\newtheorem{theorem}{Theorem}[section]
\newtheorem{lemma}[theorem]{Lemma}
\newtheorem{conjecture}[theorem]{Conjecture}
\newtheorem{proposition}[theorem]{Proposition}
\theoremstyle{definition}\newtheorem{definition}{Definition}[section]
\theoremstyle{definition}
\theoremstyle{remark}\newtheorem{remark}{Remark}[section]
\theoremstyle{remark}
\let\bbordermatrix\bordermatrix
\patchcmd{\bbordermatrix}{8.75}{4.75}{}{}
\patchcmd{\bbordermatrix}{\left(}{\left[}{}{}
\patchcmd{\bbordermatrix}{\right)}{\right]}{}{}
\newcommand{\rank}{\operatorname{rank}}
\newcommand{\spann}{\operatorname{span}}
\newcommand{\Vset}{\mathcal{V}_{\tilde{e}}(G,p,L)}
\newcommand{\Vsetcirc}{\mathcal{V}^f_{\tilde{e}}(G,p,L)}
\definecolor{colR}{rgb}{.932,.172,.172}
\definecolor{colB}{rgb}{.255,.41,.884}
\definecolor{colG}{rgb}{0,0.7,0}
\tikzstyle{vertex}=[circle, draw, fill=black, inner sep=0pt, minimum size=4pt]
\tikzstyle{smallvertex}=[circle, line width=1.5pt, draw, fill=black, inner sep=0pt, minimum size=2pt]
\tikzstyle{edge}=[line width=1.5pt]
\tikzstyle{redge}=[edge,colR]
\tikzstyle{bedge}=[edge,colB]
\tikzstyle{gedge}=[edge,colG]
\tikzstyle{lnode}=[circle,white,draw, fill=black,inner sep=1pt, font=\scriptsize]
\begin{document}

\title{Flexible placements of periodic graphs in the plane}

\author{Sean Dewar}\thanks{Supported by the Austrian Science Fund (FWF): P31888}
\address{Johann Radon Institute for Computational and Applied Mathematics (RICAM), Austrian Academy of Sciences, 4040 Linz, Austria}

\keywords{Periodic frameworks, Flexibility, Linkages, Gain graphs}
\subjclass[2010]{52C25, 13A18}

\begin{abstract}
	Given a periodic graph,
	we wish to determine via combinatorial methods whether it has periodic embeddings in the plane 
	that -- via motions that preserve edge-lengths and periodicity -- can be continuously deformed into another non-congruent embedding of the graph.
	By introducing NBAC-colourings for the corresponding quotient gain graphs,
	we identify which periodic graphs have flexible embeddings in the plane when the lattice of periodicity is fixed.
	We further characterise with NBAC-colourings which 1-periodic graphs have flexible embeddings in the plane
	with a flexible lattice of periodicity,
	and characterise in special cases which 2-periodic graphs have flexible embeddings in the plane
	with a flexible lattice of periodicity.
\end{abstract}

\maketitle
\tableofcontents

\section{Introduction}

A \emph{(bar-joint) framework in the plane} is a pair $(\mathcal{G}, \mathcal{P})$,
where $\mathcal{G}$ is a simple graph and $\mathcal{P}$ (the \emph{placement} of $\mathcal{G}$)
is a map from $V(\mathcal{G})$ to $\mathbb{R}^2$.\footnote{Although $(G,p)$ is the standard notation for a framework, we shall instead reserve this instead for the quotient frameworks that we use throughout the majority of the this paper.}
By considering each edge $vw$ as a rigid bar that restricts the distance between $v$ and $w$,
a natural question to ask is whether or not the structure is \emph{flexible},
i.e.~does there exist a continuous path in the space of placements of $\mathcal{G}$
that preserves the edge distances but is not a rigid body motion?
If the vertex set of $\mathcal{G}$ is finite 
and the coordinates of the vector $( \mathcal{P}(v))_{v \in V(\mathcal{G}) }$ are algebraically independent over $\mathbb{Q}$,
then it has been proven
(first by Pollaczek-Geiringer \cite{Pollaczek} and later by Laman independently \cite{laman})
that $(\mathcal{G}, \mathcal{P})$ is \emph{rigid} (i.e.~not flexible) 
in the plane if and only if $\mathcal{G}$ contains a (somewhat erroneously named) \emph{Laman graph};
a graph $\mathcal{H}$ where $|E(\mathcal{H})| = 2|V(\mathcal{H})| -3$ 
and $|E(\mathcal{H}')| \leq 2|V(\mathcal{H}')| -3$ for all subgraphs $\mathcal{H}'$ of $\mathcal{H}$ with $|V(\mathcal{H}')| \geq 2$.
Given a graph that contains a Laman graph,
there can however still exist non-generic placements that are flexible; see Figure \ref{fig:intro1}.

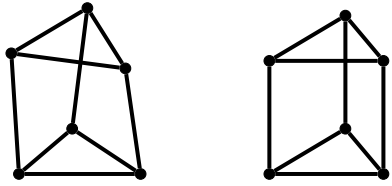
\begin{figure}[ht]
	\begin{center}
		\begin{tikzpicture}
			\node[vertex] (1) at (0,0) {};
			\node[vertex] (2) at (1.6,0) {};
			\node[vertex] (3) at (0.7,0.6) {};
			\node[vertex] (4) at (-0.1,1.6) {};
			\node[vertex] (5) at (1.4,1.4) {};
			\node[vertex] (6) at (0.9,2.2) {};
	
			\draw[edge] (1)edge(2);
			\draw[edge] (2)edge(3);
			\draw[edge] (1)edge(3);
	
			\draw[edge] (4)edge(5);
			\draw[edge] (5)edge(6);
			\draw[edge] (4)edge(6);
			
			\draw[edge] (1)edge(4);
			\draw[edge] (2)edge(5);
			\draw[edge] (3)edge(6);
		\end{tikzpicture}\qquad\qquad
		\begin{tikzpicture}
			\node[vertex] (1) at (0,0) {};
			\node[vertex] (2) at (1.5,0) {};
			\node[vertex] (3) at (1,0.6) {};
			\node[vertex] (4) at (0,1.5) {};
			\node[vertex] (5) at (1.5,1.5) {};
			\node[vertex] (6) at (1,2.1) {};
	
			\draw[edge] (1)edge(2);
			\draw[edge] (2)edge(3);
			\draw[edge] (1)edge(3);
	
			\draw[edge] (4)edge(5);
			\draw[edge] (5)edge(6);
			\draw[edge] (4)edge(6);
			
			\draw[edge] (1)edge(4);
			\draw[edge] (2)edge(5);
			\draw[edge] (3)edge(6);
		\end{tikzpicture}
	\end{center}
	\caption{(Left): A rigid placement of $K_2 \times K_3$ in the plane. 
	As $K_2 \times K_3$ is a Laman graph, almost all placements will give a rigid framework.
	(Right): A flexible placement of the same graph.}
	\label{fig:intro1}
\end{figure}

This raises a new question;
can we use combinatorial methods to determine if a graph $\mathcal{G}$ has \emph{any} placement that defines a flexible 
framework $(\mathcal{G},\mathcal{P})$?
This question was answered in the positive in \cite{jan2},
where it was proved that a finite simple graph will have flexible placements in the plane if and only if it has a \emph{NAC-colouring},
a surjective red-blue edge colouring where no cycle has exactly one red edge or exactly one blue edge.
Detecting whether graphs have flexible placements via NAC-colourings is a very recent area of research
that utilises many different areas of algebraic geometry,
including valuation theory \cite{DewGrasLeg,paradox,jan1,jan2}.

We now wish to extend the method using NAC-colourings to a frameworks in the plane with \emph{$k$-periodic symmetry},
i.e.~frameworks $(\mathcal{G},\mathcal{P})$ where there exists a matrix $L \in M_{2 \times k} (\mathbb{R})$
and a free group action $\theta$ of $\mathbb{Z}^k$ on $\mathcal{G}$ via graph automorphisms,
such that 
$\mathcal{G}$ has a finite set of vertex orbits under $\theta$ and
$\mathcal{P}(\theta(\gamma)v) = \mathcal{P}(v) +L. \gamma$
for all $v \in V(G)$ and $\gamma \in \mathbb{Z}^k$;
we call $L$ the \emph{lattice} of $\mathcal{P}$, 
$\theta$ the \emph{symmetry} of $\mathcal{G}$,
and $\mathcal{P}$ a \emph{$k$-periodic placement} of $(\mathcal{G}, \theta)$.
Specifically,
we wish to be able to determine if a graph $\mathcal{G}$ with symmetry $\theta$ has a 
$k$-periodic placement $\mathcal{P}$
where $(\mathcal{G},\mathcal{P})$ can be deformed by a motion 
that preserves the periodic structure of $(\mathcal{G},\mathcal{P})$,
and if such a placement does exist,
be able to also determine in advance whether the motion will preserve the lattice structure of $(\mathcal{G},\mathcal{P})$.

Research into the rigidity of periodic frameworks has seen much interest in the last decade.
Some of the main areas of research include 
combinatorial characterisations of rigid periodic graphs \cite{periodic,BorStr2011,louisperiodic,nixonross,ross},
periodic graphs with unique realisations \cite{globalperiodic},
rigid unit modes of periodic frameworks \cite{rum2,rum1},
and rigidity under infinitesimal motions where the periodicity is relaxed somewhat \cite{badri,BorStr2015,leftypower,ultrarigid,whiteley14}.

Each $k$-periodic framework $(\mathcal{G},\mathcal{P})$ in the plane with a given symmetry $\theta$
defines a family of \emph{gain-equivalent} triples $(G,p,L)$,
where $G$ is a \emph{$\mathbb{Z}^k$-gain graph} and $p :V(G) \rightarrow \mathbb{R}^2$ is a \emph{placement} of $G$ 
(see Sections \ref{sec:gaingraphs} and \ref{sec:rigidgraphs} for definitions),
and likewise, each such triple $(G,p,L)$ will define a framework $(\mathcal{G},\mathcal{P})$ with $k$-periodic symmetry;
see \cite[Section 2.2]{ross} for more details.
As $\mathbb{Z}^k$-gain graphs have a finite amount of vertices but still encode all the required information needed 
for working with motions that preserve periodicity,
we shall define a \emph{$k$-periodic framework in the plane} to be a triple $(G,p,L)$
for some $\mathbb{Z}^k$-gain graph $G$,
and the pair $(p,L)$ to be a \emph{placement-lattice} of $G$.

\begin{figure}[ht]
	\begin{center}	
		\begin{tikzpicture}
			\node[vertex] (1) at (0,0) {};
			\node[vertex] (2) at (1,1) {};
			\node[vertex] (1r) at (2,0) {};
			\node[vertex] (2r) at (3,1) {};
			\node[vertex] (1u) at (0,2) {};
			\node[vertex] (2u) at (1,3) {};
			\node[vertex] (1ur) at (2,2) {};
			\node[vertex] (2ur) at (3,3) {};
					
			\draw[bedge] (1)edge(1u);
			\draw[bedge] (1r)edge(1ur);
			
			\draw[bedge] (1u) -- (0,3.3);
			\draw[bedge] (1ur) -- (2,3.3);
			
			\draw[bedge] (1) -- (0,-0.3);
			\draw[bedge] (1r) -- (2,-0.3);
			\draw[gedge] (1)edge(2);
			\draw[gedge] (1r)edge(2r);
			\draw[gedge] (1u)edge(2u);
			\draw[gedge] (1ur)edge(2ur);
			
			\draw[redge] (2) -- (-0.5,0.5);
			\draw[redge] (2u) -- (-0.5,2.5);
			
			\draw[redge] (1r) -- (3.5,0.5);
			\draw[redge] (1ur) -- (3.5,2.5);
			
			\draw[redge] (1)edge(2r);
			\draw[redge] (1u)edge(2ur);

		\end{tikzpicture}\qquad\qquad
		\begin{tikzpicture}
			\node[vertex] (1) at (0,0) {};
			\node[vertex] (2) at (1,1) {};
	
			\draw[gedge] (1)edge(2);
			
			\path[redge,-latex] (1) edge [bend right] node {} (2);
			
			\node[font=\scriptsize] at (-1,0) {$(0,1)$};
			\node[font=\scriptsize] at (1.2,0.25) {$(1,0)$};
				
			\path[bedge,-latex, every loop/.style={looseness=30}] (1) edge [loop left] node { } (1);
			
			\node (3) at (-1,-1) {};
			\end{tikzpicture}
		\end{center}
	\caption{(Left): A framework $(\mathcal{G},\mathcal{P})$ with $2$-periodic symmetry. 
	(Right): A corresponding triple $(G,p,L)$ with $L := 2 I_2$,
	where $I_2$ is the $2 \times 2$ identity matrix.}
	\label{fig:intro2}
\end{figure}
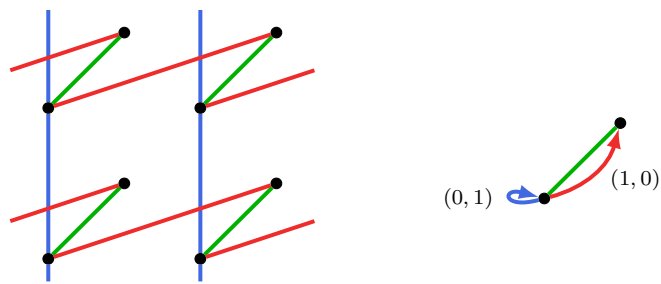

Using the gain graph description of $k$-periodic frameworks,
our question is now the following;
can we use combinatorial methods to determine if a $\mathbb{Z}^k$-gain graph $G$ has \emph{any} 
placement-lattice that defines a flexible $k$-periodic framework $(G,p,L)$?
We shall answer this in the positive for $1$-periodic frameworks where the lattice is allowed to deform 
(see Theorem \ref{thm:1NBAC})
and $k$-periodic frameworks where the lattice is fixed (see Theorem \ref{thm:fixedNBAC}).
We also obtain partial results for the more difficult case of $2$-periodic frameworks where the lattice is allowed to deform
(see Lemma \ref{lem:2NBAC}, Theorem \ref{thm:1loop2per} and Theorem \ref{cor:loopNBAC}).
To do this we shall introduce \emph{NBAC-colourings} (``NBAC'' being an acronym for ``No Balanced Almost Circuit''),
an analogue of NAC-colourings for $\mathbb{Z}^k$-gain graphs.
We shall also characterise the various types of NBAC-colourings that are generated by different motions of a given $k$-periodic framework.

The outline of the paper is as follows.
In Section \ref{sec:Preliminaries},
we shall layout some background on valuation theory,
gain graphs and periodic frameworks in both $\mathbb{R}^d$ and $\mathbb{C}^d$.
In Section \ref{sec:NBAC}, 
we shall define NBAC-colourings and their various sub-types,
including active NBAC-colourings,
and utilise valuations to prove that flexibility will imply the existence of an NBAC-colouring.
In Section \ref{sec:fixedlattice}, Section \ref{sec:1NBAC} and Section \ref{sec:2NBAC},
we shall apply our methods using NBAC-colourings to fixed lattice $k$-periodic frameworks,
flexible lattice $1$-periodic frameworks and flexible lattice $2$-periodic frameworks respectively,
with partial results in the latter case.
In Section \ref{sec:loops},
we shall prove that a full characterisation of $\mathbb{Z}^2$-gain graphs with a flexible placement-lattice is possible
if we assume that our graph has at least a single loop.

\section{Preliminaries}\label{sec:Preliminaries}

\subsection{Function fields and valuations}

We shall refer to all affine algebraic sets over $\mathbb{C}$ as algebraic sets,
and we shall call any irreducible algebraic set a variety.
For an algebraic set $V$ in $\mathbb{C}^n$,
we define $I(V)$ to be the ideal of $\mathbb{C}^n$ that defines $V$.
We recall that the dimension of an algebraic set is the maximal length of chains of distinct nonempty subvarieties of $A$.
An \emph{algebraic curve} is an affine variety of dimension $1$.

\begin{definition}
	Let $V$ be a variety in the polynomial ring $\mathbb{C}[X_1, \ldots,X_n]$.
	We define the \emph{coordinate ring} of $V$ to be the quotient $\mathbb{C}[V] :=  \mathbb{C}[X_1, \ldots,X_n]/ I(V)$
	and the \emph{function field} of $V$ to be the field of fractions of $\mathbb{C}[V]$, denoted by $\mathbb{C}(V)$.
	Each $\hat{f}/\hat{g} \in \mathbb{C}(V)$ can, 
	for any $f \in \hat{f}$ and $g \in \hat{g}$, be considered to be a partially defined function
	\begin{align*}
		f/g : V \rightarrow \mathbb{C}, ~ x \mapsto f(x)/g(x),
	\end{align*}
	and this function is independent of the choice of $f,g$.
\end{definition}

We recall that for a field extension $K/k$,
an element $a \in K$ is \emph{transcendental over $k$} if there is no polynomial $p \in k[X]$ with $p(a) = 0$,
and \emph{algebraic over $k$} otherwise.
The following useful result stems from the observation that any rational function must either be constant on a variety or take an infinite amount of values;
indeed if this was not true,
we would be able to construct a non-invertible element of the function field.

\begin{lemma}\label{lem:constantW2}
	Let $\mathcal{C}$ be an algebraic curve in $\mathbb{C}[X_1, \ldots,X_n]$ and let $f \in \mathbb{C}[x_1, \ldots,x_n]$.
	Then one of the following holds:
	\begin{enumerate}[(i)]
		\item
		$f$ takes an infinite amount of values on $\mathcal{C}$ and is transcendental over $\mathbb{C}$ 
		when considered as an element of $\mathbb{C}(\mathcal{C})$.
		\item
		$f$ is constant on $\mathcal{C}$.
	\end{enumerate}
\end{lemma}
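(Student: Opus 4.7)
The plan is to argue by a clean dichotomy: either $f$ is constant on $\mathcal{C}$, or both assertions of (i) hold.

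First I would show the easy direction that $f$ not constant on $\mathcal{C}$ forces $f(\mathcal{C})$ to be infinite. Suppose instead that $f$ takes only finitely many values $c_1,\ldots,c_k$ on $\mathcal{C}$. Then each fibre $\mathcal{C} \cap V(f - c_i)$ is a closed subvariety of $\mathcal{C}$, and their union equals $\mathcal{C}$. Since $\mathcal{C}$ is irreducible, one of these fibres must coincide with $\mathcal{C}$, giving $f \equiv c_i$ on $\mathcal{C}$, contradicting non-constancy.

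Next I would handle the transcendence claim. Suppose for contradiction that $f$, viewed in $\mathbb{C}(\mathcal{C})$, is algebraic over $\mathbb{C}$. Then $f$ is a root of some nonzero $P(Y) \in \mathbb{C}[Y]$. Because $\mathbb{C}$ is algebraically closed, $P$ splits as $P(Y) = a\prod_{i}(Y - c_i)$ with $c_i \in \mathbb{C}$, so $\prod_i (f - c_i) = 0$ in the integral domain $\mathbb{C}(\mathcal{C})$. Hence $f = c_i$ in $\mathbb{C}(\mathcal{C})$ for some $i$, i.e.\ $f$ agrees with the constant $c_i$ as a partially defined function on $\mathcal{C}$. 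This contradicts the fact that $f$ takes infinitely many values on $\mathcal{C}$.

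Finally, I would note that (i) and (ii) are mutually exclusive (a constant function on $\mathcal{C}$ is plainly algebraic over $\mathbb{C}$ and takes a single value), so combining the two paragraphs gives the dichotomy: if $f$ is not constant on $\mathcal{C}$, then (i) holds. I do not anticipate any serious obstacle here; the only point that requires care is the use of irreducibility of $\mathcal{C}$ in the first step and the use of $\mathbb{C}$ being algebraically closed in the second step, both of which are standard.
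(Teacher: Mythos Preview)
Your proof is correct and follows essentially the same approach as the paper: both argue that finitely many values forces $f$ to be constant, and that algebraicity over $\mathbb{C}$ forces $f$ to lie in $\mathbb{C}$ by algebraic closedness. The only cosmetic difference is that for the first step you invoke irreducibility of $\mathcal{C}$ directly (one closed fibre must be all of $\mathcal{C}$), whereas the paper routes this through the product polynomial $\prod_i (X - a_i)$ and then appeals to algebraic closedness; both are standard one-line arguments.
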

%
%\begin{proof}
	%Suppose $f$ takes only a finite amount of values $a_1,\ldots,a_n \in \mathbb{C}$ on $\mathcal{C}$.
	%Define the polynomial $p(X) = \prod_{i=1}^n (X - a_i) \in \mathbb{C}[X]$,
	%then $p(f(x)) = 0$ for all $x \in \mathcal{C}$.
	%It follows then that $f$ is algebraic in the field extension $\mathbb{C}(\mathcal{C})/ \mathbb{C}$,
	%thus as $\mathbb{C}$ is algebraically closed,
	%$f$ is constant on $\mathbb{C}$.
	%
	%Now suppose that $f$ takes an infinite amount of values on $\mathcal{C}$ but is algebraic 
	%in the field extension $\mathbb{C}(\mathcal{C})/ \mathbb{C}$, i.e.~there exists a polynomial $q \in \mathbb{C}[X]$
	%so that $q(f(x)) = 0$ for all $x \in \mathcal{C}$.
	%As $q$ has a finite number of roots then it cannot be zero on an infinite set.
	%This contradicts that $f$ takes an infinite amount of values on $\mathcal{C}$,
	%thus $f$ is not algebraic over $\mathbb{C}$, i.e.~$f$ is transcendental over $\mathbb{C}$.	
%\end{proof}

\begin{definition}
	For a function field $\mathbb{C}(C)$,
	a function $\nu : \mathbb{C}(C) \rightarrow \mathbb{Z} \cup \{\infty \}$ is a \emph{valuation} if 
	\begin{enumerate}[(i)]
		\item $\nu (x) = \infty$ if and only if $x = 0$.
		
		\item $\nu (xy) = \nu (x) + \nu (y)$.
		
		\item $\nu (x + y) \geq \min \{ \nu (x) , \nu (y) \}$, with equality if $\nu (x) \neq \nu (y)$.
		
		\item $\nu (x) = 0$ if $x \in \mathbb{C} \setminus \{0\}$.
	\end{enumerate}
\end{definition}

The following is a useful rewording of \cite[Corollary 1.1.20]{functionfields}.

\begin{proposition}\label{prop:valuation}
	Let $\mathbb{C}(C)$ be a function field and suppose $f \in \mathbb{C}(\mathcal{C})$ is transcendental over $\mathbb{C}$.
	Then there exists a valuation $\nu$ of $\mathbb{C}(\mathcal{C})$ with $\nu (f) >0$.
\end{proposition}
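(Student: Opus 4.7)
The plan is to exploit the fact that $\mathbb{C}(C)$, being the function field of an algebraic curve, has transcendence degree $1$ over $\mathbb{C}$. Since $f$ is transcendental over $\mathbb{C}$, the element $f$ on its own forms a transcendence basis, so the subfield $\mathbb{C}(f) \subseteq \mathbb{C}(C)$ is isomorphic to the rational function field in one indeterminate, and the extension $\mathbb{C}(C)/\mathbb{C}(f)$ is finite algebraic. This reduces the problem to (i) producing a valuation on the smaller field $\mathbb{C}(f)$ sending $f$ to a positive value, and (ii) extending that valuation to $\mathbb{C}(C)$ without losing positivity.

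For step (i), I would use the standard $(f)$-adic valuation on $\mathbb{C}(f)$: every nonzero $h \in \mathbb{C}(f)$ can be written uniquely as $h = f^n \cdot p(f)/q(f)$ with $p, q \in \mathbb{C}[X]$ having nonzero constant term, and setting $\nu_0(h) := n$ defines a discrete valuation on $\mathbb{C}(f)$ with $\nu_0(f) = 1$. Verifying axioms (i)--(iv) of the definition is a direct computation. For step (ii), I would invoke the classical extension theorem for discrete valuations in finite extensions of fields of transcendence degree $1$: any discrete valuation $\nu_0$ on $\mathbb{C}(f)$ admits an extension $\nu$ to $\mathbb{C}(C)$, with $\nu|_{\mathbb{C}(f)} = e \cdot \nu_0$ for some ramification index $e \in \mathbb{Z}_{>0}$. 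In particular $\nu(f) = e \cdot \nu_0(f) = e > 0$, which is what we want.

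The main obstacle is the extension step (ii), since the axioms for $\nu$ require integer values and the naive extension via integral closure produces a priori only a valuation ring, not a $\mathbb{Z}$-valued function. This is precisely the content of the cited Corollary 1.1.20 in \cite{functionfields}, which guarantees that in a finite extension of a function field of an algebraic curve every discrete valuation on the base extends to a discrete valuation on top; once this is granted, the proposition follows immediately by applying it to the pair $\mathbb{C}(f) \subseteq \mathbb{C}(C)$ and the valuation $\nu_0$ constructed above. Hence the proof will consist of the reduction to $\mathbb{C}(f)$, the explicit construction of $\nu_0$, and a direct appeal to that corollary.
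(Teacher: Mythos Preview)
The paper does not supply its own proof of this proposition; it simply states that the result is a rewording of \cite[Corollary 1.1.20]{functionfields}. Your proposal is a correct and standard way to establish the statement, and in fact your extension step (ii) is exactly an appeal to that same corollary, so your argument is fully aligned with the paper's intended justification---you have merely unpacked what the citation provides.
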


\subsection{Gain graphs}\label{sec:gaingraphs}

We shall briefly cover the topic of \emph{gain graphs}.
For a more in depth analysis of the topic for general groups,
we refer the reader to \cite{grosstucker}.
We will be mainly be interested in the case when the group is an abelian free group;
for more discussion on techniques often used for this specific topic,
we refer the reader to \cite{ross14}.

\begin{definition}
	A \emph{$\Gamma$-gain graph} is a triple $G := (V(G),E(G), \Gamma)$, where:
	\begin{enumerate}[(i)]
		\item $V(G)$ is a finite set of \emph{vertices}.
	
		\item $\Gamma$ is an additive abelian group with identity $0$.
	
		\item $K(V(G)) := (V(G)^2 \times \Gamma)/ R$,
		where $R$ is the equivalence relation with $(a,b,\gamma) R (c,d,\mu)$ if and only if either $a=c$, $b=d$ and $\gamma = \mu$,
		or $a=d$, $b = c$ and $\gamma = -\mu$.
		
		\item	$E(G) \subset K(V(G))$ is a set of \emph{edges}.
		We shall assume that there is no edge of the form $(v,v , 0)$;
		we shall, however, allow $E(G)$ to be an infinite set.
	\end{enumerate}
\end{definition}

While the edges of a gain graph are not orientated,
we often find it easier to assume that there is some orientation on the edges, i.e.~$G$ is directed.
We may then define the \emph{gain} of an edge $(v,w,\gamma)$ to be $\gamma$.
We refer the reader to Figure \ref{fig:gain} for an example.

\begin{figure}[ht]
	\begin{center}		
		\begin{tikzpicture}
			\node[vertex] (1) at (-1,0) {};
			\node[vertex] (2) at (1,0) {};
			\node[vertex] (3) at (0,1.5) {};
			\node[vertex] (4) at (1,-1) {};
			\node[vertex] (5) at (-1,-1) {};

			\path[edge,-latex] (1) edge [bend left] node {} (2);
			\path[edge] (1) edge [bend right] node {} (2);
			
			\draw[edge] (1)edge(4);
			\draw[edge] (2)edge(4);
			
			\draw[edge,-latex] (1)edge(3);
			\draw[edge] (2)edge(3);
			\draw[edge] (5)edge(1);
			\draw[edge] (5)edge(4);
			%\draw[edge] (5)edge(3);

			\node[font=\scriptsize] at (0,0.5) {$a$};
			\node[font=\scriptsize] at (1.7,0) {$b$};
			\node[font=\scriptsize] at (-0.75,0.7) {$c$};
			\node[font=\scriptsize] at (0.35,1.8) {$d$};
		
			\path[edge,-latex, every loop/.style={looseness=30}] (2) edge [loop right] node { } (2);
			\path[edge,-latex, every loop/.style={looseness=30}] (3) edge [loop above] node { } (3);

		\end{tikzpicture}
	\end{center}
	\caption{A $\Gamma$-gain graph with $a,b,c,d \in \Gamma$.
	We represent any edge $(v,w,\gamma)$ by an arrow from $v$ to $w$ with a label $\gamma$,
	and we represent any edge $(v,w,0)$ by an undirected and unlabelled edge from $v$ to $w$.}
	\label{fig:gain}
\end{figure}
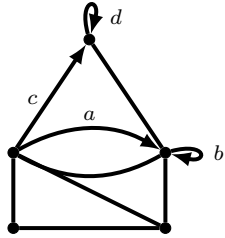
%
%Given a $\Gamma$-gain graph $G$,
%we define the \emph{covering graph} of $G$ to be the graph $\tilde{G}$ with vertex set $V(G) \times \Gamma$ and edge set
%\begin{align*}
	%E(\tilde{G}) := \left\{ \{(v,\mu), (w,\mu +\gamma) \} : (v,w,\gamma) \in E(G) \right\}.
%\end{align*}
%It follows from how $G$ was defined that $\tilde{G}$ must be a simple graph.

A \emph{switching operation at $u$ by $\mu$} is the map $\phi_u^\mu : K(V(G)) \rightarrow K(V(G))$ where
\begin{align*}
	\phi_u^\mu (v,w,\gamma) = 
	\begin{cases}
		(u,w, \gamma + \mu) & \text{if } v=u,~ w \neq u \\
		(v,u, \gamma - \mu) & \text{if } v \neq u,~ w = u \\
		(v,w,\gamma) & \text{if } v, w \neq u \text{ or } v=w=u.
	\end{cases} 
\end{align*}
See Figure \ref{fig:gainswitching} for an example of a gain switching operation at a vertex.

\begin{figure}[ht]
	\begin{center}
		\begin{tikzpicture}
			\node[lnode] (1) at (0,0) {$\boldsymbol{u}$};
			\node[vertex] (2) at (0.7,0.7) {};
			\node[vertex] (3) at (0,-1) {};
			\node[vertex] (4) at (-0.7,0.7) {};
			\draw[edge, -latex] (1)edge(2) (4)edge(1);
			\draw[edge] (1)edge(3);
			
			\node[font=\scriptsize] at (0.6,0.3) {$\beta$};
			\node[font=\scriptsize] at (-0.6,0.3) {$\alpha$};
			
			\draw[-latex] (1.7,-0.5) -- (2.3,-0.5);
			
			\node[lnode] (6) at (4,0) {$\boldsymbol{u}$};
			\node[vertex] (7) at (4.7,0.7) {};
			\node[vertex] (9) at (3.3,0.7) {};
			\node[vertex] (10) at (4,-1) {};
			\draw[edge, -latex] (9)edge(6);
			\draw[edge, -latex] (6)edge(7) (6)edge(10);

			\node[font=\scriptsize] at (5,0.3) {$\beta +\mu$};
			\node[font=\scriptsize] at (3.1,0.3) {$\alpha-\mu$};
			\node[font=\scriptsize] at (4.3,-0.5) {$\mu$};
		\end{tikzpicture}	
	\end{center}
	  \caption{A switching operation at $u$ by $\mu$.}
	  \label{fig:gainswitching}
\end{figure}
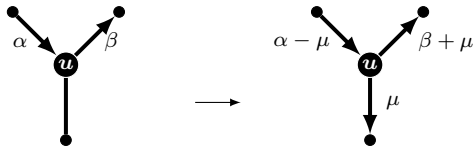

Given the switching operations $\phi_{u_1}^{\mu_1} , \ldots , \phi_{u_n}^{\mu_n}$ (where the vertices $u_1,\ldots,u_n$ and elements $\mu_1,\ldots,\mu_n$ need not be distinct),
we define $\phi := \phi_{u_n}^{\mu_n} \circ \ldots \circ \phi_{u_1}^{\mu_1}$ to be a \emph{gain equivalence}.
We say $\Gamma$-gain graphs $G,G'$ are \emph{gain-equivalent} (or $G \approx G'$) if $G$ and $G'$ are $\Gamma$-gain graphs with the same vertex set and $G' = \phi(G) := (V(G),\phi(E(G)),\Gamma)$ for some gain equivalence $\phi$.
If $H \subset G$ and $H' := \phi(H)$,
then we say $H'$ is the \emph{corresponding subgraph} of $H$ in $G'$.
The relation $\approx$ is an equivalence relation for gain graphs.
%
%\begin{remark}\label{rem:gainswitch}
	%It is immediate that $\phi_u^{-\mu} \circ \phi_u^\mu(G) = G$ and any two switching operations will commute.
	%Due to this,
	%we shall use the notation $\phi \phi'$ to denote the composition of the gain equivalences $\phi$ and $\phi'$.
%\end{remark}

A \emph{walk} in $G$ is an ordered set $C:= (e_1, \ldots, e_n)$ of edges of $G$ where $e_i = (v_i, v_{i+1}, \gamma_i)$ (with $v_{n+1}=v_1$) for some $\gamma_i$;
we note that we orientate each edge so we have a \emph{directed walk} from $v_1$ to $v_n$.
The \emph{length} of a walk is the amount of edges it contains (including any repetitions).
If $v_1 = v_n$ then $C$ is a \emph{circuit}.
Unless specified otherwise, all walks and circuits of length $n$ will be of the form described above.

For a circuit $C$, we define
\begin{align*}
	\psi(C) := \gamma_1 + \gamma_2 +  \ldots + \gamma_n
\end{align*}
to be the \emph{gain} of $C$.
A circuit is \emph{balanced} if $\psi(C) =0$, and \emph{unbalanced} otherwise.
For a connected subgraph $H \subset G$,
we define the \emph{span of $H$} to be the subgroup
\begin{align*}
	\spann (H) := \{ \psi (C) : C \text{ is a circuit in } H \}.
\end{align*} 
If $\Gamma \cong \mathbb{Z}^k$ for some $k \in \mathbb{N}$,
then we define $\rank (H)$ to be the rank of $\spann (H)$.
A connected subgraph $H$ is \emph{balanced} if $\spann (H)$ is the trivial group, and \emph{unbalanced} otherwise;
likewise, a subgraph is \emph{balanced} if every connected component is balanced and \emph{unbalanced} otherwise.

\begin{proposition}\label{prop:correspondingsub}
	Let $G,G'$ be gain-equivalent $\Gamma$-gain graphs and $H \subset G$ be a connected subgraph.
	If $H'$ is the corresponding subgraph of $H$ in $G$,
	then $\spann (H') = \spann (H) $.
\end{proposition}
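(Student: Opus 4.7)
The plan is to reduce the statement to the case of a single switching operation $\phi_u^\mu$, and then show that the gain of every circuit is preserved pointwise by such a switching. Since any gain equivalence is by definition a composition $\phi = \phi_{u_n}^{\mu_n} \circ \cdots \circ \phi_{u_1}^{\mu_1}$, and since the operation that sends $H$ to its corresponding subgraph $H'$ is simply the restriction of $\phi$, it suffices to check that for a single switching at a vertex $u$ by an element $\mu \in \Gamma$, every circuit $C$ in $H$ satisfies $\psi(\phi_u^\mu(C)) = \psi(C)$.

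First I would fix an orientation of a circuit $C = (e_1, \ldots, e_n)$ in $H$ with $e_i = (v_i, v_{i+1}, \gamma_i)$ and $v_{n+1} = v_1$, and analyse the effect of $\phi_u^\mu$ edge by edge using the three cases in the definition. Edges with neither endpoint equal to $u$, and loops at $u$, keep their gain. For each visit of the walk to $u$ as an \emph{internal} vertex (so an incoming edge $e_{i-1}$ ends at $u$ and the outgoing edge $e_i$ starts at $u$), the gain $\gamma_{i-1}$ is replaced by $\gamma_{i-1} - \mu$ and $\gamma_i$ by $\gamma_i + \mu$, so the contribution $\gamma_{i-1} + \gamma_i$ to $\psi(C)$ is unchanged. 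The same cancellation occurs at the wrap-around point if $v_1 = u$, pairing $e_n$ and $e_1$. Summing over all edges of $C$ then gives $\psi(\phi_u^\mu(C)) = \psi(C)$, so $\spann(H) \subseteq \spann(\phi_u^\mu(H))$.

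For the reverse inclusion, I would use that switching operations are invertible with $\phi_u^{-\mu} \circ \phi_u^\mu = \mathrm{id}$ (Remark \ref{rem:gainswitch}), so applying the same argument to $H'$ and $\phi_u^{-\mu}$ yields $\spann(H') \subseteq \spann(H)$. Composing through the full gain equivalence $\phi$ then gives $\spann(H) = \spann(H')$.

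The only real bookkeeping issue, and the step I expect to require the most care, is handling the orientation of edges incident to $u$ correctly: a circuit in an undirected gain graph may traverse an edge $(x,y,\gamma)$ in either direction, and when the direction of traversal disagrees with the stored orientation the contribution to $\psi(C)$ is $-\gamma$ rather than $\gamma$. One has to verify that the signs in the switching formula for edges ending at $u$ versus starting at $u$ match the signs coming from the direction of traversal of the circuit, so that the $+\mu$ and $-\mu$ still cancel in pairs regardless of how $C$ enters and leaves $u$. Loops at $u$ and the base point of $C$ (when $v_1 = u$) are the two edge cases to double-check, but both follow directly from the definition of $\phi_u^\mu$.
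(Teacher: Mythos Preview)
Your proposal is correct and follows exactly the approach the paper takes: the paper's entire proof is the one-line observation that switching operations do not change the gain of a circuit, and your argument is a careful unpacking of precisely that fact. Your edge-by-edge cancellation of $+\mu$ and $-\mu$ at each occurrence of $u$ in the circuit, together with the inverse switching for the reverse inclusion, is the natural way to justify the paper's assertion; the orientation bookkeeping you flag is handled automatically by the equivalence $(x,y,\gamma) = (y,x,-\gamma)$ in the definition of $E(G)$, so the switching rules are consistent regardless of the direction of traversal.
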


\begin{proof}
	This follows from noting that switching operations will not change the span of a circuit.
\end{proof}

\begin{proposition}\label{prop:balancedsub}
	Let $G$ be a $\Gamma$-gain graph and $\{H_1, \ldots, H_n \}$ a set of connected subgraphs with pairwise disjoint vertex sets.
	Then there exists $G' \approx G$ such that for each $i \in \{1, \ldots, n \}$,
	all the edges of the corresponding subgraph $H'_i$ of $H_i$ in $G'$
	have gain in $\spann(H_i)$.
\end{proposition}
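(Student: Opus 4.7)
The plan is to prove the proposition by treating each $H_i$ independently and using switching operations supported on its vertex set. Since $V(H_1), \ldots, V(H_n)$ are pairwise disjoint, any switching at a vertex $v \in V(H_i)$ only alters gains of edges incident to $v$; in particular it has no effect on the gain of any edge whose two endpoints both lie outside $V(H_i)$, and so does not touch the edges internal to $H_j$ for $j \neq i$. Consequently, if for each $i$ we can exhibit a composition of switchings $\phi^{(i)}$ supported on $V(H_i)$ that moves $H_i$ to a subgraph in which every edge has gain in $\spann(H_i)$, then the composition $\phi := \phi^{(n)} \circ \cdots \circ \phi^{(1)}$ is a gain equivalence witnessing the conclusion.

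To construct $\phi^{(i)}$, I would fix a spanning tree $T_i$ of the connected subgraph $H_i$ together with a root $r_i \in V(H_i)$. I would then process the non-root vertices of $T_i$ in BFS order from $r_i$; upon reaching each non-root vertex $v$ via its parent edge $e_v$ in $T_i$, I would switch at $v$ by the unique element of $\Gamma$ which (with the sign dictated by the orientation of $e_v$, following the piecewise definition of $\phi_u^\mu$) cancels the current gain of $e_v$. By Remark \ref{rem:gainswitch} the order in which these single-vertex switchings are composed is immaterial, and by construction every edge of the tree $\phi^{(i)}(T_i)$ obtained at the end carries gain $0$.

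It remains to check that this forces the non-tree edges of $\phi^{(i)}(H_i)$ to have gain in $\spann(H_i)$. If $e = (v,w,\gamma)$ is such a non-tree edge, then adjoining $e$ to the unique $\phi^{(i)}(T_i)$-path from $w$ back to $v$ produces a circuit in $\phi^{(i)}(H_i)$ whose gain is exactly $\gamma$, since every tree edge contributes $0$. Hence $\gamma \in \spann(\phi^{(i)}(H_i))$, and by Proposition \ref{prop:correspondingsub} this span equals $\spann(H_i)$, which yields the required membership.

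The argument above is a standard spanning-tree trivialisation for gain graphs and I do not anticipate a serious obstacle; the only point requiring care is the bookkeeping of orientations and signs when specifying each switching parameter, which is fully dictated by the piecewise definition of $\phi_u^\mu$ given earlier in the section. The disjointness hypothesis on the $V(H_i)$ is precisely what makes the composition step free of interference and therefore lets the independently chosen $\phi^{(i)}$ combine into a single gain equivalence $\phi$.
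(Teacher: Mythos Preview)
Your proposal is correct and follows essentially the same approach as the paper: pick a spanning tree in each $H_i$, use switchings supported on $V(H_i)$ to trivialise the tree gains, and then observe that every non-tree edge has gain equal to that of its fundamental circuit, hence lies in $\spann(H_i')=\spann(H_i)$. The paper outsources the tree-trivialisation step to \cite[Section 2.4]{ross} whereas you spell it out via a BFS, and you are also more explicit than the paper about why the disjointness of the $V(H_i)$ lets the $\phi^{(i)}$ compose without interference.
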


\begin{proof}
	Choose a spanning tree $T_i$ for each $i \in \{1, \ldots, n \}$.
	We note that we may choose $G' \approx G$ such that each corresponding subgraph $T'_i$ of $T_i$ in $G'$ has
	only trivial gain for its edges;
	see \cite[Section 2.4]{ross} for a description of the method.
	
	Fix $i \in \{1,\ldots,n\}$ and choose any $e = (v,w,\gamma) \in E(H'_i)$.
	Let $W$ be the unique walk from $w$ to $v$ in $T_i$,
	and define $C$ to be the circuit formed by the travelling along the edge $e$ and then following the walk $W$.
	As $\psi(C) = \gamma$,
	then $\gamma \in \spann(H'_i)$.
	By Proposition \ref{prop:correspondingsub},
	$\spann(H_i) = \spann(H_i')$,
	hence $\gamma \in \spann(H_i)$ as required.
\end{proof}

\subsection{Rigidity and flexibility for k-periodic frameworks}\label{sec:rigidgraphs}

Let $d \in \mathbb{N}$ and $\mathbb{K} := \mathbb{R}$ or $\mathbb{C}$.
We shall define $\| \cdot \|^2 : \mathbb{K}^d \rightarrow \mathbb{K}$ to be the quadratic form with 
\begin{align*}
	\left\|(x_i)_{i=1}^d \right\|^2 := \sum_{i=1}^d x_i^2
\end{align*}
for all $(x_i)_{i=1}^d \in \mathbb{K}^d$.
For $\mathbb{K} = \mathbb{R}$,
the quadratic form $\| \cdot\|^2$ is in fact the square of the Euclidean norm,
however this is not true for $\mathbb{K} = \mathbb{C}$.
The isometries of $(\mathbb{K}^d, \|\cdot\|^2)$ are exactly the affine maps $x \mapsto Mx +y$,
where $y \in \mathbb{K}^d$ and $M \in M_n(\mathbb{K})$ is a $d \times d$-matrix where $M^TM = I_d$.

\begin{remark}
	For any matrix $M \in M_{m \times n}(\mathbb{K})$ and $x:= (x_1, \ldots, x_n) \in \mathbb{K}^n$,
	we shall denote by $M.x$ the matrix multiplication $M [x_1 ~ \ldots ~ x_n]^T$.
\end{remark}

We shall be using the definition of periodic frameworks originally stated by Ross that utilises gain graphs (see \cite{ross14}),
although many of our results can be adapted to fit the terminology used by Borcea and Streinu (see \cite{periodic}).
These two differing definitions can be seen to be identical;
we refer the reader to \cite[Section 3.1]{ross14} for more details.

\begin{definition}
	Let $d \in \mathbb{N}$ and $G$ be a $\mathbb{Z}^k$-gain graph for some $1 \leq k \leq d$.
	A \emph{$k$-periodic framework in $\mathbb{K}^d$} is a triple $(G,p,L)$ such that
	$G$ is a $\mathbb{Z}^k$-gain graph,
	$p :V(G) \rightarrow \mathbb{K}^d$, and 
	$L \in M_{d \times k}(\mathbb{K})$,
	with the assumption that if $(v,w, \gamma) \in E(G)$ then $p(v) \neq p(w)+ L.\gamma$.
	We shall define $p$ to be a \emph{placement},
	$L$ to be a \emph{lattice},
	and the pair $(p,L)$ to be a \emph{placement-lattice}.
	If $L$ is also injective then $(G,p,L)$ is \emph{full},
	and if $\mathbb{K} =\mathbb{R}$ then we simply refer to $(G,p,L)$ as a $k$-periodic framework.
\end{definition}

For a given $\mathbb{Z}^k$-gain graph $G$,
we define $\mathcal{V}^d_{\mathbb{K}}(G)$ to be the space of placement-lattices of $G$,
which we shall consider to be a subspace of $\mathbb{K}^{d|V(G)| + dk}$.
We immediately note that $\mathcal{V}^d_{\mathbb{K}}(G)$ is an open non-empty subset in the Zariski topology,
and if $G$ has an edge,
it is a proper subset.

\begin{definition}
	Let $(G,p,L)$ and $(G,p',L')$ be $k$-periodic frameworks in $\mathbb{K}^d$. 
	Then $(G,p,L) \sim (G,p', L')$ (or $(G,p,L)$ and $(G,p', L')$ are \emph{equivalent}) if for all $(v,w, \gamma) \in E(G)$,
	\begin{align}\label{edgecond}
		\|p(v) - p(w) - L.\gamma \|^2 = \|p'(v) - p'(w) - L'.\gamma \|^2,
	\end{align}
	and $(p,L) \sim (p',L')$ (or $(p,L)$ and $(p',L')$ are \emph{congruent}) if (\ref{edgecond})
	holds for all $v, w \in V(G)$ and $\gamma \in \mathbb{Z}^k$;
	equivalently,
	we may define $(p,L) \sim (p',L')$ if and only if there exists a linear isometry $M \in M_d(\mathbb{K})$ and $y \in \mathbb{K}^d$
	such that $p'(v) = M.p(v) + y$ for all $v \in V(G)$ and $L' = ML$.
	For any $L, L' \in M_{d \times k}(\mathbb{K})$,
	we define $L$ and $L'$ to be \emph{orthogonally equivalent} (or $L \sim L'$) if for any $\gamma, \mu \in \mathbb{Z}^k$,
	\begin{align}\label{latticecond}
		(L.\gamma). (L.\mu) = (L'.\gamma). (L'.\mu).
	\end{align}
\end{definition}

We note that, by linearity, if (\ref{latticecond}) holds for all pairs of some basis of $\mathbb{Z}^k$,
then it holds for all $\gamma, \mu \in \mathbb{Z}^k$.
Furthermore, if $(p,L) \sim (p',L')$ then $(G,p,L) \sim (G,p',L')$ and $L \sim L'$.

\begin{definition}
	For a $k$-periodic framework $(G,p,L)$ we define the algebraic subsets
	\begin{eqnarray*}
	 	\mathcal{V}_{\mathbb{K}}(G,p,L) &:=& \left\{ (p',L') \in \mathcal{V}^d_{\mathbb{K}}(G) : (G,p',L') \sim (G,p,L) \right\}, \\
	 	\mathcal{V}^f_{\mathbb{K}}(G,p,L) &:=& \left\{ (p',L') \in \mathcal{V}^d_{\mathbb{K}}(G) : (G,p',L') \sim (G,p,L), ~ L' \sim L \right\}.
	\end{eqnarray*}
\end{definition}

\begin{definition}
	Let $(G,p,L)$ be a $k$-periodic framework in $\mathbb{K}^d$.
	A \emph{flex} of $(G,p,L)$ is a continuous path $t \mapsto (p_t, L_t)$, $t \in [0,1]$,
	in  $\mathcal{V}_{\mathbb{K}}(G,p,L)$.
	If $(p_t, L_t) \in \mathcal{V}^f_{\mathbb{K}}(G,p,L)$ for all $t \in [0,1]$ then $(p_t, L_t)$ is a \emph{fixed lattice flex}.
	If $(p_t,L_t) \sim (p,L)$ for all $t \in [0,1]$ then $(p_t,L_t)$ is \emph{trivial}.	
\end{definition}

\begin{remark}\label{rem:altdefntriv}
	An equivalent definition for a trivial finite flex is as follows:
	$(p_t,L_t)$ is a trivial flex of $(G,p,L)$ if and only $(p_t,L_t)$ is a trivial flex of $(K,p,L)$,
	where $K$ is $\mathbb{Z}^k$-gain graph with vertex set $V(G)$
	and edge set $K(V(G)) \setminus \{ (v,v,0): v \in V(G)\}$.
\end{remark}

\begin{definition}
	Let $(G,p,L)$ be a $k$-periodic framework.
	Then we define the following:
	\begin{enumerate}[(i)]
		\item $(G,p,L)$ is \emph{rigid} if all flexes of $(G,p,L)$ are trivial, and \emph{flexible} otherwise.
		\item $(G,p,L)$ is \emph{fixed lattice rigid} if all fixed lattice flexes of $(G,p,L)$ are trivial,
		and \emph{fixed lattice flexible} otherwise.
	\end{enumerate}
\end{definition}

Let $\phi_u^\mu$ be a switching operation of $G$.
We define the \emph{framework switching operation at $u$ by $\mu$} to be (by abuse of notation) the linear map
$\phi_u^\mu : \mathbb{K}^{d|V(G)|+dk} \rightarrow \mathbb{K}^{d|V(G)|+dk}$,
where,
given $(p',L') = \phi_u^\mu(p,L)$,
we have $L'=L$ and
\begin{align*}
	p'(v) = 
	\begin{cases}
	p(u) + L.\mu, &\text{ if } v=u \\
	p(v), &\text{ otherwise}
	\end{cases}
\end{align*}
for all $v \in V^d_{\mathbb{K}}(G)$.
We define any composition $\phi:= \phi_{u_n}^{\mu_n} \circ \ldots \circ \phi_{u_1}^{\mu_1}$ to be a \emph{gain equivalence},
and define $\phi(G,p,L) := (\phi_u^\mu (G) , \phi_u^\mu(p,L))$.
If there exists a gain equivalence such that $(G',p',L) = \phi(G,p,L)$,
then we say $(G,p,L)$ and $(G',p',L)$ are \emph{gain-equivalent};
we denote that two $k$-periodic frameworks $(G,p,L)$ and $(G',p',L)$ are gain equivalent by $(G,p,L) \approx (G',p',L)$.
%We further note that $\phi \circ \phi' (G,p,L) = \phi' \circ \phi (G,p,L)$ for any two gain equivalences $\phi$ and $\phi'$,
%hence we shall use the notation $\phi \phi'$ to denote the composition of $\phi$ and $\phi'$.

As each gain equivalence $\phi$ is a linear isomorphism and $\phi(\mathcal{V}^d_{\mathbb{K}}(G,p,L)) = \mathcal{V}_{\mathbb{K}}^d(\phi(G,p,L))$,
then the sets $\mathcal{V}_{\mathbb{K}}(G,p,L)$ and $\mathcal{V}_{\mathbb{K}}(\phi(G,p,L))$ are isomorphic as algebraic sets.
It follows that,
given $(G,p,L) \approx (G',p',L)$,
we have that $(G,p,L)$ is (fixed lattice) rigid if and only if $(G',p',L)$ is (fixed lattice) rigid.

\section{NBAC-colourings and flexibility in the plane}\label{sec:NBAC}

\subsection{NBAC-colourings}

\begin{definition}
	Let $G$ be a $\Gamma$-gain graph with edge colouring $\delta: E(G) \rightarrow \{\text{red}, \text{blue}\}$.
	We define the following:
	\begin{enumerate}[(i)]
		\item $G^\delta_{\text{red}} := (V(G),\{ e \in E(G) : \delta(e) = \text{red} \})$.
		\item A \emph{red component} is a connected component of $G^\delta_{\text{red}}$.
		\item A \emph{red walk} (respectively, \emph{red circuit}) is a walk (respectively, circuit) where every edge is red.
		\item An \emph{almost red circuit} is a circuit with exactly one blue edge.
		\item $G^\delta_{\text{blue}}$, \emph{blue components}, \emph{blue walks},
		\emph{blue circuits}, and \emph{almost blue circuits} are defined analogously. 
		\item We define a component/walk/circuit to be \emph{monochromatic} if it is either red or blue,
		and we define an \emph{almost monochromatic circuit} to be any circuit that is either an almost red or almost blue.
\end{enumerate}
	A colouring $\delta$ is a \emph{NBAC-colouring} (\emph{No Balanced Almost Circuits})
	if it is surjective,
	and there are no balanced almost red circuits and no balanced almost blue circuits;
	see Figure \ref{fig:NBAC} for an example of an NBAC-colouring. 
\end{definition}

\begin{figure}[ht]
	\begin{center}	
		\begin{tikzpicture}
			\node[vertex] (1) at (-1,0) {};
			\node[vertex] (2) at (1,0) {};
			\node[vertex] (3) at (0,2) {};
			\node[vertex] (4) at (0,-1) {};
			
			\path[redge,-latex] (1) edge [bend left] node {} (2);
			\path[bedge] (1) edge [bend right] node {} (2);
			
			\draw[bedge] (1)edge(4);
			\draw[bedge] (2)edge(4);
			
			\draw[redge,-latex] (1)edge(3);
			\draw[redge] (2)edge(3);
			
			\node[font=\scriptsize] at (0,0.55) {$\alpha$};
			\node[font=\scriptsize] at (1.9,0) {$\beta$};
			\node[font=\scriptsize] at (-1,1) {$\gamma$};
				
			\path[bedge,-latex, every loop/.style={looseness=30}] (2) edge [loop right] node { } (2);
	
		\end{tikzpicture}
	\end{center}
	\caption{A surjective colouring $\delta$ of a $\Gamma$-gain graph.
	If $\alpha \notin \langle \beta \rangle$, $\beta \notin \langle \alpha -\gamma \rangle$ and $\gamma \neq 0$,
	then $\delta$ is a NBAC-colouring.}
	\label{fig:NBAC}
\end{figure}

If $\delta$ is a colouring of $G$ and $G' \approx G$,
then by abuse of notation we shall also define $\delta$ to be a colouring for $G'$.
We note that if $\delta$ is a NBAC-colouring of $G$,
then $\delta$ is a NBAC-colouring of $G'$.

\begin{definition}
	Let $G$ be a $\mathbb{Z}^k$-gain graph for some $k \in \{1,2\}$,
	with an NBAC-colouring $\delta$. 
	If either	$G^\delta_{\text{red}}$ is balanced	and $G$ has no almost blue circuits,
	or $G^\delta_{\text{blue}}$ is balanced	and $G$ has no almost red circuits,
	then $\delta$ is a \emph{fixed lattice NBAC-colouring}.
\end{definition}

\begin{definition}
	Let $G$ be a $\mathbb{Z}$-gain graph with an NBAC-colouring $\delta$. 	
	If both $G^\delta_{\text{red}}$ and $G^\delta_{\text{blue}}$ are balanced,
	then $\delta$ is a \emph{flexible 1-lattice NBAC-colouring}.
\end{definition}

\begin{remark}
	We note that if $G$ is a $\mathbb{Z}$-gain graph with NBAC-colouring $\delta$,
	then $\delta$ can be either both a fixed lattice NBAC-colouring and a flexible $1$-lattice NBAC-colouring,
	one or the other, or neither.
	We can even have that $G$ has no NBAC-colouring that is both, but has both fixed lattice and a flexible $1$-lattice NBAC-colourings;
	see Figure \ref{fig:fixedNBAC} for an example.
\end{remark}

\begin{figure}[ht]
	\begin{center}
		\begin{tikzpicture}
			\node[vertex] (1) at (-1,-1) {};
			\node[vertex] (2) at (-1,1) {};
			\node[vertex] (3) at (1,-1) {};
			\node[vertex] (4) at (1,1) {};
			\node[vertex] (5) at (-2,0) {};
			\node[vertex] (6) at (2,0) {};

			\draw[bedge] (1)edge(3);
			\draw[bedge] (2)edge(4);
			\draw[bedge] (5)edge(6);
			
			\draw[redge] (2)edge(5);
			\draw[redge] (1)edge(5);
			\path[redge] (1) edge [bend left] node {} (2);
			\path[redge,-latex] (1) edge [bend right] node {} (2);
			
			\draw[redge] (4)edge(6);
			\draw[redge] (3)edge(6);
			\path[redge,-latex] (3) edge [bend left] node {} (4);
			\path[redge] (3) edge [bend right] node {} (4);
			
			\node[font=\scriptsize] at (-0.5,0.5) {$1$};
			\node[font=\scriptsize] at (0.5,0.5) {$1$};
	
		\end{tikzpicture} \qquad 
		\begin{tikzpicture}
			\node[vertex] (1) at (-1,-1) {};
			\node[vertex] (2) at (-1,1) {};
			\node[vertex] (3) at (1,-1) {};
			\node[vertex] (4) at (1,1) {};
			\node[vertex] (5) at (-2,0) {};
			\node[vertex] (6) at (2,0) {};

			\draw[bedge] (1)edge(3);
			\draw[bedge] (2)edge(4);
			\draw[bedge] (5)edge(6);
			
			\draw[redge] (2)edge(5);
			\draw[redge] (1)edge(5);
			\path[redge] (1) edge [bend left] node {} (2);
			\path[bedge,-latex] (1) edge [bend right] node {} (2);
			
			\draw[redge] (4)edge(6);
			\draw[redge] (3)edge(6);
			\path[bedge,-latex] (3) edge [bend left] node {} (4);
			\path[redge] (3) edge [bend right] node {} (4);
			
			\node[font=\scriptsize] at (-0.5,0.5) {$1$};
			\node[font=\scriptsize] at (0.5,0.5) {$1$};
	
		\end{tikzpicture} \qquad 
		\begin{tikzpicture}
			\node[vertex] (1) at (-1,-1) {};
			\node[vertex] (2) at (-1,1) {};
			\node[vertex] (3) at (1,-1) {};
			\node[vertex] (4) at (1,1) {};
			\node[vertex] (5) at (-2,0) {};
			\node[vertex] (6) at (2,0) {};

			\draw[redge] (1)edge(3);
			\draw[redge] (2)edge(4);
			\draw[redge] (5)edge(6);
			
			\draw[redge] (2)edge(5);
			\draw[redge] (1)edge(5);
			\path[redge] (1) edge [bend left] node {} (2);
			\path[bedge,-latex] (1) edge [bend right] node {} (2);
			
			\draw[redge] (4)edge(6);
			\draw[redge] (3)edge(6);
			\path[bedge,-latex] (3) edge [bend left] node {} (4);
			\path[redge] (3) edge [bend right] node {} (4);
			
			\node[font=\scriptsize] at (-0.5,0.5) {$1$};
			\node[font=\scriptsize] at (0.5,0.5) {$1$};
	
		\end{tikzpicture}
	\end{center}
	\caption{All three possible NBAC-colourings of a $\mathbb{Z}$-gain graph up to switching the colours red and blue.
	The left is a fixed lattice NBAC-colouring but not a flexible $1$-lattice NBAC-colouring,
	while the middle and right are flexible $1$-lattice NBAC-colourings but not fixed lattice NBAC-colourings.}
	\label{fig:fixedNBAC}
\end{figure}

\begin{definition}
	Let $G$ be a $\mathbb{Z}^2$-gain graph with an NBAC-colouring $\delta$.
	We define the following:
	\begin{enumerate}[(i)]
		\item If both $G^\delta_{\text{red}}$ and $G^\delta_{\text{blue}}$ are balanced,
		then $\delta$ is a \emph{type 1 flexible 2-lattice NBAC-colouring}.
		\item If there exists $\alpha,\beta \in \mathbb{Z}^2$ 
		such that 
		\begin{itemize}
			\item either $\alpha,\beta$ are linearly independent or exactly one of $\alpha,\beta$ is equal to $(0,0)$,
			\item $\spann(G^\delta_\text{red})$ is a non-trivial subgroup of $\mathbb{Z}\alpha$, or $\alpha=(0,0)$ and $G^\delta_\text{red}$ is balanced, 
			\item $\spann(G^\delta_\text{blue})$ is a non-trivial subgroup of $\mathbb{Z}\beta$, or $\beta=(0,0)$ and $G^\delta_\text{blue}$ is balanced,
			\item there are no almost red circuits with gain in $\mathbb{Z}\alpha$, and
			\item there are no almost blue circuits with gain in $\mathbb{Z}\beta$,
		\end{itemize}
		then $\delta$ is a \emph{type 2 flexible 2-lattice NBAC-colouring}.
		\item If there exists $\alpha \in \mathbb{Z}^2 \setminus \{(0,0)\}$ such that
		\begin{itemize}
			\item $\spann(G^\delta_\text{red})$ and $\spann(G^\delta_\text{red})$ are non-trivial subgroups of $\mathbb{Z}\alpha$, and
			\item there are no almost monochromatic circuits with gain in $\mathbb{Z}\alpha$, 
		\end{itemize}
		then $\delta$ is a \emph{type 3 flexible 2-lattice NBAC-colouring}.
	\end{enumerate}		 	
\end{definition}

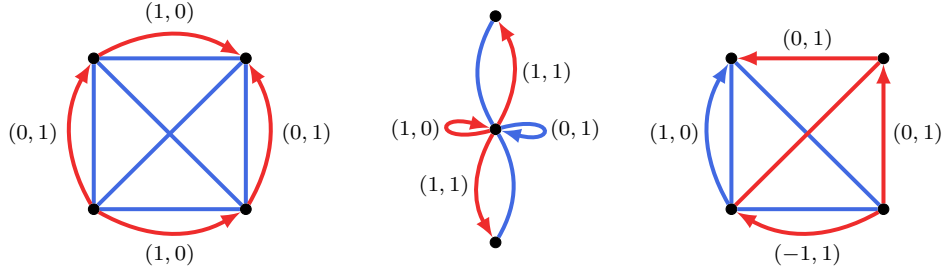
\begin{figure}[ht]
	\begin{center}
		\begin{tikzpicture}
			\node[vertex] (1) at (-1,-1) {};
			\node[vertex] (2) at (1,-1) {};
			\node[vertex] (3) at (1,1) {};
			\node[vertex] (4) at (-1,1) {};

			\draw[bedge] (1)edge(2) (1)edge(3) (1)edge(4) (2)edge(3) (2)edge(4) (3)edge(4);
			\path[redge,-latex] (1) edge [bend right] node {} (2);
			\path[redge,-latex] (2) edge [bend right] node {} (3);
			\path[redge,-latex] (4) edge [bend left] node {} (3);
			\path[redge,-latex] (1) edge [bend left] node {} (4);
			
			\node[font=\scriptsize] at (0,1.6) {$(1,0)$};
			\node[font=\scriptsize] at (0,-1.6) {$(1,0)$};
			\node[font=\scriptsize] at (1.8,0) {$(0,1)$};
			\node[font=\scriptsize] at (-1.8,0) {$(0,1)$};
		\end{tikzpicture} \quad
		\begin{tikzpicture}[scale=1.5]
			\node[vertex] (1) at (0,-1) {};
			\node[vertex] (2) at (0,0) {};
			\node[vertex] (3) at (0,1) {};

			\path[redge,-latex] (2) edge [bend right] node {} (1);
			\path[bedge] (2) edge [bend left] node {} (1);
			\path[redge,-latex] (2) edge [bend right] node {} (3);
			\path[bedge] (2) edge [bend left] node {} (3);

			\path[bedge,-latex, every loop/.style={looseness=30}] (2) edge [loop right] node { } (2);
			\path[redge,-latex, every loop/.style={looseness=30}] (2) edge [loop left] node { } (2);
			
			\node[font=\scriptsize] at (0.45,0.5) {$(1,1)$};
			\node[font=\scriptsize] at (-0.45,-0.5) {$(1,1)$};
			\node[font=\scriptsize] at (0.7,0) {$(0,1)$};
			\node[font=\scriptsize] at (-0.7,0) {$(1,0)$};
			\node[font=\scriptsize] at (0,-1.2) {};
		\end{tikzpicture}\quad
		\begin{tikzpicture}
			\node[vertex] (1) at (-1,-1) {};
			\node[vertex] (2) at (1,-1) {};
			\node[vertex] (3) at (1,1) {};
			\node[vertex] (4) at (-1,1) {};

			\draw[bedge] (2)edge(1);
			\draw[bedge] (1)edge(4);
			\draw[bedge] (2)edge(4);
			
			\draw[redge] (1)edge(3);

			\draw[redge,-latex] (2)edge(3);
			
			\draw[redge,-latex] (3)edge(4);

			\path[redge,-latex] (2) edge [bend left] node {} (1);
			\path[bedge,-latex] (1) edge [bend left] node {} (4);
			
			\node[font=\scriptsize] at (0,-1.6) {$(-1,1)$};
			\node[font=\scriptsize] at (-1.75,0) {$(1,0)$};
			\node[font=\scriptsize] at (0,1.25) {$(0,1)$};
			\node[font=\scriptsize] at (1.45,0) {$(0,1)$};

		\end{tikzpicture}
	\end{center}
	\caption{(Left): A $\mathbb{Z}^2$-gain graph with a type 1 flexible $2$-lattice NBAC-colouring.
	(Middle): A $\mathbb{Z}^2$-gain graph with a type 2 flexible $2$-lattice NBAC-colouring ($\alpha = (1,0)$, $\beta = (0,1)$).
	(Right): A $\mathbb{Z}^2$-gain graph with a type 3 flexible $2$-lattice NBAC-colouring ($\alpha = (1,0)$).}
	\label{fig:2latticeNBAC}
\end{figure}

\begin{remark}
	We note that if $G$ is a $\mathbb{Z}^2$-gain graph with NBAC-colouring $\delta$,
	then the following holds.
	\begin{itemize}
		\item For distinct $i,j \in \{1,2,3\}$,
		$\delta$ cannot be both a type $i$ and type $j$ flexible $2$-lattice NBAC-colouring.
		\item Similarly, $\delta$ cannot be both a fixed lattice NBAC-colouring and type 3 flexible $2$-lattice NBAC-colouring.
		\item The colouring $\delta$ can, however, be both a fixed lattice NBAC-colouring and either a type 1 or 2 flexible $2$-lattice NBAC-colouring; see Figure \ref{fig:type2fixedNBAC} for an example of an NBAC-colouring that is both fixed lattice and type 2.
		\item If $H \subset G$ is not monochromatic and $\delta$ is a type $k$ flexible $2$-lattice NBAC-colouring for some $k \in \{1,2,3\}$,
		then $\delta$ restricted to $H$ is a type $k'$ flexible $2$-lattice NBAC-colouring for some $1 \leq k' \leq k$;
		furthermore, if $k'=1 <k$ then $\delta$ restricted to $H$ will also be a fixed-lattice NBAC-colouring.
	\end{itemize}
\end{remark}

\begin{figure}[ht]
	\begin{center}	
		\begin{tikzpicture}
			\node[vertex] (1) at (-1,-1) {};
			\node[vertex] (2) at (1,-1) {};
			\node[vertex] (3) at (1,1) {};
			\node[vertex] (4) at (-1,1) {};

			\draw[redge] (1)edge(4);
			\draw[redge,-latex] (1)edge(2);
			\draw[redge] (2)edge(4);
			\draw[bedge,-latex] (3)edge(4);
			\draw[bedge] (3)edge(1);
			\draw[bedge,-latex] (2)edge(3);
			
			\node[font=\scriptsize] at (0,-1.3) {$(1,0)$};
			\node[font=\scriptsize] at (0,1.3) {$(0,1)$};
			\node[font=\scriptsize] at (1.4,0) {$(0,1)$};
			\node[font=\scriptsize] at (1.4,1.3) {$(1,0)$};
				
			\path[redge,-latex, every loop/.style={looseness=30}] (3) edge [loop right] node { } (3);
	
		\end{tikzpicture}
	\end{center}
	\caption{A $\mathbb{Z}^2$-gain graph with a colouring that is both a fixed lattice NBAC-colouring and a type 2 flexible $2$-lattice NBAC-colouring ($\alpha= (1,0)$, $\beta= (0,0)$).}
	\label{fig:type2fixedNBAC}
\end{figure}
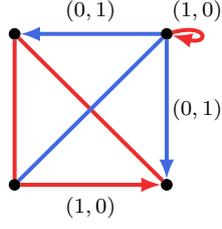

\subsection{k-periodic frameworks in the plane}

Let $G$ be a $\mathbb{Z}^k$-gain graph for $k \in \{1,2\}$,
with placement $p : V(G) \rightarrow \mathbb{R}^2$ and lattice $L \in M_{2 \times k}(\mathbb{R})$;
if $k=1$ we shall define $L_1:= L.1$ and if $k=2$ we shall define $L_1 := L.(1,0)$ and $L_2 := L.(0,1)$.
%Since we can rotate and translate any $k$-periodic framework without effecting any of its flexes,
%we may assume that for some edge $\tilde{e} := (\tilde{v},\tilde{w}, \tilde{\gamma})$ we have
%\begin{align*}
	%p(\tilde{v}) := (0,0), \qquad p(\tilde{w}) +L.\tilde{\gamma} := (c,0)
%\end{align*}
%for some $c \in \mathbb{R}$.
For each $e = (v,w,\gamma)$ with $\gamma := (\gamma_j)_{j=1}^k$,
we define
\begin{align*}
	\lambda(e) := \left\|p(v)- p(w) - \sum_{j=1}^k \gamma_j L_j \right\|
\end{align*}
(we note that this is well-defined as $(G,p,L)$ is a $k$-periodic framework in $\mathbb{R}^2$).
We further define for each $1 \leq j,l \leq k$,
\begin{align*}
	\lambda(j,l) := L_j . L_l.
\end{align*}

We shall consider each point $(q,M) \in \mathcal{V}_{\mathbb{C}}^2(G)$ to be point
\begin{align*}
	\left((x_v, y_v)_{v \in V(G)} ,(x_j,y_j)_{j=1}^k \right),
\end{align*}
where $x_v,y_v,x_j,y_j \in \mathbb{C}$;
the points $(x_v,y_v)$ will correspond to the coordinates of $q_v$,
and the points $(x_j,y_j)$ will correspond to the coordinates of $L_j$.
To help simplify things later on, we will first wish to quotient out $\mathcal{V}_{\mathbb{C}}^2(G)$ by the orientation-preserving isometries by fixing an edge $\tilde{e}= (\tilde{v},\tilde{w}, \tilde{\gamma})$.
To do so,
we define the algebraic set $\Vset \subset \mathcal{V}_{\mathbb{C}}^2(G)$ of all points where
\begin{align*}
	x_{\tilde{v}}= y_{\tilde{v}}= 0, \qquad y_{\tilde{w}} + \sum_{j=1}^k \tilde{\gamma}_j y_j = 0,
\end{align*}
and for all $e= (v,w,\gamma) \in E(G)$,
\begin{align}\label{edgecond2}
	\left(x_v- x_w - \sum_{j=1}^k \gamma_j x_j \right)^2 + \left(y_v- y_w - \sum_{j=1}^k \gamma_j y_j \right)^2 = \lambda(e)^2.
\end{align}
We further define $\Vsetcirc$ to be the algebraic subset of $\Vset$ where
\begin{align*}
	x_j x_l + y _j y_l = \lambda(j,l)^2,
\end{align*}
for each $1 \leq j,l \leq k$.

We note that the placement-lattice $(p,L)$ may not be contained in $\Vset$.
However,
the unique $k$-periodic framework obtained by translating and rotating $(G,p,L)$ so that $p_{\tilde{v}}$ lies on the origin and $p_{\tilde{w}}+L.\tilde{\gamma}$ lies on the $x$-axis will be contained in $\Vset$.
Hence, the set $\mathcal{V}_{\mathbb{C}} (G,p,L)$ is homeomorphic to
\begin{align*}
	\Vset \times SO(2, \mathbb{C}) \times \mathbb{C}^2,
\end{align*}
as $\Vset$ is the set of equivalent frameworks to $(G,p,L)$ in $\mathbb{C}^2$ where the edge $\tilde{e}$ is fixed to lie on the $x$-axis.
Similarly, $\mathcal{V}^f_{\mathbb{C}}(G,p,L)$ is homeomorphic to
\begin{align*}
	\Vsetcirc \times SO(2, \mathbb{C}) \times \mathbb{C}^2.
\end{align*}
It follows that, if we require it, we may assume $(p,L) \in \Vset$.

Given an algebraic curve $\mathcal{C} \subset \Vset$ and any $v,w \in V(G)$, $\gamma \in \mathbb{Z}^k$, we define the maps 
\begin{align*}
	W^\gamma_{v,w,}|_\mathcal{C},Z^\gamma_{v,w}|_\mathcal{C} : \mathcal{C} \rightarrow \mathbb{C}
\end{align*}
by the polynomials
\begin{eqnarray*}
	W^\gamma_{v,w} |_\mathcal{C}
	&:=& \left( x_v - x_w - \sum_{j=1}^k \gamma_j x_j \right) + i \left(y_v - y_w - \sum_{j=1}^k \gamma_j y_j \right) \\
	Z^\gamma_{v,w} |_\mathcal{C}
	&:=& \left( x_v - x_w - \sum_{j=1}^k \gamma_j x_j \right) - i \left(y_v- y_w - \sum_{j=1}^k \gamma_j y_j \right).
\end{eqnarray*}
We further define the maps $W_j|_{\mathcal{C}},Z_j|_{\mathcal{C}} :\mathcal{C} \rightarrow \mathbb{C}$ for $1 \leq j \leq k$ as the polynomials,
\begin{eqnarray*}
	W_j |_\mathcal{C} &:=& x_j + i y_j \\
	Z_j |_\mathcal{C} &:=& x_j - i y_j.
\end{eqnarray*}
For the case of $k=2$,
we shall define for each $\gamma := (a,b) \in \mathbb{Z}^2$ the maps
\begin{align*}
	\gamma W|_{\mathcal{C}} := a W_1|_{\mathcal{C}} + b W_2|_{\mathcal{C}}, \qquad \gamma Z|_{\mathcal{C}} := a Z_1|_{\mathcal{C}} + b Z_2|_{\mathcal{C}}.
\end{align*}
When there is no ambiguity regarding which algebraic curve we are observing,
we shall for brevity drop the notation ``$~|_{\mathcal{C}}$'';
for example, $W^\gamma_{v,w} |_\mathcal{C}$ shall be shortened to $W^\gamma_{v,w}$.
%
%
%we shall shorten $W^\gamma_{v,w} |_\mathcal{C}$ (respectively, $W_j|_\mathcal{C}$, $\gamma W|_\mathcal{C}$) to $W^\gamma_{v,w}$ (respectively, $W_j$, $\gamma W$) and $Z^\gamma_{v,w} |_\mathcal{C}$ (respectively, $Z_j|_\mathcal{C}$, $\gamma Z |_\mathcal{C}$) to $Z^\gamma_{v,w}$ (respectively, $Z_j$, $\gamma Z$).

We first observe that $W^{-\gamma}_{w,v} = - W^\gamma_{v,w}$ and $Z^{-\gamma}_{w,v} = - Z^\gamma_{v,w}$.
Furthermore,
we note that if $e = (v,w,\gamma) \in E(G)$,
\begin{align*}
	W^\gamma_{v,w} Z^\gamma_{v,w} = \lambda(e)^2,
\end{align*}
 and if $\mathcal{C}  \subset \Vsetcirc$ then 
\begin{align*}
	W_j.Z_j = \lambda(j,j)^2 , \qquad W_j.Z_l + W_l.Z_j = 2 \lambda(j,l)^2,
\end{align*}
for all $1 \leq j,l \leq k$.

\subsection{Active NBAC-colourings}

Active NAC-colourings for finite simple graphs were first introduced in \cite{jan1}.
We shall now give an analogue of them for $\mathbb{Z}^k$-gain graphs.

\begin{definition}
	Let $(G,p,L)$ be $k$-periodic framework in $\mathbb{R}^2$,
	$\mathcal{C} \subset \Vset$ be an algebraic curve
	and $\delta$ a NBAC-colouring of $G$.
	We define $\delta$ to be an \emph{active NBAC-colouring of $\mathcal{C}$} 
	if there exists a valuation $\nu$ of $\mathbb{C}(\mathcal{C})$ 
	and $\alpha \in \mathbb{R}$ such that for each $e \in E(G)$,
	\begin{align*}
		\delta(e) =
		\begin{cases}
			\text{red}, & \text{ if } \nu(W^\gamma_{v,w}) > \alpha, \\
			\text{blue}, & \text{ if } \nu(W^\gamma_{v,w}) \leq \alpha;		
		\end{cases}
	\end{align*}
	if this is the case, we shall say that $\delta$ is the NBAC-colouring \emph{generated by $\nu$ and $\alpha$}.
	For a $k$-periodic framework $(G,p,L)$ in $\mathbb{R}^2$,
	we define $\delta$ to be an \emph{active NBAC-colouring of $(G,p,L)$} if it is an active NBAC-colouring of 
	an algebraic curve $\mathcal{C} \subset \Vset$.
	We define $\delta$ to be an \emph{active NBAC-colouring of $G$} if it is an active NBAC-colouring of a full $k$-periodic framework $(G,p,L)$ in $\mathbb{R}^2$.
\end{definition}

\begin{remark}\label{rem:active2}
	If $\delta$ is an active NBAC-colouring of an algebraic curve $\mathcal{C} \subset \Vset$ 
	and $\delta'$ is the NBAC-colouring with $\delta'(e) \neq \delta(e)$ for all $e \in E(G)$,
	then $\delta'$ is also an active NBAC-colouring of $\mathcal{C}$;
	this can be shown in a similar way to the proof of \cite[Lemma 1.13]{jan1}.
\end{remark}

\begin{lemma}\label{lem:active1}
	Let $(G,p,L)$ be a $k$-periodic framework in $\mathbb{R}^2$,
	and $e_1, e_2 \in E(G)$, with $e_1 = (v_1,w_1,\gamma_1)$
	and $e_2 = (v_2,w_2,\gamma_2)$.
	Then the map 
	\begin{align*}
		f_{e_1,e_2} : \mathcal{V}_{e_1}(G,p,L) \rightarrow \mathcal{V}_{e_2}(G,p,L), ~ 
		(q,M) \mapsto \left( (R_{e_2}.(q(v) - q(v_2)))_{v\in V(G)}, R_{e_2}M \right)
	\end{align*}
	is biregular,
	where
	\begin{align*}
		R_{e_2}:= 
		\frac{1}{\lambda(e_2)}
		\begin{bmatrix}
			x_{w_2} - x_{v_2} & y_{w_2} - y_{v_2} \\
			-(y_{w_2} - y_{v_2}) & x_{w_2} - x_{v_2} 
		\end{bmatrix}.
	\end{align*}
	Furthermore,
	for any algebraic curve $\mathcal{C} \subset \mathcal{V}_{e_1}(G,p,L)$
	and any $v,w \in V(G)$, $\gamma \in \mathbb{Z}^k$,
	we have that $\mathcal{C}' := f_{e_1,e_2}(\mathcal{C})$ is an algebraic curve and
	\begin{align}\label{eqn:WZ}
		W_{v,w}^{\gamma}|_{\mathcal{C}'} \circ f_{e_1,e_2} = \frac{1}{\lambda(e_2)}
		W_{v,w}^{\gamma}|_{\mathcal{C}} Z_{v_2,w_2}^{\gamma_2}|_{\mathcal{C}}, \quad
		Z_{v,w}^{\gamma}|_{\mathcal{C}'} \circ f_{e_1,e_2} = \frac{1}{\lambda(e_2)}
		Z_{v,w}^{\gamma}|_{\mathcal{C}} W_{v_2,w_2}^{\gamma_2}|_{\mathcal{C}}.
	\end{align}
\end{lemma}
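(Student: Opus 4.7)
The plan is to interpret $f_{e_1,e_2}$ as a translation followed by an orthogonal transformation $R^q_{e_2} \in SO(2,\mathbb{C})$ that aligns the edge $e_2$ with the $x$-axis, and then to derive (\ref{eqn:WZ}) via the complex-coordinate identification implicit in the definitions of $W$ and $Z$.

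First I would verify that $R^q_{e_2} \in SO(2,\mathbb{C})$ with respect to the bilinear form $\|\cdot\|^2$: this amounts to checking that the two rows are mutually orthogonal and that each has squared norm equal to $1$, both following directly from the matrix entries together with the edge-length equation (\ref{edgecond2}) applied to $e_2$. Since $f_{e_1,e_2}$ is then the composition of a translation by $-q(v_2)$ with the action of $R^q_{e_2}$, both of which are isometries of $(\mathbb{C}^2, \|\cdot\|^2)$, it preserves every edge-length equation (\ref{edgecond2}) and hence $f_{e_1,e_2}(q,M) \in \mathcal{V}_{\mathbb{C}}(G,p,L)$. The anchoring conditions for $e_2$ hold by construction: the translation zeros the coordinates of $v_2$, and $R^q_{e_2}$ was designed so that the direction along $e_2$ lands on the $x$-axis. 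Thus $f_{e_1,e_2}(q,M) \in \mathcal{V}_{e_2}(G,p,L)$.

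Next I would establish biregularity. The map is visibly polynomial in the coordinates of $(q,M)$, since the denominator $\lambda(e_2)$ is a nonzero real constant, hence it is a morphism. A two-sided inverse is obtained by the analogous construction $f_{e_2,e_1}$ in the target space: the composition $f_{e_2,e_1}\circ f_{e_1,e_2}$ combines a translation, a rotation, the inverse translation, and the inverse rotation (each interpreted through its own anchoring conditions), which telescopes to the identity on $\Vset$. Since both maps are polynomial, $f_{e_1,e_2}$ is biregular, and hence the image $\mathcal{C}' = f_{e_1,e_2}(\mathcal{C})$ of any algebraic curve $\mathcal{C}\subset\Vset$ is an algebraic curve.

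Finally, to obtain (\ref{eqn:WZ}) I would compute directly in complex coordinates. Identifying $\mathbb{C}^2$ with $\mathbb{C}$ via $(u,v)\mapsto u+iv$, the rotation $R^q_{e_2}$ corresponds to multiplication by the complex scalar $\lambda(e_2)^{-1}\,Z^{\gamma_2}_{v_2,w_2}|_{\mathcal{C}}$ on the $W$-side and by its complex conjugate $\lambda(e_2)^{-1}\,W^{\gamma_2}_{v_2,w_2}|_{\mathcal{C}}$ on the $Z$-side. Translations do not affect the differences $q(v)-q(w)-M.\gamma$ that $W^\gamma_{v,w}$ and $Z^\gamma_{v,w}$ measure, so only the rotation contributes, yielding exactly the product formulas in (\ref{eqn:WZ}). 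The main obstacle is purely bookkeeping: one has to track the sign conventions for the oriented gain edges and confirm that the complex multipliers associated with $R^q_{e_2}$ are indeed $Z^{\gamma_2}_{v_2,w_2}$ and $W^{\gamma_2}_{v_2,w_2}$; no deeper idea is required.
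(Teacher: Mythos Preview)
Your proposal is correct and follows essentially the same route as the paper: verify that the affine map $z\mapsto R^q_{e_2}.(z-q(v_2))$ is an isometry of $(\mathbb{C}^2,\|\cdot\|^2)$ so that $f_{e_1,e_2}$ lands in $\mathcal{V}_{e_2}(G,p,L)$, exhibit $f_{e_2,e_1}$ as a regular inverse to obtain biregularity, and then check (\ref{eqn:WZ}) by direct computation. Your complex-coordinate description of the rotation as multiplication by $\lambda(e_2)^{-1}Z^{\gamma_2}_{v_2,w_2}$ on the $W$-side is exactly the ``direct computation'' the paper leaves implicit, so the only difference is that you have spelled out more of the bookkeeping.
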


\begin{proof}
	We note that the transform $z \mapsto R_{e_2}.(z - q(v_2))$ will preserve distance under $\| \cdot\|^2$ in $\mathbb{C}^2$.
	It follows that $(G,f_{e_1,e_2}(q,M))$ will be an equivalent framework to $(G,q,M)$,
	except now the edge $e_2$ (not $e_1$) has been fixed,
	with $v_2$ at the origin and $w_2$ on the $y$-axis,
	Hence $f_{e_1,e_2}(q,M) \in \mathcal{V}_{e_2}(G,p,L)$ for all $(q,M) \in \mathcal{V}_{e_1}(G,p,L)$,
	i.e.~the map $f_{e_1,e_2}$ is well-defined.
	It is clear that the map $f_{e_1,e_2}$ is regular.
	To see that $f_{e_1,e_2}$ is biregular,
	we note that the map $f_{e_2,e_1}$ is the inverse of $f_{e_1,e_2}$.
	
	Since $f_{e_1,e_2}$ is biregular, $\mathcal{C}'$ will be an algebraic curve.
	Equation (\ref{eqn:WZ}) now holds by direct computation.
\end{proof}

\begin{proposition}\label{prop:active}
	Let $(G,p,L)$ be a $k$-periodic framework in $\mathbb{R}^2$,
	$e_1, e_2 \in E(G)$ with $e_1 = (v_1,w_1,\gamma_1)$
	and $e_2 = (v_2,w_2,\gamma_2)$, and $\mathcal{C} \subset \mathcal{V}_{e_1}(G,p,L)$.
	If $\delta$ is an active NBAC-colouring of $\mathcal{C}$ then there exists an algebraic curve 
	$\mathcal{C}' \subset \mathcal{V}_{e_2}(G,p,L)$ such that 
	$\delta$ is an active NBAC-colouring of $\mathcal{C}'$.
\end{proposition}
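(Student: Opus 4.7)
The plan is to take $\mathcal{C}' := f_{e_1,e_2}(\mathcal{C})$, where $f_{e_1,e_2}$ is the biregular map from Lemma \ref{lem:active1}. Since $f_{e_1,e_2}$ is biregular, $\mathcal{C}'$ is an algebraic curve in $\mathcal{V}_{e_2}(G,p,L)$, and the pullback $f_{e_1,e_2}^{*} : \mathbb{C}(\mathcal{C}') \to \mathbb{C}(\mathcal{C})$ defined by $g \mapsto g \circ f_{e_1,e_2}$ is an isomorphism of function fields. Given that $\delta$ is generated by a valuation $\nu$ on $\mathbb{C}(\mathcal{C})$ and some $\alpha \in \mathbb{R}$, I would define the valuation $\nu'$ on $\mathbb{C}(\mathcal{C}')$ by $\nu'(g) := \nu(f_{e_1,e_2}^{*}(g))$; this is a valuation because the axioms transport through a field isomorphism, and it is constant on $\mathbb{C} \setminus \{0\}$ for the same reason.

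Next I would use the explicit formula (\ref{eqn:WZ}) from Lemma \ref{lem:active1}, together with the fact that $\lambda(e_2) \in \mathbb{R} \setminus \{0\}$ (so it has valuation $0$), to compute
\begin{align*}
\nu'\bigl(W^{\gamma}_{v,w}|_{\mathcal{C}'}\bigr)
= \nu\bigl(W^{\gamma}_{v,w}|_{\mathcal{C}}\bigr) + \nu\bigl(Z^{\gamma_2}_{v_2,w_2}|_{\mathcal{C}}\bigr).
\end{align*}
Since $e_2 \in E(G)$ gives $W^{\gamma_2}_{v_2,w_2} Z^{\gamma_2}_{v_2,w_2} = \lambda(e_2)^2 \in \mathbb{C} \setminus \{0\}$, valuation axiom (ii) yields $\nu\bigl(Z^{\gamma_2}_{v_2,w_2}|_{\mathcal{C}}\bigr) = -\nu\bigl(W^{\gamma_2}_{v_2,w_2}|_{\mathcal{C}}\bigr)$, which is a well-defined integer.

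Set $\alpha' := \alpha - \nu\bigl(W^{\gamma_2}_{v_2,w_2}|_{\mathcal{C}}\bigr) \in \mathbb{R}$. Then for each edge $e = (v,w,\gamma) \in E(G)$,
\begin{align*}
\nu'\bigl(W^{\gamma}_{v,w}|_{\mathcal{C}'}\bigr) > \alpha'
\iff \nu\bigl(W^{\gamma}_{v,w}|_{\mathcal{C}}\bigr) > \alpha,
\end{align*}
so the NBAC-colouring generated by $(\nu', \alpha')$ on $\mathcal{C}'$ coincides with $\delta$. Hence $\delta$ is an active NBAC-colouring of $\mathcal{C}'$.

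There is no serious obstacle here, since Lemma \ref{lem:active1} has already done the geometric work of showing that $f_{e_1,e_2}$ is biregular and producing the transformation formula (\ref{eqn:WZ}); the only thing to watch is bookkeeping, namely ensuring that the shift $\alpha'$ is a real number (which follows from $\nu$ taking integer values on nonzero elements) and that $\lambda(e_2)$, being a nonzero real constant, contributes trivially to the valuation. This part is purely formal manipulation of the valuation axioms and the identity $W \cdot Z = \lambda^2$.
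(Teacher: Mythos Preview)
Your proposal is correct and follows essentially the same approach as the paper: both define $\mathcal{C}' := f_{e_1,e_2}(\mathcal{C})$, transport the valuation via $\nu'(g) := \nu(g \circ f_{e_1,e_2})$, use the formula (\ref{eqn:WZ}) to compute $\nu'(W^{\gamma}_{v,w}|_{\mathcal{C}'}) = \nu(W^{\gamma}_{v,w}|_{\mathcal{C}}) + \nu(Z^{\gamma_2}_{v_2,w_2}|_{\mathcal{C}})$, and shift the threshold accordingly. Your $\alpha' = \alpha - \nu(W^{\gamma_2}_{v_2,w_2}|_{\mathcal{C}})$ coincides with the paper's $\alpha' = \alpha + \nu(Z^{\gamma_2}_{v_2,w_2}|_{\mathcal{C}})$ via the identity $\nu(W) + \nu(Z) = 0$ that you noted.
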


\begin{proof}
	Let $\mathcal{C}' := f_{e_1,e_2}(\mathcal{C})$,
	where $f_{e_1,e_2}$ is the map defined in Lemma \ref{lem:active1}.
	Let $\nu$ be the valuation of $\mathbb{C}(\mathcal{C})$ and $\alpha \in \mathbb{R}$ be chosen so that they generate $\delta$.
	Define $\nu'$ to be the valuation of $\mathbb{C}(\mathcal{C}')$ where $\nu'(f) := \nu(f \circ f_{e_1,e_2})$ 
	for each $f \in \mathbb{C}(\mathcal{C}')$.
	By Lemma \ref{lem:active1},
	\begin{align*}
		\nu'\left( W_{v,w}^{\gamma}|_{\mathcal{C}'} \right) = \nu \left(W_{v,w}^{\gamma}|_{\mathcal{C}'} \circ f_{e_1,e_2} \right) =
		\nu \left( \frac{1}{\lambda(e_2)}
		W_{v,w}^{\gamma}|_{\mathcal{C}} Z_{v_2,w_2}^{\gamma_2}|_{\mathcal{C}} \right) =
		\nu \left( W_{v,w}^{\gamma}|_{\mathcal{C}} \right) + \nu \left( Z_{v_2,w_2}^{\gamma_2}|_{\mathcal{C}} \right).
	\end{align*}
	If we define $\alpha' := \alpha + \nu \left( Z_{v_2,w_2}^{\gamma_2}|_{\mathcal{C}} \right)$,
	then $\nu'$ and $\alpha'$ will generate $\delta$.
\end{proof}

\begin{lemma}\label{lem:activegain}
	Let $(G,p,L)$ and $(G',p',L)$ be gain equivalent frameworks with gain equivalence 
	$\phi : \mathcal{V}^d_{\mathbb{K}}(G) \rightarrow \mathcal{V}^d_{\mathbb{K}}(G')$.
	If $\tilde{e} \in E(G)$ and $\tilde{e}' := \phi(\tilde{e})$,
	then $\phi$ is a biregular map with $\phi(\Vset) = \mathcal{V}_{\tilde{e}'}(G',p',L)$.
	Furthermore,
	for any algebraic curve $\mathcal{C} \subset \mathcal{V}_{e_1}(G,p,L)$
	and any $v,w \in V(G)$, $\gamma \in \mathbb{Z}^k$,
	we have that $\mathcal{C}' := \phi(\mathcal{C})$ is an algebraic curve and
	\begin{align}\label{eqn:WZgain}
		W_{v,w}^{\gamma}|_{\mathcal{C}'} \circ \phi = 	W_{v,w}^{\gamma}|_{\mathcal{C}}, \quad
		Z_{v,w}^{\gamma}|_{\mathcal{C}'} \circ \phi = Z_{v,w}^{\gamma}|_{\mathcal{C}}.
	\end{align}
\end{lemma}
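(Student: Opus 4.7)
The plan is to reduce everything to the case of a single elementary switching $\phi = \phi_u^\mu$, because any gain equivalence is a finite composition of such operations and every assertion in the statement (biregularity, the set equality, and the polynomial identities) is preserved under composition. I work throughout in the shared ambient coordinates $(x_{v'}, y_{v'})_{v'\in V(G)} \times (x_j, y_j)_{j=1}^k$ on $\mathcal{V}^d_\mathbb{K}(G) = \mathcal{V}^d_\mathbb{K}(G')$. In these coordinates $\phi_u^\mu$ fixes every coordinate except the pair $(x_u, y_u)$, which is translated by the linear form associated to $L.\mu$ in the lattice variables; this is an affine-linear automorphism of the ambient space and has $\phi_u^{-\mu}$ as an explicit two-sided inverse, so $\phi$ is biregular.

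Next, to identify $\phi(\Vset)$ with $\mathcal{V}_{\tilde{e}'}(G', p', L)$, where $\tilde{e}' := \phi(\tilde{e})$, I would compare defining equations under the edge bijection $e \leftrightarrow \phi_u^\mu(e)$ between $E(G)$ and $E(G')$. The key algebraic identity is that for $(u, w, \gamma) \in E(G)$ switched to $(u, w, \gamma + \mu) \in E(G')$, the quantity $p'(u) - p'(w) - L.(\gamma + \mu)$ equals $p(u) - p(w) - L.\gamma$, with an analogous identity when $w = u$. These are precisely the computations used to see that gain equivalence preserves the relation $\sim$ on frameworks. The same rewriting translates the normalization equations defining $\Vset$ into those defining $\mathcal{V}_{\tilde{e}'}(G', p', L)$, after a brief case analysis depending on whether $\tilde{v}$ or $\tilde{w}$ coincides with $u$.

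For the identities in \eqref{eqn:WZgain}, this is a direct polynomial computation. When $u \notin \{v, w\}$, every variable appearing in $W_{v,w}^\gamma$ and $Z_{v,w}^\gamma$ is fixed by $\phi_u^\mu$, so the claimed equalities are tautological. When $u \in \{v, w\}$, the translation of the coordinate at $u$ by the lattice form $L.\mu$ cancels against the appropriate lattice term in the defining formula, under the edge correspondence established in the previous step, and the equalities again hold. The $Z$-case is obtained from the $W$-case by switching the sign of the imaginary part.

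The main obstacle: the argument is fundamentally a verification, parallel in structure to Lemma \ref{lem:active1}, and does not involve any conceptually new ingredient. The only technical care lies in the case analysis of the second and third steps when $u$ happens to be an endpoint of $\tilde{e}$ or of the tracked edge $(v, w, \gamma)$; keeping the edge correspondence $e \leftrightarrow \phi_u^\mu(e)$ consistent throughout is essential to ensure the defining equations match up cleanly and the polynomial identities reduce to a tautology.
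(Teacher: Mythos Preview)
Your proposal is correct and follows essentially the same approach as the paper. The paper's proof is extremely terse---it simply notes that $\phi$ is the restriction of an invertible linear map (hence biregular, hence $\phi(\mathcal{C})$ is an algebraic curve) and asserts that \eqref{eqn:WZgain} follows by direct computation---whereas you have spelled out that computation via the reduction to a single switching $\phi_u^\mu$ and the attendant case analysis; one small remark is that in the ambient coordinates the map is actually linear (not merely affine-linear), since the shift at $u$ is a linear combination of the lattice coordinates.
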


\begin{proof}
	As $\phi$ is a bijective map that is the restriction of an invertible linear map,
	it is a biregular map;
	hence, $\phi(\mathcal{C})$ is an algebraic curve.
	Equation (\ref{eqn:WZgain}) now follows by direct computation.	
\end{proof}

\begin{proposition}\label{prop:activegain}
	Let $G$ and $G'$ be gain equivalent $\mathbb{Z}^k$-gain graphs.
	Then $\delta$ is an active NBAC-colouring of $G$ if and only if $\delta$ is an active NBAC-colouring of $G'$.
\end{proposition}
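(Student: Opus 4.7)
The plan is to transfer an active NBAC certificate from $G$ to $G'$ by pulling back valuations along the biregular map supplied by Lemma~\ref{lem:activegain}. Since gain equivalence is symmetric and $\phi^{-1}$ is again a gain equivalence whenever $\phi$ is, it suffices to prove the forward implication.

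Assume $\delta$ is an active NBAC-colouring of $G$, witnessed by a real $k$-periodic framework $(G,p,L)$, an edge $\tilde{e}\in E(G)$, an algebraic curve $\mathcal{C}\subset \Vset$, a valuation $\nu$ of $\mathbb{C}(\mathcal{C})$, and a real number $\alpha$. Writing $\phi$ as a composition $\phi_{u_n}^{\mu_n}\circ\cdots\circ\phi_{u_1}^{\mu_1}$ of switching operations, I first observe that $(G',p',L) := \phi(G,p,L)$ is again a real framework, because each switching operation simply translates a single vertex's placement by the real vector $L.\mu_i$ and leaves $L$ fixed. Setting $\tilde{e}' := \phi(\tilde{e})$ and $\mathcal{C}' := \phi(\mathcal{C})$, Lemma~\ref{lem:activegain} gives that $\phi$ restricts to a biregular isomorphism between $\Vset$ and $\mathcal{V}_{\tilde{e}'}(G',p',L)$ and that $\mathcal{C}'$ is an algebraic curve.

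Next, I pull $\nu$ back to a valuation $\nu'$ on $\mathbb{C}(\mathcal{C}')$ by setting $\nu'(f) := \nu(f\circ\phi)$. Biregularity of $\phi$ ensures that the pullback $f\mapsto f\circ\phi$ is a field isomorphism $\mathbb{C}(\mathcal{C}')\to\mathbb{C}(\mathcal{C})$, so the four valuation axioms transfer immediately from $\nu$ to $\nu'$. For each edge $e\in E(G)$ with image $e' := \phi(e) \in E(G')$, the $W$-identity in Lemma~\ref{lem:activegain} then yields $\nu'(W_{e'}|_{\mathcal{C}'})=\nu(W_{e}|_{\mathcal{C}})$. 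Consequently the pair $(\nu',\alpha)$ generates on $G'$ the colouring that assigns colour $\delta(e)$ to $\phi(e)$, which is precisely $\delta$ under the paper's convention for gain-equivalent graphs, so $\delta$ is active on $G'$.

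I expect no significant obstacle here: the whole argument is a routine pullback-of-valuations once Lemma~\ref{lem:activegain} has supplied the biregular identification of the curves together with the identity linking the $W$-functions. The only two points needing brief verification are that the transformed framework $(G',p',L)$ remains defined over $\mathbb{R}$, so that the notion of active NBAC-colouring applies, and that the edges of $G'$ inherit colours consistently from their preimages in $G$; both are immediate from the definition of a switching operation.
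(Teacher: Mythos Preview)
Your proof is correct and follows essentially the same route as the paper: transport the framework, the curve, and the valuation along the gain equivalence $\phi$ via Lemma~\ref{lem:activegain}, then use the $W$-identity to see that $(\nu',\alpha)$ generates the same colouring on $G'$. Your added remarks that $(G',p',L)$ stays real and that the induced colouring on $\phi(e)$ agrees with $\delta(e)$ under the paper's convention are minor clarifications that the paper leaves implicit.
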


\begin{proof}
	Let $\delta$ be an active NBAC-colouring of $\mathcal{C} \subset \Vset$ generated by the valuation 
	$\nu$ of $\mathbb{C}(\mathcal{C})$ and $\alpha \in \mathbb{R}$.
	Let $\phi$ be the gain equivalence from $G$ to $G'$.
	We define the gain equivalent framework $(G',p',L) := \phi (G,p,L)$,
	the algebraic curve $\mathcal{C}' := \phi(\mathcal{C})$ (Lemma \ref{lem:activegain}),
	and the valuation $\nu'$ of $\mathbb{C}(\mathcal{C}')$ where $\nu'(f) := \nu(f \circ \phi)$ 
	for each $f \in \mathbb{C}(\mathcal{C}')$.	
	By Lemma \ref{lem:activegain},
	\begin{align*}
		\nu' \left( W_{v,w}^{\gamma}|_{\mathcal{C}'} \right) = 
		\nu \left( W_{v,w}^{\gamma}|_{\mathcal{C}'}  \circ \phi\right) = \nu \left( W_{v,w}^{\gamma}|_{\mathcal{C}} \right),
	\end{align*}
	thus $\nu'$ and $\alpha$ generate $\delta$ for $G'$.
\end{proof}

\subsection{Key tools}

We are now ready to outline the key tools that shall help us throughout the rest of the paper.

\begin{lemma}\label{lem:existalgcurv}
	Let $(G,p,L)$ be a $k$-periodic framework in $\mathbb{R}^2$.
	Then the following holds:
	\begin{enumerate}[(i)]
		\item \label{lem:existalgcurvitem1} If $(G,p,L)$ is flexible,
		there exists an algebraic curve $\mathcal{C} \subset \Vset$.
		\item \label{lem:existalgcurvitem2} If $(G,p,L)$ is fixed lattice flexible,
		there exists an algebraic curve $\mathcal{C} \subset \Vsetcirc$.
	\end{enumerate}
\end{lemma}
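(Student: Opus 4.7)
The plan is to convert the non-trivial real flex into a one-dimensional complex subvariety of the relevant ambient algebraic set. For part (\ref{lem:existalgcurvitem1}), I would start with a non-trivial flex $(p_t, L_t)_{t \in [0,1]}$ of $(G,p,L)$. Using the identification
\begin{align*}
\mathcal{V}_\mathbb{C}(G,p,L) \cong \Vset \times SO(2,\mathbb{C}) \times \mathbb{C}^2,
\end{align*}
I would, for each $t$, pick a rigid motion $\phi_t$ that translates $p_t(\tilde{v})$ to the origin and then rotates so that $p_t(\tilde{w}) + L_t.\tilde{\gamma}$ lies on the non-negative real $x$-axis. The defining condition of a $k$-periodic framework forces $p_t(\tilde{v}) \neq p_t(\tilde{w}) + L_t.\tilde{\gamma}$ for every $t$, so the rotation depends continuously on $t$, and $(q_t, M_t) := \phi_t(p_t, L_t)$ is a continuous path in the real part of $\Vset$. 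By the identification above, non-triviality of $(p_t, L_t)$ is equivalent to $(q_t, M_t)$ being non-constant.

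Next I would take the Zariski closure $X$ of $\{(q_t, M_t) : t \in [0,1]\}$ inside $\mathcal{V}_\mathbb{C}^2(G)$. Since $\Vset$ is a Zariski closed subset of the ambient affine space and contains every $(q_t, M_t)$, we have $X \subset \Vset$. A non-constant continuous image of $[0,1]$ in a Hausdorff space is necessarily infinite, so $X$ contains infinitely many points and therefore has dimension at least $1$. Picking an irreducible component of $X$ of positive dimension and, if necessary, cutting it down by successive generic hyperplane sections (invoking the standard fact that every irreducible affine variety of dimension $d \geq 1$ contains irreducible subvarieties of each intermediate dimension), I extract an algebraic curve $\mathcal{C} \subset X \subset \Vset$.

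For part (\ref{lem:existalgcurvitem2}), the argument proceeds verbatim with $\Vset$ replaced by $\Vsetcirc$. A fixed lattice flex additionally satisfies $L_t \sim L$ for all $t$, and this orthogonal-equivalence relation is preserved by left multiplication of $L_t$ by the orthogonal matrix part of $\phi_t$, so the normalized path lands inside $\Vsetcirc \cap \mathbb{R}^N$; its complex Zariski closure is therefore contained in $\Vsetcirc$, and the same dimension-cutting argument produces the required algebraic curve.

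The main technical nuisance I anticipate is verifying the continuous choice of $\phi_t$ and checking rigorously that non-triviality of the flex translates to non-constancy of $(q_t, M_t)$ in $\Vset$; once these bookkeeping steps are in place, the rest is the purely algebraic-geometric observation that the Zariski closure of an infinite subset of an algebraic set is a positive-dimensional algebraic subset.
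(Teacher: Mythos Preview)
Your proposal is correct and follows essentially the same approach as the paper: both arguments reduce to showing that flexibility forces $\Vset$ (respectively $\Vsetcirc$) to be infinite, after which the standard fact that an infinite algebraic set contains an algebraic curve finishes the job. The paper's proof is simply a terser version of yours---it asserts in one line that flexibility implies $\Vset$ is not finite, whereas you spell out the normalization of the flex and the Zariski-closure step explicitly; the underlying idea is the same.
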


\begin{proof}
	(\ref{lem:existalgcurvitem1}): 
	If $(G,p)$ is flexible then $\Vset$ cannot be finite.
	As every algebraic set that is not finite contains a variety with positive dimension
	and every variety with positive dimension contains an algebraic curve, the result holds.
	
	(\ref{lem:existalgcurvitem2}): 
	This follows from a similar method.
\end{proof}

\begin{lemma}\label{lem:constantW}
	Let $(G,p,L)$ be $k$-periodic and $\mathcal{C} \subset \Vset$ be an algebraic curve.
	Suppose $G$ contains a spanning tree $T$ that contains $\tilde{e}$ and has trivial gain for all of its edges.
	If $\rank(G) =k$,
	then there exists $(v,w, \gamma) \in E(G)$ such that $W^\gamma_{v,w}$ takes an infinite amount of values on $\mathcal{C}$.
\end{lemma}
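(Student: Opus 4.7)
The plan is to argue by contradiction: assume that every $W^\gamma_{v,w}$ with $(v,w,\gamma)\in E(G)$ takes only finitely many values on $\mathcal{C}$, and deduce that $\mathcal{C}$ must be zero-dimensional, contradicting the assumption that $\mathcal{C}$ is a curve. By Lemma \ref{lem:constantW2} each such $W^\gamma_{v,w}$ is then constant on $\mathcal{C}$, and since $W^\gamma_{v,w}Z^\gamma_{v,w}=\lambda(e)^2\neq 0$, the corresponding $Z^\gamma_{v,w}$ is constant as well. Passing to real and imaginary parts, this means that $x_v-x_w-\sum_j\gamma_j x_j$ and $y_v-y_w-\sum_j\gamma_j y_j$ are each constant on $\mathcal{C}$ for every $(v,w,\gamma)\in E(G)$.

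First I would use the spanning tree $T$ to propagate constancy to the vertex coordinates. Because every edge of $T$ has trivial gain, the constancy of $W^0_{v,w}$ forces $x_v-x_w$ and $y_v-y_w$ to be constant on $\mathcal{C}$ for every $(v,w,0)\in E(T)$. Combined with the normalisations $x_{\tilde{v}}=y_{\tilde{v}}=0$ built into the definition of $\Vset$, connectivity of $T$ then propagates constancy along tree paths, so that $x_v$ and $y_v$ are constant on $\mathcal{C}$ for every $v\in V(G)$.

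Next I would extract linear relations on the lattice variables. For any $e=(v,w,\gamma)\in E(G)\setminus E(T)$, adding $e$ to $T$ produces a unique cycle whose gain is exactly $\gamma$, since $T$ has trivial gains; hence by Proposition \ref{prop:correspondingsub} (applied after the trivial-gain reduction) the set $\{\gamma:(v,w,\gamma)\in E(G)\setminus E(T)\}$ generates $\spann(G)$. Since all vertex coordinates are already constant on $\mathcal{C}$, constancy of $W^\gamma_{v,w}$ for such $e$ now yields that $\sum_j\gamma_j x_j$ and $\sum_j\gamma_j y_j$ are constant on $\mathcal{C}$, and this extends by linearity to every $\gamma\in\spann(G)$. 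The hypothesis $\rank(G)=k$ supplies $k$ vectors in $\spann(G)$ that are $\mathbb{Z}$-linearly (hence $\mathbb{C}$-linearly) independent, giving an invertible $k\times k$ linear system that forces each $x_j$ and each $y_j$ to be constant on $\mathcal{C}$.

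Combining the two steps, every coordinate function on $\Vset$ is constant on $\mathcal{C}$, so $\mathcal{C}$ is a single point, contradicting that it is a curve (which has dimension $1$). The main obstacle is really just bookkeeping rather than any deep input: one must cleanly invoke Lemma \ref{lem:constantW2} to upgrade ``finitely many values'' to ``constant'', and must carefully use the trivial-gain spanning tree to identify the $\mathbb{Z}$-span of the non-tree gains with $\spann(G)$ so that the rank hypothesis can be applied to the lattice coordinates.
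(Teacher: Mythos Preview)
Your proposal is correct and follows essentially the same approach as the paper: assume all $W^\gamma_{v,w}$ take finitely many values, use Lemma~\ref{lem:constantW2} and the relation $W^\gamma_{v,w}Z^\gamma_{v,w}=\lambda(e)^2$ to get constancy, propagate through the trivial-gain spanning tree to pin down all vertex coordinates, and then use edges with full-rank gains to force the lattice coordinates constant, contradicting $\dim\mathcal{C}>0$. The only cosmetic difference is that the paper splits into the cases $k=1$ and $k=2$ explicitly, whereas you phrase the final step uniformly as solving an invertible $k\times k$ linear system.
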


\begin{proof}
	Suppose that for each $(v,w, \gamma) \in E(G)$,
	the map $W^\gamma_{v,w}$ takes a finite amount of values.
	By Lemma \ref{lem:constantW2},
	each map $W^\gamma_{v,w}$ is constant.
	As $W^\gamma_{v,w}Z^\gamma_{v,w}$ is constant,
	$Z^\gamma_{v,w}$ is also constant.
	Choose any two vertices $v, w \in V(G)$ with $v \neq w$.
	Then there exists a unique walk $v_1, \ldots, v_n$ from $v$ to $w$ in $T$.
	As
	\begin{align*}
	W^0_{v,w} = \sum_{j=1}^{n-1} W^0_{v_j,v_{j+1}}, \qquad Z^0_{v,w} = \sum_{j=1}^{n-1} Z^0_{v_j,v_{j+1}},
	\end{align*}
	both $W^0_{v,w}$ and $Z^0_{v,w}$ are constant;
	furthermore, as
	\begin{align*}
		(x_v - x_w) = \frac{1}{2}(W^0_{v,w} + Z^0_{v,w}), \qquad (y_v - y_w) = \frac{i}{2}(Z^0_{v,w} - W^0_{v,w}),
	\end{align*}	
	then $(x_v-x_w)$ and $(y_v-y_w)$ are also constant on $\mathcal{C}$.
	Since $x_{\tilde{v}},y_{\tilde{w}},x_{\tilde{w}},y_{\tilde{v}}$ are constant on $\mathcal{C}$ and both $\tilde{v}$ and $\tilde{w}$ are contained in $T$,
	both $x_v$, $y_v$ are constant on $\mathcal{C}$ for every $v \in V$ also.
		
	Suppose $k=1$ and let $e = (v,w, \gamma)$ be any edge with $\gamma \neq 0$.
	By observing the maps $W^\gamma_{v,w}$ and $Z^\gamma_{v,w}$,
	we note that $x_1$ and $y_1$ are constant on $\mathcal{C}$
	(since $x_v,x_w,y_v,y_w$ are all constant on $\mathcal{C}$).
	It now follows that $\mathcal{C}$ is a single point,
	contradicting that $\dim \mathcal{C} >0$. 
	
	Now suppose $k=2$.
	As $\rank (G) =k$,
	there exists edges $(v,w, \gamma)$ and $(v',w', \gamma')$
	such that $\gamma, \gamma'$ are independent.
	By observing the maps $W^\gamma_{v,w}, Z^\gamma_{v,w},W^{\gamma'}_{v',w'}, Z^{\gamma'}_{v',w'}$,
	we note that the polynomials
	\begin{eqnarray*}
		f &:=& \gamma_1 x_1 + \gamma_2 x_2, \qquad g:= \gamma_1 y_1 + \gamma_2 y_2 \\
		f' &:=& \gamma'_1 x_1 + \gamma'_2 x_2, \qquad g':= \gamma'_1 y_1 + \gamma'_2 y_2
	\end{eqnarray*}
	are constant on $\mathcal{C}$. 	
	As both $x_1$ and $x_2$ can be formed by linear combinations of $f,f'$,
	both are constant on $\mathcal{C}$;
	similarly,
	as both $y_1$ and $y_2$ can be formed by linear combinations of $g,g'$ then
	both $y_1$ and $y_2$ are also constant on $\mathcal{C}$.
	It now follows that $\mathcal{C}$ is a single point,
	contradicting that $\dim \mathcal{C} >0$.
\end{proof}

\begin{lemma}\label{lem:NBAC}
	Let $(G,p,L)$ be a full $k$-periodic framework in $\mathbb{R}^2$,
	$\rank (G) =k$ for $k \in \{1,2\}$ and $\mathcal{C} \subset \Vset$ an algebraic curve.
	Suppose there exists $a:=(a_1, a_2, \alpha) \in E(G)$ such that $W^\alpha_{a_1,a_2}$ takes an infinite amount of values on $\mathcal{C}$.
	Then there exists a valuation $\nu$ of $\mathbb{C}(\mathcal{C})$ such that the colouring 
	$\delta : E(G) \rightarrow \{ \text{red}, \text{blue} \}$ given by 
	\begin{align*}
		\delta(e) :=
		\begin{cases}
			\text{red}, &\text{ if } \nu(W^\gamma_{v,w}) >0, \\
			\text{blue}, &\text{ if } \nu(W^\gamma_{v,w}) \leq 0.		
		\end{cases}
	\end{align*}
	for each $e = (v,w,\gamma)$,
	is an NBAC-colouring of $G$;
	furthermore,
	$\delta(\tilde{e}) = \text{blue}$ and $\delta(a) = \text{red}$.
\end{lemma}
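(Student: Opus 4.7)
The plan is to construct $\nu$ directly from the edge $a$ and then check the required properties. Since $W^\alpha_{a_1,a_2}$ takes infinitely many values on $\mathcal{C}$, Lemma \ref{lem:constantW2} gives that it is transcendental over $\mathbb{C}$ when viewed as an element of $\mathbb{C}(\mathcal{C})$. By Proposition \ref{prop:valuation} there is a valuation $\nu$ of $\mathbb{C}(\mathcal{C})$ with $\nu(W^\alpha_{a_1,a_2}) > 0$. With $\delta$ defined as in the statement, we get $\delta(a) = \text{red}$ immediately; the surjectivity of $\delta$ will then follow once we have established $\delta(\tilde{e}) = \text{blue}$.

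To verify $\delta(\tilde{e}) = \text{blue}$, I unpack the defining equations of $\mathcal{V}_{\tilde{e}}(G,p,L)$: on $\mathcal{C}$ we have $x_{\tilde{v}} = y_{\tilde{v}} = 0$ and $y_{\tilde{w}} + \sum_{j=1}^k \tilde{\gamma}_j y_j = 0$. Hence the imaginary parts of both $W^{\tilde{\gamma}}_{\tilde{v},\tilde{w}}$ and $Z^{\tilde{\gamma}}_{\tilde{v},\tilde{w}}$ vanish on $\mathcal{C}$, so the two functions agree there and both equal the common real part. Combined with the edge relation $W^{\tilde{\gamma}}_{\tilde{v},\tilde{w}} \cdot Z^{\tilde{\gamma}}_{\tilde{v},\tilde{w}} = \lambda(\tilde{e})^2 \neq 0$ and the irreducibility of $\mathcal{C}$, this forces $W^{\tilde{\gamma}}_{\tilde{v},\tilde{w}}$ to be the constant $\pm \lambda(\tilde{e}) \in \mathbb{C}^*$ on $\mathcal{C}$. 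Therefore $\nu(W^{\tilde{\gamma}}_{\tilde{v},\tilde{w}}) = 0$, so $\delta(\tilde{e}) = \text{blue}$.

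It remains to rule out balanced almost monochromatic circuits. For any circuit $C = (e_1, \ldots, e_n)$ with $e_i = (v_i, v_{i+1}, \gamma_i)$ and $v_1 = v_{n+1}$, telescoping yields
\begin{align*}
\sum_{i=1}^n W^{\gamma_i}_{v_i,v_{i+1}} = -\sum_{j=1}^k \psi(C)_j W_j, \qquad \sum_{i=1}^n Z^{\gamma_i}_{v_i,v_{i+1}} = -\sum_{j=1}^k \psi(C)_j Z_j.
\end{align*}
So if $C$ is balanced then both sums vanish identically on $\mathcal{C}$. Suppose $C$ were a balanced almost red circuit with unique blue edge $e_k$; then $\nu(W^{\gamma_i}_{v_i,v_{i+1}}) > 0$ for $i \neq k$, and solving the $W$-relation for the $k$-th term and applying the ultrametric inequality gives $\nu(W^{\gamma_k}_{v_k,v_{k+1}}) > 0$, contradicting $\delta(e_k) = \text{blue}$.

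For the symmetric case of a balanced almost blue circuit with unique red edge $e_k$, the $W$-argument does not close directly, and this is the main subtle point. I would pass to $Z$: since each edge satisfies $W^\gamma_{v,w} Z^\gamma_{v,w} = \lambda(e)^2 \in \mathbb{C}^*$, valuation properties (ii) and (iv) force $\nu(Z^\gamma_{v,w}) = -\nu(W^\gamma_{v,w})$. The blue edges then have $\nu(Z) \geq 0$ while the red edge $e_k$ has $\nu(Z) < 0$, so the balanced $Z$-relation together with the ultrametric inequality again yields a contradiction. The main obstacle is really just noticing that one must use both the $W$ and $Z$ expressions (exploiting $W Z = \lambda(e)^2$) to handle the two color cases symmetrically; everything else is bookkeeping.
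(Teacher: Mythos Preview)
Your proof is correct and follows essentially the same route as the paper's: obtain $\nu$ from Proposition \ref{prop:valuation} applied to the transcendental element $W^\alpha_{a_1,a_2}$, verify $\nu(W^{\tilde\gamma}_{\tilde v,\tilde w})=0$ from the edge constraints at $\tilde e$, and then use the balanced telescoping identity in $W$ for almost red circuits and in $Z$ (via $\nu(Z^\gamma_{v,w})=-\nu(W^\gamma_{v,w})$) for almost blue circuits. Your expanded justification that $W^{\tilde\gamma}_{\tilde v,\tilde w}$ is the constant $\pm\lambda(\tilde e)$ on $\mathcal{C}$ is a helpful elaboration of the paper's one-line ``as $\tilde e$ is fixed'' remark, but the argument is otherwise identical.
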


\begin{proof}
	By Lemma \ref{lem:constantW2}, 
	$W^\alpha_{a_1,a_2}$ is transcendental over $\mathbb{C}$,
	thus by Proposition \ref{prop:valuation},
	there exists a valuation $\nu$ of $\mathbb{C}(\mathcal{C})$ such that $\nu (W^\alpha_{a_1,a_2}) >0$.
	As $\tilde{e}$ is fixed and $\lambda(\tilde{e}) \neq 0$,
	we have $\nu ( W^{\tilde{\gamma}}_{\tilde{v},\tilde{w}}) =0$.
	We note that $\nu (W^\gamma_{v,w}Z^\gamma_{v,w}) =0$ for each $(v,w,\gamma) \in E(G)$ since $W^\gamma_{v,w} Z^\gamma_{v,w}$ is constant,
	hence $\nu (W^\gamma_{v,w}) = - \nu (Z^\gamma_{v,w})$.
	
	Let $\delta : E(G) \rightarrow \{ \text{red}, \text{blue} \}$ as described in the statement of the lemma for the valuation $\nu$.
	It follows that $a$ is red and $\tilde{e}$ is blue, thus $\delta$ is surjective. 
	Suppose there exists a balanced almost red circuit $C$ of length $n$ in $G$ with $\delta(e_n) = \text{blue}$.
	Then 
	\begin{align*}
		\nu(W^{\gamma_n}_{v_1,v_n})
		= \nu \left( \sum_{j=1}^{n-1} W^{\gamma_j}_{v_j,v_{j+1}} \right) 
		\geq \min \left\{ \nu (W^{\gamma_j}_{v_j,v_{j+1}}) : j =1, \ldots, n-1 \right\} >0,
	\end{align*}
	however this contradicts that $\nu (W^{\gamma_n}_{v_1,v_n}) \leq 0$.
	Now suppose instead that $C$ is a balanced almost blue circuit with $\delta(e_n) = \text{red}$.
	Then 
	\begin{align*}
		\nu(Z^{\gamma_n}_{v_1,v_{n}}) 
		= \nu \left( \sum_{j=1}^{n-1} Z^{\gamma_j}_{v_j,v_{j+1}} \right) 
		\geq \min \left\{ \nu (Z^{\gamma_j}_{v_j,v_{j+1}}) : j =1, \ldots, n-1 \right\}  \geq 0,
	\end{align*}
	however this contradicts that $\nu (Z^{\gamma_n}_{v_1,v_{n}}) < 0$. 
\end{proof}

\begin{definition}
	For any two edges $e_1,e_2$ of a $k$-periodic framework $(G,p,L)$ in $\mathbb{R}^2$
	with $e_i := (v_i,w_i,\gamma_i)$ for each $i \in \{1,2\}$,
	we define the \emph{angle function of $e_1,e_2$} to be the map
	\begin{align*}
		A_{e_1,e_2}: \mathcal{V}_{\mathcal{C}}^2(G) \rightarrow \mathbb{C}, 
		~ (p',L') \mapsto (p'(v_1)-p'(w_1) - L'.\gamma_1).(p'(v_2)-p'(w_2)- L'.\gamma_2).
	\end{align*}
\end{definition}

\begin{remark}
	We note that for any $\tilde{e} \in E(G)$ and any algebraic curve $\mathcal{C} \subset \Vset$,
	\begin{align*}
		A_{e_1,e_2}|_{\mathcal{C}} = 
		\frac{1}{2} \left( W_{v_1,w_1}^{\gamma_1}Z_{v_2,w_2}^{\gamma_2} + Z_{v_1,w_1}^{\gamma_1}W_{v_2,w_2}^{\gamma_2} \right).
	\end{align*}
	Furthermore, if $(p,L) \sim (p',L')$, then $A_{e_1,e_2}(p,L) = A_{e_1,e_2}(p',L')$;
	this is as linear isometries of $(\mathbb{C}^2, \|\cdot\|^2)$ will preserve the bilinear form associated to $\|\cdot\|^2$.
\end{remark}

\begin{lemma}\label{lem:active2}
	Let $(G,p,L)$ be $k$-periodic framework in $\mathbb{R}^2$ for $k \in \{1,2\}$,
	$\mathcal{C} \subset \Vset$ be an algebraic curve
	and $e_1,e_2 \in E(G)$, with $e_j := (v_j,w_j,\gamma_j)$ for each $j \in \{1,2\}$.
	If $\delta(e_1)=\delta(e_2)$ for all active NBAC-colourings of $\mathcal{C}$,
	then $A_{e_1,e_2}|_{\mathcal{C}}$	is constant.
\end{lemma}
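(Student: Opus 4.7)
The plan is to prove the contrapositive: assume $A_{e_1,e_2}|_{\mathcal{C}}$ is not constant, and construct an active NBAC-colouring $\delta$ of $\mathcal{C}$ with $\delta(e_1) \neq \delta(e_2)$. Write $W_i := W^{\gamma_i}_{v_i,w_i}$ and $Z_i := Z^{\gamma_i}_{v_i,w_i}$ viewed as elements of $\mathbb{C}(\mathcal{C})$. Since $e_1,e_2 \in E(G)$, the products $W_i Z_i = \lambda(e_i)^2$ are non-zero constants, so in particular $W_i$ and $Z_i$ are non-zero in $\mathbb{C}(\mathcal{C})$ and the ratio $\xi := W_1/W_2$ is a well-defined element of $\mathbb{C}(\mathcal{C})$.

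Substituting $Z_i = \lambda(e_i)^2/W_i$ into the formula for the angle function gives
\begin{align*}
A_{e_1,e_2}|_{\mathcal{C}} \;=\; W_1 Z_2 + Z_1 W_2 \;=\; \lambda(e_2)^2\, \xi + \lambda(e_1)^2\, \xi^{-1}.
\end{align*}
If $\xi$ were algebraic over $\mathbb{C}$, then by Lemma \ref{lem:constantW2} it would be constant on $\mathcal{C}$, forcing $A_{e_1,e_2}|_{\mathcal{C}}$ to be constant — contradicting our hypothesis. Hence $\xi$ is transcendental over $\mathbb{C}$, and by Proposition \ref{prop:valuation} there exists a valuation $\nu$ of $\mathbb{C}(\mathcal{C})$ with $\nu(\xi) > 0$, i.e.\ $\nu(W_1) > \nu(W_2)$. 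Set $\alpha := \nu(W_2) \in \mathbb{Z}$ and define $\delta : E(G) \to \{\text{red},\text{blue}\}$ by the usual rule (red if $\nu(W^\gamma_{v,w}) > \alpha$, blue otherwise). By construction $\delta(e_1) = \text{red}$ and $\delta(e_2) = \text{blue}$, so $\delta$ is surjective and distinguishes $e_1$ from $e_2$.

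It remains to verify that $\delta$ is indeed an NBAC-colouring. This is the routine but essential bookkeeping step, and it mirrors the argument in the proof of Lemma \ref{lem:NBAC}. For any edge $e = (v,w,\gamma)$, $\nu(W^\gamma_{v,w}) + \nu(Z^\gamma_{v,w}) = 0$. For any balanced circuit $(e_1,\dots,e_n)$ the telescoping identity $\sum_j W^{\gamma_j}_{v_j,v_{j+1}} = 0$ holds (the placement coordinates telescope and the lattice contribution vanishes because $\sum_j \gamma_j = 0$); the same applies to $Z$. If an almost red balanced circuit existed with unique blue edge $e_n$, then the $W$-identity would force $\nu(W^{\gamma_n}_{v_n,v_1}) \geq \min_{j<n}\nu(W^{\gamma_j}_{v_j,v_{j+1}}) > \alpha$, contradicting $\delta(e_n)=\text{blue}$. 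The almost blue case is symmetric, using the $Z$-identity and the fact that blue (resp.\ red) edges have $\nu(Z) \geq -\alpha$ (resp.\ $\nu(Z) < -\alpha$).

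The main potential obstacle is ensuring $\xi$ lies in $\mathbb{C}(\mathcal{C})$ and is genuinely transcendental; this is handled at once by the observation that $W_i$ is a unit in the fraction field (its product with $Z_i$ is a non-zero constant) and by Lemma \ref{lem:constantW2}. With that in place, the rest of the proof is a direct application of Proposition \ref{prop:valuation} together with the telescoping argument already developed for Lemma \ref{lem:NBAC}.
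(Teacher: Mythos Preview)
Your argument is correct and follows a somewhat different route from the paper. The paper first invokes Proposition \ref{prop:active} to reduce to the case $\tilde{e}=e_1$, so that $W^{\gamma_1}_{v_1,w_1}$ is a non-zero constant on $\mathcal{C}$; it then argues that non-constancy of $A_{e_1,e_2}$ forces $W^{\gamma_2}_{v_2,w_2}$ to take infinitely many values, applies Proposition \ref{prop:valuation} to $W^{\gamma_2}_{v_2,w_2}$, and builds the colouring with threshold $\alpha=0$ exactly as in Lemma \ref{lem:NBAC}. You avoid the change-of-base-edge reduction entirely by working with the ratio $\xi=W_1/W_2$ and applying Proposition \ref{prop:valuation} directly to $\xi$; the threshold then becomes $\alpha=\nu(W_2)$ rather than $0$, and the NBAC verification goes through verbatim with this shifted threshold. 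Your approach is slightly more self-contained, while the paper's approach keeps the threshold at $0$ and reuses Lemma \ref{lem:NBAC} more literally.

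One small point: Lemma \ref{lem:constantW2} as stated applies to polynomial functions, whereas $\xi$ is a priori only a rational function on $\mathcal{C}$. The conclusion you need (``algebraic over $\mathbb{C}$ implies constant'') is of course immediate from $\mathbb{C}$ being algebraically closed, so no harm is done, but strictly speaking you are using this elementary fact rather than the lemma itself.
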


\begin{proof}
	As $A_{e_1,e_2}$ is invariant for congruent placement-lattices,	
	by Proposition \ref{prop:active},
	we may assume $\tilde{e} = e_1$.
	We note the map
	\begin{align}\label{eqn:active2.1}
		(p',L') \mapsto p'(v_1)-p'(w_1) - L'.\gamma_1
	\end{align}
	is constant on $\mathcal{C}$, and $W^{\gamma_1}_{v_1,w_1}$ is constant also.
	Suppose $A_{e_1,e_2}|_{\mathcal{C}}$ is not constant,
	then as (\ref{eqn:active2.1}) is constant,
	\begin{align*}
		(p',L') \mapsto p'(v_2)-p'(w_2) - L'.\gamma_2
	\end{align*}
	is not constant on $\mathcal{C}$.
	This in turn implies that $W^{\gamma_2}_{v_2,w_2}$ takes an infinite amount of values over $\mathcal{C}$.
	By Lemma \ref{lem:NBAC},
	there exists an active NBAC-colouring $\delta$ of $\mathcal{C}$ with $\delta(e_1) \neq \delta(e_2)$.	
\end{proof}

\begin{lemma}\label{lem:active3}
	Let $(G,p,L)$ be $k$-periodic framework in $\mathbb{R}^2$ for $k \in \{1,2\}$,
	and $\tilde{e},e_1,e_2 \in E(G)$.
	If $A_{e_1,e_2}$ takes an infinite amount of values on $\Vset$
	then there exists an algebraic curve $\mathcal{C} \subset \Vset$ such that $A_{e_1,e_2}|_{\mathcal{C}}$ is not constant.
\end{lemma}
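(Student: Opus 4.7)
The plan is to reduce to an irreducible component of $\Vset$ on which $A_{e_1,e_2}$ is non-constant, and then produce an algebraic curve inside that component along which $A_{e_1,e_2}$ still varies. First, I would decompose $\Vset$ into its finitely many irreducible components $V_1, \ldots, V_m$. On each irreducible component $V_i$, the polynomial $A_{e_1,e_2}$ is either constant or takes infinitely many values (by an argument analogous to Lemma \ref{lem:constantW2}, applied to irreducible varieties rather than only to algebraic curves: a regular function taking finitely many values on an irreducible variety is algebraic over $\mathbb{C}$ and therefore constant). Using this, together with the hypothesis that $A_{e_1,e_2}$ is non-constant on $\Vset$, I would identify an irreducible component $V$ of positive dimension on which $A_{e_1,e_2}|_V$ is non-constant.

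With such a $V$ in hand, I would pick witness points $x_1, x_2 \in V$ with $A_{e_1,e_2}(x_1) \neq A_{e_1,e_2}(x_2)$. If $\dim V = 1$, then $\mathcal{C} := V$ is already an algebraic curve on which $A_{e_1,e_2}$ is non-constant. Otherwise, I would invoke the classical fact that two points on an irreducible affine variety of positive dimension can be joined by an irreducible algebraic curve lying inside the variety: one iteratively intersects $V$ with a generic hyperplane passing through $x_1$ and $x_2$, using a Bertini-type theorem to guarantee that the intersection remains irreducible of dimension exactly one less than $V$. After $\dim V - 1$ such reductions one obtains an irreducible curve $\mathcal{C} \subseteq V \subseteq \Vset$ containing both $x_1$ and $x_2$, on which $A_{e_1,e_2}$ is necessarily non-constant since it already takes different values at these two points.

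The main obstacle is the first step: guaranteeing that non-constancy on $\Vset$ as a whole yields non-constancy on some individual irreducible component. In principle, $A_{e_1,e_2}$ could take distinct constant values on disjoint components, in which case no single irreducible algebraic curve would detect its non-constancy. Ruling this out uses the specific structure of the framework equations defining $\Vset$. By contrast, the subsequent Bertini-style dimension reduction is essentially routine; the only technical subtlety is that the hyperplane is forced to pass through two prescribed points, which restricts the parameter space of hyperplanes by codimension at most $2$—still leaving ample room for the generic irreducibility conclusion of Bertini's theorem to apply to the affine irreducible variety $V$.
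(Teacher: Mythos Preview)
Your overall route matches the paper's: locate two points in $\Vset$ at which $A_{e_1,e_2}$ takes distinct values, then pass a curve through both. The paper does this in two sentences, taking one witness to be the base framework point $(p,L)$ itself and citing \cite[Lemma, p.~56]{mumford} for the curve; your Bertini-style hyperplane slicing is exactly the standard argument behind that lemma, so in content the second half of your sketch coincides with the paper's proof.

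The part you yourself flag as the ``main obstacle'' is left unresolved. You correctly observe that $A_{e_1,e_2}$ might in principle be constant on each irreducible component of $\Vset$ while taking different constant values on different components, and you then assert that ``the specific structure of the framework equations'' rules this out---but you never say what that structure is or how the argument runs. As a proof this is a genuine gap, located at precisely the step you identify as critical. (The paper's own proof is equally terse on this point: Mumford's lemma is stated for irreducible varieties, and the paper does not explicitly verify that $(p,L)$ and the second witness $(p',L')$ lie in a common component. So you are not inventing a difficulty; but you have not dispatched it either, and a proposal that names an obstacle and then waves at unspecified structure cannot be counted as complete.)
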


\begin{proof}
	As $A_{e_1,e_2}$ takes an infinite amount of values on $\Vset$,
	there exists a variety $\mathcal{V} \subset \Vset$ and points $(p',L'), (p'',L'') \in \Vset$ such that 
	$A_{e_1,e_2}(p',L') \neq A_{e_1,e_2}(p'',L'')$.
	By \cite[Lemma pg. 56]{mumford},
	there exists an algebraic curve $\mathcal{C}$ that contains $(p',L')$ and $(p'',L'')$.
\end{proof}

\begin{proposition}\label{prop:angleactive}
	Let $(G,p,L)$ be $k$-periodic framework in $\mathbb{R}^2$ for $k \in \{1,2\}$,
	and $\tilde{e},e_1,e_2 \in E(G)$.
	Then $\delta(e_1) = \delta(e_2)$ for all active NBAC-colourings $\delta$ of $(G,p,L)$ if and only if
	$A_{e_1,e_2}$ takes only finitely many values on $\Vset$.
\end{proposition}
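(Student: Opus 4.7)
The plan is to establish the two directions separately. The forward implication ($\Rightarrow$) reduces cleanly to a combination of Lemmas \ref{lem:active2} and \ref{lem:active3}, while the reverse implication ($\Leftarrow$) requires a direct valuation computation in the spirit of Lemma \ref{lem:NBAC}.

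For the direction $(\Rightarrow)$, I would argue by contrapositive. Suppose $A_{e_1,e_2}$ is not constant on $\Vset$. By Lemma \ref{lem:active3}, there exists an algebraic curve $\mathcal{C} \subset \Vset$ on which $A_{e_1,e_2}|_{\mathcal{C}}$ is not constant. Then the contrapositive of Lemma \ref{lem:active2} produces an active NBAC-colouring $\delta$ of $\mathcal{C}$ with $\delta(e_1) \neq \delta(e_2)$, and by definition $\delta$ is also an active NBAC-colouring of $(G,p,L)$.

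For the direction $(\Leftarrow)$, suppose $A_{e_1,e_2}$ is constant on $\Vset$ and, for contradiction, let $\delta$ be an active NBAC-colouring of $(G,p,L)$ with $\delta(e_1) \neq \delta(e_2)$. By Proposition \ref{prop:active} I may assume $\delta$ is generated by a valuation $\nu$ of $\mathbb{C}(\mathcal{C})$ and some $\alpha \in \mathbb{R}$ for an algebraic curve $\mathcal{C} \subset \Vset$. Swapping $e_1$ and $e_2$ if necessary, assume $\delta(e_1) = \text{red}$ and $\delta(e_2) = \text{blue}$, so $\nu(W^{\gamma_1}_{v_1,w_1}) > \alpha \geq \nu(W^{\gamma_2}_{v_2,w_2})$. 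Since $W^{\gamma_j}_{v_j,w_j} Z^{\gamma_j}_{v_j,w_j} = \lambda(e_j)^2$ is a nonzero constant, one has $\nu(Z^{\gamma_j}_{v_j,w_j}) = -\nu(W^{\gamma_j}_{v_j,w_j})$, and therefore
\begin{align*}
	\nu\bigl(W^{\gamma_1}_{v_1,w_1} Z^{\gamma_2}_{v_2,w_2}\bigr) &= \nu(W^{\gamma_1}_{v_1,w_1}) - \nu(W^{\gamma_2}_{v_2,w_2}) > 0, \\
	\nu\bigl(Z^{\gamma_1}_{v_1,w_1} W^{\gamma_2}_{v_2,w_2}\bigr) &= -\nu(W^{\gamma_1}_{v_1,w_1}) + \nu(W^{\gamma_2}_{v_2,w_2}) < 0.
\end{align*}
Since these two valuations are distinct, axiom (iii) forces $\nu(A_{e_1,e_2}|_{\mathcal{C}}) = \nu\bigl(Z^{\gamma_1}_{v_1,w_1} W^{\gamma_2}_{v_2,w_2}\bigr) < 0$. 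But $A_{e_1,e_2}$ is constant on $\Vset$, so $A_{e_1,e_2}|_{\mathcal{C}}$ is either a nonzero scalar (forcing $\nu = 0$) or identically zero (forcing $\nu = \infty$), a contradiction.

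The main point to watch is ensuring the two summands in the formula for $A_{e_1,e_2}|_{\mathcal{C}}$ have genuinely distinct $\nu$-values so that axiom (iii) of a valuation applies with equality; this is exactly where the hypothesis $\delta(e_1) \neq \delta(e_2)$ together with $\lambda(e_j) \neq 0$ is used. No serious obstacle is anticipated: the backward direction is essentially a bookkeeping reduction to previously established lemmas, and the forward direction follows the valuation template already set by Lemma \ref{lem:NBAC}.
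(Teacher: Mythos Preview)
Your proposal is correct and follows essentially the same approach as the paper: the forward direction is handled by combining Lemmas~\ref{lem:active2} and~\ref{lem:active3}, and the backward direction is the same valuation computation showing $\nu(A_{e_1,e_2}|_{\mathcal{C}})<0$. Your explicit invocation of Proposition~\ref{prop:active} to ensure $\mathcal{C}\subset\Vset$ for the given $\tilde{e}$ is in fact slightly more careful than the paper's own exposition, which leaves that reduction implicit.
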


\begin{proof}
	Suppose $\delta(e_1) = \delta(e_2)$ for all active NBAC-colourings $\delta$ of $(G,p,L)$.
	By Lemma \ref{lem:active2},
	$A_{e_1,e_2}|_{\mathcal{C}}$ is constant for any algebraic curve $\mathcal{C} \subset \Vset$.
	By Lemma \ref{lem:active3},
	it follows that $A_{e_1,e_2}$ takes only a finite amount of values on $\Vset$.
	
	Suppose there exists an algebraic curve $\mathcal{C}$ and active NBAC-colouring $\delta$ of $\mathcal{C}$ 
	generated by $\nu, \alpha$, 
	such that $\delta(e_1) \neq \delta(e_2)$.
	Let $e_j = (v_j,w_j,\gamma_j)$ for $j \in \{1,2\}$.
	Without loss of generality we may assume $\nu (W_{v_1,w_1}^{\gamma_1}) \leq \alpha < \nu (W_{v_2,w_2}^{\gamma_2})$.
	We now note
	\begin{align*}
		\nu (A_{e_1,e_2}|_{\mathcal{C}}) 
		= \nu (W_{v_1,w_1}^{\gamma_1}Z_{v_2,w_2}^{\gamma_2} + Z_{v_1,w_1}^{\gamma_1}W_{v_2,w_2}^{\gamma_2})
		= \nu (W_{v_1,w_1}^{\gamma_1}) -  \nu (W_{v_2,w_2}^{\gamma_2}) < 0,
	\end{align*}
	thus $A_{e_1,e_2}|_{\mathcal{C}}$ must be transcendental over $\mathbb{C}$ when considered as an element of $\mathcal{C}(\mathbb{C})$.
	It follows from Proposition \ref{prop:valuation} that $A_{e_1,e_2}$ takes an infinite amount of values on $\Vset$ as required.
\end{proof}

We shall end this section by defining a graph operation we shall use later in both
Lemma \ref{lem:1NBAC} and Lemma \ref{lem:2NBAC}.

\begin{definition}
	Let $(G,p,L)$ be a $k$-periodic framework in $\mathbb{R}^2$ and $\gamma \in \mathbb{Z}^k$ be a non-zero element.
	We define a $k$-periodic framework $(G',p',L)$ in $\mathbb{R}^2$ to be a 
	\emph{vertex addition of $(G,p,L)$ at $v_1$ by $\gamma$} if
	\begin{align*}
		V(G') := V(G) \cup \{v_0\} , \qquad E(G') := E(G) \cup \{(v_0,v_1,0) ,(v_0,v_1,\gamma)\} 
	\end{align*}
	and $p'(v)=p(v)$ for all $v \in V(G)$; see Figure \ref{fig:Henneberg1}.
\end{definition}

\begin{remark}
	The graph operation that takes $G$ to $G'$ in the vertex addition described above is the first of the two \emph{gain-preserving Henneberg moves};
	we refer the reader to \cite{nixonross} for more information.
\end{remark}

\begin{figure}[ht]
	\begin{center}
		%\begin{tikzpicture}
			%\node[lnode] (1) at (4,-0.2) {$\boldsymbol{v_1}$};
			%\node[lnode] (2) at (5,-0.2) {$\boldsymbol{v_2}$};
			%\node[lnode] (0) at (4.5,1.5) {$\boldsymbol{v_0}$};
			%
			%\draw[line width=1.5pt] (0.5,-0.35) ellipse (1.3cm and 0.7cm);
			%
			%\node[lnode] (1') at (0,-0.2) {$\boldsymbol{v_1}$};
			%\node[lnode] (2') at (1,-0.2) {$\boldsymbol{v_2}$};
			%
			%\draw[line width=1.5pt] (4.5,-0.35) ellipse (1.3cm and 0.7cm);
			%
			%\draw[-latex] (2.25,-0.35) -- (2.75,-0.35);
			%
			%\draw[edge,-latex] (1)edge(0);
			%\draw[edge,-latex] (2)edge(0);
			%
			%\node[font=\scriptsize] at (4,0.65) {$\gamma_1$};
			%\node[font=\scriptsize] at (5,0.65) {$\gamma_2$};
		%\end{tikzpicture}\qquad\quad
		%	
		\begin{tikzpicture}
			\node[lnode] (1) at (4.5,-0.2) {$\boldsymbol{v_1}$};
			\node[lnode] (0) at (4.5,1.5) {$\boldsymbol{v_0}$};
			
			\draw[line width=1.5pt] (0.5,-0.35) ellipse (1.3cm and 0.7cm);
			
			\node[lnode] (1') at (0.5,-0.2) {$\boldsymbol{v_1}$};
			
			\draw[line width=1.5pt] (4.5,-0.35) ellipse (1.3cm and 0.7cm);
			
			\draw[-latex] (2.25,-0.35) -- (2.75,-0.35);
			
			\draw[edge,-latex] (1) edge [bend left] node {} (0);
			\draw[edge] (1)edge [bend right] node {} (0);
			
			\node[font=\scriptsize] at (3.9,0.65) {$\gamma$};
			%\node[font=\scriptsize] at (5.1,0.65) {$\gamma_2$};
		\end{tikzpicture}
	\end{center}
	\caption{%(Left): A vertex extension of $(G,p,L)$ at $v_1,v_2$ by $\gamma_1, \gamma_2$.
	%(Right): 
	A vertex addition of $(G,p,L)$ at $v_1$ by $\gamma$.}
	\label{fig:Henneberg1}
\end{figure}

\begin{lemma}\label{lem:1NBAC.1}
	Let $(G,p,L)$ be a $k$-periodic framework in $\mathbb{R}^2$ with non-trivial flex $(p_t,L_t)$, $t \in [0,1]$.
	Assume that $\|L_t.\gamma\| \neq 0$ for all $t \in [0,1]$.	
	Then there exists a vertex addition $(G',p',L)$ of $(G,p,L)$ at $v_1$ by $\gamma$
	with non-trivial flex $(p_t',L_t)$ such that $p'_t$ restricted to $V(G)$ is the placement $p_t$ for each $t \in [0,1]$.
\end{lemma}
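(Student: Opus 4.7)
The Henneberg $1$ extension adds a new vertex $v_0$ joined to $v_1$ by two edges of gains $0$ and $\gamma$, so the edge-length constraints on $p'_t(v_0)$ are
\begin{align*}
\|p'_t(v_0) - p_t(v_1)\|^2 = r_1^2, \qquad \|p'_t(v_0) - p_t(v_1) - L_t.\gamma\|^2 = r_2^2,
\end{align*}
where $r_1 := \|p'(v_0) - p(v_1)\|$ and $r_2 := \|p'(v_0) - p(v_1) - L.\gamma\|$ are determined by the initial choice of $p'(v_0)$. Geometrically, $p'_t(v_0)$ must lie at the intersection of two circles of radii $r_1,r_2$ centred at $p_t(v_1)$ and $p_t(v_1) + L_t.\gamma$. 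The plan is to choose $p'(v_0)$ so that $r_1 = r_2$ and both are sufficiently large that the intersection is non-empty and depends continuously on $t$ throughout the flex.

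By continuity of $t \mapsto L_t$ and compactness of $[0,1]$, the quantity $\sup_{t \in [0,1]} \|L_t.\gamma\|$ is finite. Fix $R$ strictly greater than $\tfrac{1}{2}\sup_{t \in [0,1]} \|L_t.\gamma\|$, let $m_t := p_t(v_1) + \tfrac{1}{2}L_t.\gamma$ denote the midpoint of the two circle centres, and let $w_t$ be the unit vector obtained by rotating $L_t.\gamma/\|L_t.\gamma\|$ by $\pi/2$; $w_t$ is well-defined and continuous in $t$ by the hypothesis $\|L_t.\gamma\| \neq 0$. Setting $r := \sqrt{R^2 + \tfrac{1}{4}\|L.\gamma\|^2}$, I would define
\begin{align*}
p'_t(v_0) := m_t + \sqrt{r^2 - \tfrac{1}{4}\|L_t.\gamma\|^2}\, w_t, \qquad p'(v_0) := p'_0(v_0).
\end{align*}
The radicand is bounded below by $R^2 - \tfrac{1}{4}\sup_t\|L_t.\gamma\|^2 > 0$, so $t \mapsto p'_t(v_0)$ is continuous, and a direct computation exploiting the orthogonality $w_t \perp L_t.\gamma$ verifies that both edge-length equalities hold with $r_1 = r_2 = r$. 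Hence $(p'_t,L_t)$ is a flex of the extended framework $(G',p',L)$.

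For non-triviality, suppose $(p'_t,L_t)$ were a trivial flex of $(G',p',L)$; then there would exist continuous families of linear isometries $\{A_t\}$ and vectors $\{y_t\}$ with $p'_t(v) = A_t.p'(v) + y_t$ for all $v \in V(G')$ and $L_t = A_tL$. Restricting this identity to $V(G) \subset V(G')$ would exhibit $(p_t,L_t)$ as a trivial flex of $(G,p,L)$, contradicting the non-triviality of the original flex implicit in the hypothesis. The hard part of the argument is ensuring the real intersection branch of the two moving circles extends continuously throughout the entire flex; this is guaranteed precisely by the hypothesis $\|L_t.\gamma\| \neq 0$, which prevents the two circle centres from coinciding, combined with the compactness-based choice of $R$ that keeps the radicand strictly positive.
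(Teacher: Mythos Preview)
Your argument is correct and follows the same geometric idea as the paper: place the new vertex $v_0$ at an intersection point of the two circles of appropriate radii about $p_t(v_1)$ and $p_t(v_1)+L_t.\gamma$, using compactness of $[0,1]$ to guarantee this intersection is real for all $t$. The paper parametrises this intersection trigonometrically, writing $p'_t(v_0) = p_t(v_1) + (r\cos f(t), r\sin f(t))$ with $f(t) = \cos^{-1}\!\bigl(-\tfrac{1}{2r}\|L_t.\gamma\|\bigr) + \atan(y_t,x_t)$ valued in $\mathbb{R}/2\pi\mathbb{Z}$, whereas you parametrise via the midpoint $m_t$ and the unit perpendicular $w_t$ to $L_t.\gamma$. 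Your version is arguably cleaner: it sidesteps the need to pass to $\mathbb{R}/2\pi\mathbb{Z}$ to make $\atan$ continuous, and the orthogonality $w_t \perp L_t.\gamma$ makes the verification of both edge-length constraints a one-line computation. You also explicitly address non-triviality of the extended flex (which the paper's proof omits), correctly observing that a trivial flex of $(G',p',L)$ restricts to a trivial flex of $(G,p,L)$; note, however, that this step does require the original flex to be non-trivial, which is indeed how the lemma is used in Lemmas~\ref{lem:1NBAC} and~\ref{lem:2NBAC} but is not literally stated in the hypothesis.
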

%
%We recall that the function $\atan : \mathbb{R}^2 \setminus \{(0,0)\} \rightarrow (-\pi,\pi]$ is given by
%\begin{align*}
	%\atan (y,x) :=
	%\begin{cases}
		%\arctan (y/x) & \text{if } x >0\\
		%\arctan (y/x) + \pi & \text{if } x<0, y \geq 0\\	
		%\arctan (y/x) - \pi & \text{if } x<0,y < 0\\	
		%\frac{\pi}{2} & \text{if } x=0, y>0\\	
		%-\frac{\pi}{2} & \text{if } x=0, y <0.
	%\end{cases}
%\end{align*}
%With the the function $\atan$,
%we have the following trigonometric identity;
%\begin{align*}
	%A \cos \theta + B \sin \theta = \sqrt{A^2 + B^2} \cos \left( \theta - \atan (B,A)\right).
%\end{align*}

\begin{proof}
	As $[0,1]$ is compact,
	we may choose $r>0$ such that $r > \|L_t.\gamma\|/2$ for all $t \in [0,1]$.
	By our choice of $r$, there exists for each $t \in [0,1]$ exactly two points that satisfy the equation
	\begin{align}\label{eq:zpath}
		\| z - p_t(v_1)\|^2 = \| z - p_t(v_1) + L.\gamma \|^2 = r^2.
	\end{align}
	As $(p_t,L_t)$ is continuous, it follows that there exists a continuous path $z_t :[0,1] \rightarrow \mathbb{R}^2$ that satisfies equation (\ref{eq:zpath}).
	We now set $p'_t(v) := p_t$ for all $v \in V(G)$ and $p'_{v_0} := z_t(v_0)$.
	%Define $(x_t,y_t) := L_t.\gamma$. As $[0,1]$ is compact,
	%we may choose $r>0$ such that for all $t \in [0,1]$,
	%\begin{align}\label{eqn:r}
		%r \geq \frac{1}{2}\sqrt{ x_t^2 + y_t^2}.
	%\end{align}
	%We now define the continuous function $f:[0,1] \rightarrow \mathbb{R}/2\pi \mathbb{Z}$,
	%where
	%\begin{align*}
		%f(t) := \cos^{-1} \left( -\frac{1}{2r}\sqrt{ x_t^2 + y_t^2} \right) + \atan (y_t,x_t).
	%\end{align*}
	%As $x_t$ and $y_t$ cannot both be equal to $0$ for any $t \in [0,1]$ (since $\|L_t.\gamma\| \neq 0$ for all $t \in [0,1]$)
	%and we chose $r$ such that (\ref{eqn:r}) holds,
	%the function $f(t)$ is well-defined.
	%We note that 
	%\begin{align*}
		%x_t \cos f(t) + y_t \sin f(t)
		%= \sqrt{x_t^2 + y_t^2} \cos \left( f(t) - \atan (y_t,x_t)\right)
		%= \frac{-(x_t^2 + y_t^2)}{2r},
	%\end{align*}
	%thus by some rearranging we have
	%\begin{align*}
		%(x_t + r\cos f(t))^2 + (y_t + r\sin f(t))^2 = r^2.
	%\end{align*}
	%Define for each $t \in [0,1]$ the placement-lattice $(p'_t,L_t)$ of $G'$,
	%with $p'_t(v) = p_t(v)$ for all $v \in V(G)$ and $p'_t(v_0) = p_t(v_1) + (r\cos f(t), r\sin f(t))$.
	%It follows that for all $t \in [0,1]$,
	%\begin{align*}
		%\|p'_t(v_0) - p'_t(v_1)\|^2 =\|p'_t(v_0) - p'_t(v_1) - L_t.\gamma\|^2  = r^2,
	%\end{align*}
	%and so we define $(G',p',L):=(G',p'_0,L_0)$ as required.
\end{proof}

\section{Characterising fixed lattice flexible frameworks}\label{sec:fixedlattice}

In this section we shall prove the following result.

\begin{theorem}\label{thm:fixedNBAC}
	Let $G$ be a connected $\mathbb{Z}^k$-gain graph for $k\in \{1,2\}$. 
	Then there exists a placement-lattice $(p,L)$ of $G$ in $\mathbb{R}^2$ such that
	$(G,p,L)$ is a fixed lattice flexible full $k$-periodic framework if and only if either
	\begin{enumerate}[(i)]
		\item $G$ has a fixed lattice NBAC-colouring, or
		\item $G$ is balanced.
	\end{enumerate}
\end{theorem}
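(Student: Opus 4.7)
The plan is to prove the two directions of the biconditional separately. For the ``if'' direction when $G$ is balanced, apply Proposition~\ref{prop:balancedsub} to gain-switch $G$ so that every edge has trivial gain; then any injective placement $p$ together with any injective lattice $L$ gives a full framework whose edge-length constraints are independent of $L$, and the continuous path $t \mapsto (p, R_t L)$ with $R_t \in SO(2)$ is a fixed-lattice flex. It is non-trivial for generic $p$: congruence $(p, R_t L) \sim (p, L)$ would force $M L = R_t L$ for some $M \in O(2)$, which restricts $M$ to a finite set of candidates, and the identity $p(v) = M p(v) + y$ then collapses $p$ onto a single line or point, which is easy to avoid.

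If instead $G$ has a fixed-lattice NBAC-colouring $\delta$, assume without loss of generality that $G^\delta_{\text{red}}$ is unbalanced, $G^\delta_{\text{blue}}$ is balanced, and $G$ has no almost red circuits. First gain-switch on the blue components (Proposition~\ref{prop:balancedsub}) so every blue edge has trivial gain; the ``no almost red circuits'' hypothesis then guarantees that every blue edge joins two \emph{distinct} red components, since a blue edge inside a red component would close an almost red circuit together with a red path. The flexible placement adapts the grid-style NAC construction of \cite{jan2}: set $p(v) := \rho_{R(v)} e_1 + \beta_{B(v)} e_2$, where $R(v), B(v)$ denote the red and blue components of $v$, the $\rho_R$ and $\beta_B$ are generic real parameters indexed by red resp.\ blue components, and $e_1, e_2 \in \mathbb{R}^2$ are fixed non-parallel directions; choose any injective lattice $L$ and define each edge length by this placement. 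Under the one-parameter deformation $p_\theta(v) := \rho_{R(v)} e_1(\theta) + \beta_{B(v)} e_2$ (rotating $e_1$ to $e_1(\theta)$), each red edge $(v, w, \gamma)$ is preserved because $p_\theta(v) - p_\theta(w) - L\gamma = (\beta_{B(v)} - \beta_{B(w)}) e_2 - L\gamma$ is $\theta$-independent, and each blue edge $(v, w, 0)$ lies within a single blue component so $\beta_{B(v)} = \beta_{B(w)}$ and the edge vector reduces to $(\rho_{R(v)} - \rho_{R(w)}) e_1(\theta)$, whose length is preserved by rotation; the lattice $L$ remains fixed throughout.

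For the ``only if'' direction, assume $(G, p, L)$ is fixed-lattice flexible and $G$ is not balanced. Lemma~\ref{lem:existalgcurv}(ii) produces an algebraic curve $\mathcal{C} \subset \Vsetcirc$. After switching so that $\tilde{e}$ lies in a balanced spanning tree, Lemma~\ref{lem:constantW} (which applies when $\rank(G) = k$) yields an edge $a$ with $W^\alpha_{a_1, a_2}$ non-constant on $\mathcal{C}$, and Lemma~\ref{lem:NBAC} then extracts an NBAC-colouring $\delta$ of $G$; the case $k = 2$ with $\rank(G) = 1$ reduces to the $k = 1$ case, since all gains lie in a rank-$1$ subgroup $\mathbb{Z}\alpha \subseteq \mathbb{Z}^2$. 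The crucial remaining task is to show that $\delta$ (possibly after swapping colours and adjusting the threshold) is of \emph{fixed lattice} type. For this I exploit the identities $W_j Z_j = \lambda(j,j)^2$ and $W_j Z_l + W_l Z_j = 2\lambda(j,l)^2$ valid on $\Vsetcirc$: the strict triangle inequality for valuations forces $\nu(W_j) = -\nu(Z_j)$ and $\nu(W_j) = \nu(W_l)$ for all $j, l$. Setting $\beta := \nu(W_1)$, a careful choice of threshold together with a possible colour-swap then produces the fixed-lattice NBAC conditions that one monochromatic subgraph is balanced and there are no almost monochromatic circuits of the opposite colour.

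The main obstacle I anticipate is this last step of the ``only if'' direction: translating the global valuation constraint $\nu(W_1) = \cdots = \nu(W_k) = \beta$ into the combinatorial fixed-lattice NBAC conditions. A secondary technical point lies in the ``if''-direction construction, where one must verify that the explicit placement yields a genuine full framework (with injective $L$ consistent with the declared edge lengths) and that the $e_1$-rotation is not accidentally a rigid motion of the whole framework.
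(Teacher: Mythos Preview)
Your proposal is correct and follows essentially the same approach as the paper. The balanced case matches Lemma~\ref{lem:fixedNBAC4} exactly; your grid construction for the fixed-lattice NBAC case is the same as Lemma~\ref{lem:fixedNBAC3} (with the two colours interchanged, which is harmless); and your necessity argument is precisely the chain Lemma~\ref{lem:existalgcurv} $\to$ Lemma~\ref{lem:constantW} $\to$ Lemma~\ref{lem:NBAC} followed by the valuation identities of Lemma~\ref{lem:helpz} and Lemmas~\ref{lem:fixedactive1}/\ref{lem:fixedactive2}, including the reduction of the $k=2$, $\rank(G)=1$ case to $k=1$. One small remark: you will not actually need to adjust the threshold or swap colours at the end --- once you know $\nu(W_1)=\cdots=\nu(W_k)$ and $\nu(W_j)=-\nu(Z_j)$, the colouring produced by Lemma~\ref{lem:NBAC} with threshold~$0$ is already of fixed-lattice type, the two cases $\nu(W_1)>0$ and $\nu(W_1)\le 0$ simply determining which colour plays which role.
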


We shall first need to prove four results: Lemma \ref{lem:fixedNBAC1} for $k=1$, Lemma \ref{lem:fixedNBAC2} for $k=2$,
and Lemmas \ref{lem:fixedNBAC3} and \ref{lem:fixedNBAC4} for any $k \in \{1,2\}$.
The latter two will also explicitly show how to construct a fixed lattice flexible framework when either $G$ has a fixed lattice NBAC-colouring or is balanced.

\subsection{Necessary conditions for fixed lattice flexibility}

\begin{lemma}\label{lem:fixedactive1}
 	Let $(G,p,L)$ be a full $1$-periodic framework in $\mathbb{R}^2$ where $G$ is connected and unbalanced,
 	and let $\mathcal{C} \subset \Vsetcirc$ be an algebraic curve. 
 	Then every active NBAC-colouring of $\mathcal{C}$ is a fixed lattice NBAC-colouring.
\end{lemma}

\begin{proof}
	Let $\delta$ be an active NBAC-colouring of $\mathcal{C}$ generated by the valuation $\nu$ 
	and $\alpha \in \mathbb{R}$.	
	As $\mathcal{C} \subset \Vsetcirc$,
	we have $W_1 Z_1 = \| L.1\|^2$.
	Since $W_1 Z_1$ is constant,
	then $\nu(W_1) = - \nu(Z_1)$.
	We shall assume $\nu(W_1) >\alpha$ as the proof for the case $\nu(W_1) \leq \alpha$ follows from a similar method.

	Suppose there exists an almost red circuit $C$ of length $n$ in $G$ with $\delta(e_n) = \text{blue}$.
	As $\delta$ is an NBAC-colouring, we must have that $\gamma: = \psi (C) \neq 0$.
	It then follows that
	\begin{align*}
		\nu(W^{\gamma_n}_{v_1,v_n}) =\nu  \left( \sum_{j=1}^{n-1} W^{\gamma_j}_{v_j,v_{j+1}} + \gamma W_1 \right)
		\geq \min \left\{ \nu (W^{\gamma_j}_{v_j,v_{j+1}}), \nu(W_1) : j =1, \ldots, n-1 \right\} >\alpha,
	\end{align*}
	however this contradicts that $\nu (W^{\gamma_n}_{v_1,v_n}) \leq \alpha$.
	
	Now suppose there exists an unbalanced blue circuit $C$ of length $n$ in $G$ with $\gamma: = \psi (C)$.
	We note
	\begin{align*}
		\nu(-\gamma Z_1) 
		= \nu \left( \sum_{j=1}^{n} Z^{\gamma_j}_{v_j,v_{j+1}} \right) 
		\geq \min \left\{ \nu (Z^{\gamma_j}_{v_j,v_{j+1}}) : j =1, \ldots, n \right\}  \geq \alpha,
	\end{align*}
	contradicting that $\nu (Z_1) <\alpha$.
\end{proof}

We are now ready to prove our first necessity lemma.

\begin{lemma}\label{lem:fixedNBAC1}
 	Let $(G,p,L)$ be a full $1$-periodic framework in $\mathbb{R}^2$. 
 	If $(G,p,L)$ is fixed lattice flexible then either $G$ has an active fixed lattice NBAC-colouring, $G$ is balanced or $G$ is disconnected.
\end{lemma}

\begin{proof}
	Suppose $G$ is unbalanced and connected. 
	It follows from Proposition \ref{prop:balancedsub}
	that we may assume $G$ contains a spanning tree $T$ where every edge has trivial gain and $\tilde{e} \in T$,
	since by Proposition \ref{prop:activegain},
	if an equivalent graph to $G$ has an active NBAC-colouring then so does $G$.
	By Lemma \ref{lem:existalgcurv} (\ref{lem:existalgcurvitem2}),
	there exists an algebraic curve $\mathcal{C} \subset \Vsetcirc$.
	By Lemma \ref{lem:constantW},
	there exists $a:=(a_1, a_2, \alpha) \in E(G)$ such that $W^\alpha_{a_1,a_2}$ is not constant on $\mathcal{C}$.
	By Lemma \ref{lem:NBAC},
	there exists an active NBAC-colouring $\delta$ of $\mathcal{C}$,
	thus by Lemma \ref{lem:fixedactive1},
	$\delta$ is a fixed lattice NBAC-colouring as required.
\end{proof}

\begin{lemma}\label{lem:helpz}
	Let $(G,p,L)$ be a full $2$-periodic framework in $\mathbb{R}^2$,
	$\mathcal{C} \subset \Vsetcirc$ be an algebraic curve
	and suppose the function field $\mathbb{C}(\mathcal{C})$ has valuation $\nu$.
	Then the following holds:
		\begin{enumerate}[(i)]
			\item \label{lem:helpzitem1} $\nu(W_1) = - \nu(Z_1)$, $\nu(W_2) = - \nu(Z_2)$ and $\nu (W_1.Z_2 + W_2.Z_1) = 0$.
			\item \label{lem:helpzitem2} $\nu(W_1) = \nu(W_2)$ and $\nu(Z_1) = \nu(Z_2)$.
			\item \label{lem:helpzitem4} For all $\gamma \in \mathbb{Z}^2$,
			$\nu(\gamma Z) = - \nu(\gamma W)$.
			\item \label{lem:helpzitem3} For any $\gamma \in \mathbb{Z}^2$ and $\alpha \in \mathbb{R}$, 
			if $\nu(W_1) >\alpha$, 
			then $\nu(\gamma W) >\alpha$, 
			and if $\nu(W_1) \leq \alpha$, 
			then $\nu(\gamma W) \leq \alpha$.
		\end{enumerate}
\end{lemma}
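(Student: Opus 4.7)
The plan is to exploit the defining identities of $\Vsetcirc$---namely $W_j Z_j = \lambda(j,j)^2$ and $W_j Z_l + W_l Z_j = 2\lambda(j,l)^2$, both constants in $\mathbb{C}$---together with the axioms of a valuation, in particular the strict-inequality clause in axiom (iii) and the fact that every nonzero complex constant has valuation zero. The injectivity of $L$ (from the ``full'' hypothesis) will ensure that the relevant constants are nonzero where we need them to be.

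For part (\ref{lem:helpzitem1}), since $(G,p,L)$ is full we have $L_j \neq 0$, so $W_j Z_j = \lambda(j,j)^2$ is a nonzero complex constant and multiplicativity gives $\nu(W_j) + \nu(Z_j) = 0$; the expression $W_j Z_l + W_l Z_j$ is likewise constant on $\Vsetcirc$, so it has valuation at least $0$. For part (\ref{lem:helpzitem2}) I would argue by contradiction: if $\nu(W_1) \neq \nu(W_2)$, then $\nu(W_1 Z_2) = \nu(W_1) - \nu(W_2)$ and $\nu(W_2 Z_1) = \nu(W_2) - \nu(W_1)$ are distinct, so the strict-inequality clause forces $\nu(W_1 Z_2 + W_2 Z_1) = \min\{\nu(W_1 Z_2), \nu(W_2 Z_1)\} < 0$, contradicting the constancy of $W_1 Z_2 + W_2 Z_1$. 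The equality $\nu(Z_1) = \nu(Z_2)$ then follows from (\ref{lem:helpzitem1}).

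For part (\ref{lem:helpzitem4}), a direct expansion gives $(\gamma W)(\gamma Z) = (a x_1 + b x_2)^2 + (a y_1 + b y_2)^2$ for $\gamma = (a,b)$, which is constant on $\Vsetcirc$ and evaluates at the real framework $(p,L)$ to $\|L.\gamma\|^2$. Since $L$ is injective, $L.\gamma \neq 0$ whenever $\gamma \neq 0$, so this constant is nonzero and hence $\nu(\gamma W) + \nu(\gamma Z) = 0$. For part (\ref{lem:helpzitem3}) I would then squeeze $\nu(\gamma W)$: subadditivity gives $\nu(\gamma W) \geq \min\{\nu(W_1),\nu(W_2)\} = \nu(W_1)$ by (\ref{lem:helpzitem2}), while the same bound applied to $\gamma Z$, combined with (\ref{lem:helpzitem4}), yields $\nu(\gamma W) = -\nu(\gamma Z) \leq -\nu(Z_1) = \nu(W_1)$. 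Hence $\nu(\gamma W) = \nu(W_1)$ for every nonzero $\gamma$, and (\ref{lem:helpzitem3}) is immediate.

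The main subtlety is the case $\gamma = 0$ in part (\ref{lem:helpzitem3}), where $\nu(\gamma W) = \infty$ causes the ``$\leq \alpha$'' clause to fail; this case must either be excluded from the statement or treated as trivial. A secondary nuisance is that when $L_1 \perp L_2$ the constant $W_1 Z_2 + W_2 Z_1$ vanishes identically, but this does not obstruct the contradiction argument for (\ref{lem:helpzitem2}) since only the lower bound $\nu(W_1 Z_2 + W_2 Z_1) \geq 0$ is used there.
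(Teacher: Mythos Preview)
Your argument is correct and follows essentially the same route as the paper: use the constancy of $W_jZ_j$ and $W_jZ_l+W_lZ_j$ on $\Vsetcirc$ together with the valuation axioms, argue (\ref{lem:helpzitem2}) by contradiction via the strict-inequality clause, and handle (\ref{lem:helpzitem3}) by bounding $\nu(\gamma W)$ from below directly and from above via (\ref{lem:helpzitem4}). Your treatment is in fact slightly more careful than the paper's---you correctly note that $W_1Z_2+W_2Z_1$ may vanish when $L_1\perp L_2$ (so only $\nu\geq 0$ is guaranteed, which is all that is needed), and you flag the $\gamma=0$ edge case in (\ref{lem:helpzitem3}), which the paper silently ignores; your sharper conclusion $\nu(\gamma W)=\nu(W_1)$ for $\gamma\neq 0$ is a clean way to package both halves of (\ref{lem:helpzitem3}) at once.
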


\begin{proof}
	(\ref{lem:helpzitem1}): As $\mathcal{C} \subset \Vsetcirc$ then
	\begin{align*}
		W_1 Z_1 = \lambda(1,1)^2, \quad W_2 Z_2 = \lambda(2,2)^2, \quad W_1.Z_2 + W_2.Z_1 = 2 \lambda(1,2)^2,
	\end{align*}
	thus all are non-zero and constant.
	Since $\nu(f) = 0$ for all non-zero and constant $f\in \mathbb{C}(\mathcal{C})$,
	the result follows.
	
	(\ref{lem:helpzitem2}): We see that,
	\begin{align*}
		\nu(W_1.Z_2 + W_2.Z_1) \geq \min \left\{ \nu (W_1) - \nu (W_2), \nu (W_2) - \nu (W_1) \right\}
	\end{align*}
	with equality if $\nu (W_1) \neq \nu (W_2)$.
	If $\nu (W_1) \neq \nu (W_2)$,
	then $\nu(W_1.Z_2 + W_2.Z_1) < 0$,
	contradicting that $\nu(W_1.Z_2 + W_2.Z_1) = 0$, 
	thus $\nu (W_1) = \nu (W_2)$ (and similarly $\nu (Z_1) = \nu (Z_2)$).
	
	(\ref{lem:helpzitem4}):	Let $\gamma := (\gamma_1, \gamma_2)$ and define
	\begin{eqnarray*}
		g &:=& (\gamma_1 W_1 + \gamma_2 W_2) (\gamma_1 Z_1 + \gamma_2 Z_2) \\
		&=& \gamma_1^2 W_1 Z_1+ \gamma_2^2 W_2 Z_2 + \gamma_1 \gamma_2 (W_1 Z_2 + W_2 Z_1) \\
		&=& (\gamma_1 x_1 + \gamma_2 x_2)^2 + (\gamma_1 y_1 + \gamma_2 y_2)^2 .
	\end{eqnarray*}
	As $W_1 Z_1$, $W_2 Z_2 $ and $W_1 Z_2 + W_2 Z_1$ are all constant (since $\mathcal{C} \subset \Vsetcirc$),
	then $g$ is constant. 
	We further note that if $g = 0$ then the vectors $(x_1,y_1)$ and $(x_2,y_2)$ are linearly dependent for all points in $\mathcal{C}$.
	As this would contradict that $(G,p,L)$ is full,
	we have $\nu (g) = 0$.
	The required equality will now follow.
	
	(\ref{lem:helpzitem3}): Let $\gamma := (\gamma_1, \gamma_2)$.
	By (\ref{lem:helpzitem1}) and (\ref{lem:helpzitem2}),
	$\nu(W_1)=\nu(W_2)$.
	If $\nu(W_1) >\alpha$, 
	then
	\begin{align*}
		\nu(\gamma_1 W_1 + \gamma_2 W_2) 
		\geq \min \left\{ \nu (W_1) , \nu (W_2) \right\}  > \alpha,
	\end{align*}
	while if $\nu(W_1) \leq \alpha$, 
	then by (\ref{lem:helpzitem4}),
	\begin{align*}
		\nu(\gamma_1 W_1 + \gamma_2 W_2) 
		= - \nu(\gamma_1 Z_1 + \gamma_2 Z_2) 
		\leq - \min \left\{ \nu (Z_1) , \nu (Z_2) \right\} 
		= \max \left\{ \nu (W_1) , \nu (W_2) \right\} 
		\leq \alpha.
	\end{align*}
\end{proof}

\begin{lemma}\label{lem:fixedactive2}
 	Let $(G,p,L)$ be a full $2$-periodic framework in $\mathbb{R}^2$ where $G$ is connected graph with $\rank(G)=2$,
 	and let $\mathcal{C} \subset \Vsetcirc$ be an algebraic curve. 
 	Then every active NBAC-colouring of $\mathcal{C}$ is a fixed lattice NBAC-colouring.
\end{lemma}

\begin{proof}
	Let $\delta$ be an active NBAC-colouring of $\mathcal{C}$ with corresponding valuation $\nu$ 
	and non-zero $\alpha \in \mathbb{R}$.	
	By Lemma \ref{lem:helpz} (\ref{lem:helpzitem1}) and Lemma \ref{lem:helpz} (\ref{lem:helpzitem2}), 
	$\nu(W_1) = \nu(W_2)$, $\nu(Z_1) = - \nu(W_1)$ and $\nu(Z_2) = - \nu(W_2)$.
	We shall assume $\nu(W_1) >\alpha$ as the proof for the case $\nu(W_1) \leq \alpha$ follows from a similar method.
	
	Suppose there exists an almost red circuit $C$ of length $n$ in $G$ with $\gamma := \psi(C)$ and $\delta(e_n) = \text{blue}$.
	Then,
	\begin{align*}
		W^{\gamma_n}_{v_1,v_n} = \sum_{j=1}^{n-1} W^{\gamma_j}_{v_j,v_{j+1}} + \gamma W.
	\end{align*}
	By Lemma \ref{lem:helpz} (\ref{lem:helpzitem3}),
	\begin{align*}
		\nu(W^{\gamma_n}_{v_1,v_n})
		\geq \min \left\{ \nu (W^{\gamma_j}_{v_j,v_{j+1}}), \gamma W : j =1, \ldots, n-1 \right\} >\alpha,
	\end{align*}
	however this contradicts that $\nu (W^{\gamma_n}_{v_1,v_n}) \leq \alpha$.
	
	Now suppose there exists an unbalanced blue circuit $C$ of length $n$ in $G$ with $\gamma := \psi(C)$.
	We note
	\begin{align*}
		\nu(-\gamma Z) 
		= \nu \left( \sum_{j=1}^{n} Z^{\gamma_j}_{v_j,v_{j+1}} \right) 
		\geq \min \left\{ \nu (Z^{\gamma_j}_{v_j,v_{j+1}}) : j =1, \ldots, n \right\}  \geq \alpha.
	\end{align*}
	However, by Lemma \ref{lem:helpz} (\ref{lem:helpzitem4}) and Lemma \ref{lem:helpz} (\ref{lem:helpzitem3}) we have $\nu(-\gamma Z) <\alpha$, a contradiction.
\end{proof}

We are now ready to prove our final necessity lemma.
	
\begin{lemma}\label{lem:fixedNBAC2}
 	Let $(G,p,L)$ be a full $2$-periodic framework in $\mathbb{R}^2$. 
 	If $(G,p,L)$ is fixed lattice flexible then either $G$ has an active fixed lattice NBAC-colouring, $G$ is balanced or $G$ is disconnected.
\end{lemma}

\begin{proof}
	Suppose $\rank G =1$ and $G$ is connected. 
	We note that any $2$-periodic framework with rank 1 is fixed lattice flexible if and only if
	it is fixed lattice flexible when considered as a $1$-periodic framework.
	By Lemma \ref{lem:fixedNBAC1}, $G$ has an active fixed lattice NBAC-colouring.

	Suppose $\rank G =2$ and $G$ is connected.
	It follows from Proposition \ref{prop:balancedsub} and Proposition \ref{prop:activegain}
	that we may assume $G$ contains a spanning tree $T$ where every edge has trivial gain and $\tilde{e} \in T$.
	By Lemma \ref{lem:existalgcurv} (\ref{lem:existalgcurvitem2}),
	there exists an algebraic curve $\mathcal{C} \subset \Vsetcirc$.
	By Lemma \ref{lem:constantW},
	there exists $a:=(a_1, a_2, \alpha) \in E(G)$ such that $W^\alpha_{a_1,a_2}$ is not constant on $\mathcal{C}$.
	By Lemma \ref{lem:NBAC},
	there exists an active NBAC-colouring $\delta$ of $\mathcal{C}$,
	and by Lemma \ref{lem:fixedactive2},
	$\delta$ is a fixed lattice NBAC-colouring as required.
\end{proof}

\subsection{Constructing fixed lattice flexible frameworks}

\begin{lemma}\label{lem:fixedNBAC3}
	Let $G$ be a connected $\mathbb{Z}^k$-gain graph  for $k\in \{1,2\}$.
	If $G$ has a fixed lattice NBAC-colouring $\delta$
	then there exists a full placement-lattice $(p,L)$ of $G$ in $\mathbb{R}^2$ such that $(G,p,L)$ is fixed lattice flexible.
\end{lemma}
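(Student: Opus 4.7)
The plan is to construct $(p, L)$ and its fixed lattice flex directly from the combinatorial data in $\delta$. Without loss of generality I shall assume that $G^\delta_{\text{red}}$ is unbalanced, $G^\delta_{\text{blue}}$ is balanced, and $G$ has no almost red circuits; the reversed case is symmetric. The first step is a gain-switching reduction: since the blue components have pairwise disjoint vertex sets and are each balanced, Proposition~\ref{prop:balancedsub} yields a gain-equivalent graph in which every blue edge has trivial gain, and by Remark~\ref{rem:gainequivrigid} fixed-lattice flexibility is invariant under gain equivalence, so I may work with this new graph. After this reduction I shall write $R_1, \ldots, R_m$ and $B_1, \ldots, B_n$ for the red and blue components, and $r(v), b(v)$ for the component indices of each vertex; surjectivity of $\delta$ combined with ``no almost red circuits'' forces $m \geq 2$, since any blue edge must have its endpoints in distinct red components.

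For the placement I shall pick any injective $L \in M_{2 \times k}(\mathbb{R})$, pairwise distinct $A_1, \ldots, A_m \in \mathbb{R}^2$, and $B_1, \ldots, B_n \in \mathbb{R}^2$ generic enough that $B_{b(v)} - B_{b(w)} \neq L\gamma$ for every red edge $(v, w, \gamma)$ with $b(v) \neq b(w)$, and then set $p(v) := A_{r(v)} + B_{b(v)}$. Non-degeneracy of $(G, p, L)$ as a placement-lattice will come from three pieces of the hypothesis: the distinctness of the $A_i$ together with ``no almost red circuits'' handles the blue edges; the generic $B_j$ handle red edges across distinct blue components; and for a red edge internal to a blue component, concatenating it with the gain-$0$ blue walk between its endpoints yields an almost blue circuit whose balance would violate NBAC, forcing $\gamma \neq 0$ and hence $L\gamma \neq 0$ by injectivity of $L$.

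For the flex I shall set $p_t(v) := R(t) A_{r(v)} + B_{b(v)}$ and $L_t := L$, where $R(t) \in SO(2, \mathbb{R})$ denotes rotation by angle $t$. Each red edge has $r(v) = r(w)$, so $p_t(v) - p_t(w) - L_t \gamma = B_{b(v)} - B_{b(w)} - L\gamma$ is independent of $t$; each blue edge has $b(v) = b(w)$, so $p_t(v) - p_t(w) = R(t)(A_{r(v)} - A_{r(w)})$ has constant length. Hence $(p_t, L_t) \in \mathcal{V}^f_{\mathbb{R}}(G, p, L)$. Non-triviality will follow from the observation that a fixed-lattice trivial flex must be a pure translation (any continuous family of isometries starting at the identity and preserving the image of the injective $L$ must be the identity throughout), whereas the relative displacement $(R(t) - I)(A_i - A_j)$ between vertices in different red components is nonzero for small $t \neq 0$ because $R(t) - I$ is invertible.

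The main obstacle is spotting the correct flex. The standard NAC-colouring construction for finite simple graphs rotates both colour classes, but that fails here because the non-rotating lattice term $L\gamma$ appears in red edge vectors; the resolution is to rotate only the red-component offsets $A_{r(v)}$, which leaves each red edge vector completely invariant while blue edges (which were switched to trivial gain) transform cleanly under rotation. The two halves of the fixed-lattice NBAC condition --- ``no almost red circuits'' and ``no balanced almost blue circuits'' --- then supply exactly the non-degeneracy checks the construction requires.
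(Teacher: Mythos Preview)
Your proof is correct and follows essentially the same construction as the paper's: after switching the balanced colour to have trivial gain, you place each vertex according to its pair of monochromatic component indices (your $A_{r(v)}+B_{b(v)}$ is exactly the paper's $(x,y)$ with $x,y$ the component labels), and then rotate the offset associated to the balanced colour while keeping both the other offset and the lattice fixed. The only differences are cosmetic---the paper picks explicit integer coordinates and a diagonal lattice with irrational entries in place of your generic $A_i$, $B_j$, $L$, and it certifies non-triviality by noting that one edge stays pinned rather than via your translation argument---but the underlying mechanism is identical.
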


\begin{proof}
	The proof for $k=1$ is identical to that for $k=2$ except we have $L := [c ~ 0]^T$ for some irrational $c>0$.
	Due to this,
	we shall only prove the case for $k=2$.
	
	We may assume without loss of generality that $G^{\delta}_{\text{red}}$ is balanced;
	furthermore, by Proposition \ref{prop:balancedsub},
	we may assume all edges of $G^{\delta}_{\text{red}}$ have trivial gain.
	Let $R_1, \ldots, R_n$ be the red connected components and $B_1, \ldots, B_m$ be the blue connected components.
	As $\delta$ is a NBAC-colouring,
	there exists a blue edge $\tilde{e} \in E(G)$;
	by reordering the blue components we may assume the end points of $\tilde{e}$ lie in $B_1$.
	
	Choose any two points $c_1,c_2 >0$ so that $Ac_1 + Bc_2 \notin \mathbb{Z}$ for all $A,B \in \mathbb{Z} \setminus \{0\}$;
	it is sufficient that the set $\{c_1,c_2\}$ is algebraically independent over $\mathbb{Q}$.
	We define the placement-lattice $(p,L)$ of $G$ with 
	\begin{align*}
		p(v) := (x,y), 
		\qquad 
		L:= 
		\begin{bmatrix}
			c_1 & 0 \\
			0 & c_2 
		\end{bmatrix}
	\end{align*}
	for $v \in V(R_x) \cap V(B_y)$.

	We shall now prove $(G,p,L)$ is a well-defined $k$-periodic framework.
	Suppose there exists a red edge $e:= (v,w,\gamma) \in E(G)$ such that $p(v) = p(w) +L.\gamma$.
	As $e$ is red then $\gamma = (0,0)$,
	thus $p(v) = p(w)$. 
	It follows that for some $1\leq x \leq n$ and $1 \leq y \leq m$,
	we have $v,w \in V(R_{x}) \cap V(B_{y})$, 
	thus there exists a blue path $( e_1, \ldots, e_n)$ that starts at $w$ and ends at $v$.
	We note, however, that $(e_1,  \ldots, e_n, e)$ is an almost blue circuit,
	contradicting that $\delta$ is a fixed-lattice NBAC-colouring.
	
	 Now suppose there exists a blue edge $e:= (v,w,\gamma) \in E(G)$ with $\gamma = (\gamma_1,\gamma_2)$
	 such that $p(v) = p(w) +L.\gamma$,
	 then $p(v) = p(w) + (\gamma_1 c_1, \gamma_2 c_2)$.
	 By our choice of $c_1, c_2$ we must have $\gamma_1=\gamma_2 =0$,
	 thus $p(v) = p(w)$.
	 This implies that for some $1\leq x \leq n$ and $1 \leq y \leq m$,
	 we have $v,w \in V(R_{x}) \cap V(B_{y})$, 
	 and there exists a red path $( e_1, \ldots, e_n)$ that starts at $w$ and ends at $v$.
	 We note, however, that $(e_1, \ldots, e_n,  e)$ is a balanced almost red circuit
	 (since all red edges have trivial gain),
	 contradicting that $\delta$ is an NBAC-colouring.
	 It now follows that $(G,p,L)$ is a full $k$-periodic framework.
	
	Define the motion $(p_t,L_t)$, $t \in [0,1]$, 
	where for $p(v) = (x,y)$, 
	\begin{align*}
		p_t(v) := ( x + y \sin t, y \cos t),
	\end{align*}
	and $L_t =L$. 
	Choose any $t \in [0,1]$ and $e = (v,w, \gamma) \in E(G)$,
	with $\gamma =(\gamma_1, \gamma_2)$,
	$p(v) = (x,y)$ and $p(w) = (x',y')$.
	Suppose $\delta(e) = \text{red}$.
	Then $x'=x$, and $\gamma = (0,0)$ (as all red edges have trivial gain),
	and	it follows that
	\begin{align*}
		\|p_t(v) - p_t(w) - L_t.\gamma\|^2 = ( (y-y') \sin t)^2  + ( (y-y') \cos t)^2 = (y-y')^2.
	\end{align*}
	Now suppose $\delta(e) = \text{blue}$.
	Then $y'=y$ and we note that
	\begin{align*}
		\|p_t(v) - p_t(w) - L_t.\gamma\|^2 = (x-x' + \gamma_1 c_1)^2 + (\gamma_2 c_2)^2.
	\end{align*}
	It follows that $(G,p_t,L_t) \sim (G,p,L)$ for all $t \in [0,1]$,
	thus $(p_t,L_t)$ is a fixed lattice flex of $(G,p,L)$.
	As the edge $\tilde{e}$ is fixed then $(p_t,L_t)$ is non-trivial, 
	thus $(G,p,L)$ is fixed lattice flexible as required.
	We refer the reader to Figure \ref{fig:fixedNBACconstruction} for an example of the construction described.
\end{proof}

\begin{figure}[ht]
	\begin{center}
		\begin{tikzpicture}
			\node[lnode] (1) at (-1,-1) {$\boldsymbol{1}$};
			\node[lnode] (2) at (-1,1) {$\boldsymbol{2}$};
			\node[lnode] (3) at (1,-1) {$\boldsymbol{3}$};
			\node[lnode] (4) at (1,1) {$\boldsymbol{4}$};
			\node[lnode] (5) at (-2,0) {$\boldsymbol{5}$};
			\node[lnode] (6) at (2,0) {$\boldsymbol{6}$};
			\node[lnode] (7) at (0,1.5) {$\boldsymbol{7}$};
			\node(8) at (0,-2.1) {};
			
			\draw[redge] (1)edge(3);
			\draw[redge] (2)edge(4);
			\draw[redge] (5)edge(6);
			
			\draw[bedge] (2)edge(5);
			\draw[bedge] (1)edge(5);
			\path[bedge] (1) edge [bend left] node {} (2);
			\path[bedge,-latex] (1) edge [bend right] node {} (2);
			
			\draw[redge] (4)edge(7);
			\draw[bedge,-latex] (2)edge(7);
			
			\draw[bedge] (4)edge(6);
			\draw[bedge] (3)edge(6);
			\draw[bedge,-latex] (3)edge(4);
			
			\node[font=\scriptsize] at (-0.3,-0.5) {$(1,0)$};
			\node[font=\scriptsize] at (0.5,0.5) {$(0,1)$};
			\node[font=\scriptsize] at (-0.65,1.55) {$(1,1)$};
		\end{tikzpicture}\qquad\qquad
		\begin{tikzpicture}[scale=2]
			\node[lnode] (1) at (0,0) {$\boldsymbol{1}$};
			\node[lnode] (2) at (2,0) {$\boldsymbol{27}$};
			\node[lnode] (3) at (0,1) {$\boldsymbol{3}$};
			\node[lnode] (4) at (2,1) {$\boldsymbol{4}$};
			\node[lnode] (5) at (1,0) {$\boldsymbol{5}$};
			\node[lnode] (6) at (1,1) {$\boldsymbol{6}$};
			
			\draw[redge] (1)edge(3);
			\draw[redge] (2)edge(4);
			\draw[redge] (5)edge(6);
			
			\draw[bedge] (2)edge(5);
			\draw[bedge] (1)edge(5);
			\path[bedge] (1) edge [bend left] node {} (2);
			\path[bedge,-latex] (1) edge [bend right] node {} (2);
			
			\path[redge] (4) edge [bend left] node {} (2);

			%\draw[bedge,-latex] (2)edge(3);
			
			\draw[bedge] (4)edge(6);
			\draw[bedge] (3)edge(6);
			\path[bedge,-latex] (3)edge[bend left] node {} (4);
			
			\path[bedge,-latex, every loop/.style={looseness=10}] (2) edge [loop right] node { } (2);

			\node[font=\scriptsize] at (1,-0.5) {$(1,0)$};
			\node[font=\scriptsize] at (2.3,-0.25) {$(1,1)$};
			\node[font=\scriptsize] at (1,1.5) {$(0,1)$};
		\end{tikzpicture}
	\end{center}
	\caption{(Left): A $\mathbb{Z}^2$-gain graph $G$ with a fixed lattice NBAC-colouring.
	(Right): The constructed full $2$-periodic framework $(G,p,L)$ in $\mathbb{R}^2$.
	We note that even though we place $(2)$ and $(7)$ at the same point in $\mathbb{R}^2$,
	$p(2) \neq p(7) +L.(1,1)$.}
	\label{fig:fixedNBACconstruction}
\end{figure}
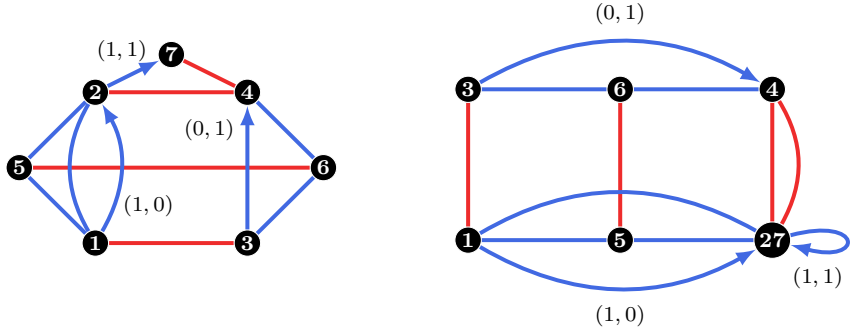

\begin{lemma}\label{lem:fixedNBAC4}
	Let $G$ be a $\mathbb{Z}^k$-gain graph  for $k\in \{1,2\}$.
	If $G$ is balanced,
	then there exists a full placement-lattice $(p,L)$ of $G$ in $\mathbb{R}^2$ such that $(G,p,L)$ is fixed lattice flexible.
\end{lemma}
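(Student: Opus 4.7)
The plan is to produce an explicit fixed-lattice flex in which the placement $p$ is held fixed while the lattice $L$ rotates. Applying Proposition \ref{prop:balancedsub} (with the connected components of $G$ as the subgraphs $H_i$, each of trivial span since $G$ is balanced) together with Remark \ref{rem:gainequivrigid}, I may assume without loss of generality that every edge of $G$ carries the trivial gain. I would then pick any placement $p : V(G) \to \mathbb{R}^2$ injective on vertices and any injective $L \in M_{2 \times k}(\mathbb{R})$; the trivial-gain edges and injectivity of $p$ automatically make $(G,p,L)$ a full $k$-periodic framework.

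For the motion I would take $(p_t, L_t) := (p, R_t L)$ for $t \in [0,1]$, where $R_t \in SO(2)$ is rotation by angle $2\pi t$. Checking that this is a fixed-lattice flex is routine: because every edge has $\gamma = 0$, the edge-length condition collapses to the tautology $\|p(v)-p(w)\|^2 = \|p(v)-p(w)\|^2$; the identity $L_t^T L_t = L^T R_t^T R_t L = L^T L$ yields $L_t \sim L$; and $L_t$ remains injective because $R_t$ is invertible.

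The main step is to verify non-triviality. Suppose toward contradiction that $(p_t, L_t) \sim (p, L)$ for every $t \in [0,1]$; then for each $t$ there exist $M_t \in O(2)$ and $y_t \in \mathbb{R}^2$ with $p = M_t p + y_t$ and $R_t L = M_t L$. The first equation forces $(I - M_t)(p(v) - p(w)) = 0$ for all $v,w \in V(G)$, so $I - M_t$ has non-trivial kernel; since $\det(I - R_\theta) = 2(1 - \cos\theta) \neq 0$ for non-zero $\theta$, this excludes every non-identity rotation, leaving $M_t = I$ or $M_t$ a reflection whose axis contains every difference $p(v) - p(w)$. A reflection of the latter type exists only when $p(V(G))$ is collinear, and when it does it is uniquely determined. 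Thus $M_t$ is drawn from a finite subset $\mathcal{M} \subseteq O(2)$ of size at most two, forcing $L_t = M_t L$ into the finite set $\{M L : M \in \mathcal{M}\}$. But $L_t = R_t L$ takes infinitely many distinct values as $t$ varies, since each non-zero column of $L$ moves continuously along a circle of positive radius; this is the desired contradiction.

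The main obstacle I anticipate is the collinear-placement subcase (forced when $|V(G)| = 2$), where a non-trivial reflection genuinely stabilises $p$; the discrete-versus-continuous counting at the end of paragraph three handles this uniformly. The degenerate case $|V(G)| = 1$ with $E(G) = \emptyset$ admits no non-trivial fixed-lattice flex at all and lies implicitly outside the scope of the result.
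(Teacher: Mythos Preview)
Your approach is exactly the paper's: reduce to trivial gains via Proposition~\ref{prop:balancedsub}, then rotate the lattice while holding $p$ fixed. The paper's proof is terser and omits the non-triviality check you supply; your finite-versus-infinite argument for $M_t$ is correct and fills in a detail the paper leaves implicit, and your observation about the single-vertex case is a genuine boundary issue that the paper also does not address.
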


\begin{proof}
	By Proposition \ref{prop:balancedsub},
	we may assume every edge of $G$ has trivial gain.
	Choose any injective map $p$ and any full lattice $L$.
	We may now define the fixed lattice flex $(p_t,L_t)$ for $t \in [0,1]$,
	where $p_t = p$ and 
	\begin{align*}
		L_t =
		\begin{bmatrix}
			\cos t & -\sin t \\
			\sin t & \cos t
		\end{bmatrix}
		L.
	\end{align*}
\end{proof}

We may now combine the results of this section to prove Theorem \ref{thm:fixedNBAC}

\begin{proof}[Proof of Theorem \ref{thm:fixedNBAC}]
	If $(G,p,L)$ is a fixed lattice flexible full $k$-periodic framework,
	then by Lemma \ref{lem:fixedNBAC1} if $k=1$ or Lemma \ref{lem:fixedNBAC2} if $k=2$,
	either $G$ has a fixed lattice NBAC-colouring or $G$ is balanced.
	
	If $G$ has a fixed lattice NBAC-colouring,
	then by Lemma \ref{lem:fixedNBAC3},
	there exists a fixed lattice flexible full $k$-periodic framework $(G,p,L)$ in $\mathbb{R}^2$.
	
	If $G$ is balanced,
	then by Lemma \ref{lem:fixedNBAC4},
	there exists a fixed lattice flexible full $k$-periodic framework $(G,p,L)$ in $\mathbb{R}^2$.
\end{proof}

\section{Characterising flexible 1-periodic frameworks}\label{sec:1NBAC}

In this section we shall prove the following theorem.

\begin{theorem}\label{thm:1NBAC}
	Let $G$ be a connected $\mathbb{Z}$-gain graph. 
	Then there exists a full placement-lattice $(p,L)$ of $G$ in $\mathbb{R}^2$ such that
	$(G,p,L)$ is a flexible full $1$-periodic framework if and only if either:
	\begin{enumerate}[(i)]
		\item $G$ has a fixed lattice NBAC-colouring,
		\item $G$ has a flexible 1-lattice NBAC-colouring, or
		\item $G$ is balanced.
	\end{enumerate}
\end{theorem}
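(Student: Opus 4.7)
The proof splits into sufficiency and necessity. For sufficiency, cases (i) and (iii) both reduce immediately to Theorem \ref{thm:fixedNBAC}, since a fixed lattice flexible framework is in particular flexible. In case (ii), my plan is to mimic the construction in Lemma \ref{lem:fixedNBAC3}: given a flexible 1-lattice NBAC-colouring $\delta$, apply Proposition \ref{prop:balancedsub} to the red components to obtain a gain-equivalent graph in which every red edge has trivial gain, then place $p(v) := (x_{R_v}, y_{B_v})$ where $R_v$ and $B_v$ index the red and blue component of $v$ and the coordinates are chosen to be rationally independent, and take $L := (c,0)$ with $c$ irrational. The explicit path $p_t(v) := (x_{R_v} + y_{B_v}\sin t,\, y_{B_v}\cos t)$ with $L_t := L$ is then a fixed lattice flex, because red edges have equal first coordinates and blue edges have equal second coordinates within their respective components, and the NBAC condition (no balanced almost monochromatic circuits) rules out the coincidences that would otherwise violate the placement-lattice condition $p(v) \neq p(w) + L.\gamma$. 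Non-triviality of the flex follows since, in any connected graph with a surjective two-colouring, there exist vertices that lie in distinct red and distinct blue components.

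For necessity, suppose $(G,p,L)$ is flexible with $G$ connected; if $G$ is balanced we are in case (iii), so assume $G$ is unbalanced. By Lemma \ref{lem:existalgcurv}(i) there is an algebraic curve $\mathcal{C} \subset \Vset$, and using Propositions \ref{prop:balancedsub} and \ref{prop:activegain} we may assume $G$ has a balanced spanning tree containing $\tilde{e}$. If some choice of curve lies in $\Vsetcirc$, then $(G,p,L)$ is fixed lattice flexible and Theorem \ref{thm:fixedNBAC} gives (i); otherwise the function $W_1 Z_1 = x_1^2+y_1^2$ is non-constant, hence transcendental on $\mathcal{C}$ by Lemma \ref{lem:constantW2}. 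I then plan to feed this through Lemma \ref{lem:constantW}, Proposition \ref{prop:valuation}, and Lemma \ref{lem:NBAC} to produce an active NBAC-colouring $\delta$ generated by a valuation $\nu$ and threshold $\alpha$, and to verify that $\delta$ is of flexible 1-lattice type. The verification amounts to showing that neither $G^\delta_{\text{red}}$ nor $G^\delta_{\text{blue}}$ contains an unbalanced circuit: a monochromatic unbalanced red cycle would force $\nu(W_1) > \alpha$ via the identity $\sum_j W^{\gamma_j}_{v_j,v_{j+1}} = -\gamma W_1$, and a monochromatic unbalanced blue cycle would analogously force $\nu(Z_1) \geq -\alpha$. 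My strategy is to choose $\nu$ with $\nu(W_1)+\nu(Z_1) > 0$ (using Proposition \ref{prop:valuation} applied to $W_1 Z_1$) and fix $\alpha$ strictly between $-\nu(Z_1)$ and $\nu(W_1)$, so that both of these inequalities become inconsistent with the chosen $\alpha$.

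The main obstacle is this final verification. On curves contained in $\Vsetcirc$ the identity $\nu(W_1) = -\nu(Z_1)$ underpins Lemma \ref{lem:fixedactive1}, but when $\mathcal{C} \not\subset \Vsetcirc$ this identity fails; one only has $\nu(W_1)+\nu(Z_1) > 0$, and the naive attempt to derive a contradiction from simultaneous red and blue unbalanced cycles collapses. Overcoming this requires the coordinated choice of $\nu$ and $\alpha$ described above, together with combinatorial bookkeeping to check surjectivity of the resulting colouring (which in turn uses Lemma \ref{lem:constantW} to identify at least one edge in each colour class). I expect this step to be the most delicate part of the argument, and to serve as a model for the more elaborate case analysis carried out later for $\mathbb{Z}^2$-gain graphs.
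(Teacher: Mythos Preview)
Your proposal has genuine gaps in both directions.

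\textbf{Sufficiency, case (ii).} The fixed-lattice construction you borrow from Lemma~\ref{lem:fixedNBAC3} does not transfer to flexible 1-lattice NBAC-colourings. That lemma works because a fixed-lattice colouring forbids \emph{all} almost red (or almost blue) circuits, whereas a flexible 1-lattice colouring only forbids \emph{balanced} ones. Concretely, take $V=\{v,w\}$ with edges $e_1=(v,w,0)$ red and $e_2=(v,w,1)$ blue. Both monochromatic subgraphs are trees, hence balanced; the only almost monochromatic circuits have gain $\pm 1$, so $\delta$ is a flexible 1-lattice NBAC-colouring and $G$ is unbalanced. After your switching the red edge still has gain $0$, there is a single red component $\{v,w\}$ and a single blue component $\{v,w\}$, and your placement collapses $p(v)=p(w)$, violating the framework condition for $e_1$. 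No choice of ``rationally independent'' coordinates repairs this. The paper instead (Lemma~\ref{lem:type1a}) arranges via Lemma~\ref{lem:type1a.1} that all blue edges have trivial gain \emph{and} all red edges have non-trivial gain, then uses the potential of Lemma~\ref{lem:type1a.2} and the genuinely moving lattice $L_t.1=(-2+\cos t,\sin t)$; the flex is not a fixed-lattice flex.

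\textbf{Necessity.} Your inequality is backwards. An unbalanced red circuit forces $\nu(W_1)>\alpha$ and an unbalanced blue circuit forces $\nu(Z_1)\geq -\alpha$; to contradict both you need $\nu(W_1)\leq\alpha<-\nu(Z_1)$, hence $\nu(W_1Z_1)<0$, not $>0$. With your choice $\nu(W_1Z_1)>0$ and $-\nu(Z_1)<\alpha<\nu(W_1)$, both implications are \emph{consistent} with $\alpha$, so nothing is ruled out. Even after correcting the sign, surjectivity of $\delta$ is not available from Lemma~\ref{lem:constantW} alone: that lemma only supplies a non-constant $W^\gamma_{v,w}$, and your chosen $\nu$ need not separate any two edge valuations across the threshold $\alpha$. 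The paper sidesteps both issues by first reducing (via Lemma~\ref{lem:1NBAC.1}, a Henneberg~1 extension) to the case where $G$ has a pair of parallel edges $\tilde{e},\tilde{f}$ on $\tilde{v},\tilde{w}$; applying Lemma~\ref{lem:NBAC} with $a=\tilde f$ and $\alpha=0$ then yields $\nu(W_1)=0$ and $\nu(Z_1)<0$ directly from $\mu W_1=W^0_{\tilde v,\tilde w}-W^\mu_{\tilde v,\tilde w}$, with surjectivity ($\tilde e$ blue, $\tilde f$ red) for free.
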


Fortunately, 
much of the required work has been dealt with in Section \ref{sec:fixedlattice},
since fixed lattice flexible $1$-periodic frameworks are a subclass of flexible $1$-periodic frameworks.
Due to this, we only need to prove two results: 
a necessity lemma that proves a flexible $1$-periodic framework will have one of the required properties 
(see Lemma \ref{lem:1NBAC.2}),
and a construction lemma to prove that we can construct a flexible $1$-periodic framework given a graph with a 
flexible $1$-lattice NBAC-colouring (see Lemma \ref{lem:type1a}).

\subsection{Necessary conditions for 1-periodic flexibility}

\begin{lemma}\label{lem:1NBAC.2}
	Let $(G,p,L)$ be a $1$-periodic framework in $\mathbb{R}^2$
	with edge $(v,w,\gamma) \in E(G)$ for some $\gamma \neq 0$,
	$\mathcal{C} \subset \Vset$ be an algebraic curve and $\nu$ a valuation of $\mathbb{C}(\mathcal{C})$.
	Suppose $x_v - x_w$ and $y_v - y_w$ are constant on $\mathcal{C}$.
	Then $W_{v,w}^\gamma$ is constant if and only if $\mathcal{C} \subset \Vsetcirc$.
\end{lemma}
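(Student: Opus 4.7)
The plan is to rewrite $W_{v,w}^{\gamma}$ and $Z_{v,w}^{\gamma}$ in terms of the ``lattice coordinates'' $W_1, Z_1$, then extract both directions from the edge identity $W_{v,w}^{\gamma} Z_{v,w}^{\gamma} = \lambda(e)^2$. Setting $c := (x_v - x_w) + i(y_v - y_w)$, which is constant on $\mathcal{C}$ by hypothesis, I have
\begin{align*}
 W_{v,w}^{\gamma} = c - \gamma W_1, \qquad Z_{v,w}^{\gamma} = \bar c - \gamma Z_1.
\end{align*}
Since $c$ is constant and $\gamma \neq 0$, the function $W_{v,w}^{\gamma}$ is constant on $\mathcal{C}$ if and only if $W_1$ is constant on $\mathcal{C}$. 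Expanding the edge identity yields the key relation
\begin{align*}
 \gamma^2 W_1 Z_1 - \gamma(\bar c W_1 + c Z_1) + |c|^2 = \lambda(e)^2 \qquad (\star)
\end{align*}
holding on all of $\mathcal{C}$.

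For the forward direction, if $W_{v,w}^{\gamma}$ is constant then, because $W_{v,w}^{\gamma} Z_{v,w}^{\gamma} = \lambda(e)^2$ is a nonzero constant, $Z_{v,w}^{\gamma}$ is also constant; hence both $W_1$ and $Z_1$ are constant on $\mathcal{C}$. Evaluating $W_1 Z_1 = x_1^2 + y_1^2$ at the base point $(p,L) \in \mathcal{C}$ gives $W_1 Z_1 = \|L_1\|^2 = \lambda(1,1)^2$, so $\mathcal{C} \subset \Vsetcirc$.

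For the backward direction, assume $\mathcal{C} \subset \Vsetcirc$, so that $W_1 Z_1 = \lambda(1,1)^2$ on $\mathcal{C}$. Substituting into $(\star)$ forces $\bar c W_1 + c Z_1$ to be constant on $\mathcal{C}$. Using $Z_1 = \lambda(1,1)^2 / W_1$ (valid since $W_1 Z_1 \neq 0$) and multiplying through by $W_1$ produces a quadratic polynomial equation in $W_1$ with constant coefficients. When $c \neq 0$, the leading coefficient $\bar c$ is nonzero, so $W_1$ takes at most two values on $\mathcal{C}$; by Lemma \ref{lem:constantW2}, $W_1$ is constant, and hence so is $W_{v,w}^{\gamma}$. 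Alternatively, one can argue by contradiction using the valuation $\nu$ appearing in the hypothesis: if $W_1$ were non-constant, it would be transcendental by Lemma \ref{lem:constantW2}, and by Proposition \ref{prop:valuation} one could pick $\nu$ with $\nu(W_1) > 0$; then $\nu(Z_1) = -\nu(W_1) < 0$, so the term $cZ_1$ in $\bar c W_1 + c Z_1$ has strictly smaller valuation than $\bar c W_1$ and dominates the sum, forcing $\nu(\bar c W_1 + c Z_1) < 0$, which contradicts the constancy established above.

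The main obstacle is the degenerate case $c = 0$, corresponding to $p'(v) = p'(w)$ throughout $\mathcal{C}$ (e.g.~for loop edges). In this case the quadratic of the previous paragraph collapses, and both the polynomial and the valuation arguments lose their punch. One must then fall back on the full information encoded by $\mathcal{C}$ in the larger ambient variety and use the fact that the only ``new'' content the equation $(\star)$ retains is the consistency condition $\lambda(e) = |\gamma|\lambda(1,1)$; a more careful comparison of $(\star)$ in the $c = 0$ regime with the definition of $\mathcal{C} \subset \Vsetcirc$ is required to close this case and complete the proof.
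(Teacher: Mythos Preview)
Your approach is more elaborate than the paper's. The paper simply observes that $W_{v,w}^\gamma Z_{v,w}^\gamma=\lambda(e)^2$ is a nonzero constant, so $W_{v,w}^\gamma$ is constant iff $Z_{v,w}^\gamma$ is; and since $x_v-x_w$, $y_v-y_w$ are constant and $\gamma\neq 0$, the pair $(W_{v,w}^\gamma,Z_{v,w}^\gamma)$ is constant iff $(W_1,Z_1)$ is, iff $(x_1,y_1)$ is. The paper then writes ``the result now follows'' and stops --- no expansion $(\star)$, no quadratic, no appeal to a valuation.

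Your concern about the case $c=0$ is legitimate and in fact pinpoints exactly what that final sentence glosses over. To pass from $\mathcal C\subset\Vsetcirc$ to ``$x_1,y_1$ constant'' one must combine the lattice constraint $x_1^2+y_1^2=\mathrm{const}$ with the edge relation, which (using that $a:=x_v-x_w$ and $b:=y_v-y_w$ are constant) yields $a x_1+b y_1=\mathrm{const}$; when $(a,b)\neq(0,0)$ these two equations leave only finitely many $(x_1,y_1)$ and Lemma~\ref{lem:constantW2} finishes. When $(a,b)=(0,0)$ (for instance if $(v,w,\gamma)$ is a loop) the linear equation is vacuous and the backward implication genuinely fails: one can have $\mathcal C\subset\Vsetcirc$ while $W_1$ traces out the whole circle. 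In the paper's sole application of this lemma (the proof of Lemma~\ref{lem:1NBAC}), one has $v=\tilde v$, $w=\tilde w$, the endpoints of the fixed trivial-gain edge $\tilde e$; there $x_{\tilde w}^2=\lambda(\tilde e)^2\neq 0$ on $\mathcal C$, so $c\neq 0$ and the degenerate case never arises.

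One further small gap on your side: in the forward direction you ``evaluate at the base point $(p,L)\in\mathcal C$'', but nothing in the hypotheses places $(p,L)$ on the particular curve $\mathcal C$. Without that, constancy of $x_1,y_1$ only gives that $x_1^2+y_1^2$ equals \emph{some} constant, not that this constant is $\lambda(1,1)^2$. The paper's terse conclusion does not address this point either.
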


\begin{proof}
	We note that $W_{v,w}^\gamma$ is constant if and only if $Z_{v,w}^\gamma$ is also constant as
	$W_{v,w}^\gamma Z_{v,w}^\gamma$ is constant.
	As $x_v - x_w$ and $y_v-y_w$ are constant then
	$W_{v,w}^\gamma$ and $Z_{v,w}^\gamma$ are constant if and only if
	both $x_1 + i y_1$ and $x_1- i y_1$ are constant,
	which in turn is equivalent to both $x_1,y_1$ being constant.
	The result now follows.	
\end{proof}

\begin{lemma}\label{lem:1NBACparallel}
	Let $(G,p,L)$ be a full $1$-periodic framework in $\mathbb{R}^2$.
	Suppose that $(G,p,L)$ is flexible, $G$ is connected and unbalanced, and $G$ contains a pair of parallel edges $\tilde{e},\tilde{f}$.
	Then $G$ either has an active fixed lattice NBAC-colouring where $\tilde{e},\tilde{f}$ are the same colour,
	or $G$ has an active flexible 1-lattice NBAC-colouring where $\tilde{e},\tilde{f}$ are opposite colours.
\end{lemma}

\begin{proof}
	We may assume $\tilde{e}$ and $\tilde{f}$ are the pair of parallel edges on $\tilde{v},\tilde{w}$,
	with $\psi(\tilde{f} )= \mu \neq 0$.
	It follows from Proposition \ref{prop:balancedsub} and Proposition \ref{prop:activegain}
	that we may assume $G$ contains a spanning tree $T$ where every edge has trivial gain and $\tilde{e} \in T$.
	By Lemma \ref{lem:existalgcurv} (\ref{lem:existalgcurvitem2}),
	there exists an algebraic curve $\mathcal{C} \subset \Vset$.
	
	Suppose $\mathcal{C} \subset \Vsetcirc$.
	By Lemma \ref{lem:fixedNBAC1},
	$G$ has an active fixed lattice NBAC-colouring $\delta$.
	By Lemma \ref{lem:1NBAC.2},
	we note that we must have $\delta(\tilde{e})= \delta(\tilde{f})$.
	
	Now suppose $\mathcal{C} \not\subset \Vsetcirc$.
	By Lemma \ref{lem:1NBAC.2},
	$W_{\tilde{v},\tilde{w}}^{\mu}$ is not constant on $\mathbb{C}(\mathcal{C})$.
	Let $\nu$ the valuation of $\mathbb{C}(\mathcal{C})$
	and $\delta$ the NBAC-colouring given by Lemma \ref{lem:NBAC} with $a := \tilde{f}$.	
	By our choice of valuation,
	$\nu (W_{\tilde{v},\tilde{w}}^0) = 0$ and $\nu (W_{\tilde{v},\tilde{w}}^\mu) > 0$;
	it follows immediately that $\nu (Z_{\tilde{v},\tilde{w}}^0) = 0$ and $\nu (Z_{\tilde{v},\tilde{w}}^\mu) < 0$
	as both $W_{\tilde{v},\tilde{w}}^0 Z_{\tilde{v},\tilde{w}}^0$ and $W_{\tilde{v},\tilde{w}}^\mu Z_{\tilde{v},\tilde{w}}^\mu$
	are constant.
	As $\mu W_1 = W_{\tilde{v},\tilde{w}}^0 - W_{\tilde{v},\tilde{w}}^\mu$ then $\nu(W_1) = \nu (W_{\tilde{v},\tilde{w}}^0) = 0$.
	Similarly,
	as $\mu Z_1 = Z_{\tilde{v},\tilde{w}}^0 - Z_{\tilde{v},\tilde{w}}^\mu$ then $\nu (Z_1) = \nu (Z_{\tilde{v},\tilde{w}}^\mu) <0$.
	
	Suppose $G$ has an unbalanced monochromatic circuit $C$ of length $n$.
	If $C$ is red,
	then
	\begin{align*}
		\nu (W_1) = \nu ( - \psi (C) W_1) 
		= \nu \left( \sum_{j=1}^n W^{\gamma_j}_{v_j,v_{j+1}} \right) 
		\geq \min \left\{ \nu (W^{\gamma_j}_{v_j,v_{j+1}}) : 1\leq j \leq n \right\} >0,
	\end{align*}
	contradicting that $\nu(W_1) = 0$.
	If $C$ is blue,
	then
	\begin{align*}
		\nu (Z_1) = \nu ( - \psi (C) Z_1) 
		= \nu \left( \sum_{j=1}^n Z^{\gamma_j}_{v_j,v_{j+1}} \right) 
		\geq \min \left\{ \nu (Z^{\gamma_j}_{v_j,v_{j+1}}) : 1\leq j \leq n \right\} \geq 0,
	\end{align*}
	contradicting that $\nu(Z_1) < 0$.
	It now follows that $\delta$ is an active flexible 1-lattice NBAC-colouring.	
\end{proof}

We are now ready to state our necessity lemma.

\begin{lemma}\label{lem:1NBAC}
	Let $(G,p,L)$ be a full $1$-periodic framework in $\mathbb{R}^2$. 
 	If $(G,p,L)$ is flexible then $G$ either has an active fixed lattice NBAC-colouring,
 	an active flexible 1-lattice NBAC-colouring,
 	$G$ is balanced,
 	or $G$ is disconnected.
\end{lemma}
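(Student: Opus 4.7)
The plan is to mirror the proof of Lemma \ref{lem:fixedNBAC1}, refining the case analysis to detect the \emph{type} of active NBAC-colouring produced.

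First I reduce to the setting where $G$ is connected and unbalanced, so that $\rank G = 1$. Using Proposition \ref{prop:balancedsub} together with Proposition \ref{prop:activegain}, I may assume $G$ has a trivial-gain spanning tree $T$ containing $\tilde{e}$. Lemma \ref{lem:existalgcurv}(i) yields an algebraic curve $\mathcal{C} \subset \Vset$, and Lemma \ref{lem:constantW} produces an edge $a = (a_1, a_2, \alpha_a) \in E(G)$ with $W^{\alpha_a}_{a_1, a_2}$ non-constant on $\mathcal{C}$.

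The central dichotomy is whether $\mathcal{C} \subset \Vsetcirc$. In the first case, $W_1 Z_1$ is constant on $\mathcal{C}$ and the proof of Lemma \ref{lem:fixedNBAC1} carries over verbatim: Lemma \ref{lem:NBAC} supplies an active NBAC-colouring and Lemma \ref{lem:fixedactive1} upgrades it to a fixed lattice NBAC-colouring. In the second case, $W_1 Z_1$ is non-constant on $\mathcal{C}$, hence transcendental over $\mathbb{C}$ by Lemma \ref{lem:constantW2}. I would then select a valuation $\nu$ of $\mathbb{C}(\mathcal{C})$ and a real threshold $\alpha$ satisfying both $\nu(W^{\alpha_a}_{a_1, a_2}) > \alpha$ (accessible via Proposition \ref{prop:valuation} applied to $W^{\alpha_a}_{a_1, a_2}$) and $\nu(W_1 Z_1) \leq 0$ (by choosing a place at or near a pole of $W_1 Z_1$). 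For this choice the generated colouring $\delta$ is a surjective NBAC-colouring, with $a$ red and $\tilde{e}$ blue, exactly as in Lemma \ref{lem:NBAC}. The key observation is that the constraint $\nu(W_1 Z_1) \leq 0$ forbids both monochromatic subgraphs from being simultaneously unbalanced: using the circuit identities $\sum_j W^{\gamma_j}_{v_j, v_{j+1}} = -\psi(C) W_1$ and $\sum_j Z^{\gamma_j}_{v_j, v_{j+1}} = -\psi(C) Z_1$, combined with the per-edge relation $\nu(W^\gamma_{v,w}) + \nu(Z^\gamma_{v,w}) = 0$, an unbalanced red circuit forces $\nu(W_1) > \alpha$ while an unbalanced blue circuit forces $\nu(Z_1) \geq -\alpha$, whose sum $\nu(W_1 Z_1) > 0$ contradicts our choice. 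When both monochromatic subgraphs are balanced, $\delta$ is a flexible 1-lattice NBAC-colouring; when exactly one (say the blue) is unbalanced, a parallel strong-triangle-inequality computation rules out every almost blue circuit in $G$, so $\delta$ is again of fixed lattice type.

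The main obstacle is the second case, specifically the simultaneous construction of a valuation $\nu$ with $\nu(W^{\alpha_a}_{a_1, a_2}) > \alpha$ and $\nu(W_1 Z_1) \leq 0$. In principle every zero of $W^{\alpha_a}_{a_1, a_2}$ on the normalisation of $\mathcal{C}$ could coincide with a zero of $W_1 Z_1$, so the existence of the desired place is not automatic; in such exceptional configurations I would fall back on a Henneberg 1 extension via Lemma \ref{lem:1NBAC.1} to replace $(G,p,L)$ by a modified framework in which the obstruction is broken, and then re-apply the valuation machinery. Once the right valuation is in hand, the remaining inequalities are direct applications of the valuation axioms together with the identity $W^\gamma_{v,w} Z^\gamma_{v,w} = \lambda(e)^2$ for each edge $e = (v,w,\gamma) \in E(G)$.
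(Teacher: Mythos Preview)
Your overall architecture---the dichotomy on whether $\mathcal{C}\subset\Vsetcirc$, with the first branch handled by Lemma~\ref{lem:fixedactive1}---matches the paper. The second branch, however, contains a genuine gap that you yourself flag: you need a single valuation $\nu$ with $\nu(W^{\alpha_a}_{a_1,a_2})>\alpha$ \emph{and} $\nu(W_1Z_1)\le 0$, and nothing in Proposition~\ref{prop:valuation} lets you impose two such constraints at once. Your downstream trichotomy (both balanced $\Rightarrow$ flexible $1$-lattice; exactly one unbalanced $\Rightarrow$ fixed lattice) is in fact correct once such a $\nu$ is in hand, but without it the argument does not start.

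The paper closes this gap not by finding a clever valuation but by arranging that the \emph{single} constraint $\nu(W^{\mu}_{\tilde v,\tilde w})>0$ already pins down $\nu(W_1)$ and $\nu(Z_1)$. The mechanism is a pair of \emph{parallel edges} $\tilde e=(\tilde v,\tilde w,0)$ and $\tilde f=(\tilde v,\tilde w,\mu)$: since $\tilde e$ is the fixed edge, $\nu(W^{0}_{\tilde v,\tilde w})=0$, and Lemma~\ref{lem:1NBAC.2} gives that $W^{\mu}_{\tilde v,\tilde w}$ is non-constant exactly when $\mathcal{C}\not\subset\Vsetcirc$. Taking $a=\tilde f$ in Lemma~\ref{lem:NBAC} yields $\nu(W^{\mu}_{\tilde v,\tilde w})>0$, and the identity $\mu W_1=W^{0}_{\tilde v,\tilde w}-W^{\mu}_{\tilde v,\tilde w}$ forces $\nu(W_1)=0$; likewise $\nu(Z_1)=\nu(Z^{\mu}_{\tilde v,\tilde w})<0$. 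These two values immediately rule out \emph{any} unbalanced monochromatic circuit, so $\delta$ is always flexible $1$-lattice in this branch---no further case split is needed. This is the missing idea: parallel edges convert a single transcendence input into exact control of $\nu(W_1),\nu(Z_1)$.

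Consequently the role of the Henneberg~1 extension is not a fallback for a failed valuation search; it is used \emph{up front}, when $G$ has no parallel edges, precisely to manufacture the pair $\tilde e,\tilde f$ at a new vertex (Lemma~\ref{lem:1NBAC.1}). One then runs the parallel-edge argument on $G'$ and restricts the resulting colouring $\delta'$ to $G$. The only thing left to check is that $\delta'|_{E(G)}$ is still surjective, which follows because $G$ is unbalanced while every monochromatic subgraph of $G'$ has rank at most one of the required types; the paper verifies this separately for the flexible $1$-lattice and fixed-lattice outcomes.
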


\begin{proof}
	We may suppose $G$ is connected and unbalanced.
	If $G$ contains a pair of parallel edges then the result holds by Lemma \ref{lem:1NBACparallel},
	thus we shall also assume that $G$ does not contain a pair of parallel edges.
	
	By Lemma \ref{lem:1NBAC.1},
	there exists a vertex addition $(G',p',L)$ of $(G,p,L)$ at $v_1$ by $1$
	such that $(G',p',L)$ has a non-trivial not fixed lattice flex;
	we shall define these new edges by $\tilde{e}, \tilde{f}$, with $\psi(\tilde{e})=0$ and $\psi(\tilde{f})=1$.
	As $G'$ contains a pair of parallel edges then by Lemma \ref{lem:1NBACparallel},
	either $G'$ has an active flexible 1-lattice NBAC-colouring $\delta'$ 
	with $\delta'(\tilde{e}) = \text{blue}$ and $\delta'(\tilde{f}) = \text{red}$,
	or $G'$ has an active fixed lattice lattice NBAC-colouring $\delta''$ with $\delta''(\tilde{e}) =\delta''(\tilde{f}) = \text{blue}$.	
	
	Suppose $G'$ has a colouring $\delta'$ as described above.
	Let $\delta$ be the colouring of $G$ with $\delta(e) := \delta'(e)$ for all $e \in E(G)$.
	We note that $\delta$ is a flexible 1-lattice NBAC-colouring 
	if and only if $\delta'$ is not monochromatic on the subgraph $G$ of $G'$.
	As $G$ is unbalanced,
	$\delta'$ cannot be monochromatic on $G$,
	thus $\delta$ is a flexible 1-lattice NBAC-colouring of $G$.
	
	Now suppose $G'$ has a colouring $\delta''$ as described above.
	Let $\delta$ be the colouring of $G$ with $\delta(e) := \delta''(e)$ for all $e \in E(G)$.
	We note that $\delta$ is a fixed lattice NBAC-colouring
	if and only if $\delta'$ is not monochromatic on the subgraph $G$ of $G'$.
	If $\delta'$ is monochromatic on $G$,
	then as $\delta'(\tilde{e}) =\delta'(\tilde{f}) = \text{blue}$ and $G$ is unbalanced,
	we must have $\delta(G) = \text{blue}$,
	however this would contradict that $\delta'(G') =\{ \text{red},\text{blue}\}$.
	It now follows that $\delta$ is a fixed lattice NBAC-colouring of $G$.
\end{proof}

\subsection{Constructing flexible frameworks from flexible 1-lattice NBAC-colourings}
%
%If $G$ is a $\mathbb{Z}$-gain graph with a flexible 1-lattice NBAC-colouring,
%we immediately note the following:
%\begin{enumerate}[(i)]
%	\item $G$ can have no loops.
%	%
%	\item If two vertices are connected by two edges then one edge must be red and the other must be blue.
%	%
%	\item No two vertices can be connected by more than two edges.
%\end{enumerate}

\begin{lemma}\label{lem:type1a.1}
	Let $G$ be a $\mathbb{Z}$-gain graph with a flexible 1-lattice NBAC-colouring. 
	Then there exists $G' \approx G$ such that each blue edge has trivial gain and no red edge has trivial gain.
\end{lemma}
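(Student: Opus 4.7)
The plan is to perform two stages of switching: first to drive every blue edge to trivial gain, and then to adjust the red edges without disturbing the blue ones. Concretely, since $\delta$ is a flexible $1$-lattice NBAC-colouring, both $G^\delta_{\text{red}}$ and $G^\delta_{\text{blue}}$ are balanced. Let $B_1,\ldots,B_m$ be the blue connected components; their vertex sets are pairwise disjoint, so Proposition \ref{prop:balancedsub} yields $G'' \approx G$ in which every blue edge has gain $0$.

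Before going further, I would record the key consequence of the NBAC property. If $(v,w,\gamma)$ is a red edge of $G''$ whose endpoints lie in the same blue component $B_i$, then $\gamma \neq 0$: otherwise, taking any blue walk from $w$ back to $v$ in $B_i$ (which has gain $0$, since blue edges of $G''$ are all trivial) and appending the edge would yield a balanced circuit with exactly one red edge, contradicting that $\delta$ is an NBAC-colouring. So after Stage 1, all "within-component" red edges already have non-trivial gain, and only red edges between distinct blue components remain to be dealt with.

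For Stage 2, I would choose integers $\mu_1,\ldots,\mu_m \in \mathbb{Z}$ and switch every vertex $v \in B_i$ by $\mu_i$; call the resulting graph $G'$. Because the individual switchings commute (Remark \ref{rem:gainswitch}), this is well-defined, and because all vertices of $B_i$ are switched by the same amount, blue edges inside $B_i$ keep gain $0$. A red edge $(v,w,\gamma)$ of $G''$ with $v \in B_i$ and $w \in B_j$ acquires the new gain $\gamma + \mu_i - \mu_j$, so it suffices to choose the $\mu_i$ so that $\mu_j - \mu_i \neq \gamma$ for every such inter-component red edge. Setting $\mu_1 := 0$ and picking $\mu_2,\mu_3,\ldots,\mu_m$ one at a time, only finitely many values of $\mu_i$ are forbidden at each step (one per red edge connecting $B_i$ to some $B_j$ with $j<i$), and $\mathbb{Z}$ is infinite, so such a choice exists.

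The main obstacle is that the red and blue subgraphs share their vertex set, so Proposition \ref{prop:balancedsub} cannot be applied simultaneously to both colour classes to obtain both conditions at once. The resolution is the observation that after Stage 1 the freedom to switch vertices of a single blue component by a common constant does not disturb any blue gain, leaving exactly the $m$-parameter family of switchings needed to avoid the finitely many hyperplanes $\mu_j - \mu_i = \gamma$ that correspond to inter-component red edges of trivial gain.
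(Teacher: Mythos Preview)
Your proof is correct and follows essentially the same two-stage strategy as the paper: first apply Proposition~\ref{prop:balancedsub} to the blue components to trivialise all blue gains, then switch each blue component $B_i$ by a constant $\mu_i$ so that no red edge ends up with gain $0$. The only difference is in how the constants are chosen: the paper takes $\mu_i = i\mu$ for a single $\mu$ exceeding every $|\gamma|$ appearing in $E(G)$, which forces $\gamma + (i-j)\mu \neq 0$ whenever $i \neq j$, whereas you pick the $\mu_i$ greedily to avoid the finitely many forbidden differences. Both arguments use the same NBAC observation to handle the intra-component red edges, and both rely on the edge set being finite (or at least having bounded gains).
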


\begin{proof}
	As $G^\delta_{\text{blue}}$ is balanced,
	by Proposition \ref{prop:balancedsub},
	we may suppose all blue edges of $G$ have trivial gain.
	Let $B_1, \ldots, B_n$ be the blue components of $G$ and choose $\mu \in \mathbb{N}$ such that
	$\mu >|\gamma|$ for all $(v,w,\gamma) \in E(G)$.
	We now define
	\begin{align*}
		G' := \left( \prod_{i=1}^n \prod_{v \in B_i} \phi_v^{i\mu} \right) (G).
	\end{align*}
	We first note that any blue edge of $G'$ will have trivial gain since both of its ends will lie in the same blue component.
	Choose a red edge $(v,w,\gamma) \in E(G)$ and suppose $v \in B_i$ and $w \in B_j$.
	We note that
	\begin{align*}
		\left( \prod_{i=1}^n \prod_{v \in B_i} \phi_v^{i\mu} \right)(v,w,\gamma) 
		= \phi_v^{i\mu} \circ \phi_w^{j\mu}(v,w,\gamma) 
		= (v,w,\gamma + (i-j)\mu).
	\end{align*}
	As $\mu >|\gamma|$ and $i-j \in \mathbb{Z} $ then
	$\gamma + (i-j)\mu =0$ if and only if $\gamma = 0$ and $i=j$.
	If $v,w \in B_i$ and $\gamma = 0$ then there would exist a balanced almost blue circuit
	as $v,w$ are connected by a blue path and all blue edges of $G$ have trivial gain,
	thus $\gamma + (i-j)\mu \neq 0$ as required.
\end{proof}

\begin{lemma}\label{lem:type1a.2}
	Let $H$ be a balanced $\mathbb{Z}$-gain graph. 
	Then there exists a placement $q$ of $H$ in $\mathbb{Z}$ such that for all $(v,w,\gamma) \in E(H)$,
	$q(w) - q(v) = 2\gamma$.
\end{lemma}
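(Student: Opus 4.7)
The plan is to construct $q$ explicitly by integrating the gains along paths from a fixed basepoint, using the balance of $H$ to guarantee consistency. First I would reduce to the case where $H$ is connected, since if the claim holds on each connected component then we can define $q$ independently on each component (any choice of values suffices, as the edge condition is a local one on each component).

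Next, assuming $H$ is connected, I would pick a basepoint $v_0 \in V(H)$ and set $q(v_0) := 0$. For any other vertex $v \in V(H)$, choose any walk $W_v$ from $v_0$ to $v$ in $H$ and set $q(v) := 2\psi(W_v)$, where $\psi(W_v)$ is the gain of $W_v$ (defined as for circuits, summing the gains of the directed edges along $W_v$ with the appropriate sign). The key point to verify is that $q(v)$ does not depend on the choice of $W_v$: if $W_v$ and $W_v'$ are two walks from $v_0$ to $v$, then their concatenation (reversing $W_v'$) is a circuit $C$ based at $v_0$ with $\psi(C) = \psi(W_v) - \psi(W_v')$; since $H$ is balanced, $\spann(H)$ is trivial, so $\psi(C) = 0$, hence $\psi(W_v) = \psi(W_v')$.

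Finally, for any edge $e = (v,w,\gamma) \in E(H)$, I would concatenate a walk $W_v$ from $v_0$ to $v$ with the single directed edge $e$ to obtain a walk from $v_0$ to $w$ of gain $\psi(W_v) + \gamma$. Comparing with the definition of $q(w)$ then gives
\begin{align*}
    q(w) - q(v) \;=\; 2(\psi(W_v) + \gamma) - 2\psi(W_v) \;=\; 2\gamma,
\end{align*}
as required. Since $q$ takes values in $2\mathbb{Z} \subset \mathbb{Z}$, this gives a valid placement of $H$ in $\mathbb{Z}$. There is no real obstacle here; the only subtle step is the well-definedness of $q(v)$, which is immediate from the balance hypothesis, and the factor of $2$ plays no role in the argument beyond uniform scaling.
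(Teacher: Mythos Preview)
Your proof is correct and takes essentially the same approach as the paper's: both reduce to the connected case and define $q$ by summing gains along paths from a fixed base, using balance to guarantee consistency. The only cosmetic difference is that the paper fixes a spanning tree and defines $q$ along its unique paths before checking the remaining edges, whereas you allow arbitrary walks and verify well-definedness directly; these are equivalent formulations of the same argument.
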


\begin{proof}
	We may suppose without loss of generality that $H$ is connected.
	Choose a spanning tree $T$ of $H$.
	It is immediate that we may choose a placement $q$ of $T$ that satisfies the condition $q(w) - q(v) =2\gamma $
	for all $(v,w,\gamma) \in E(T)$.
	Choose an edge $e=(a,b,\mu) \in E(H) \setminus E(T)$,
	then there exists a path $(e_1,\ldots,e_{n-1})$ in $T$ with $e_i = (v_i,v_{i+1}, \gamma_i)$,
	$v_1=b$ and $v_n =a$.
	As $H$ is balanced,
	$\psi(e_1,\ldots,e_{n-1}) = -\mu$,
	thus by our choice of $q$,
	\begin{align*}
		q(b) - q(a)
		= -\left(\sum_{i=1}^{n-1} q(v_{i+1}) - q(v_i) \right)
		= -2\psi(e_1,\ldots,e_{n-1}) = 2\mu.
	\end{align*}
\end{proof}

We our now ready to prove our construction lemma.

\begin{lemma}\label{lem:type1a}
	Let $G$ be a $\mathbb{Z}$-gain graph with a flexible 1-lattice NBAC-colouring $\delta$. 
	Then there exists a full placement-lattice $(p,L)$ of $G$ in $\mathbb{R}^2$
	such that $(G,p,L)$ is a flexible full $1$-periodic framework.
\end{lemma}

\begin{proof}
	By Lemma \ref{lem:type1a.1}, 
	we may assume all blue edges of $G$ have trivial gain and all red edges have non-trivial gain.
	Let $R_1, \ldots, R_n$ be the red components of $G$ and define $E_j$ to be the set of edges $(v,w,\gamma)$
	in $G^\delta_{\text{red}}$ with $v,w \in R_j$.
	By Lemma \ref{lem:type1a.2}, 
	for each $R_j$ there exists a placement $q_j$ in $\mathbb{R}$ where $q_j(w) - q_j(v) = 2\gamma$
	for all $(v,w,\gamma) \in E_j$. We now define for each $t \in [0,2\pi]$ the full placement-lattice 
	$(p_t,L_t)$ of $G$ in $\mathbb{R}^2$,
	with
	\begin{align*}
		p_t(v) := ( q_j(v), j ), \qquad L_t .1:= (-2 + \cos t, \sin t)
	\end{align*}
	for $v \in R_j$ and $t \in [0,2\pi]$.
	We shall denote $(p,L) := (p_0,L_0)$.
	
	To see that 	$(p,L)$ is a well-defined placement-lattice,
	choose any $e=(v,w,\gamma)$ and suppose that $p(v) = p(w) + L.\gamma$.
	It follows that $v,w \in R_j$ and $q_j(v) - q_j(w)  =  \gamma$.
	If $\delta(e) = \text{red}$ then $\gamma \neq 0$,
	however this contradicts that $q_j(v) - q_j(w) = -2\gamma$.
	Suppose $\delta(e) = \text{blue}$.
	Since every blue edge has trivial gain, $\gamma = 0$.
	As $v,w \in R_j$,
	there exists a red path $(e_1,\ldots,e_{n-1})$ with $e_j = (v_j,v_{j+1}, \gamma_j) \in E_j$,
	$v_1=w$ and $v_n=v$.
	Since $q_j(v)=q_j(w)$,
	we have $\sum_{j=1}^{n-1} \gamma_j =0$.
	However, this implies $(e_1,\ldots,e_{n-1},e)$ is a balanced almost red circuit,
	contradicting that $\delta$ is a NBAC-colouring.
	
	Choose any $e = (v,w,\gamma)$.
	If $\delta(e) = \text{blue}$ then $\gamma = 0$. As $p_t = p$ then for each $t \in [0, 2 \pi]$,
	\begin{align*}
		\| p_t(v) - p_t(w) - L_t.\gamma \|^2 = \|p(v) - p(v) \|^2.
	\end{align*}
	If $\delta(e) = \text{red}$ then $v,w \in R_j$,
	thus for each $t \in [0, 2 \pi]$,
	\begin{align*}
		\| p_t(v) - p_t(w) - L_t.\gamma \|^2 = (-(q_j(w) - q_j(v) )+ 2 \gamma- \gamma  \cos t )^2 + (\gamma \sin t)^2 = \gamma^2.
	\end{align*}
	It follows that $(p_t,L_t)$  is a flex of $(G,p,L)$ as required.
	We refer the reader to Figure \ref{fig:1latticeNBACconstruction} for an example of the construction.
\end{proof}

\begin{figure}[ht]
	\begin{center}
		\begin{tikzpicture}
			\node[lnode] (1) at (-1,0) {\textbf{1}};
			\node[lnode] (2) at (-1,2) {\textbf{2}};
			\node[lnode] (3) at (1,0) {\textbf{3}};
			\node[lnode] (4) at (1,2) {\textbf{4}};
			\node[lnode] (5) at (-2,1) {\textbf{5}};
			\node[lnode] (6) at (2,1) {\textbf{6}};

			\draw[redge,-latex] (1)edge(3);
			\draw[redge,-latex] (2)edge(4);
			\draw[redge,-latex] (5)edge(6);
			
			\draw[bedge] (2)edge(5);
			\draw[bedge] (1)edge(5);
			\path[bedge] (1) edge [bend left] node {} (2);
			\path[redge,-latex] (1) edge [bend right] node {} (2);
			
			\draw[bedge] (4)edge(6);
			\draw[bedge] (3)edge(6);
			\path[redge,-latex] (3) edge [bend left] node {} (4);
			\path[bedge] (3) edge [bend right] node {} (4);
			
			\node[font=\scriptsize] at (-0.5,1.5) {$1$};
			\node[font=\scriptsize] at (0.5,1.5) {$1$};
			\node[font=\scriptsize] at (0,2.2) {$2$};
			\node[font=\scriptsize] at (0,0.2) {$2$};
			\node[font=\scriptsize] at (0,0.8) {$1$};
		\end{tikzpicture}\qquad\qquad
		\begin{tikzpicture}
			\node[lnode] (1) at (0,0) {\textbf{1}};
			\node[lnode] (2) at (2,0) {\textbf{2}};
			\node[lnode] (3) at (4,0) {\textbf{3}};
			\node[lnode] (4) at (6,0) {\textbf{4}};
			\node[lnode] (5) at (0,1.5) {\textbf{5}};
			\node[lnode] (6) at (2,1.5) {\textbf{6}};
			
			\draw[bedge] (2)edge(5);
			\draw[bedge] (1)edge(5);
			\draw[bedge] (1) edge(2);
			\draw[bedge] (4)edge(6);
			\draw[bedge] (3)edge(6);
			
			\draw[bedge] (3) edge (4);
			
			\path[redge,-latex] (1)edge [bend left] node {} (3);
			\path[redge,-latex] (2) edge [bend right] node {} (4);
			\draw[redge,-latex] (5)edge(6);

			\path[redge,-latex] (1) edge [bend right] node {} (2);
			\path[redge,-latex] (3) edge [bend left] node {} (4);

			\node[font=\scriptsize] at (2,0.8) {$2$};
			\node[font=\scriptsize] at (1,-0.6) {$1$};
			\node[font=\scriptsize] at (4,-0.45) {$2$};
			\node[font=\scriptsize] at (5.5,0.45) {$1$};
			\node[font=\scriptsize] at (0.9,1.7) {$1$};
		
		\end{tikzpicture}
	\end{center}
	\caption{(Left): A $\mathbb{Z}$-gain graph with a flexible $1$-lattice NBAC-colouring.
	(Right): The constructed full $1$-periodic framework in $\mathbb{R}^2$.}
	\label{fig:1latticeNBACconstruction}
\end{figure}
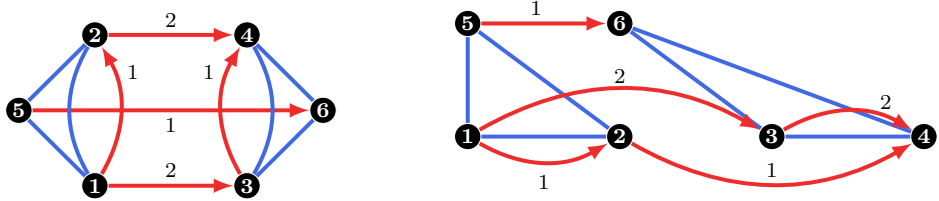

We are now ready to prove the main theorem of this section.

\begin{proof}[Proof of Theorem \ref{thm:1NBAC}]
	Suppose $(G,p,L)$ is flexible. 
	By Lemma \ref{lem:1NBAC},
	either $G$ is balanced,
	$G$ has a fixed lattice NBAC-colouring,
	or $G$ has a flexible 1-periodic NBAC-colouring.
	
	If $G$ is balanced,
	then by Lemma \ref{lem:fixedNBAC4},
	$G$ has a flexible full placement-lattice in $\mathbb{R}^2$.
	
	If $G$ has a fixed lattice NBAC-colouring,
	then by Lemma \ref{lem:fixedNBAC3},
	$G$ has a flexible full placement-lattice in $\mathbb{R}^2$.
	
	If $G$ has a flexible 1-lattice NBAC-colouring,
	then by Lemma \ref{lem:type1a},
	$G$ has a flexible full placement-lattice in $\mathbb{R}^2$.
\end{proof}

\section{Characterising flexible 2-periodic frameworks}\label{sec:2NBAC}

Unlike with $1$-periodic frameworks,
a full characterisation of $\mathbb{Z}^2$-gain graphs with flexible $2$-periodic full placements in the plane 
via NBAC-colourings is unknown.
We would conjecture the following.

\begin{conjecture}\label{con:weaktype3}
	Let $G$ be a connected $\mathbb{Z}^2$-gain graph. 
	Then there exists a full placement-lattice $(p,L)$ of $G$ in $\mathbb{R}^2$ such that
	$(G,p,L)$ is a flexible full $2$-periodic framework if and only if either:
	\begin{enumerate}[(i)]
		\item $G$ has a type 1 flexible 2-lattice NBAC-colouring,
		\item $G$ has a type 2 flexible 2-lattice NBAC-colouring,
		\item $G$ has a type 3 flexible 2-lattice NBAC-colouring,
		\item $G$ has a fixed lattice NBAC-colouring, or
		\item $\rank (G) <2$.
	\end{enumerate}
\end{conjecture}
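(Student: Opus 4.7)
The plan is to prove the conjecture by splitting into necessity and sufficiency directions, each handled case-by-case against the five combinatorial conditions. The cases covered by conditions (iv) and (v) on the sufficiency side are already done: if $G$ is balanced, Lemma \ref{lem:fixedNBAC4} produces a fixed-lattice flex; if $\rank(G)=1$, Theorem \ref{thm:1NBAC} applied to $G$ viewed as a $\mathbb{Z}$-gain graph produces a flex; and if $G$ has a fixed lattice NBAC-colouring, Lemma \ref{lem:fixedNBAC3} applies. This reduces both directions to the connected, full, $\rank(G)=2$ setting.

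For the necessity direction I would mimic the template of Lemma \ref{lem:fixedNBAC2} and Lemma \ref{lem:1NBAC}. Assuming $(G,p,L)$ is flexible, Proposition \ref{prop:balancedsub} and Proposition \ref{prop:activegain} let us reduce to the case where $G$ contains a spanning tree of trivial gain through $\tilde{e}$. Lemma \ref{lem:existalgcurv}(i) produces an algebraic curve $\mathcal{C}\subset\Vset$. If $\mathcal{C}\subset\Vsetcirc$ then Lemma \ref{lem:fixedNBAC2} gives a fixed lattice NBAC-colouring and we are done. Otherwise Lemma \ref{lem:constantW} supplies a non-constant $W^\alpha_{a_1,a_2}$, and Lemma \ref{lem:NBAC} yields an active NBAC-colouring $\delta$ generated by a valuation $\nu$. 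The classification into types 1, 2, 3 is then obtained by examining the integer pair $(\nu(W_1),\nu(W_2))$ together with $(\nu(Z_1),\nu(Z_2))$: using the identity $W^\gamma_{v,w}=W^0_{v,w}-\gamma W$ for each edge, the sign of $\nu(\gamma W)$ forces any monochromatic circuit with gain $\gamma$ to be balanced or constrained. A case analysis along the lines of Lemma \ref{lem:helpz}(iii) should single out a preferred direction $\alpha\in\mathbb{Z}^2$ such that red monochromatic circuits have gain in $\mathbb{Z}\alpha$ (and similarly a $\beta$ for blue), with the trichotomy \emph{both trivial} (type 1), \emph{linearly independent} (type 2), or \emph{collinear} (type 3).

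For sufficiency I expect the type 1 and type 2 constructions to generalise Lemma \ref{lem:type1a} directly. In type 1 both colour classes are balanced, so after a switching reduction analogous to Lemma \ref{lem:type1a.1} we can apply a $\mathbb{Z}^2$-analogue of Lemma \ref{lem:type1a.2} to each colour class independently; placing vertices using these two unfoldings in orthogonal coordinate directions and flexing both columns of $L$ via the $(-2+\cos t,\sin t)$-trick produces a two-parameter family of equivalent placement-lattices. Type 2 is similar, with the distinguished directions $\alpha,\beta$ replacing $(1,0),(0,1)$: red edges see only $L\cdot\alpha$ and blue edges only $L\cdot\beta$, so the two lattice vectors can be deformed independently without disturbing the other colour.

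The hard part will be the type 3 sufficiency construction. When both $\spann(G^\delta_{\text{red}})$ and $\spann(G^\delta_{\text{blue}})$ sit inside the same rank-one subgroup $\mathbb{Z}\alpha$, the two colour classes compete for the same lattice direction, so the separation-of-variables trick that drives the $1$-periodic and type 1, 2 constructions collapses. This is precisely the obstacle the paper signals by proving only partial results for $2$-periodic flexible-lattice frameworks (Lemma \ref{lem:2NBAC}, Theorem \ref{thm:1loop2per}) and by requiring a loop hypothesis in Section \ref{sec:loops}. A full resolution would need either a genuinely new placement idea in which the second lattice direction (orthogonal to $\alpha$) absorbs the discrepancy between red and blue, or a non-constructive algebraic argument producing a flex from the existence of a type 3 colouring; either approach falls outside the toolkit developed above, which is why the statement is left as a conjecture.
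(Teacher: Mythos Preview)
Your proposal is not a proof, and correctly so: the statement is a conjecture in the paper, not a theorem, and the paper does not prove it. Your outline of the provable pieces matches the paper's actual partial results almost exactly. Necessity is Lemma~\ref{lem:2NBAC}, whose proof follows precisely the template you describe (curve in $\Vset$, fixed-lattice case when $\mathcal{C}\subset\Vsetcirc$, otherwise a valuation argument leading to the type~1/2/3 trichotomy via Lemma~\ref{lem:2NBAC.1} and Lemma~\ref{lem:2NBAC.3}); sufficiency for types~1 and~2 is carried out in Lemmas~\ref{lem:type1} and~\ref{lem:type2} by exactly the separation-of-variables constructions you sketch; and the low-rank and fixed-lattice cases are Lemmas~\ref{lem:rank2} and~\ref{lem:fixedNBAC3}. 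You also correctly isolate the type~3 sufficiency as the missing ingredient and give the right intuition for why the existing constructions collapse there --- this is precisely why the paper records the statement as Conjecture~\ref{con:weaktype3} and separately Conjecture~\ref{con:type3}.

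Two small technical remarks on your necessity sketch. First, the paper does not get control of $\nu(W_1),\nu(W_2)$ directly from an arbitrary non-constant $W^\alpha_{a_1,a_2}$; it needs a pair of parallel edges whose gain difference $\mu$ has $\|L_t.\mu\|^2$ non-constant, so that $\nu(\mu W)=0$ and $\nu(\mu Z)<0$ are forced (Lemma~\ref{lem:2NBAC.3}). When no such pair is present the paper manufactures one via a Henneberg~1 extension (Lemma~\ref{lem:1NBAC.1}) and then restricts the colouring back to $G$ --- a step your outline omits. Second, for $\rank(G)=1$ you cannot literally invoke Theorem~\ref{thm:1NBAC}, since $G$ is a $\mathbb{Z}^2$-gain graph; the paper instead gives the direct construction of Lemma~\ref{lem:rank2}, which flexes the lattice in the direction transverse to $\spann(G)$.
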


We are able to obtain the required necessity lemma and most of the required construction lemmas,
however a construction of a flexible full $2$-periodic framework from a type 3 flexible 2-lattice NBAC-colouring is still currently unknown.
In this section we shall,
however, outline some partial results regarding $\mathbb{Z}^2$-gain graphs,
in particular,
Lemma \ref{lem:2NBAC}, Lemma \ref{lem:rank2}, Lemma \ref{lem:type1} and Lemma \ref{lem:type2}.
We shall discuss some other possible conjectures at the end of the section,
and later in Section \ref{sec:loops} we shall obtain analogues of Theorem \ref{thm:1NBAC} for certain types of graphs;
see Theorem \ref{thm:1loop2per} and Theorem \ref{thm:loopNBAC}.

\subsection{Necessary conditions for 2-periodic flexibility}

For any $\gamma = (a,b) \in \mathbb{Z}^2$,
we recall the notation $\gamma W := a W_1 + b W_2$ and $\gamma Z := a Z_1 + b Z_2$.

\begin{lemma}\label{lem:2NBAC.2}
	Let $(G,p,L)$ be a $2$-periodic framework in $\mathbb{R}^2$
	with edge $(v,w,\gamma) \in E(G)$ for some $\gamma = (\gamma_1,\gamma_2) \neq (0,0)$,
	$\mathcal{C} \subset \Vset$ be an algebraic curve and $\nu$ a valuation of $\mathbb{C}(\mathcal{C})$.
	Suppose $x_v - x_w$ and $y_v - y_w$ are constant on $\mathcal{C}$.
	If $W_{v,w}^\gamma$ is constant then
	\begin{align*}
		(\gamma_1 x_1 + \gamma_2 x_2)^2 + (\gamma_1 y_1 + \gamma_2 y_2)^2
	\end{align*}
	is constant.
\end{lemma}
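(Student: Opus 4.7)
The plan is to translate the statement into a question about two scalar functions on $\mathcal{C}$. Set $c := x_v - x_w$ and $d := y_v - y_w$, both constant on $\mathcal{C}$ by hypothesis, and $X := \gamma_1 x_1 + \gamma_2 x_2$, $Y := \gamma_1 y_1 + \gamma_2 y_2$, so that
\begin{align*}
W_{v,w}^\gamma = (c - X) + i(d - Y), \qquad Z_{v,w}^\gamma = (c - X) - i(d - Y).
\end{align*}
The workhorse identity is the edge relation $W_{v,w}^\gamma Z_{v,w}^\gamma = \lambda(e)^2$, which holds identically on $\mathcal{C} \subset \Vset$ since $(v,w,\gamma) \in E(G)$; expanding gives
\begin{align*}
X^2 + Y^2 - 2(cX + dY) + (c^2 + d^2) = \lambda(e)^2.
\end{align*}
Because $c^2 + d^2$ is constant, this already shows that $X^2 + Y^2$ is constant on $\mathcal{C}$ if and only if $cX + dY$ is constant on $\mathcal{C}$.

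For the forward direction, assume $W_{v,w}^\gamma$ is constant. Since $W_{v,w}^\gamma Z_{v,w}^\gamma = \lambda(e)^2$ is a nonzero constant, $W_{v,w}^\gamma$ is itself a nonzero constant, so $Z_{v,w}^\gamma = \lambda(e)^2/W_{v,w}^\gamma$ is constant as well. Then $W_{v,w}^\gamma + Z_{v,w}^\gamma = 2(c-X)$ and $W_{v,w}^\gamma - Z_{v,w}^\gamma = 2i(d-Y)$ are both constant, which forces $X$ and $Y$ each to be constant on $\mathcal{C}$, and hence $X^2 + Y^2$ is constant.

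For the converse, suppose $X^2 + Y^2$ is constant, so that $cX + dY$ is also constant. In the regime $(c,d) \neq (0,0)$, one can eliminate one of $X,Y$ using the linear relation $cX + dY = K_2$ and substitute into $X^2 + Y^2 = K_1$ to obtain a polynomial of degree at most $2$ in a single variable with constant coefficients that is satisfied identically on $\mathcal{C}$. By Lemma \ref{lem:constantW2}, such a function takes only finitely many values on $\mathcal{C}$ and is therefore constant; the other variable follows from the linear relation. Hence $X$ and $Y$ are each constant, and so $W_{v,w}^\gamma$ is constant.

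The main subtlety is the degenerate case $c = d = 0$, which occurs exactly when $v = w$ (so $e$ is a loop). Then the workhorse identity collapses to $X^2 + Y^2 = \lambda(e)^2$ automatically, making the right-hand side of the claimed equivalence trivial on every $\mathcal{C} \subset \Vset$, while constancy of $W_{v,w}^\gamma = -(X + iY)$ is a strictly stronger condition that is not forced by the hypotheses alone. I expect the lemma to be invoked only for non-loop edges—loops being singled out for separate treatment in Section \ref{sec:loops}—or, if used for a loop, under further context-specific hypotheses (for instance $\mathcal{C} \subset \Vsetcirc$, where $W_1$ and $Z_1$ are individually controlled via $W_1 Z_1 = \lambda(1,1)^2$) that restore the equivalence.
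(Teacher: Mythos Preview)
Your forward direction coincides with the paper's: both use $W_{v,w}^\gamma Z_{v,w}^\gamma = \lambda(e)^2$ to pass from constancy of $W_{v,w}^\gamma$ to constancy of $Z_{v,w}^\gamma$, hence of $X$ and $Y$ separately, hence of $X^2+Y^2$.

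For the converse the paper's argument is actually thinner than yours. The paper stops at the chain
\[
W_{v,w}^\gamma \text{ const} \iff X,Y \text{ both const}
\]
and then writes ``The result now follows,'' never explaining why $X^2+Y^2$ constant should force $X$ and $Y$ individually constant. Your use of the edge identity to extract the \emph{linear} constraint $cX+dY=K_2$ from $X^2+Y^2=K_1$, followed by elimination and an appeal to Lemma~\ref{lem:constantW2}, is exactly the missing step. So your route is not merely different; it supplies the justification the paper omits.

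Your diagnosis of the degenerate case $c=d=0$ is correct: for a loop the edge relation already gives $X^2+Y^2=\lambda(e)^2$, so the right-hand side of the equivalence holds automatically, while $W_{v,w}^\gamma=-(X+iY)$ need not be constant. The paper only applies the lemma (in Lemma~\ref{lem:2NBAC.3}) with $(v,w)=(\tilde v,\tilde w)$, where $\tilde e=(\tilde v,\tilde w,0)$ lies in a trivial-gain spanning tree, so $\tilde v\neq\tilde w$ and indeed $c=-x_{\tilde w}=\mp\lambda(\tilde e)\neq 0$. The lemma is thus used only in the regime your argument covers.

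One small point worth tightening: your elimination tacitly assumes the resulting polynomial in the remaining variable is not identically zero. If $(c,d)\neq(0,0)$ but $c^2+d^2=0$ (possible since $c,d$ are a priori complex), the leading coefficient vanishes. In that case $d=\pm ic$ and the linear relation $cX+dY=K_2$ becomes $c(X\pm iY)=K_2$, so one of $X\pm iY$ is itself constant; since $W_{v,w}^\gamma Z_{v,w}^\gamma=\lambda(e)^2\neq 0$ forbids $W_{v,w}^\gamma\equiv 0$ or $Z_{v,w}^\gamma\equiv 0$, one still concludes $W_{v,w}^\gamma$ is constant. So the gap is easily closed.
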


\begin{proof}
	We note that $W_{v,w}^\gamma$ is constant if and only if $Z_{v,w}^\gamma$ is also constant as
	$W_{v,w}^\gamma Z_{v,w}^\gamma$ is constant.
	As $x_v - x_w$ and $y_v-y_w$ are constant then
	both $(\gamma_1 x_1 + \gamma_2x_2) + i(\gamma_1 y_1 + \gamma_2 y_2)$ 
	and $(\gamma_1x_1 + \gamma_2 x_2) - i(\gamma_1y_1 + \gamma_2 y_2)$ are constant.
	The result now follows from the observation that $(a +ib)(a-ib)=a^2+b^2$.
\end{proof}

\begin{lemma}\label{lem:2NBAC.1}
	Let $(G,p,L)$ be a full $2$-periodic framework in $\mathbb{R}^2$ and 
	$\mathcal{C} \subset \Vset$ be an algebraic curve.
	Suppose the function field $\mathbb{C}(\mathcal{C})$ has valuation $\nu$ and for some $\mu \in \mathbb{Z}^2 \setminus \{(0,0)\}$,
	\begin{align*}
		\nu \left( \mu W \right) = 0, \qquad \nu \left( \mu Z \right) < 0.
	\end{align*}
	Then one of the following cases holds:
	\begin{enumerate}[(i)]
		\item \label{lem:2NBAC.1item1} For all $\gamma \in \mathbb{Z}^2\setminus \{(0,0) \}$,
		\begin{align*}
			\nu \left( \gamma W \right) \leq 0, \qquad \nu \left(\gamma Z \right) < 0.
		\end{align*}
		\item \label{lem:2NBAC.1item2} There exists $\alpha, \beta \in \mathbb{Z}^2$ with at least one non-zero
		such that for all 
		$\gamma \in \mathbb{Z}^2 \setminus (\mathbb{Z}\alpha \cup \mathbb{Z}\beta )$,
		\begin{align*}
			\nu \left( \gamma W \right) \leq 0, \qquad \nu \left(\gamma Z \right) < 0,
		\end{align*}
		for all $\gamma \in \mathbb{Z} \alpha \setminus \{(0,0) \}$,
		\begin{align*}
			\nu \left(\gamma W \right) > 0, \qquad \nu \left(\gamma Z \right) < 0,
		\end{align*}
		and for all $\gamma \in \mathbb{Z} \beta \setminus \{(0,0) \}$,
		\begin{align*}
			\nu \left(\gamma W \right) \leq 0, \qquad \nu \left(\gamma Z \right) \geq 0,
		\end{align*}
		\item \label{lem:2NBAC.1item3} There exists $\alpha \in \mathbb{Z}^2 \setminus \{(0,0)\}$
		such that for all 
		$\gamma \in \mathbb{Z}^2 \setminus \mathbb{Z}\alpha$,
		\begin{align*}
			\nu \left( \gamma W \right) \leq 0, \qquad \nu \left(\gamma Z \right) < 0,
		\end{align*}
		and for all $\gamma \in \mathbb{Z}\alpha \setminus \{(0,0) \}$,
		\begin{align*}
			\nu \left(\gamma W \right) > 0, \qquad \nu \left( \gamma Z \right) \geq 0.
		\end{align*}
	\end{enumerate}
\end{lemma}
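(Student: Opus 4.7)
The plan is to classify the behaviour of $\nu$ on the lattice $\mathbb{Z}^2$ through two natural subgroups. I would first introduce
\[
A := \{\gamma \in \mathbb{Z}^2 : \nu(\gamma W) > 0\}, \qquad B := \{\gamma \in \mathbb{Z}^2 : \nu(\gamma Z) \geq 0\}.
\]
A direct check using the strong triangle inequality together with $\nu(-f) = \nu(f)$ shows $A$ and $B$ are subgroups of $\mathbb{Z}^2$, while the hypothesis $\nu(\mu W) = 0$ and $\nu(\mu Z) < 0$ gives $\mu \notin A$ and $\mu \notin B$, so both subgroups are proper.

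The crucial step is to show that $A$ has rank at most $1$ (and by symmetry so does $B$). If $\gamma, \delta \in A$ were $\mathbb{Z}$-linearly independent, then the $2 \times 2$ integer matrix with rows $\gamma, \delta$ would be invertible over $\mathbb{C}$, so $W_1$ and $W_2$ could each be written as a $\mathbb{C}$-linear combination of $\gamma W$ and $\delta W$; applying $\nu$ would force $\nu(W_1), \nu(W_2) > 0$, and then $\nu(\eta W) \geq \min\{\nu(W_1),\nu(W_2)\} > 0$ for every non-zero $\eta \in \mathbb{Z}^2$, yielding $A = \mathbb{Z}^2$ and contradicting $\mu \notin A$. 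Next, since non-zero integer scalars are units for $\nu$ (so $n \alpha \in A \Leftrightarrow \alpha \in A$), if $A$ has rank $1$ it must in fact equal $\mathbb{Z}\alpha$ for a primitive $\alpha \in \mathbb{Z}^2$. Thus $A$ is either $\{0\}$ or $\mathbb{Z}\alpha$ for a primitive $\alpha$, and the same holds for $B$.

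With this classification in hand, I would finish by enumerating the cases for $(A, B)$. If $A = B = \{0\}$ we land in case (i). If exactly one of $A, B$ is trivial, say $A = \{0\}$ and $B = \mathbb{Z}\beta$, we land in case (ii) with $\alpha = 0$ (and symmetrically if $B = \{0\}$). If $A = B = \mathbb{Z}\alpha$ we land in case (iii). Finally if $A = \mathbb{Z}\alpha$ and $B = \mathbb{Z}\beta$ are both non-trivial with $\mathbb{Z}\alpha \neq \mathbb{Z}\beta$, then because each is primitive the vectors $\alpha, \beta$ must be linearly independent, so $\mathbb{Z}\alpha \cap \mathbb{Z}\beta = \{0\}$, and case (ii) holds with both generators non-zero. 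In every case the detailed inequalities on $\nu(\gamma W)$ and $\nu(\gamma Z)$ follow immediately from the defining (non-)memberships in $A$ and $B$.

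I do not anticipate a serious obstacle here: the whole argument reduces to elementary subgroup analysis of $\mathbb{Z}^2$ combined with the basic valuation axioms recalled in Section~\ref{sec:Preliminaries}. The only mildly delicate point is that non-zero integer scalars are units for $\nu$, which is precisely what lets one promote an arbitrary rank-$1$ generator $n\alpha$ to a primitive $\alpha$ and thereby obtain the clean trichotomy in the statement.
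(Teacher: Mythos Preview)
Your argument is correct and is, in fact, tidier than the paper's. The paper proceeds by fixing an auxiliary $\lambda\in\mathbb{Z}^2$ independent of $\mu$ and then running a four-way case split on whether $\nu(\mu W)=\nu(\lambda W)$ and whether $\nu(\mu Z)=\nu(\lambda Z)$; within each sub-case it argues directly about the possible existence and uniqueness (up to scalar) of a direction $\alpha$ with $\nu(\alpha W)>0$, and similarly a direction $\beta$ with $\nu(\beta Z)\geq 0$, and then matches the resulting configurations to (i)--(iii). Your approach replaces this ad hoc analysis by a single structural observation: the sets $A=\{\gamma:\nu(\gamma W)>0\}$ and $B=\{\gamma:\nu(\gamma Z)\geq 0\}$ are proper subgroups of $\mathbb{Z}^2$ of rank at most $1$, saturated under integer scaling, hence each equals $\{0\}$ or $\mathbb{Z}\alpha$ for a primitive $\alpha$. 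The remaining case enumeration on the pair $(A,B)$ is then immediate. What you gain is a coordinate-free argument that avoids the auxiliary choice of $\lambda$ and the nested case checks; what the paper's version offers is perhaps slightly more explicit bookkeeping tying each sub-case to the valuations of $W_1,W_2,Z_1,Z_2$ individually, which is closer to how these quantities are used downstream in Lemma~\ref{lem:2NBAC.3}.
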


\begin{proof}
	Choose $\lambda \in \mathbb{Z}^2$ such that $\mu$ and $\lambda$ are linearly independent.
	
	If $\nu( \mu W) \neq \nu (\lambda W)$,
	then we note that for all $\gamma \in \mathbb{Z}^2 \setminus \{(0,0)\}$ with $\gamma = a \mu + b \lambda$,
	\begin{align*}
		\nu ( \gamma W) = \nu ((a \mu + b \lambda)W) = \min \left\{ \nu( \mu W), \nu (\lambda W) \right\} \leq 0;
	\end{align*}
	similarly,
	if $\nu( \mu Z) \neq \nu (\lambda Z)$,
	then $\nu (\gamma Z) <0$
	for all $\gamma \in \mathbb{Z}^2$.
	
	If $\nu( \mu W) = \nu (\lambda W)$,
	then there can exist $\alpha \in \mathbb{Z}^2 \setminus\{(0,0)\}$ that is pairwise independent of $\mu,\lambda$
	such that $\nu(\alpha W) >0$.
	We note that $\alpha$ is unique up to scalar multiplication,
	as if there exists $\gamma \in \mathbb{Z}^2 \setminus \mathbb{Z} \alpha$ such that $\nu (\gamma W) >0$ also,
	then we may choose $A,B \in \mathbb{R}$ such that $A\alpha + B\gamma = \mu$,
	and note that
	\begin{align*}
		\nu ( \mu W) \geq \min \left\{ \nu( \alpha W), \nu (\gamma W) \right\} > 0,
	\end{align*}
	contradicting that $\nu (\mu W) = 0$.
	Likewise,
	if  $\nu( \mu Z) = \nu (\lambda Z)$,
	then there can exist at most one $\beta \in \mathbb{Z}^2 \setminus\{0\}$ such that $\nu(\beta Z) \geq 0$.
	
	We now check the cases:
	\begin{itemize}
		\item Suppose $\nu( \mu W) \neq \nu (\lambda W)$ and $\nu( \mu Z) \neq \nu (\lambda Z)$.
		\begin{itemize}
			\item Case (\ref{lem:2NBAC.1item1}) holds if $\nu(\lambda W) , \nu (\lambda Z)<0$.
			\item Case (\ref{lem:2NBAC.1item2}) holds if $\nu(\lambda W)<0 < \nu (\lambda Z)$
			or $\nu(\lambda Z)<0 < \nu (\lambda W)$.
			\item Case (\ref{lem:2NBAC.1item3}) holds if $\nu(\lambda W) , \nu (\lambda Z) > 0$.
		\end{itemize}
		\item Suppose $\nu( \mu W) = \nu (\lambda W)$ and $\nu( \mu Z) \neq \nu (\lambda Z)$.
		\begin{itemize}
			\item Case (\ref{lem:2NBAC.1item1}) holds if $\alpha$ doesn't exist and $\nu ( \mu Z) < 0$.
			\item Case (\ref{lem:2NBAC.1item2}) holds otherwise.
		\end{itemize}
		\item Suppose $\nu( \mu W) \neq \nu (\lambda W)$ and $\nu( \mu Z) = \nu (\lambda Z)$.
		\begin{itemize}
			\item Case (\ref{lem:2NBAC.1item1}) holds if $\nu ( \mu W) < 0$ and $\beta$ doesn't exist.
			\item Case (\ref{lem:2NBAC.1item2}) holds otherwise.
		\end{itemize}
		\item Suppose $\nu( \mu W) = \nu (\lambda W)$ and $\nu( \mu Z) = \nu (\lambda Z)$.
		\begin{itemize}
			\item Case (\ref{lem:2NBAC.1item1}) holds if $\alpha, \beta$ don't exist.
			\item Case (\ref{lem:2NBAC.1item2}) holds if $\alpha$ exists and $\beta$ doesn't exist,
			$\alpha$ doesn't exist and $\beta$ exists, or if $\alpha, \beta$ exist and $\alpha \neq \beta$.
			\item Case (\ref{lem:2NBAC.1item3}) holds if $\alpha, \beta$ exist and $\alpha = \beta$.
		\end{itemize}
	\end{itemize}
\end{proof}

\begin{lemma}\label{lem:2NBAC.3}
	Let $(G,p,L)$ be a full $2$-periodic framework in $\mathbb{R}^2$
	and $\mathcal{C} \subset \Vset$ be an algebraic curve.
	Further,
	suppose $G$ contains a pair of parallel edges $(v,w,\gamma)$ and $(v,w,\gamma')$ such that 
	$\gamma - \gamma' = (\lambda_1,\lambda_2)$ and
	\begin{align*}
		(\lambda_1 x_1 + \lambda_2 x_2)^2 + (\lambda_1 y_1 + \lambda_2 y_2)^2
	\end{align*}
	is not constant on $\mathcal{C}$.
	Then one of the following holds:
	\begin{enumerate}[(i)]
		\item $G$ has an active type 1 flexible 2-lattice NBAC-colouring,
		\item $G$ has an active type 2 flexible 2-lattice NBAC-colouring, or
		\item $G$ has an active type 3 flexible 2-lattice NBAC-colouring.
	\end{enumerate}
\end{lemma}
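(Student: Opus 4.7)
The plan is to produce an active NBAC-colouring of $\mathcal{C}$ and classify its type via Lemma \ref{lem:2NBAC.1}. Using Proposition \ref{prop:balancedsub} and Proposition \ref{prop:activegain}, I first reduce to the case where $G$ contains a balanced spanning tree $T$ with trivial gains, and by a suitable gain equivalence together with the choice of $\tilde{e}$ I may take the two parallel edges to be $\tilde{e} = (v,w,0)$ and $e' = (v,w,\lambda)$ with $\lambda \neq (0,0)$.

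The hypothesis is that $\lambda W \cdot \lambda Z = (\lambda_1 x_1 + \lambda_2 x_2)^2 + (\lambda_1 y_1 + \lambda_2 y_2)^2$ is non-constant on $\mathcal{C}$, so by Lemma \ref{lem:constantW2} it is transcendental over $\mathbb{C}$, forcing at least one of $\lambda W, \lambda Z$ to be transcendental itself; via Remark \ref{rem:active2} I may swap colours and assume $\lambda W$ is transcendental. Since $W^0_{v,w}$ is a non-zero constant on $\mathcal{C}$ (because $\tilde{e}$ is fixed) and $\lambda W = W^0_{v,w} - W^\lambda_{v,w}$, the function $W^\lambda_{v,w}$ is transcendental as well, so I can apply Lemma \ref{lem:NBAC} with $a := e'$ to obtain an active NBAC-colouring $\delta$ generated by some valuation $\nu$ of $\mathbb{C}(\mathcal{C})$ (with threshold $0$), satisfying $\delta(\tilde{e}) = \text{blue}$ and $\delta(e') = \text{red}$.

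The key computation is then that $\nu(\lambda W) = 0$ and $\nu(\lambda Z) < 0$, so that the hypothesis of Lemma \ref{lem:2NBAC.1} is satisfied with $\mu := \lambda$. Indeed, $\nu(W^0_{v,w}) = 0$ and $\nu(W^\lambda_{v,w}) > 0$ give $\nu(\lambda W) = \nu(W^0_{v,w} - W^\lambda_{v,w}) = 0$, while $W^\lambda_{v,w} Z^\lambda_{v,w} = \lambda(e')^2$ being a non-zero constant and $\nu(W^\lambda_{v,w}) > 0$ force $\nu(Z^\lambda_{v,w}) < 0$, and combining with $\nu(Z^0_{v,w}) = 0$ and $\lambda Z = Z^0_{v,w} - Z^\lambda_{v,w}$ yields $\nu(\lambda Z) < 0$.

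Lemma \ref{lem:2NBAC.1} then splits into three cases, and I claim that cases (i), (ii), (iii) correspond respectively to $\delta$ being a type 1, type 2, or type 3 flexible 2-lattice NBAC-colouring. For any monochromatic circuit $C$ with gain $\psi(C)$, the telescoping identities $-\psi(C) W = \sum_j W^{\gamma_j}_{v_j,v_{j+1}}$ and $-\psi(C) Z = \sum_j Z^{\gamma_j}_{v_j,v_{j+1}}$ combined with the colour rule give $\nu(\psi(C) W) > 0$ if $C$ is red and $\nu(\psi(C) Z) \geq 0$ if $C$ is blue, which in each case of Lemma \ref{lem:2NBAC.1} pins $\psi(C)$ to the appropriate subgroup; an analogous analysis for almost monochromatic circuits, together with the NBAC property forcing $\psi(C) \neq 0$, rules out the forbidden gains in the type 2 and type 3 definitions. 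The main obstacle I anticipate is the careful case-by-case verification in this last step, in particular checking the no-almost-monochromatic-circuit conditions for types 2 and 3; a secondary subtlety is the case $\rank(G) = 1$, where Lemma \ref{lem:NBAC} does not directly apply and the graph must be reinterpreted as a $1$-periodic graph via the analogous argument of Section \ref{sec:1NBAC} to produce a type 3 NBAC-colouring.
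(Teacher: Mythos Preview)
Your approach is essentially the paper's: fix $\tilde e$ to be one of the parallel edges, reduce via Propositions \ref{prop:balancedsub} and \ref{prop:activegain} to a trivially-gained spanning tree, show $W^{\lambda}_{v,w}$ is non-constant, invoke Lemma \ref{lem:NBAC} to get an active NBAC-colouring $\delta$ and a valuation $\nu$, verify $\nu(\lambda W)=0$ and $\nu(\lambda Z)<0$, and then run the case split of Lemma \ref{lem:2NBAC.1} together with the telescoping circuit identities.

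Two small corrections. First, your invocation of Remark \ref{rem:active2} to ``assume $\lambda W$ is transcendental'' is unnecessary: since $\tilde e=(v,w,0)$ is fixed, $W^{0}_{v,w}$ and $Z^{0}_{v,w}$ are non-zero constants, and $W^{\lambda}_{v,w}Z^{\lambda}_{v,w}=\lambda(e')^{2}\neq 0$ is constant, so $W^{\lambda}_{v,w}$ is constant iff $Z^{\lambda}_{v,w}$ is, i.e.\ $\lambda W$ and $\lambda Z$ are simultaneously transcendental (this is exactly Lemma \ref{lem:2NBAC.2}). Second, the correspondence between the three cases of Lemma \ref{lem:2NBAC.1} and the three types is not bijective: in case (ii) the colouring is type 1 if both monochromatic subgraphs happen to be balanced and type 2 otherwise, and in case (iii) it is type 1, 2, or 3 depending on whether neither, one, or both colours admit an unbalanced monochromatic circuit. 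The paper makes this distinction explicit at the end of each case.

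Your worry about $\rank(G)=1$ is a red herring. The hypothesis $\rank(G)=k$ in Lemma \ref{lem:NBAC} is not used in its proof once the transcendental edge $a$ is supplied directly (as you do with $a=e'$); the paper simply applies Lemma \ref{lem:NBAC} without comment. Reinterpreting $G$ as a $1$-periodic graph via Section \ref{sec:1NBAC} would not produce a flexible $2$-lattice colouring of any type, so that suggested fix should be dropped.
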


\begin{proof}
	By our choice of $\tilde{e}$,
	we may assume $\tilde{e}$ and $\tilde{f}$ are the pair of parallel edges on $\tilde{v},\tilde{w}$,
	with $\psi(\tilde{f} )= \mu$ for some $\mu = (\mu_1,\mu_2) \in \mathbb{Z}^2 \setminus \{(0,0)\}$.
	It follows from Proposition \ref{prop:balancedsub} and Proposition \ref{prop:activegain}
	that we also may assume $G$ contains a spanning tree $T$ where every edge has trivial gain and $\tilde{e} \in T$.
	Since $\mu$ is the difference in gains of $\tilde{e},\tilde{f}$,
	then 
	\begin{align*}
		(\mu_1 x_1 + \mu_2 x_2)^2 + (\mu_1 y_1 + \mu_2 y_2)^2
	\end{align*}
	is not constant.
	By Lemma \ref{lem:2NBAC.2},
	$W_{\tilde{v},\tilde{w}}^{\mu}$ is not constant on $\mathbb{C}(\mathcal{C})$.
	Let $\nu$ be the valuation of $\mathbb{C}(\mathcal{C})$
	and $\delta$ be the active NBAC-colouring given by Lemma \ref{lem:NBAC} with $a := \tilde{f}$.
	
	We note that $\nu (W_{\tilde{v},\tilde{w}}^0) = 0$ and $\nu (W_{\tilde{v},\tilde{w}}^\mu) > 0$.
	As $\mu W = W_{\tilde{v},\tilde{w}}^0 - W_{\tilde{v},\tilde{w}}^\mu$
	then
	\begin{align*}
		\nu \left( \mu W \right) = \nu \left( W_{\tilde{v},\tilde{w}}^0 \right) = 0.
	\end{align*}
	Similarly,
	as $\mu Z = Z_{\tilde{v},\tilde{w}}^0 - Z_{\tilde{v},\tilde{w}}^\mu$
	then
	\begin{align*}
		\nu \left( \mu Z \right) = \nu \left( Z_{\tilde{v},\tilde{w}}^\mu \right) < 0.
	\end{align*}
	Let case (\ref{lem:2NBAC.1item1}), case (\ref{lem:2NBAC.1item2}) and case (\ref{lem:2NBAC.1item3})
	refer to the three possibilities given by Lemma \ref{lem:2NBAC.1}.
	We shall now proceed to prove that case (\ref{lem:2NBAC.1item1}) implies $G$ has a type 1 flexible 2-lattice NBAC-colouring,
	case (\ref{lem:2NBAC.1item2}) implies $G$ has either a type 1 or type 2 flexible 2-lattice NBAC-colouring, and
	case (\ref{lem:2NBAC.1item3}) implies $G$ has either a type 1, type 2 or a type 3 flexible 2-lattice NBAC-colouring.
	
	(Case (\ref{lem:2NBAC.1item1}) holds):
	Suppose $G$ has an unbalanced monochromatic circuit $C=$ of length $n$,
	and define $\gamma := \psi(C)$.
	If $C$ is red,
	then
	\begin{align*}
		\nu (\gamma W) 
		= \nu \left(- \sum_{j=1}^n W^{\gamma_j}_{v_j,v_{j+1}} \right) 
		\geq \min \left\{ \nu (W^{\gamma_j}_{v_j,v_{j+1}}) : 1\leq j \leq n \right\} >0,
	\end{align*}
	contradicting that $\nu(\gamma W) \leq 0$.
	If $C$ is blue,
	then
	\begin{align*}
		\nu (\gamma Z) 
		= \nu \left(- \sum_{j=1}^n Z^{\gamma_j}_{v_j,v_{j+1}} \right) 
		\geq \min \left\{ \nu (Z^{\gamma_j}_{v_j,v_{j+1}}) : 1\leq j \leq n \right\} \geq 0,
	\end{align*}
	contradicting that $\nu (\gamma Z)  < 0$.
	It now follows that $\delta$ is a type 1 flexible 2-lattice NBAC-colouring.	
	
	(Case (\ref{lem:2NBAC.1item2}) holds):
	Let $C$ be an unbalanced monochromatic circuit of length $n$ with 
	$\gamma := \psi(C)$.
	If $C$ is red and $\gamma \notin \mathbb{Z} \alpha$,
	then
	\begin{align*}
		\nu (\gamma W) 
		= \nu \left(- \sum_{j=1}^n W^{\gamma_j}_{v_j,v_{j+1}} \right) 
		\geq \min \left\{ \nu (W^{\gamma_j}_{v_j,v_{j+1}}) : 1\leq j \leq n \right\} >0,
	\end{align*}
	contradicting that $\nu(\gamma W) \leq 0$.
	Likewise, if $C$ is blue and $\gamma \notin \mathbb{Z} \beta$,
	then
	\begin{align*}
		\nu (\gamma Z) 
		= \nu \left(- \sum_{j=1}^n Z^{\gamma_j}_{v_j,v_{j+1}} \right) 
		\geq \min \left\{ \nu (Z^{\gamma_j}_{v_j,v_{j+1}}) : 1\leq j \leq n \right\} \geq 0,
	\end{align*}
	contradicting that $\nu (\gamma Z)  < 0$.
	
	Now let $C$ be an almost monochromatic circuit of length $n$	
	where $\delta(e_n) \neq \delta(e_i)$ for all $i \in \{1,\ldots,n-1\}$.
	If $C$ is almost red and $\psi(C) = c\alpha$ for some $c \in \mathbb{Z}$,
	then,
	\begin{align*}
		\nu (W_{v_{1},v_{n}}^{\gamma_{n}})
		= \nu \left(\sum_{j=1}^{n-1} W^{\gamma_j}_{v_j,v_{j+1}} + c \alpha W \right) 
		\geq \min \left\{\nu(\alpha W), \nu (W^{\gamma_j}_{v_j,v_{j+1}}) : 1\leq j \leq n-1 \right\} >0,
	\end{align*}
	contradicting that $\nu (W_{v_{1},v_{n}}^{\gamma_{n}}) \leq 0$.
	Similarly, if $C$ is almost blue and $\psi(C) = c\beta$ for some $c \in \mathbb{Z}$,
	then,
	\begin{align*}
		\nu (Z_{v_{1},v_{n}}^{\gamma_{n}})
		= \nu \left( \sum_{j=1}^{n-1} Z^{\gamma_j}_{v_j,v_{j+1}} + c \beta Z \right) 
		\geq \min \left\{\nu(\beta Z), \nu (Z^{\gamma_j}_{v_j,v_{j+1}}) : 1\leq j \leq n-1 \right\} >0,
	\end{align*}
	contradicting that $\nu (Z_{v_{1},v_{n}}^{\gamma_{n}}) \leq 0$.
	
	It now follows that if $G$ has no unbalanced monochromatic circuits then $\delta$ is a type 1 flexible 2-lattice NBAC-colouring,
	and if $G$ has an unbalanced monochromatic circuit then $\delta$ is a type 2 flexible 2-lattice NBAC-colouring.	
	
	(Case (\ref{lem:2NBAC.1item3}) holds): 
	Let $C$ be an unbalanced monochromatic circuit of length $n$ with	$\gamma:= \psi(C) \notin \mathbb{Z} \alpha$.
	If $C$ is red,
	then
	\begin{align*}
		\nu (\gamma W) 
		= \nu \left(- \sum_{j=1}^n W^{\gamma_j}_{v_j,v_{j+1}} \right) 
		\geq \min \left\{ \nu (W^{\gamma_j}_{v_j,v_{j+1}}) : 1\leq j \leq n \right\} >0,
	\end{align*}
	contradicting that $\nu(\gamma W) \leq 0$.
	Likewise, if $C$ is blue,
	then
	\begin{align*}
		\nu (\gamma Z) 
		= \nu \left(- \sum_{j=1}^n Z^{\gamma_j}_{v_j,v_{j+1}} \right) 
		\geq \min \left\{ \nu (Z^{\gamma_j}_{v_j,v_{j+1}}) : 1\leq j \leq n \right\} \geq 0,
	\end{align*}
	contradicting that $\nu (\gamma Z)  < 0$.
	
	Now let $C$ be an almost monochromatic circuit of length $n$	
	where $\psi(C) := c\alpha$ for some $c \in \mathbb{Z}$ and $\delta(e_n) \neq \delta(e_i)$ for all $i \in \{1,\ldots,n-1\}$.
	If $C$ is almost red, 
	then,
	\begin{align*}
		\nu (W_{v_{1},v_{n}}^{\gamma_{n}})
		= \nu \left( \sum_{j=1}^{n-1} W^{\gamma_j}_{v_j,v_{j+1}} + c \alpha W \right) 
		\geq \min \left\{\nu(\alpha W), \nu (W^{\gamma_j}_{v_j,v_{j+1}}) : 1\leq j \leq n-1 \right\} >0,
	\end{align*}
	contradicting that $\nu (W_{v_{1},v_{n}}^{\gamma_{n}}) \leq 0$.
	Similarly, if $C$ is almost blue,
	then,
	\begin{align*}
		\nu (Z_{v_{1},v_{n}}^{\gamma_{n}})
		= \nu \left(\sum_{j=1}^{n-1} Z^{\gamma_j}_{v_j,v_{j+1}} + c \alpha Z \right) 
		\geq \min \left\{\nu(\alpha Z), \nu (Z^{\gamma_j}_{v_j,v_{j+1}}) : 1\leq j \leq n-1 \right\} >0,
	\end{align*}
	contradicting that $\nu (Z_{v_{1},v_{n}}^{\gamma_{n}}) \leq 0$.
	
	It now follows that if $G$ has no unbalanced monochromatic circuits then $\delta$ is a type 1 flexible 2-lattice NBAC-colouring,
	if $G$ only has unbalanced monochromatic circuits for a single colour then $\delta$ is a type 2 flexible 2-lattice NBAC-colouring,
	and if $G$ has unbalanced monochromatic circuits for both colours then $\delta$ is a type 3 flexible 2-lattice NBAC-colouring.
\end{proof}

We are now ready for our necessity lemma.

\begin{lemma}\label{lem:2NBAC}
	Let $(G,p,L)$ be a full $2$-periodic framework in $\mathbb{R}^2$.
	If $(G,p,L)$ is flexible then one of the following holds:
	\begin{enumerate}[(i)]
		\item $G$ has an active type 1 flexible 2-lattice NBAC-colouring,
		\item $G$ has an active type 2 flexible 2-lattice NBAC-colouring,
		\item $G$ has an active type 3 flexible 2-lattice NBAC-colouring,
		\item $G$ has an active fixed lattice NBAC-colouring,
		\item $\rank (G) <2$, or
		\item $G$ is disconnected.
	\end{enumerate}
\end{lemma}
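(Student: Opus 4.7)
The plan is to adapt the strategy of Lemma \ref{lem:1NBAC} to the two-lattice setting, using Lemma \ref{lem:2NBAC.3} as the engine that produces a type $1$, $2$, or $3$ active NBAC-colouring from a parallel-edge pair. I assume $G$ is connected with $\rank(G) = 2$, since otherwise we are in case (v) or (vi). By Proposition \ref{prop:balancedsub} together with Proposition \ref{prop:activegain} we may take $G$ to contain a spanning tree through $\tilde{e}$ whose edges all carry trivial gain. Lemma \ref{lem:existalgcurv}(i) then furnishes an algebraic curve $\mathcal{C} \subset \Vset$.

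If $\mathcal{C} \subset \Vsetcirc$, then $(G,p,L)$ is fixed-lattice flexible and Lemma \ref{lem:fixedNBAC2} immediately supplies the active fixed lattice NBAC-colouring required for case (iv). Otherwise $\mathcal{C} \not\subset \Vsetcirc$, and expanding the defining equations of $\Vsetcirc$ shows that if the quadratic $(\lambda_1 x_1 + \lambda_2 x_2)^2 + (\lambda_1 y_1 + \lambda_2 y_2)^2$ were constant on $\mathcal{C}$ for every $\lambda \in \mathbb{Z}^2$, then polarisation at $(1,0)$, $(0,1)$, $(1,1)$ would force $x_1^2+y_1^2$, $x_2^2+y_2^2$, and $x_1 x_2 + y_1 y_2$ to be constant, contradicting $\mathcal{C} \not\subset \Vsetcirc$. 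Hence we can fix some $\lambda \in \mathbb{Z}^2 \setminus \{(0,0)\}$ for which this quadratic is non-constant on $\mathcal{C}$.

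If $G$ already contains a pair of parallel edges with gain difference $\lambda$, then after relabelling these as $\tilde{e}$ and $\tilde{f}$, Lemma \ref{lem:2NBAC.3} directly delivers one of cases (i), (ii), (iii). Otherwise, I invoke Lemma \ref{lem:1NBAC.1} to form a Henneberg 1 extension $(G',p',L)$ of $(G,p,L)$ obtained by adjoining a new vertex $v_0$ joined to some $v_1 \in V(G)$ by edges of gains $0$ and $\lambda$; fullness of $(G,p,L)$ gives $L.\lambda \neq 0$, and restricting the flex to a sufficiently short initial segment ensures $\|L_t.\lambda\| \neq 0$ throughout, so that the hypothesis of Lemma \ref{lem:1NBAC.1} is available. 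The extended framework is flexible, the non-constancy of the $\lambda$-quadratic persists on an algebraic curve inside $\mathcal{V}_{\tilde{e}}(G',p',L)$, and $G'$ carries a pair of parallel edges of gain difference $\lambda$, so Lemma \ref{lem:2NBAC.3} yields an active type $k \in \{1,2,3\}$ NBAC-colouring $\delta'$ of $G'$.

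Finally I take $\delta := \delta'|_{E(G)}$. Since $\rank(G) = 2$, no type $k \in \{1,2,3\}$ NBAC-colouring of $G'$ can restrict to a monochromatic colouring on $G$ (types $1$ and $2$ force each monochromatic span to have rank at most $1$, and type $3$ forces both monochromatic spans to lie in a single cyclic subgroup of $\mathbb{Z}^2$), so $\delta$ is surjective, and by the restriction remark in Section \ref{sec:NBAC} it is a type $k' \leq k$ flexible $2$-lattice NBAC-colouring of $G$. Activeness transfers by pulling the valuation back along the coordinate projection $\mathcal{V}_{\tilde{e}}(G',p',L) \to \Vset$ that forgets $v_0$, using that the functions $W_j$, $Z_j$, and $W^\gamma_{v,w}$ for $v,w \in V(G)$ depend only on the coordinates common to $G$ and $G'$. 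The main obstacle I expect is the bookkeeping in the Henneberg step: one must verify both that the extended framework genuinely admits an algebraic curve inside the correct $\mathcal{V}$-variety on which the $\lambda$-quadratic remains non-constant, and that the valuation-theoretic inequalities of Lemma \ref{lem:2NBAC.1} governing the type-$k$ classification survive the pullback to $G$.
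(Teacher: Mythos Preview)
Your proof is correct and follows essentially the same approach as the paper: the same trichotomy on whether $\mathcal{C} \subset \Vsetcirc$, whether a suitable parallel-edge pair already exists, or whether a Henneberg~1 extension is needed, followed by Lemma~\ref{lem:2NBAC.3} and restriction back to $G$. One small point: from $\mathcal{C} \subset \Vsetcirc$ you cannot literally conclude that $(G,p,L)$ is fixed-lattice flexible in the real sense, but the ingredients of the proof of Lemma~\ref{lem:fixedNBAC2} (namely Lemmas~\ref{lem:constantW}, \ref{lem:NBAC}, and~\ref{lem:fixedactive2}) apply directly to the curve $\mathcal{C}$, which is all you need.
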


\begin{proof}
	Suppose $\rank (G) = 2$ and $G$ is connected.
	Choose any $\tilde{e} \in E(G)$.
	By Lemma \ref{lem:existalgcurv} (\ref{lem:existalgcurvitem2}),
	there exists an algebraic curve $\mathcal{C} \subset \Vset$.
	We now have three possible outcomes:
	\begin{enumerate}
		\item \label{poss0} $\mathcal{C} \subset \Vsetcirc$.
		\item \label{poss1} $G$ contains a pair of parallel edges $(v,w,\gamma)$ and $(v,w,\gamma')$ such that 
		$\gamma - \gamma' = (\lambda_1,\lambda_2)$ and
		\begin{align*}
			(\lambda_1 x_1 + \lambda_2 x_2)^2 + (\lambda_1 y_1 + \lambda_2 y_2)^2
		\end{align*}
		is not constant on $\mathcal{C}$.
		\item \label{poss2} Possibilities \ref{poss0} and \ref{poss1} do not hold.
	\end{enumerate}
	
	(Possibility \ref{poss0} holds):
	If $\mathcal{C} \subset \Vsetcirc$ then by Lemma \ref{lem:fixedNBAC1},
	$G$ has an active fixed lattice NBAC-colouring.
	
	(Possibility \ref{poss1} holds):
	By Lemma \ref{lem:2NBAC.3},
	$G$ has either an active type 1, type 2 or type 3 flexible $2$-lattice NBAC-colouring.	
	
	(Possibility \ref{poss2} holds):
	As  $\mathcal{C} \not\subset \Vsetcirc$,
	we may choose $\mu := (\mu_1,\mu_2) \in \mathbb{Z}^2$ such that
	\begin{align*}
		(\lambda_1x_1 + \lambda_2 x_2)^2 + (\lambda_1 y_1 + \lambda_2 y_2)^2
	\end{align*}
	is not constant.	
	By Lemma \ref{lem:1NBAC.1},
	there exists a vertex addition $(G',p',L)$ of $(G,p,L)$ at $v_1$ by $\lambda$
	such that $(G',p',L)$ has a non-trivial not fixed lattice flex.
	As Possibility \ref{poss1} holds for $(G',p',L)$,
	then by Lemma \ref{lem:2NBAC.3},
	$G'$ has an active type $k$ flexible 1-lattice NBAC-colouring $\delta'$ for some $k \in \{1,2,3\}$.
	
	Suppose $G'$ has a colouring $\delta'$ as described above.
	Let $\delta$ be the colouring of $G$ with $\delta(e) := \delta'(e)$ for all $e \in E(G)$.
	We note that $\delta$ is an active type $k'$ flexible 2-lattice NBAC-colouring for some $k' \in \{1,2,3\}$
	if and only if $\delta'$ is not monochromatic on the subgraph $G$ of $G'$.
	As $\rank (G)=2$ and $\delta'$ is a type $k$ flexible 2-lattice NBAC-colouring,
	$\delta'$ is not monochromatic on $G$,
	thus $G$ has an active type $k'$ flexible 2-lattice NBAC-colouring for some $k' \in \{1,2,3\}$.	
\end{proof}

\subsection{Constructing flexible frameworks: Low rank graphs}

Our first construction lemma is the simplest,
as the framework is not connected.

\begin{lemma}\label{lem:rank2}
	Let $G$ be a $\mathbb{Z}^2$-gain graph.
	If $\rank (G) <2$ then there exists a full placement-lattice $(p,L)$ of $G$ in $\mathbb{R}^2$ such that $(G,p,L)$ is flexible.
\end{lemma}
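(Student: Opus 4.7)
The plan is to split the argument into two cases according to $\rank(G)$. If $\rank(G)=0$ then $G$ is balanced, and the conclusion follows immediately from Lemma \ref{lem:fixedNBAC4}: a fixed-lattice flex is in particular a flex. So the real content is the case $\rank(G)=1$.

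In that case $\spann(G) = \mathbb{Z}\alpha$ for some $\alpha \in \mathbb{Z}^2 \setminus \{(0,0)\}$. I would first invoke Proposition \ref{prop:balancedsub} (together with Remark \ref{rem:gainequivrigid}, so that passing to a gain-equivalent graph does not affect (non-)flexibility) to reduce to the case where every edge of $G$ has gain in $\mathbb{Z}\alpha$. The key observation is then that each edge length constraint involves the lattice $L$ only through the single vector $u := L.\alpha$: for an edge $(v,w,c\alpha) \in E(G)$ the relevant squared length is $\|p(v) - p(w) - cu\|^2$. Consequently, any lattice $L'$ with $L'.\alpha = u$ yields an equivalent framework.

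This suggests the following construction. Pick any injective $L_0 \in M_{2\times 2}(\mathbb{R})$, set $u := L_0.\alpha$, and choose a placement $p$ whose image is not contained in a single line and such that $(G,p,L_0)$ is a framework (easy by genericity). The affine subspace $S := \{L' \in M_{2\times 2}(\mathbb{R}) : L'.\alpha = u\}$ is $2$-dimensional, and the subset $S^\circ \subset S$ of injective matrices is a non-empty open set containing $L_0$. Any non-constant continuous path $t \mapsto L_t$ in $S^\circ$ starting at $L_0$, together with $p_t := p$, then produces a continuous path of full placement-lattices; the observation above shows that $(G,p_t,L_t) \sim (G,p,L_0)$ for every $t$.

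The one part that requires real care is non-triviality. If $(p,L_t) \sim (p,L_0)$, then some linear isometry $M$ and vector $y \in \mathbb{R}^2$ must satisfy $p(v) = M.p(v) + y$ for every $v \in V(G)$ and $L_t = M L_0$; since the image of $p$ is not contained in a line, this forces $M = I$ and $y = 0$, hence $L_t = L_0$. Choosing $t$ with $L_t \neq L_0$ therefore yields the required non-trivial flex. I expect this to be the main (and only) obstacle in extreme degenerate cases, e.g.\ graphs with very few vertices whose images are necessarily collinear; those can be handled separately by directly exhibiting a suitable non-isometric variation of $L_t$ within $S^\circ$.
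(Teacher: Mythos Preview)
Your proof is correct and follows essentially the same approach as the paper: case-split on $\rank(G)$, reduce via gain equivalence so that all edge gains lie in $\mathbb{Z}\alpha$, and then vary the lattice while keeping $L.\alpha$ fixed. The paper's explicit choice $L_t.\beta = (1+t)L.\beta$ (for some $\beta$ independent of $\alpha$) makes non-triviality immediate, since $\|L_t.\beta\|$ varies and hence no linear isometry can carry $L_0$ to $L_t$; this sidesteps the collinear-placement edge case you flag.
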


\begin{proof}
	If $\rank(G)=0$ then this holds by Lemma \ref{lem:fixedNBAC3},
	so we may suppose $\rank (G) =1$, i.e.~$\spann(G) =  \mathbb{Z} \alpha$ for some non-zero $\alpha \in \mathbb{Z}^2$.
	By Proposition \ref{prop:balancedsub},
	we may assume every edge of $G$ has gain in $\mathbb{Z} \alpha$.
	Choose any injective map $p$, any full lattice $L$, 
	and any element $\beta \in \mathbb{Z}^2$ that is linearly independent of $\alpha$.
	We may now define the fixed lattice flex $(p_t,L_t)$ for $t \in [0,2 \pi]$,
	where $p_t = p$ and 
	\begin{align*}
		L_t . \alpha := L.\alpha, \qquad L_t . \beta := (1+t) L.\beta.
	\end{align*}
\end{proof}

\subsection{Constructing flexible frameworks: Type 1 flexible 2-lattice NBAC-colourings}

We recall that a type $1$ flexible $2$-lattice NBAC-colouring is a NBAC-colouring $\delta$ where all monochromatic circuits
are balanced.

\begin{lemma}\label{lem:type1.1}
	Let $G$ be a $\mathbb{Z}^2$-gain graph with a type 1 flexible 2-lattice NBAC-colouring. 
	Then there exists $G' \approx G$ such that each blue edge has trivial gain and no red edge has trivial gain.
\end{lemma}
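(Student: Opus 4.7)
The plan is to adapt the proof of Lemma \ref{lem:type1a.1} to the $\mathbb{Z}^2$-setting: the only new ingredient is choosing the per-component shift as a suitable vector in $\mathbb{Z}^2$ rather than an integer.

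First I would use that $\delta$ is a type 1 flexible 2-lattice NBAC-colouring, so $G^\delta_{\text{blue}}$ is balanced, to apply Proposition \ref{prop:balancedsub} and replace $G$ by a gain-equivalent graph in which every blue edge has trivial gain. Let $B_1,\ldots,B_n$ denote the blue connected components. Next, I choose $\mu \in \mathbb{Z}^2$ such that $\mathbb{Z}\mu$ intersects the set $\{\gamma : (v,w,\gamma)\in E(G),\ \delta((v,w,\gamma))=\text{red}\}$ only in $(0,0)$; concretely, taking $\mu=(M,1)$ with $M$ strictly larger than the first coordinate of every red gain suffices (using that the relevant collection of gains is finite). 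I then define
$$G' := \left(\prod_{i=1}^{n}\prod_{v\in B_i}\phi_v^{i\mu}\right)(G),$$
so that $G' \approx G$ by construction, and the operations commute by Remark \ref{rem:gainswitch}.

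Second, I verify the two required properties of $G'$. A blue edge of $G$ has both endpoints in the same $B_i$, so the switching operations at its endpoints contribute $+i\mu$ and $-i\mu$, leaving its trivial gain unchanged in $G'$. A red edge $(v,w,\gamma)\in E(G)$ with $v\in B_i$ and $w\in B_j$ becomes $(v,w,\gamma+(i-j)\mu)$ in $G'$. If $i\neq j$, then $\gamma + (i-j)\mu = (0,0)$ would force $\gamma = (j-i)\mu \in \mathbb{Z}\mu\setminus\{(0,0)\}$, contradicting the choice of $\mu$. If $i=j$ and $\gamma=(0,0)$, then concatenating this red edge with the trivial-gain blue path between $w$ and $v$ inside $B_i$ yields a balanced almost blue circuit, contradicting that $\delta$ is an NBAC-colouring. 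Hence every red edge of $G'$ has non-trivial gain.

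The main subtlety, as in the $\mathbb{Z}$ case, is the combinatorial selection of $\mu$: one must ensure that the integer multiples $\{(i-j)\mu : i\neq j\}$ avoid the finite set of negatives of red gains. Once that is arranged, all remaining verifications are immediate from the formula for switching operations and the fact that the balanced almost blue circuit obstruction is exactly what the type 1 NBAC hypothesis forbids.
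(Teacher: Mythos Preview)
Your proof is correct and follows exactly the approach the paper intends (the paper's own proof simply says to repeat the method of Lemma~\ref{lem:type1a.1}); your adaptation to $\mathbb{Z}^2$ via the choice $\mu=(M,1)$ is a clean way to carry this out. One small fix: the condition on $M$ should be that $M$ exceeds the \emph{absolute value} of the first coordinate of every red gain, since otherwise a red gain with large negative first coordinate could coincide with a negative multiple of $\mu$.
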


\begin{proof}
	The proof follows a similar method as Lemma \ref{lem:type1a.1}.
\end{proof}

\begin{lemma}\label{lem:type1.2}
	Let $H$ be a balanced $\mathbb{Z}^2$-gain graph with no multiple edges and no loops. 
	Then there exists a placement $q$ of $H$ in $\mathbb{Z}^2$ such that for all $(v,w,\gamma) \in E(H)$,
	$q(w) - q(v) = 2\gamma$.
\end{lemma}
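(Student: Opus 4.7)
The plan is to mirror the proof of Lemma \ref{lem:type1a.2}, since the argument there is purely group-theoretic in the gains and so carries over verbatim with $\mathbb{Z}^2$ replacing $\mathbb{Z}$. First I would reduce to the case that $H$ is connected by treating each connected component separately. Then, having fixed a spanning tree $T$ of $H$ and an arbitrary root vertex $v_0 \in V(H)$, I would set $q(v_0) := (0,0)$ and extend $q$ uniquely to $V(H)$ along $T$ by imposing $q(w) - q(v) = 2\gamma$ for every tree edge $(v,w,\gamma) \in E(T)$ (orienting $T$ away from $v_0$). This construction is consistent because $T$ is a tree, and by definition it gives the required identity on every edge of $T$.

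The remaining step is to verify the identity on a non-tree edge $e = (a,b,\mu) \in E(H) \setminus E(T)$. Taking the unique path $(e_1,\ldots,e_{n-1})$ in $T$ with $e_i = (v_i,v_{i+1},\gamma_i)$, $v_1 = b$ and $v_n = a$, the sequence $(e_1,\ldots,e_{n-1},e)$ is a circuit of $H$. As $H$ is balanced, its total gain vanishes, so $\psi(e_1,\ldots,e_{n-1}) = -\mu$. Telescoping the identity on tree edges then gives
\begin{align*}
	q(a) - q(b) = \sum_{i=1}^{n-1} \bigl( q(v_{i+1}) - q(v_i) \bigr) = 2 \sum_{i=1}^{n-1} \gamma_i = 2\psi(e_1,\ldots,e_{n-1}) = -2\mu,
\end{align*}
which rearranges to $q(b) - q(a) = 2\mu$, as required.

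There is no real obstacle here; in fact the ``no multiple edges and no loops'' hypothesis is automatic from balancedness. Two parallel edges $(v,w,\gamma_1)$ and $(v,w,\gamma_2)$ with $\gamma_1 \neq \gamma_2$ would produce an unbalanced $2$-circuit of gain $\gamma_1 - \gamma_2 \neq 0$, and any loop $(v,v,\gamma)$ would be an unbalanced circuit since the definition of gain graph already excludes loops of gain $0$. The hypothesis is presumably present so that $q$ can be read directly as a sensible placement.
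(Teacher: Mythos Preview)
Your proof is correct and is exactly what the paper does: it simply states that the argument of Lemma~\ref{lem:type1a.2} carries over verbatim with $\mathbb{Z}^2$ in place of $\mathbb{Z}$. Your additional remark that the ``no multiple edges and no loops'' hypothesis is already forced by balancedness is a nice observation not made explicit in the paper.
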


\begin{proof}
	The proof follows the same method as Lemma \ref{lem:type1a.2}.
\end{proof}

We are now ready for our construction lemma for type 1 flexible $2$-lattice NBAC-colourings.
We note that it is essentially the same as the construction given in Lemma \ref{lem:type1a}.

\begin{lemma}\label{lem:type1}
	Let $G$ be a $\mathbb{Z}^2$-gain graph with a type 1 flexible 2-lattice NBAC-colouring $\delta$. 
	Then there exists a full placement-lattice $(p,L)$ of $G$ in $\mathbb{R}^2$
	such that $(G,p,L)$ is a flexible full $2$-periodic framework.
\end{lemma}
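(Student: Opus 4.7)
The plan is to mimic the construction in Lemma \ref{lem:type1a}, replacing the one-dimensional rotation of the lattice generator by a full rotation of the $2\times 2$ lattice matrix. First I would invoke Lemma \ref{lem:type1.1} to replace $G$ by a gain-equivalent graph in which every blue edge carries trivial gain and every red edge carries a non-trivial gain; this is harmless by Remark \ref{rem:gainequivrigid}. Let $R_1,\ldots,R_n$ denote the red connected components. Since $\delta$ is type 1, each $R_j$ is a balanced $\mathbb{Z}^2$-gain graph (and hence has no loops or pairs of parallel edges), so Lemma \ref{lem:type1.2} supplies a placement $q_j : V(R_j) \to \mathbb{Z}^2$ with $q_j(w) - q_j(v) = 2\gamma$ for every red edge $(v,w,\gamma)$ of $R_j$. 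I would then pick offsets $c_1, \ldots, c_n \in \mathbb{R}^2$ so that $c_j - c_k \notin \mathbb{Z}^2$ for all $j \neq k$ (for example, $c_j := j(\sqrt{2}, \sqrt{3})$), and set $p(v) := q_j(v) + c_j$ for each $v \in R_j$.

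For the flex itself, I would set $p_t := p$ for all $t$ and
\begin{align*}
L_t := -2 I + \Theta_t, \qquad \Theta_t :=
\begin{bmatrix}
\cos t & -\sin t \\
\sin t & \cos t
\end{bmatrix},
\end{align*}
taking $(p,L) := (p_0, L_0) = (p, -I)$. A direct computation gives $\det L_t = 5 - 4\cos t > 0$, so $L_t$ is invertible for every $t$ and in particular $(p,L)$ is full. For a red edge $(v,w,\gamma)$ with $v,w \in R_j$ one has
\begin{align*}
p_t(v) - p_t(w) - L_t.\gamma = -2\gamma - (-2\gamma + \Theta_t.\gamma) = -\Theta_t.\gamma,
\end{align*}
whose squared $\|\cdot\|^2$-length equals $\|\gamma\|^2$, constant in $t$; this simultaneously preserves the edge-length condition and, since $\gamma \neq 0$, keeps the edge from collapsing for any $t$. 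Blue edges have trivial gain and the placement is constant in $t$, so their edge-lengths are trivially preserved. Non-triviality of the flex follows from $\|L_\pi.(1,0)\|^2 = 9 \neq 1 = \|L_0.(1,0)\|^2$, since linear isometries of $(\mathbb{R}^2, \|\cdot\|^2)$ preserve the lengths of lattice vectors.

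The step requiring genuine care is verifying that $(G,p,L_0)$ is a valid placement-lattice, i.e.\ that no edge degenerates at $t=0$. For red edges this has just been checked. A blue edge $(v,w,0)$ with $v,w$ in the same red component $R_j$ would force $q_j(v) = q_j(w)$; summing $q_j(v_{i+1})-q_j(v_i) = 2\gamma_i$ along any red path from $w$ to $v$ in $R_j$ would then show that the total gain of this path is zero, and adjoining the blue edge produces a balanced almost red circuit, contradicting the NBAC-property of $\delta$. A blue edge $(v,w,0)$ across distinct components $R_j, R_k$ has displacement $(q_j(v) - q_k(w)) + (c_j - c_k)$, which cannot vanish because $q_j(v)-q_k(w) \in \mathbb{Z}^2$ while $c_j - c_k \notin \mathbb{Z}^2$ by our choice of offsets. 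This finite case-analysis, together with the need to pick the offsets $c_j$ irrationally, is the only part of the proof that is not a formal two-dimensional translation of the argument for Lemma \ref{lem:type1a}.
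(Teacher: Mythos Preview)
Your proof is correct and follows essentially the same approach as the paper's: invoke Lemma~\ref{lem:type1.1}, use Lemma~\ref{lem:type1.2} on each balanced red component, offset the components to avoid blue-edge collapses, and define a flex by keeping the placement fixed while rotating the lattice around $-2I$. The only cosmetic differences are your explicit choice of irrational offsets $c_j = j(\sqrt{2},\sqrt{3})$ (the paper just picks the $x_j$ generically) and your slightly different second lattice column; your version is in fact a bit more careful in explicitly verifying fullness of $L_t$ and non-triviality of the flex.
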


\begin{proof}
	By Lemma \ref{lem:type1.1}, 
	we may assume all blue edges of $G$ have trivial gain and all red edges have non-trivial gain.
	Let $R_1, \ldots, R_n$ be the red components of $G$ and define $E_j$ to be the set of edges $(v,w,\gamma)$
	in $G^\delta_{\text{red}}$ with $v,w \in R_j$.
	By Lemma \ref{lem:type1.2}, 
	for each $R_j$ there exists a placement $q_j$ in $\mathbb{R}^2$ where $q_j(w) - q_j(v) = 2\gamma$
	for all $(v,w,\gamma) \in E_j$. 
	By applying translations to each of the placements $q_j$,
	we may assume that for any blue edge $(v,w,0) \in E(G)$ with $v \in R_j$, $w \in R_k$ and $j\neq k$,
	we have	$q_j(v) \neq q_k(w)$.
	We now define for each $t \in [0,2\pi]$ the full placement-lattice $(p_t,L_t)$ of $G$ in $\mathbb{R}^2$,
	with 
	\begin{align*}
		L_t .(1,0):= (-2 + \cos t, \sin t), \qquad L_t .(0,1):= (\sin t, -2 - \cos t)
	\end{align*}
	and
	\begin{align*}
		p_t(v) :=  q_j(v)
	\end{align*}
	for $v \in R_j$.
	We shall denote $(p,L) := (p_0,L_0)$.
	
	To see that 	$(p,L)$ is a well-defined placement-lattice,
	choose any $e=(v,w,\gamma)$ with $\gamma = (\gamma_1,\gamma_2)$ and suppose that $p(v) = p(w) + L.\gamma$.
	If $\delta(e) = \text{red}$,
	then $\gamma \neq (0,0)$ and $v,w \in R_j$ for some $j$.
	It follows that 
	\begin{align*}
		-L.\gamma = q_j(w) - q_j(v) = 2\gamma.
	\end{align*}		
	However as $-L.\gamma = (\gamma_1, 3 \gamma_2)$,
	then $-L.\gamma = 2\gamma$ if and only if $\gamma = (0,0)$,
	contradicting that all red edges have non-trivial gain.
	If $\delta(e) = \text{blue}$,
	then $\gamma = (0,0)$.
	By our choice of placements $\{q_i : 1 \leq i \leq n\}$,
	we must have $v,w \in R_j$ for some $j$;
	furthermore, as $\gamma = (0,0)$ then	$q_j(v) = q_j(w)$.
	Let $(e_1,\ldots,e_{n-1})$ be a red path from $w$ to $v$ with $e_j = (v_j,v_{j+1}, \gamma_j) \in E_j$,
	$v_1=w$ and $v_n=v$.
	Since $q_j(v)=q_j(w)$,
	we have $\sum_{j=1}^{n-1} \gamma_j =0$.
	However this implies $(e_1,\ldots,e_{n-1},e)$ is a balanced almost red circuit,
	contradicting that $\delta$ is a type 1 flexible 2-lattice NBAC-colouring.
	
	Choose any edge $e = (v,w,\gamma)$ with $\gamma = (\gamma_1,\gamma_2)$.
	If $\delta(e) = \text{blue}$ then $\gamma = 0$. As $p_t = p$ then for each $t \in [0, 2 \pi]$,
	\begin{align*}
		\| p_t(v) - p_t(w) - L_t.\gamma \|^2 = \|p(v) - p(v) - L.\gamma\|^2.
	\end{align*}
	If $\delta(e) = \text{red}$ then $v,w \in R_j$,
	thus for each $t \in [0, 2 \pi]$,
	\begin{eqnarray*}
		\| p_t(v) - p_t(w) - L_t.\gamma \|^2 &=&
		(\gamma_1 \cos t + \gamma_2 \sin t)^2 + (\gamma_1 \sin t - \gamma_2 \cos t)^2 \\
		&=& \gamma_1^2 + \gamma_2^2
	\end{eqnarray*}
	It follows that $(p_t,L_t)$  is a flex of $(G,p,L)$ as required.
	We refer the reader to Figure \ref{fig:type1NBACconstruction} for an example of the construction.
\end{proof}

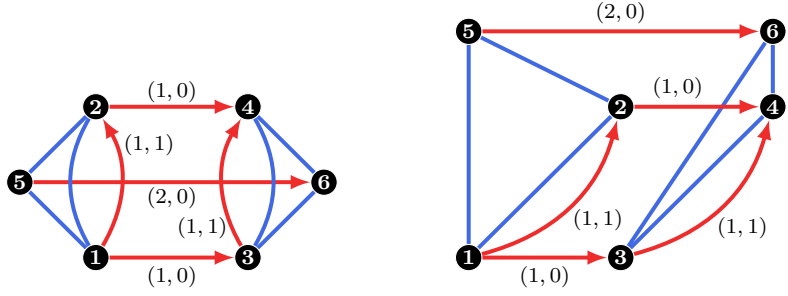
\begin{figure}[ht]
	\begin{center}
		\begin{tikzpicture}
			\node[lnode] (1) at (-1,0) {\textbf{1}};
			\node[lnode] (2) at (-1,2) {\textbf{2}};
			\node[lnode] (3) at (1,0) {\textbf{3}};
			\node[lnode] (4) at (1,2) {\textbf{4}};
			\node[lnode] (5) at (-2,1) {\textbf{5}};
			\node[lnode] (6) at (2,1) {\textbf{6}};

			\draw[redge,-latex] (1)edge(3);
			\draw[redge,-latex] (2)edge(4);
			\draw[redge,-latex] (5)edge(6);
			
			\draw[bedge] (2)edge(5);
			\draw[bedge] (1)edge(5);
			\path[bedge] (1) edge [bend left] node {} (2);
			\path[redge,-latex] (1) edge [bend right] node {} (2);
			
			\draw[bedge] (4)edge(6);
			\draw[bedge] (3)edge(6);
			\path[redge,-latex] (3) edge [bend left] node {} (4);
			\path[bedge] (3) edge [bend right] node {} (4);
			
			\node[font=\scriptsize] at (-0.3,1.5) {$(1,1)$};
			\node[font=\scriptsize] at (0.4,0.4) {$(1,1)$};
			\node[font=\scriptsize] at (0,2.2) {$(1,0)$};
			\node[font=\scriptsize] at (0,-0.25) {$(1,0)$};
			\node[font=\scriptsize] at (0,0.8) {$(2,0)$};
		\end{tikzpicture}\qquad\qquad
		\begin{tikzpicture}
			\node[lnode] (1) at (0,0) {\textbf{1}};
			\node[lnode] (2) at (2,2) {\textbf{2}};
			\node[lnode] (3) at (2,0) {\textbf{3}};
			\node[lnode] (4) at (4,2) {\textbf{4}};
			\node[lnode] (5) at (0,3) {\textbf{5}};
			\node[lnode] (6) at (4,3) {\textbf{6}};
			
			\draw[bedge] (2)edge(5);
			\draw[bedge] (1)edge(5);
			\draw[bedge] (1) edge(2);
			\draw[bedge] (4)edge(6);
			\draw[bedge] (3)edge(6);
			
			\draw[bedge] (3) edge (4);
			
			\path[redge,-latex] (1)edge(3);
			\path[redge,-latex] (2) edge (4);
			\draw[redge,-latex] (5)edge(6);

			\path[redge,-latex] (1) edge [bend right] node {} (2);
			\path[redge,-latex] (3) edge [bend right] node {} (4);
			
			\node[font=\scriptsize] at (1.7,0.5) {$(1,1)$};
			\node[font=\scriptsize] at (3.6,0.4) {$(1,1)$};
			\node[font=\scriptsize] at (2.75,2.25) {$(1,0)$};
			\node[font=\scriptsize] at (1,-0.25) {$(1,0)$};
			\node[font=\scriptsize] at (2,3.25) {$(2,0)$};
		
		\end{tikzpicture}
	\end{center}
	\caption{(Left): A $\mathbb{Z}^2$-gain graph with a type $1$ flexible $2$-lattice NBAC-colouring.
	(Right): The constructed full $2$-periodic framework in $\mathbb{R}^2$.}
	\label{fig:type1NBACconstruction}
\end{figure}

\subsection{Constructing flexible frameworks: Type 2 flexible 2-lattice NBAC-colourings}

We recall that a type $2$ flexible $2$-lattice NBAC-colouring is a NBAC-colouring $\delta$ where 
there exists $\alpha,\beta \in \mathbb{Z}^2$ such that:
\begin{itemize}
	\item either $\alpha,\beta$ are linearly independent or exactly one of $\alpha,\beta$ is equal to $(0,0)$,
	\item $\spann(G^\delta_\text{red})$ is a non-trivial subgroup of $\mathbb{Z}\alpha$, or $\alpha=(0,0)$ and $G^\delta_\text{red}$ is balanced,
	\item $\spann(G^\delta_\text{blue})$ is a non-trivial subgroup of $\mathbb{Z}\beta$, or $\beta=(0,0)$ and $G^\delta_\text{blue}$ is balanced,
	\item there are no almost red circuits with gain in $\mathbb{Z}\alpha$, and
	\item there are no almost blue circuits with gain in $\mathbb{Z}\beta$,
\end{itemize}

\begin{lemma}\label{lem:type2.1}
	Let $G$ be a $\mathbb{Z}^2$-gain graph and $\delta$ a type 2 flexible 2-lattice NBAC-colouring of $G$
	with $\alpha, \beta$ as described previously.
	Suppose $\alpha \neq (0,0)$.
	Then there exists $G' \approx G$ such that 
	each red edge has gain $a \alpha + b \beta$ for some $a,b \in \mathbb{Z}$ with $a\neq 0$,
	and each blue edge has gain $c\beta$ for some $c \in \mathbb{Z}$.
\end{lemma}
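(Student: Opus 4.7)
The plan is to construct the required $G' \approx G$ in two coordinated switching stages. First, I would invoke Proposition \ref{prop:balancedsub} applied to the family of blue connected components $B_1, \ldots, B_m$ of $G$, which have pairwise disjoint vertex sets; this yields a gain-equivalent graph in which every blue edge has gain in $\spann(B_i) \subseteq \mathbb{Z}\beta$, hence of the required form $c\beta$. In the second stage, I would perform a further switching that is constant on each blue component: for each $B_i$ choose $\mu_i \in \mathbb{Z}^2$ and apply the commuting composition $\prod_{v \in B_i} \phi_v^{\mu_i}$. Because the shift is constant on each $B_i$ and every blue edge is internal to its blue component, both endpoints of any blue edge are switched by the same element, so its gain is unchanged and remains in $\mathbb{Z}\beta$. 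A red edge $(v,w,\gamma)$ with $v \in B_i$ and $w \in B_j$ acquires the new gain $\gamma + \mu_i - \mu_j$.

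The $\mu_i$'s must be chosen to satisfy two requirements on the new red-edge gains: each gain lies in $\mathbb{Z}\alpha + \mathbb{Z}\beta$, and its $\alpha$-coefficient is nonzero. To achieve the first requirement, I would pass to the quotient group $\mathbb{Z}^2/(\mathbb{Z}\alpha + \mathbb{Z}\beta)$ and solve a switching consistency system on the \emph{red quotient graph} $\mathcal{Q}$, whose vertices are the blue components of $G$ and whose edges are the red edges of $G$, each carrying its gain modulo $\mathbb{Z}\alpha + \mathbb{Z}\beta$. Solvability amounts to $\mathcal{Q}$ being balanced in this quotient group; I would establish this by lifting any circuit in $\mathcal{Q}$ to a closed walk in $G$, using blue paths within each visited blue component to connect the endpoints of consecutive red edges, and observing that these blue paths contribute only elements of $\mathbb{Z}\beta$, so that the total red-edge gain of the circuit in $\mathcal{Q}$ and the gain of the lifted walk in $G$ differ by an element of $\mathbb{Z}\beta \subseteq \mathbb{Z}\alpha + \mathbb{Z}\beta$. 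With the $\mu_i$ chosen in this way, each red edge has gain in $\mathbb{Z}\alpha + \mathbb{Z}\beta$.

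To then force the $\alpha$-coefficient of each red-edge gain to be nonzero, I would replace each $\mu_i$ by $\mu_i + iM\alpha$ for a sufficiently large $M \in \mathbb{N}$; since $\alpha \in \mathbb{Z}\alpha + \mathbb{Z}\beta$ this preserves the first property, and, in the spirit of the large-$\mu$ trick used in the proof of Lemma \ref{lem:type1a.1}, the term $(i-j)M\alpha$ dominates for every red edge between distinct blue components, forcing the $\alpha$-coefficient to be nonzero. For a red edge $(v,w,\gamma)$ whose endpoints both lie in a single blue component $B_i$, the shift vanishes, and I would argue directly: joining the edge with a blue path between $v$ and $w$ in $B_i$ (whose gain lies in $\mathbb{Z}\beta$) produces an almost blue circuit with gain in $\gamma + \mathbb{Z}\beta$, so by the no-almost-blue-circuits-with-gain-in-$\mathbb{Z}\beta$ hypothesis $\gamma \notin \mathbb{Z}\beta$, forcing its $\alpha$-coefficient to be nonzero. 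The main obstacle is the balancedness of $\mathcal{Q}$ in $\mathbb{Z}^2/(\mathbb{Z}\alpha + \mathbb{Z}\beta)$, which requires a delicate combination of the span conditions on the two monochromatic subgraphs with the absence of almost monochromatic circuits having gains in $\mathbb{Z}\alpha$ or $\mathbb{Z}\beta$; I expect the argument to proceed by induction on the number of colour transitions of a mixed circuit, reducing to the monochromatic and almost monochromatic cases where the hypotheses bite directly.
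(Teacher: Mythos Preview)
Your two-stage plan is essentially the paper's own argument: it too applies Proposition~\ref{prop:balancedsub} to the blue components to put every blue gain into $\mathbb{Z}\beta$, and then performs one further blue-component-constant switch, namely $\prod_i\prod_{v\in B_i}\phi_v^{\,iN(\alpha+\beta)}$ with $N$ larger than every $|a|$, handling the $i\neq j$ and $i=j$ cases exactly as you describe (the $i=j$ case via an almost blue circuit through $B_i$). The only structural difference is that the paper does \emph{not} insert your intermediate reduction: it simply writes every gain as $a\alpha+b\beta$ from the outset and proceeds.

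You are right to flag that intermediate reduction as the main obstacle, but your proposed resolution cannot be completed. The balancedness of the red quotient graph $\mathcal{Q}$ in $\mathbb{Z}^2/(\mathbb{Z}\alpha+\mathbb{Z}\beta)$ does not follow from the type~2 hypotheses, and no induction on colour transitions will rescue it. Take $\alpha=(3,0)$, $\beta=(0,3)$, vertices $u,v$, a red loop $(u,u,\alpha)$, a blue loop $(v,v,\beta)$, a blue edge $(u,v,0)$ and a red edge $(u,v,(1,1))$. One checks directly that this is a type~2 flexible $2$-lattice NBAC-colouring, yet there is a single blue component, so every blue-component-constant switch leaves the red edge with gain $(1,1)\notin 3\mathbb{Z}^2=\mathbb{Z}\alpha+\mathbb{Z}\beta$; and any switch putting that gain into $3\mathbb{Z}^2$ necessarily moves the blue edge $(u,v,0)$ out of $\mathbb{Z}\beta$. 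Thus your step~2 is impossible here, and indeed the lemma as literally stated with $a,b\in\mathbb{Z}$ already fails for this $G$. The paper's argument is meant to be read with $a,b$ the \emph{rational} coordinates of $\gamma$ in the basis $\alpha,\beta$ of $\mathbb{Q}^2$; under that reading your step~2 becomes vacuous, the large-$N$ shift gives $N(i-j)+a\neq 0$ whenever $i\neq j$, and the remainder of your plan coincides with the paper's proof and with what is actually used downstream in Lemma~\ref{lem:type2}.
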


\begin{proof}
	As $\spann(G^\delta_{\text{blue}}) = \mathbb{Z} \beta$,
	by Proposition \ref{prop:balancedsub},
	we may suppose all blue edges of $G$ have gain in $\mathbb{Z}\beta$.
	Let $B_1, \ldots, B_n$ be the blue components of $G$ and choose $N \in \mathbb{N}$ such that
	$N >|a|$ for all $(v,w,\gamma) \in E(G)$ with $\gamma = a\alpha + b \beta$.
	We now define the gain equivalent graph
	\begin{align*}
		G' := \left( \prod_{i=1}^n \prod_{v \in B_i} \phi_v^{iN \alpha} \right) (G).
	\end{align*}
	We first note that any blue edge of $G'$ will have gain in $\mathbb{Z} \beta$
	since both of its ends will lie in the same blue component.
	Choose a red edge $(v,w,\gamma) \in E(G)$ with $\gamma = a\alpha + b \beta$ and suppose $v \in B_i$ and $w \in B_j$.
	We note that
	\begin{eqnarray*}
		\left( \prod_{i=1}^n \prod_{v \in B_i} \phi_v^{iN \alpha} \right)(v,w,\gamma)
		&=& \phi_v^{iN \alpha} \circ \phi_w^{jN \alpha}(v,w,\gamma) \\
		&=& (v,w,(N(i-j) + a) \alpha + b \beta).
	\end{eqnarray*}
	As $N >|a|$ and $i-j \in \mathbb{Z}$,
	we have $N(i-j) + a =0$	if and only if $a = 0$ and $i=j$.
	If this holds,
	then as $v,w \in B_i$,
	we can define an almost blue circuit containing $v$ with red edge $(v,w,b\beta)$ and gain in $\mathbb{Z} \beta$
	(as every blue edge has gain in $\mathbb{Z} \beta$),
	contradicting that $\delta$ is a type $2$ flexible $2$-lattice NBAC-colouring.
	It now follows that $a \neq 0$ as required.
\end{proof}

\begin{lemma}\label{lem:type2.2}
	Let $\alpha,\beta \in \mathbb{Z}^2$ be linearly independent
	and let $H$ be a $\mathbb{Z}^2$-gain graph where $\spann(H)$ is a subgroup of $\mathbb{Z} \alpha$.
	Then there exists a placement $q$ of $H$ in $\mathbb{Z}$ such that for all $(v,w,a \alpha +b \beta) \in E(H)$,
	$q(v) - q(w) = b$.
\end{lemma}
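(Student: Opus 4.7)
The plan is to adapt the spanning-tree propagation argument used in Lemma \ref{lem:type1a.2} and Lemma \ref{lem:type1.2}. First I would reduce to the case where $H$ is connected, since components can be treated independently. Since $\alpha, \beta \in \mathbb{Z}^2$ are linearly independent, they form an $\mathbb{R}$-basis of $\mathbb{R}^2$, so every $\gamma \in \mathbb{Z}^2$ has a unique decomposition $\gamma = a\alpha + b\beta$; let $\pi(\gamma) := b$ denote the $\beta$-coefficient. The map $\pi$ is $\mathbb{R}$-linear and vanishes on $\mathbb{R}\alpha$, so the hypothesis $\spann(H) \leq \mathbb{Z}\alpha$ translates into the structural fact that $\pi(\psi(C)) = 0$ for every circuit $C$ of $H$; this is the only place the hypothesis is used.

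The construction is then direct: fix a spanning tree $T$ of $H$ and a root $v_0 \in V(H)$, set $q(v_0) := 0$, and propagate $q$ along $T$ by declaring $q(v) := q(w) + \pi(\gamma)$ whenever $(v,w,\gamma) \in E(T)$ and $q(w)$ has already been assigned. Since $T$ is a spanning tree this uniquely determines $q$ on $V(H)$, and the required identity $q(v) - q(w) = b$ holds on every tree edge by construction.

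It remains to verify the identity on non-tree edges. For $e = (v,w,\mu) \in E(H) \setminus E(T)$, let $(u_1,\ldots,u_n)$ with $u_1 = v$ and $u_n = w$ be the tree path from $v$ to $w$, whose consecutive edges $f_i \in E(T)$ carry gains $\eta_i$. Telescoping the tree-edge conditions gives $q(v) - q(w) = \sum_{i=1}^{n-1} \pi(\eta_i) = \pi\!\left(\sum_i \eta_i\right)$. On the other hand, this tree path together with $e$ traversed from $w$ back to $v$ forms a circuit $C$ with $\psi(C) = \sum_i \eta_i - \mu \in \spann(H) \leq \mathbb{Z}\alpha$, so applying $\pi$ yields $\pi(\sum_i \eta_i) = \pi(\mu) = b$. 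Combining the two expressions gives $q(v) - q(w) = b$ as required.

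I do not anticipate any serious obstacle; the argument is a mild generalisation of the balanced-graph placement lemmas, with the only new ingredient being the extraction of the $\beta$-coefficient via the linear functional $\pi$, together with the observation that $\spann(H) \leq \mathbb{Z}\alpha$ kills $\pi$ on all circuit gains, so the construction is self-consistent at every non-tree edge. In the intended application (Lemma \ref{lem:type2.1}) the gains decompose as $a\alpha + b\beta$ with $a,b \in \mathbb{Z}$, so $q$ automatically takes integer values.
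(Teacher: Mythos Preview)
Your proposal is correct and takes essentially the same approach as the paper: both extract the $\beta$-coefficient of each gain, observe that the hypothesis $\spann(H)\leq\mathbb{Z}\alpha$ makes this projection balanced, and propagate along a spanning tree. The paper packages the last step as an application of Lemma~\ref{lem:type1a.2} (at the cost of an extra factor~$2$ that is then divided out), whereas you unwind that argument directly with the linear functional~$\pi$; your version is in fact slightly cleaner since it avoids the rescaling.
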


\begin{proof}
	Define the $\mathbb{Z}$-gain graph $H'$ with vertex set $V(H') := V(H)$ and edge set 
	\begin{align*}
		E(H') := \{ (v,w,b\beta) : (v,w,a \alpha + b\beta) \in E(H) \};
	\end{align*}
	we delete any loops with trivial gain that may arrive, and note that multiple edges may become a single edge.
	By Lemma \ref{lem:type1a.2},
	we may define a placement $q'$ of $H'$ in $\mathbb{Z}$ such that $q'(v) - q'(w) = -2 b$ for all $(v,w,b\beta) \in E(H')$.
	We now define $q$ to be the placement of $H$ where $q(v) := -\frac{1}{2} q'(v)$.
\end{proof}

We are now ready for our construction lemma for type 2 flexible 2-lattice NBAC-colourings.

\begin{lemma}\label{lem:type2}
	Let $G$ be a $\mathbb{Z}^2$-gain graph with a type 2 flexible $2$-lattice NBAC-colouring $\delta$. 
	Then there exists a full placement-lattice $(p,L)$ of $G$ in $\mathbb{R}^2$
	such that $(G,p,L)$ is a flexible full $2$-periodic framework.
\end{lemma}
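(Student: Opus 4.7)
The plan is to adapt the 1-periodic construction of Lemma \ref{lem:type1a}, but with $L_t.\alpha$ rotating while $L_t.\beta$ stays fixed; I treat the main case where $\alpha,\beta$ are linearly independent (the degenerate cases where exactly one of $\alpha,\beta$ equals $(0,0)$ force every edge to have gain in a rank-one sublattice after applying Lemma \ref{lem:type2.1}, and so are covered by Lemma \ref{lem:rank2}). By Lemma \ref{lem:type2.1} I may replace $G$ with a gain-equivalent graph in which every red edge has gain $a\alpha + b\beta$ with $a \neq 0$ and every blue edge has gain $c\beta$. For each red component $R_j$, Lemma \ref{lem:type2.2} supplies a function $q_j : V(R_j) \to \mathbb{R}$ satisfying $q_j(v) - q_j(w) = b$ for every red edge $(v,w,a\alpha+b\beta) \in E(R_j)$.

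I then fix an initial lattice $L_0$ with $L_0.\alpha$ and $L_0.\beta$ linearly independent, and define $L_t$ by $L_t.\alpha := R_t \, L_0.\alpha$ (a rotation about the origin by angle $t$) and $L_t.\beta := L_0.\beta$, extended to a linear map $\mathbb{Z}^2 \to \mathbb{R}^2$ via any integer basis adapted to $\{\alpha,\beta\}$. After picking generic shifts $x_j \in \mathbb{R}^2$ for each red component, I set the time-independent placement $p(v) := q_j(v)\, L_0.\beta + x_j$ for $v \in R_j$. The candidate flex is then $t \mapsto (p, L_t)$, and it is non-trivial because the inner product $(L_t.\alpha) \cdot (L_t.\beta)$ varies with $t$, so the lattice shape genuinely deforms.

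A direct computation verifies that $(p,L_t) \sim (p,L_0)$ for every $t$. For a red edge $(v,w,a\alpha+b\beta) \in E(R_j)$ the $bL_0.\beta$ contributions cancel between $p(v) - p(w)$ and $L_t.\gamma$, leaving a length defect $-a L_t.\alpha$ of constant norm $|a|\cdot\|L_0.\alpha\|$; for a blue edge $(v,w,c\beta)$ the $\alpha$-component of the lattice does not appear, so the length is automatically constant in $t$.

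The main obstacle lies in confirming that $(G,p,L_0)$ is a genuine $2$-periodic framework, i.e.\ $p(v) \neq p(w) + L_0.\gamma$ for every edge. For red edges this difference equals $-a L_0.\alpha \neq 0$, and for blue edges joining distinct red components a generic choice of the shifts $\{x_j\}$ keeps $(q_j(v) - q_k(w) - c)L_0.\beta + (x_j - x_k)$ off $0$. The delicate case is a blue edge $(v,w,c\beta)$ with $v,w \in R_j$, where $p(v) = p(w) + L_0.\gamma$ reduces to $q_j(v) - q_j(w) = c$. Tracing $q_j$ along any red path in $R_j$ from $w$ to $v$ with gain $a'\alpha + b'\beta$ gives $b' = -c$, so the almost red circuit formed by this path together with the blue edge has gain $a'\alpha + (b' + c)\beta = a'\alpha \in \mathbb{Z}\alpha$. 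When $a' \neq 0$ this contradicts the type 2 hypothesis forbidding almost red circuits with gain in $\mathbb{Z}\alpha$, and when $a' = 0$ the circuit is balanced, contradicting the NBAC-property. Hence no such blue edge exists, and the framework is well-defined, completing the proof.
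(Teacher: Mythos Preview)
Your main construction (for $\alpha,\beta$ linearly independent) is essentially the paper's: both invoke Lemma~\ref{lem:type2.1} to normalise gains, Lemma~\ref{lem:type2.2} to build the $q_j$, place vertices along the $\beta$-direction stratified by red component, and rotate $L_t.\alpha$ while keeping $L_t.\beta$ fixed. The only cosmetic difference is that the paper takes the explicit choice $L_t.\alpha=(\cos t,\sin t)$, $L_t.\beta=(1,0)$, $p(v)=(q_j(v),j)$, whereas you allow a general $L_0$ and generic shifts $x_j$. Your well-definedness and length computations match the paper's line for line.

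Your treatment of the degenerate case, however, has a gap. Lemma~\ref{lem:type2.1} (and its proof) needs $\alpha,\beta$ to span $\mathbb{Q}^2$ in order to express every edge gain as $a\alpha+b\beta$, so you cannot invoke it when one of $\alpha,\beta$ equals $(0,0)$. Moreover the conclusion you draw---that after gain equivalence all edges lie in a rank-one sublattice, hence $\rank(G)<2$---does not follow: a type~2 colouring with $\beta=(0,0)$ only constrains monochromatic circuit gains, not individual edge gains, and one can build examples with $\rank(G)=2$. The paper handles this case differently, reducing instead to the fixed-lattice construction (Lemma~\ref{lem:fixedNBAC3}) rather than to Lemma~\ref{lem:rank2}.
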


\begin{proof}
	Without loss of generality we may assume $\spann (G^\delta_{\text{red}}) = \mathbb{Z} \alpha$
	and $\spann (G^\delta_{\text{blue}}) = \mathbb{Z} \beta$,
	with $\alpha \neq (0,0)$.
	If $\beta = (0,0)$ then $\delta$ is a fixed-lattice NBAC-colouring and the result holds by Lemma \ref{lem:fixedNBAC2},
	thus we may assume $\alpha,\beta$ are linearly independent.	
	
	By Lemma \ref{lem:type2.1}, 
	we may assume all red edges have gain $a \alpha + b\beta$ for some $a,b \in \mathbb{Z}$ with $a\neq 0$,
	and all blue edges have gain $c\beta$ for some $c \in \mathbb{Z}$.
	Let $R_1, \ldots, R_n$ be the red components of $G$ and define $E_j$ to be the set of edges $(v,w,\gamma)$
	in $G^\delta_{\text{red}}$ with $v,w \in R_j$.
	By Lemma \ref{lem:type2.2}, 
	for each $R_j$ there exists a placement $q_j$ in $\mathbb{R}$ where $q_j(v) - q_j(w) = b$
	for all $(v,w,\gamma) \in E_j$ with $\gamma =a \alpha + b\beta$. 
	We now define for each $t \in [0,2\pi]$ the full placement-lattice $(p_t,L_t)$ of $G$ in $\mathbb{R}^2$,
	with 
	\begin{align*}
		L_t .\alpha := (\sin t, \cos t), \qquad L_t .\beta:= (1,0)
	\end{align*}
	and
	\begin{align*}
		p_t(v) :=  (q_j(v) , j)
	\end{align*}
	for $v \in R_j$.
	We shall denote $(p,L) := (p_0,L_0)$.
	
	To see that 	$(p,L)$ is a well-defined placement-lattice,
	choose any $e=(v,w,\gamma)$ and suppose that $p(v) = p(w) + L.\gamma$.
	If $\delta(e) = \text{red}$,
	then $\gamma = a\alpha + b \beta$ for some $a,b \in \mathbb{Z} \setminus \{0\}$ and $v,w \in R_j$ for some $j$.
	We note
	\begin{align*}
		p(v) = (q_j(v),j) = (q_j(w) + b, j + a) = p(w) + L.\gamma,
	\end{align*}		
	which implies $a=0$,
	a contradiction.
	If $\delta(e) = \text{blue}$,
	then $\gamma = b \beta$ for some $b \in \mathbb{Z}$.
	If $v\in R_j$ and $w \in R_k$ then
	\begin{align*}
		p(v) = (q_j(v),j) = (q_k(w) + b, k) = p(w) + L.\gamma,
	\end{align*}		
	thus $j=k$.
	Let $P:=(e_1,\ldots,e_{n-1})$ be a red path from $w$ to $v$ with $e_i = (v_i,v_{i+1}, \gamma_i) \in E_j$,
	$v_1=w$, $v_n=v$, and $\gamma_i = a_i \alpha + b_i \beta$.
	Define $C := (e_1,\ldots,e_{n-1}, e)$.
	As
	\begin{align*}
		b = q_j(v)-q_j(w) = \sum_{i=1}^{n-1} (q_j(v_{i+1}) - q_j(v_{i})) = - \sum_{i=1}^{n-1} b_i,
	\end{align*}
	we have $\psi(C) = a \alpha$ for some $a \in \mathbb{Z}$.
	This contradicts that $\delta$ is a type 2 flexible 2-lattice NBAC-colouring,
	as $C$ is an almost red circuit with $\psi(C) \in \mathbb{Z} \alpha$.
	
	Choose any edge $e = (v,w,\gamma)$ with $\gamma = a \alpha + b \beta$.
	If $\delta(e) = \text{blue}$ then $a = 0$. 
	As $p_t = p$ and $L_t.\beta = (1,0)$ then $\| p_t(v) - p_t(w) - L_t.\gamma \|^2$ is constant.
	If $\delta(e) = \text{red}$ then $v,w \in R_j$,
	thus for each $t \in [0, 2 \pi]$,
	\begin{align*}
		\| p_t(v) - p_t(w) - L_t.\gamma \|^2 =
		(q_j(v)- q_j(w) - b - a \cos t)^2 + (a \sin t)^2 = a^2.
	\end{align*}
	It follows that $(p_t,L_t)$  is a flex of $(G,p,L)$ as required.
	We refer the reader to Figure \ref{fig:type2NBACconstruction} for an example of the construction.
\end{proof}

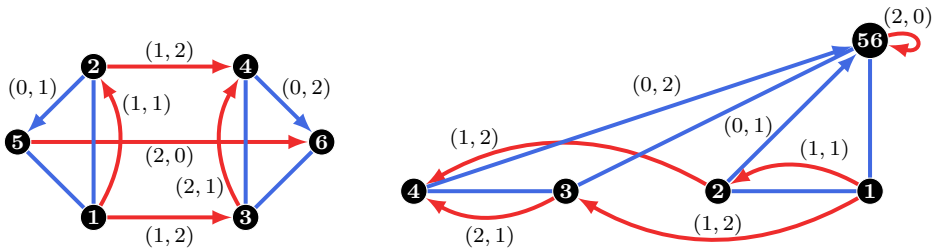
\begin{figure}[ht]
	\begin{center}
		\begin{tikzpicture}
			\node[lnode] (1) at (-1,0) {\textbf{1}};
			\node[lnode] (2) at (-1,2) {\textbf{2}};
			\node[lnode] (3) at (1,0) {\textbf{3}};
			\node[lnode] (4) at (1,2) {\textbf{4}};
			\node[lnode] (5) at (-2,1) {\textbf{5}};
			\node[lnode] (6) at (2,1) {\textbf{6}};

			\draw[redge,-latex] (1)edge(3);
			\draw[redge,-latex] (2)edge(4);
			\draw[redge,-latex] (5)edge(6);
			
			\draw[bedge,-latex] (2)edge(5);
			\draw[bedge] (1)edge(5);
			\draw[bedge] (1) edge (2);
			\path[redge,-latex] (1) edge [bend right] node {} (2);
			
			\draw[bedge,-latex] (4)edge(6);
			\draw[bedge] (3)edge(6);
			\path[redge,-latex] (3) edge [bend left] node {} (4);
			\draw[bedge] (3) edge (4);
			
			\node[font=\scriptsize] at (-0.3,1.5) {$(1,1)$};
			\node[font=\scriptsize] at (0.4,0.4) {$(2,1)$};
			\node[font=\scriptsize] at (0,2.2) {$(1,2)$};
			\node[font=\scriptsize] at (0,-0.25) {$(1,2)$};
			\node[font=\scriptsize] at (0,0.8) {$(2,0)$};
			\node[font=\scriptsize] at (-1.8,1.7) {$(0,1)$};
			\node[font=\scriptsize] at (1.8,1.7) {$(0,2)$};
		\end{tikzpicture}\qquad
		\begin{tikzpicture}
			\node[lnode] (1) at (0,0) {\textbf{1}};
			\node[lnode] (2) at (-2,0) {\textbf{2}};
			\node[lnode] (3) at (-4,0) {\textbf{3}};
			\node[lnode] (4) at (-6,0) {\textbf{4}};
			\node[lnode] (56) at (0,2) {\textbf{56}};
			
			\path[redge,-latex] (1) edge [bend left] node {} (3);
			\path[redge,-latex] (2) edge [bend right] node {} (4);
			\path[redge,-latex, every loop/.style={looseness=10}] (56) edge [loop right] node { } (56);
			
			\draw[bedge,-latex] (2)edge(56);
			\draw[bedge] (1)edge(56);
			\draw[bedge] (1) edge (2);
			\path[redge,-latex] (1) edge [bend right] node {} (2);
			
			\draw[bedge,-latex] (4)edge(56);
			\draw[bedge] (3)edge(56);
			\path[redge,-latex] (3) edge [bend left] node {} (4);
			\draw[bedge] (3) edge (4);
			
			\node[font=\scriptsize] at (-0.6,0.55) {$(1,1)$};
			\node[font=\scriptsize] at (-5,-0.6) {$(2,1)$};
			\node[font=\scriptsize] at (-5.2,0.7) {$(1,2)$};
			\node[font=\scriptsize] at (-2,-0.45) {$(1,2)$};
			\node[font=\scriptsize] at (0.5,2.3) {$(2,0)$};
			\node[font=\scriptsize] at (-1.6,0.85) {$(0,1)$};
			\node[font=\scriptsize] at (-2.8,1.4) {$(0,2)$};
		\end{tikzpicture}
	\end{center}
	\caption{(Left): A $\mathbb{Z}^2$-gain graph with a type $2$ flexible $2$-lattice NBAC-colouring ($\alpha = (1,0)$, $\beta = (0,1)$).
	(Right): The constructed full $2$-periodic framework in $\mathbb{R}^2$.}
	\label{fig:type2NBACconstruction}
\end{figure}

\subsection{Conjectures regarding type 3 flexible 2-lattice NBAC-colourings}

We recall that a NBAC-colouring $\delta$ of a $\mathbb{Z}^2$-gain graph $G$ is a type $3$ flexible $2$-lattice NBAC-colouring 
if there exists $\alpha \in \mathbb{Z}^2 \setminus \{(0,0)\}$ such that
\begin{itemize}
	\item $\spann(G^\delta_\text{red})$ and $\spann(G^\delta_\text{red})$ are non-trivial subgroups of $\mathbb{Z}\alpha$, and
	\item there are no almost monochromatic circuits with gain in $\mathbb{Z}\alpha$.
\end{itemize}

It is an open question whether the existence of a type 3 flexible 2-lattice NBAC-colouring implies the existence of a flexible placement of a $\mathbb{Z}^2$-gain graph in $\mathbb{R}^2$.
As this is the case for all other types of flexible $2$-lattice NBAC-colourings,
we would conjecture the following.

\begin{conjecture}\label{con:type3}
	Let $G$ be a $\mathbb{Z}^2$-gain graph with type 3 flexible 2-lattice NBAC-colouring.
	Then there exists a full placement-lattice $(p,L)$ of $G$ in $\mathbb{R}^2$
	such that $(G,p,L)$ is a flexible full $2$-periodic framework.
\end{conjecture}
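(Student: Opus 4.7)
My plan is to project out the direction of $\alpha$ and apply the already-proved flexible 1-lattice NBAC result. Using Proposition \ref{prop:balancedsub} I would first switch $G$ into a gain-equivalent form in which every red edge has gain in $\mathbb{Z}\alpha$. Let $\alpha_0$ be a primitive generator of the cyclic subgroup of $\mathbb{Z}^2$ containing $\alpha$, pick $\beta$ completing $\alpha_0$ to a $\mathbb{Z}$-basis, and let $\bar G$ denote the $\mathbb{Z}$-gain graph obtained from $G$ by projecting all gains along $\mathbb{Z}^2\to\mathbb{Z}^2/\mathbb{Z}\alpha_0\cong\mathbb{Z}$. The three defining conditions of a type 3 flexible 2-lattice NBAC-colouring translate exactly to the definition of a flexible 1-lattice NBAC-colouring of $\bar G$: both monochromatic subgraphs of $\bar G$ are balanced (their $G$-spans lie inside $\mathbb{Z}\alpha_0$), and no almost monochromatic circuit of $\bar G$ is balanced, since such a balanced circuit in $\bar G$ would lift to an almost monochromatic circuit in $G$ with gain in $\mathbb{Z}\alpha$, which is forbidden. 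Applying Theorem \ref{thm:1NBAC} to $\bar G$, or more concretely the construction in Lemma \ref{lem:type1a}, produces a full placement-lattice $(\bar p, \bar L)$ of $\bar G$ in $\mathbb{R}^2$ with a flex $(\bar p, \bar L_t)$ in which $\bar p$ does not depend on $t$.

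\textbf{Attempted lift.} To promote this into a flex of $G$ I would set $L_t.\beta := \bar L_t$ and try $p_t(v) := \bar p(v) + \xi(v) L_t.\alpha_0$ for some $\xi: V(G) \to \mathbb{Z}$, letting $L_t.\alpha_0$ trace a circle of constant radius. Expanding $\|p_t(v)-p_t(w)-aL_t.\alpha_0-bL_t.\beta\|^2$ for an edge of gain $a\alpha_0+b\beta$, the length is constant in $t$ provided at least one of the factors $(\xi(v)-\xi(w)-a)$ or $(\bar p(v)-\bar p(w)-b\bar L_t)$ vanishes. Red edges have $b=0$ and both endpoints in a common red component, so choosing $\xi$ constant on red components kills the second factor; in the Lemma \ref{lem:type1a} construction each red component is placed at a single point, giving $\bar p(v) = \bar p(w)$, which handles red edges. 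For blue edges I would then impose the $\xi$-condition $\xi(v)-\xi(w)=a$.

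\textbf{Main obstacle.} The crux is that this $\xi$-condition is in general inconsistent around blue circuits: the sum of the coefficients $a$ around a blue circuit equals the $\alpha_0$-coordinate of its gain, which need not vanish because $\spann(G^\delta_{\text{blue}}) = \mathbb{Z}\alpha$ is non-trivial. Unlike in types 1 and 2, no bilinear ansatz of the form $p_t(v) = \xi(v)L_t.\alpha_0 + \eta(v)L_t.\beta$ preserves all edge-lengths simultaneously. A successful proof would therefore need either a non-linear evolution of $p_t$ in the lattice parameters, already visible in the smallest test example of two vertices joined by parallel edges of gains $\alpha, 2\alpha, \beta, \alpha+\beta$, where a dimension count produces a 1-parameter flex in which $\|L.\alpha\|$ varies and the vertex positions depend non-linearly on it, or an indirect route via a Henneberg-style reduction to the loop case of Section \ref{sec:loops} so as to invoke Theorem \ref{thm:loopNBAC}. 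The genuinely hard part is carrying out either strategy uniformly across all type 3 coloured graphs, which is presumably why the statement remains a conjecture.
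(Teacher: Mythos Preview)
There is nothing to compare against: Conjecture~\ref{con:type3} is stated as an open problem in the paper, with the explicit remark that ``a construction of a flexible full $2$-periodic framework from a type 3 flexible 2-lattice NBAC-colouring is currently unknown.'' The paper offers no proof, only the alternative Conjecture~\ref{con:notype3} and the partial evidence of Section~\ref{sec:loops}, where Lemma~\ref{lem:1loop2per} shows that graphs with a loop never carry an active type~3 colouring at all.

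Your write-up is not a proof and does not pretend to be one; it is a diagnosis of why the bilinear ansatz that drives Lemmas~\ref{lem:type1a}, \ref{lem:type1} and \ref{lem:type2} breaks down here, and on that point you are essentially correct. The projection $G\to\bar G$ does yield a flexible $1$-lattice NBAC-colouring (at least when $\alpha$ is taken primitive, which you should check is without loss of generality---the type~3 condition forbids almost monochromatic circuits with gain in $\mathbb{Z}\alpha$, and you need it for $\mathbb{Z}\alpha_0$), and the obstruction to lifting is exactly that the $\alpha_0$-component of the gain around blue circuits need not vanish, so no integer potential $\xi$ exists. One small correction: in the construction of Lemma~\ref{lem:type1a} the red components are \emph{not} placed at a single point---each $R_j$ sits on the horizontal line $y=j$ with varying first coordinate $q_j(v)$---so your sentence ``each red component is placed at a single point, giving $\bar p(v)=\bar p(w)$'' is wrong as stated. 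Red edges work there because $q_j(w)-q_j(v)=2\gamma$ is tuned to cancel the lattice term, not because the placement is constant on components. This does not affect your overall conclusion, but it does mean the ``handles red edges'' step of your lift needs to be rethought.
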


All examples of $\mathbb{Z}^2$-gain graphs with a type 3 flexible 2-lattice NBAC-colouring discovered so far will also have either
a type 1 or type 2 flexible 2-lattice NBAC-colouring, a fixed lattice NBAC-colouring,
or have a low rank.
Due to this,
we would also conjecture the following.

\begin{conjecture}\label{con:notype3}
	Let $G$ be a $\mathbb{Z}^2$-gain graph with type 3 flexible 2-lattice NBAC-colouring.
	Then $G$ has either a type 1 or type 2 flexible 2-lattice NBAC-colouring,
	$G$ has a fixed lattice NBAC-colouring,
	or $\rank (G) <2$.
\end{conjecture}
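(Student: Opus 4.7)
The plan is to reduce the conjecture to a problem about $\mathbb{Z}$-gain graphs by quotienting gains by $\mathbb{Z}\alpha$, and then attempt a structural recolouring argument informed by Theorem \ref{thm:1NBAC}.

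First I would assume $\rank(G) = 2$, since otherwise case (v) holds directly. By replacing $\alpha$ with its primitive divisor, I may assume $\alpha$ is primitive in $\mathbb{Z}^2$, so that $\mathbb{Z}^2/\mathbb{Z}\alpha \cong \mathbb{Z}$; let $\pi$ denote this quotient and let $G'$ be the $\mathbb{Z}$-gain graph obtained from $G$ by projecting every edge gain via $\pi$. The type 3 conditions translate exactly to saying that $\delta$ is a flexible 1-lattice NBAC-colouring of $G'$: both monochromatic spans in $G'$ are trivial (since their preimages in $G$ lie in $\mathbb{Z}\alpha$), and the absence in $G$ of almost-monochromatic circuits with gain in $\mathbb{Z}\alpha$ translates exactly to the absence in $G'$ of balanced almost-monochromatic circuits. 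This reduction aligns the combinatorial side of the problem with the 1-periodic case already understood.

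Next I would invoke Proposition \ref{prop:balancedsub} applied to the red components of $G$ to arrange that every red edge has gain in $\mathbb{Z}\alpha$. The hypothesis $\rank(G) = 2$ then forces the blue edges to carry gains whose cosets in $\mathbb{Z}^2/\mathbb{Z}\alpha$ generate all of $\mathbb{Z}$, while blue cycle spans remain inside $\mathbb{Z}\alpha$. The structural claim I would aim to establish is that there exists a blue edge $e$ (or a small set of such edges) whose gain lies in a coset of $\mathbb{Z}\alpha$ different from $\mathbb{Z}\alpha$ itself, such that recolouring $e$ to red enlarges the red span to contain a $\beta$ linearly independent of $\alpha$ without producing any new balanced almost-monochromatic circuit. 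If such a recolouring succeeds, the resulting $\delta'$ has red span contained in $\mathbb{Z}\alpha + \mathbb{Z}\beta$ and blue span still in $\mathbb{Z}\alpha$, from which I would hope to extract a type 2 colouring (possibly after a second recolouring step to break residual almost-monochromatic circuits using $\beta$).

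The main obstacle is the verification that such a recolouring preserves the NBAC property. A type 3 colouring is allowed to contain almost-monochromatic circuits with gain outside $\mathbb{Z}\alpha$, and recolouring an edge may convert such a circuit into a \emph{balanced} almost-monochromatic circuit of the opposite colour. To control this, I would attempt a minimal-counterexample strategy: let $G$ be a rank 2 $\mathbb{Z}^2$-gain graph admitting a type 3 colouring but no type 1, type 2, or fixed lattice colouring, minimal in $|E(G)|$; derive structural constraints from the minimality (e.g., every red edge belongs to some unbalanced red cycle, every blue edge to some non-$\mathbb{Z}\alpha$ mixed circuit), and argue that these constraints are incompatible with rank 2. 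Given that Conjecture \ref{con:notype3} is left open in the paper and every known example admits a second type, I expect that a successful proof either identifies a genuinely new invariant of type 3 colourings or, conversely, produces a counterexample graph of rank 2 whose type 3 colouring is unique among NBAC types — both outcomes would be of independent interest.
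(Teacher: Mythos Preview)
The paper does not prove this statement: Conjecture \ref{con:notype3} is explicitly left open, motivated only by the empirical observation that every known example of a $\mathbb{Z}^2$-gain graph with a type 3 flexible 2-lattice NBAC-colouring also satisfies one of the other alternatives. There is therefore no proof to compare against, and you appear to be aware of this, since your proposal is framed as a research strategy rather than a completed argument.

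That said, your outline contains a technical slip and a substantive gap. The slip is in the primitivity step: if you replace $\alpha$ by its primitive divisor $\alpha'$ and quotient by $\mathbb{Z}\alpha'$, a balanced almost-monochromatic circuit in the quotient $G'$ corresponds to an almost-monochromatic circuit in $G$ with gain in $\mathbb{Z}\alpha'$, not merely in $\mathbb{Z}\alpha$; the type 3 hypothesis only excludes the latter, so $\delta$ need not become an NBAC-colouring of $G'$. (When $\alpha$ happens already to be primitive, your observation that type 3 for $G$ translates exactly to a flexible 1-lattice NBAC-colouring of the quotient is correct and worth recording.) The substantive gap is that neither the recolouring step nor the minimal-counterexample argument is actually carried out: you correctly identify the obstruction --- recolouring a blue edge to red may convert an almost-red circuit with gain outside $\mathbb{Z}\alpha$ into a balanced almost-blue circuit --- but offer no mechanism to rule it out or to show the constraints from minimality are inconsistent. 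Your closing paragraph is candid about this, and the honest summary is that the proposal is a plausible line of attack on an open problem, not a proof.
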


If Conjecture \ref{con:type3} is true,
then by Lemma \ref{lem:2NBAC}, Lemma \ref{lem:type1}, Lemma \ref{lem:type2}, Lemma \ref{lem:fixedNBAC3}, 
and Lemma \ref{lem:rank2},
we can deduce that Conjecture \ref{con:weaktype3} would be also be true. 
If Conjecture \ref{con:notype3} is true,
then we obtain the slightly stronger result.

\begin{conjecture}\label{con:strongtype3}
	Let $G$ be a connected $\mathbb{Z}^2$-gain graph. 
	Then there exists a full placement-lattice $(p,L)$ of $G$ in $\mathbb{R}^2$ such that
	$(G,p,L)$ is a flexible full $2$-periodic framework if and only if either:
	\begin{enumerate}[(i)]
		\item $G$ has a type 1 flexible 2-lattice NBAC-colouring,
		\item $G$ has a type 2 flexible 2-lattice NBAC-colouring,
		\item $G$ has a fixed lattice NBAC-colouring, or
		\item $\rank (G) <2$.
	\end{enumerate}
\end{conjecture}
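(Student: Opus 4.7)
The plan is to deduce Conjecture \ref{con:strongtype3} by combining the machinery that has already been developed in Section \ref{sec:2NBAC} with a proof of Conjecture \ref{con:notype3}, which asserts that any $\mathbb{Z}^2$-gain graph admitting a type 3 flexible 2-lattice NBAC-colouring must already admit either a type 1 or type 2 flexible 2-lattice NBAC-colouring, a fixed lattice NBAC-colouring, or have $\rank(G) < 2$. The sufficiency direction is immediate from the construction lemmas: if $G$ has a type 1 (respectively type 2) flexible 2-lattice NBAC-colouring, apply Lemma \ref{lem:type1} (respectively Lemma \ref{lem:type2}); if $G$ has a fixed lattice NBAC-colouring, apply Lemma \ref{lem:fixedNBAC3}; and if $\rank(G) < 2$, apply Lemma \ref{lem:rank2}. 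In each case we obtain a full placement-lattice $(p,L)$ with $(G,p,L)$ flexible.

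For necessity, assume $(G,p,L)$ is a flexible full $2$-periodic framework with $G$ connected. By Lemma \ref{lem:2NBAC}, one of the following holds: $G$ has an active type $k$ flexible 2-lattice NBAC-colouring for some $k \in \{1,2,3\}$; $G$ has an active fixed lattice NBAC-colouring; or $\rank(G) < 2$. In every outcome except $k=3$, one of the four conditions in the statement is established directly, so the task reduces to showing that a type 3 flexible 2-lattice NBAC-colouring forces one of the other three outcomes, which is precisely Conjecture \ref{con:notype3}.

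The hard part is therefore Conjecture \ref{con:notype3}. Fix a type 3 NBAC-colouring $\delta$ of $G$ with associated direction $\alpha$, so that both $\spann(G^\delta_{\text{red}})$ and $\spann(G^\delta_{\text{blue}})$ sit in $\mathbb{Z}\alpha$ and no almost monochromatic circuit has gain in $\mathbb{Z}\alpha$. The natural strategy is to fix $\beta \in \mathbb{Z}^2$ with $\alpha,\beta$ linearly independent, and exploit Proposition \ref{prop:balancedsub} together with suitable switching operations to normalise $G$ so that every monochromatic edge has gain in $\mathbb{Z}\alpha$. Under this normalisation, the $\beta$-component of every edge is determined mod $\mathbb{Z}\alpha$ by the endpoints, and one may pass to the $\mathbb{Z}$-gain graph $\bar G := G / \mathbb{Z}\alpha$ obtained by quotienting gains by $\mathbb{Z}\alpha$. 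The colouring $\delta$ descends to a colouring $\bar\delta$ of $\bar G$; because no almost monochromatic circuit of $G$ has gain in $\mathbb{Z}\alpha$, every almost monochromatic circuit of $\bar G$ is unbalanced in $\mathbb{Z}$, and both red and blue subgraphs of $\bar G$ are balanced in $\mathbb{Z}$ (since $\spann \subset \mathbb{Z}\alpha$ vanishes in the quotient). Thus $\bar\delta$ is a flexible 1-lattice NBAC-colouring of $\bar G$ in the sense of Section \ref{sec:1NBAC}.

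The remaining step is to lift structural information from $\bar G$ back to $G$: either the $\alpha$-component is itself trivial on the whole graph (forcing $\rank(G) \leq 1$), or it can be used together with $\bar\delta$ to build a type 2 flexible 2-lattice NBAC-colouring on $G$ with the pair $(\alpha,\beta)$, or the two monochromatic subgraphs can be recoloured to produce a fixed lattice NBAC-colouring. The main obstacle I anticipate is verifying that this recolouring indeed avoids almost monochromatic circuits of the forbidden types in $G$, as one has to simultaneously track gains in both the $\alpha$ and $\beta$ directions; I would first test the construction on the examples of Figure \ref{fig:2latticeNBAC} and Figure \ref{fig:type2fixedNBAC} to confirm the pattern, and then give an inductive argument on the number of monochromatic components, using gain switching at whole components to simplify the bookkeeping.
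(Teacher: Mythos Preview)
The statement you are trying to prove is Conjecture \ref{con:strongtype3}, and the paper does \emph{not} prove it: it is explicitly left open. The paper only observes (in the paragraph preceding the conjecture) that it would follow from Conjecture \ref{con:notype3}, which is itself open. Your reduction to Conjecture \ref{con:notype3} is correct and matches exactly what the paper says, and your handling of the sufficiency direction via Lemmas \ref{lem:type1}, \ref{lem:type2}, \ref{lem:fixedNBAC3}, \ref{lem:rank2} and of the necessity direction via Lemma \ref{lem:2NBAC} is fine. So the entire content of your proposal lives in the attempted proof of Conjecture \ref{con:notype3}, and that is where the gap is.

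Your sketch for Conjecture \ref{con:notype3} is not a proof. Two concrete issues: first, the quotient $\bar G := G/\mathbb{Z}\alpha$ is only a $\mathbb{Z}$-gain graph when $\alpha$ is primitive; for general $\alpha$ the quotient $\mathbb{Z}^2/\mathbb{Z}\alpha$ has torsion, and replacing $\alpha$ by its primitive part strengthens the ``no almost monochromatic circuit with gain in $\mathbb{Z}\alpha$'' hypothesis in a way you have not justified. Second, and more seriously, your final step is a bare trichotomy (``either $\rank(G)\le 1$, or build a type 2 colouring, or recolour to a fixed lattice colouring'') with no argument; you yourself flag the obstacle of controlling almost monochromatic circuits under the recolouring, and nothing in your outline explains how to do this. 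Testing on the figures and ``an inductive argument on the number of monochromatic components'' is a plan, not a proof. Since the paper records Conjecture \ref{con:notype3} as open precisely because no such argument is known, you should either supply the missing combinatorics in full or present this as a heuristic strategy rather than a proof.
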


\section{Special cases of flexible 2-periodic frameworks}\label{sec:loops}

We shall now focus on $2$-periodic frameworks with loops.
With this added assumption,
we can fully characterise whether a $\mathbb{Z}^2$-gain graph has a flexible placement-lattice by observing the graph's 
NBAC-colourings.

\subsection{2-periodic frameworks with loops}

\begin{lemma}\label{lem:loop2per.1}
	Let $(G,p,L)$ be a $k$-periodic framework in $\mathbb{K}^d$
	and suppose $G$ has a loop $(w,w,\alpha)$.
	If $G'$ is the $\mathbb{Z}^k$-gain graph with 
	\begin{align*}
		V(G') := V(G), \quad E(G') :=E(G) \cup  \{ (v,v,c \alpha) : v \in V(G), c \in \mathbb{N}\},
	\end{align*}
	then,
	\begin{align*}
		\mathcal{V}_{\mathbb{K}}(G',p,L) = \mathcal{V}_{\mathbb{K}}(G,p,L)
	\end{align*}
\end{lemma}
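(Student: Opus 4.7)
The plan is to prove the set equality by establishing two inclusions. The inclusion $\mathcal{V}_\mathbb{K}(G',p,L) \subseteq \mathcal{V}_\mathbb{K}(G,p,L)$ is immediate: since $E(G) \subseteq E(G')$, every equivalence equation (\ref{edgecond}) for $G$ is a subequation of the system for $G'$, and every placement-lattice of $G'$ is \emph{a fortiori} a placement-lattice of $G$ (the defining non-coincidence conditions for edges in $E(G)$ are included among those for $E(G')$).

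For the reverse inclusion, I would take an arbitrary $(p',L') \in \mathcal{V}_\mathbb{K}(G,p,L)$ and verify that the extra loops $(v,v,c\alpha)$ appearing in $G'$ impose no new constraints. The key observation is that $\|\cdot\|^2$ is a quadratic form on $\mathbb{K}^d$, so
\begin{align*}
\|p(v) - p(v) - L.(c\alpha)\|^2 &= c^2 \|L.\alpha\|^2, \\
\|p'(v) - p'(v) - L'.(c\alpha)\|^2 &= c^2 \|L'.\alpha\|^2,
\end{align*}
for every $c \in \mathbb{N}$ and every $v \in V(G)$. Since the loop $(w,w,\alpha)$ lies in $E(G)$, the equivalence condition for $(p',L')$ on $G$ already forces $\|L.\alpha\|^2 = \|L'.\alpha\|^2$; multiplying by $c^2$ yields the required edge-length equation on every new loop $(v,v,c\alpha)$, uniformly across all vertices $v$. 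Crucially, the right-hand side depends only on the gain $c\alpha$ and not on the endpoint $v$, which is what makes the extension from one loop at $w$ to loops at every vertex cost-free.

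Finally, one must confirm that $(p',L')$ is still a \emph{bona fide} placement-lattice of $G'$, i.e.~that $L'.(c\alpha) \neq 0$ for each $c \geq 1$. This reduces to $L'.\alpha \neq 0$, which is automatic: since $(w,w,\alpha) \in E(G)$ and $(p',L') \in \mathcal{V}^d_\mathbb{K}(G)$, the placement-lattice condition for $G$ already demands $L'.\alpha \neq 0$. I do not expect any genuine obstacle in this argument; the lemma is essentially the statement that a single loop at one vertex controls the squared length of every integer multiple of its gain, and by the quadratic-form identity this constraint propagates to loops at every other vertex with no additional content.
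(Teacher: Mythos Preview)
Your proof is correct and follows essentially the same route as the paper's: both establish the nontrivial inclusion by using the loop $(w,w,\alpha)$ to force $\|L'.\alpha\|^2=\|L.\alpha\|^2$ and then invoke homogeneity of the quadratic form $\|\cdot\|^2$ to propagate this to all loops $(v,v,c\alpha)$. Your argument is in fact slightly more careful than the paper's, since you also verify the placement-lattice non-coincidence condition $L'.(c\alpha)\neq 0$ for the new edges, a point the paper leaves implicit.
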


\begin{proof}
	First note that $\mathcal{V}_{\mathbb{K}}(G',p,L) \subset \mathcal{V}_{\mathbb{K}}(G,p,L)$.
	Choose any placement-lattice $(p',L') \in \mathcal{V}_{\mathbb{K}}(G,p,L)$.
	As $(w,w,\alpha) \in E(G)$ then
	\begin{align*}
		\|L'.\alpha \|^2 = \|p'(w) - p'(w) - L'.\alpha \|^2 = \|p(w) - p(w) - L.\alpha \|^2 = \|L.\alpha \|^2.
	\end{align*}
	We note that for any $v \in V(G)$ and non-zero $c \in \mathbb{Z}$,
	\begin{align*}
		\|p'(v) - p'(v) - L'.c\alpha \|^2 = c^2 \|L'.\alpha \|^2 = c^2 \|L.\alpha \|^2 = \|p(v) - p(v) - L.c\alpha \|^2,
	\end{align*}
	thus $(p',L') \in \mathcal{V}_{\mathbb{K}}(G',p,L)$ as required.
\end{proof}

\begin{lemma}\label{lem:loop2per.2}
	Let $G$ be a connected $\mathbb{Z}^2$-gain graph 
	and suppose that there exists some $\alpha \in \mathbb{Z}^2\setminus \{(0,0)\}$
	such that for every vertex $v \in V(G)$ and $c \in \mathbb{N}$, we have $(v,v,c \alpha) \in E(G)$.
	If $\delta$ is a NBAC-colouring of $G$, then every loop with gain $c \alpha$ for some $c \in \mathbb{N}$ is the same colour.
\end{lemma}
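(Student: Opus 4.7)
The plan is to establish the claim in two stages: first showing that at each vertex all loops with gains in $\mathbb{N}\alpha$ share a single colour, and then propagating this colour across $V(G)$ via connectivity. For the first stage I fix $v \in V(G)$, abbreviate $\ell_c := (v,v,c\alpha)$, and suppose for contradiction that $\ell_c$ is red and $\ell_{c'}$ is blue for some distinct $c, c' \in \mathbb{N}$. Since $c+c' \in \mathbb{N}$, the loop $\ell_{c+c'}$ lies in $E(G)$ by hypothesis. Traversing $\ell_{c+c'}$ forwards and then $\ell_{c'}$, $\ell_c$ in reverse produces a circuit at $v$ whose gain is $(c+c')\alpha - c'\alpha - c\alpha = 0$. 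Whatever colour $\ell_{c+c'}$ takes, this balanced circuit becomes either almost red (one blue edge) or almost blue (one red edge), contradicting NBAC. Hence there is a well-defined common colour $\delta_v \in \{\text{red},\text{blue}\}$ of all loops at $v$.

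For the second stage I will show that $\delta_v$ is constant on $V(G)$. Since $G$ is connected it suffices to check $\delta_x = \delta_y$ for every non-loop edge $f = (x,y,\mu) \in E(G)$. If $\delta_x \neq \delta_y$, then the circuit
\begin{align*}
	\bigl( (x,x,c\alpha),\; f,\; (y,y,-c\alpha),\; f^{-1} \bigr)
\end{align*}
has gain $c\alpha + \mu - c\alpha - \mu = 0$, so it is balanced; its edge-list consists of one red loop, one blue loop, and two copies of $f$, so however $f$ is coloured the circuit is almost monochromatic, again contradicting NBAC. Thus $\delta_x = \delta_y$ along every edge of $G$, and connectivity forces $\delta_v$ to be globally constant, which gives the claim.

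The main subtlety --- rather than a serious obstacle --- is checking that the balanced circuits above do indeed conflict with the NBAC condition, particularly the second-stage circuit which reuses the edge $f$ twice. Under the paper's definition of a walk as an ordered tuple of edges (which permits repetitions), ``almost red'' and ``almost blue'' are naturally read with multiplicity, so a colour count of three edges of one colour and one edge of the other suffices to force the contradiction in both stages.
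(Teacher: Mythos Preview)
Your proof is correct and follows essentially the same two-stage approach as the paper: first pin down the colour at each vertex, then propagate along edges using the four-term circuit $(\ell_x, f, \ell_y^{-1}, f^{-1})$, which is exactly the paper's second-stage argument. The only cosmetic difference is in stage one, where the paper compares $(v,v,n\alpha)$ to $(v,v,\alpha)$ via the circuit $((v,v,-\alpha),\ldots,(v,v,-\alpha),(v,v,n\alpha))$ of length $n+1$, whereas you use the neater three-edge circuit $(\ell_{c+c'}, \ell_{c'}^{-1}, \ell_c^{-1})$; both produce a balanced almost monochromatic circuit and your version is arguably cleaner.
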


\begin{proof}
	We first note that every loop at a vertex must have the same colour.
	To see this, suppose there exists a loop $(v,v, c\alpha)$ for some integer $c>1$, where $\delta(v,v,n\alpha) \neq \delta(v,v,\alpha)$.
	This would imply the circuit
	\begin{align*}
		( \overbrace{ (v,v,-\alpha),  \ldots ,  (v,v,-\alpha)}^{c \text{ times}} , (v,v,c\alpha) )
	\end{align*}
	is a balanced almost monochromatic circuit,
	contradicting that $\delta$ is a NBAC-colouring.
	
	Suppose not all loops are the same colour.
	As $G$ is connected,
	there must exist distinct vertices $v,w \in V(G)$ connected by an edge $(v,w,\gamma)$ where $\delta(v,v,\alpha) \neq \delta(w,w,\alpha)$.
	Without loss of generality we may assume $\delta(v,v,\alpha) = \delta(v,w,\gamma)$.
	The circuit
	\begin{align*}
		( (v,w,\gamma) ,(w,w,\alpha), (w,v,-\gamma), (v,v,-\alpha) )
	\end{align*}
	is a balanced almost monochromatic circuit,
	contradicting that $\delta$ is a NBAC-colouring.
\end{proof}
	
\begin{lemma}\label{lem:loop2per.3}
	Let $G$ be a connected $\mathbb{Z}^2$-gain graph.
	Suppose that there exists some $\alpha \in \mathbb{Z}^2\setminus \{(0,0)\}$
	such that for every vertex $v \in V(G)$ and $c \in \mathbb{N}$, we have $(v,v,c \alpha) \in E(G)$.
	Then there are no type 1 or type 3 flexible 2-lattice NBAC-colourings of $G$.
\end{lemma}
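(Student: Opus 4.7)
The plan is to argue by contradiction and dispose of both cases using the colour forced on the loops. Suppose $\delta$ is either a type 1 or a type 3 flexible 2-lattice NBAC-colouring of $G$. By Lemma \ref{lem:loop2per.2}, all loops of the form $(v,v,c\alpha)$ with $v \in V(G)$ and $c \in \mathbb{N}$ share a common colour, which I may assume without loss of generality to be red. Since $V(G)$ is non-empty there exists at least one such loop, and each is by itself a red circuit of gain $\alpha \neq (0,0)$; consequently $\spann(G^\delta_{\text{red}})$ is non-trivial, so $G^\delta_{\text{red}}$ is unbalanced. This immediately contradicts the definition of a type 1 flexible 2-lattice NBAC-colouring, in which both $G^\delta_{\text{red}}$ and $G^\delta_{\text{blue}}$ must be balanced, and so disposes of the type 1 case.

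For the type 3 case, let $\beta \in \mathbb{Z}^2 \setminus \{(0,0)\}$ be the witness from the definition, so that $\spann(G^\delta_{\text{red}})$ and $\spann(G^\delta_{\text{blue}})$ are non-trivial subgroups of $\mathbb{Z}\beta$ and no almost monochromatic circuit has gain in $\mathbb{Z}\beta$. Since $\alpha \in \spann(G^\delta_{\text{red}}) \subseteq \mathbb{Z}\beta$, I can write $\alpha = m\beta$ for some non-zero $m \in \mathbb{Z}$. Non-triviality of $\spann(G^\delta_{\text{blue}})$ then yields a blue circuit $C$ based at some vertex $v \in V(G)$ with $\psi(C) = n\beta$ for some non-zero $n \in \mathbb{Z}$. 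I would then concatenate $C$ with the red loop $(v,v,\alpha)$ at $v$: this produces a closed walk at $v$ that is a circuit containing exactly one red edge and is otherwise blue, so it is an almost blue circuit; its gain is $\psi(C) + \alpha = (n+m)\beta \in \mathbb{Z}\beta$, contradicting the type 3 condition.

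I expect no substantive obstacle in this argument. The only points worth a moment's verification are that attaching a loop at $v$ to a circuit through $v$ really is again a circuit, which is immediate since the loop begins and ends at $v$, and that orientation conventions permit the loop's contribution to be recorded as $+\alpha$, which is not essential in any case because $\mathbb{Z}\beta$ is closed under negation.
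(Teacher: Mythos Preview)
Your proof is correct and follows the same overall strategy as the paper's: invoke Lemma~\ref{lem:loop2per.2} to force all the $\alpha$-loops into one colour (say red), dispose of type~1 by the resulting unbalancedness of $G^\delta_{\text{red}}$, and for type~3 combine an unbalanced blue circuit with a red loop at the same vertex to obtain a forbidden almost blue circuit.

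The only difference is in the final contradiction for type~3. The paper appends to a blue circuit $C$ the red loop with gain exactly $-\psi(C)$, producing a \emph{balanced} almost blue circuit and contradicting the NBAC property itself. You instead append the fixed red loop $(v,v,\alpha)$ and observe that the resulting gain $(n+m)\beta$ still lies in $\mathbb{Z}\beta$, contradicting the type~3 condition directly. Your route is marginally cleaner, since it does not require the existence of a loop with the precise cancelling gain: with $\alpha=m\beta$ and $\psi(C)=n\beta$, the paper's step needs $n\beta\in\mathbb{Z}\alpha$, which is not automatic when the lemma's $\alpha$ and the type~3 witness are carefully distinguished as you have done.
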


\begin{proof}
	Suppose there exists a NBAC-colouring $\delta$ of $G$ that is either a type 1 or type 3 flexible 2-lattice NBAC-colouring.
	As one of $G^\delta_{\text{red}}$ or $G^\delta_{\text{blue}}$ must contain a loop (and thus be unbalanced),
	$\delta$ must be a type 3 flexible 2-lattice NBAC-colouring.
	By Lemma \ref{lem:loop2per.3},
	every loop of the form $(v,v,c \gamma)$ has the same colour, and without loss of generality we shall assume they are all red.
	We note immediately that there cannot be any unbalanced blue circuits;
	indeed, if $C$ was a blue circuit containing $v$ with gain $c \alpha$, 
	then the circuit formed by $C$ followed by $(v,v,-c\alpha)$ will be a balanced almost blue circuit.
	However this implies $\rank (G^\delta_{\text{blue}})=0$,
	contradicting that $\delta$ is a type 3 flexible 2-lattice NBAC-colouring.
\end{proof}

\begin{lemma}\label{lem:1loop2per}
	Let $G$ be a connected $\mathbb{Z}^2$-gain graph with a loop.
	Then there are no active type 1 or type 3 flexible 2-lattice NBAC-colourings of $G$.
\end{lemma}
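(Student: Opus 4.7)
The plan is to rule out each type separately using the loop $(w,w,\gamma_0) \in E(G)$, which by the definition of a gain graph has non-trivial gain $\gamma_0 \neq (0,0)$. The type 1 case does not require the active hypothesis: for any NBAC-colouring $\delta$, the loop is a one-edge circuit of gain $\gamma_0$ lying entirely in either $G^\delta_{\text{red}}$ or $G^\delta_{\text{blue}}$, which puts $\gamma_0 \neq 0$ in the span of the corresponding connected monochromatic component. This forces one of the two monochromatic subgraphs to be unbalanced, contradicting the type 1 requirement that both be balanced.

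The type 3 case is where the active hypothesis enters. Suppose $\delta$ is generated by a valuation $\nu$ of $\mathbb{C}(\mathcal{C})$ and threshold $\theta \in \mathbb{R}$ on some algebraic curve $\mathcal{C} \subset \Vset$, with associated $\alpha \in \mathbb{Z}^2 \setminus \{(0,0)\}$. Assume without loss of generality that the loop is red; then $\gamma_0$ lies in $\spann$ of its red component and hence in $\mathbb{Z}\alpha$, so $\gamma_0 = k\alpha$ for some nonzero $k \in \mathbb{Z}$. Since the loop is an edge of $G$, $W^{\gamma_0}_{w,w} Z^{\gamma_0}_{w,w} = \lambda(w,w,\gamma_0)^2$ is a nonzero constant, giving $\nu(W^{\gamma_0}_{w,w}) = -\nu(Z^{\gamma_0}_{w,w}) > \theta$. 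Using the identities $W^{\gamma_0}_{w,w} = -k\alpha W$, $Z^{\gamma_0}_{w,w} = -k\alpha Z$ together with the vanishing of $\nu$ on nonzero integer scalars, I would conclude $\nu(\alpha W) > \theta$ and $\nu(\alpha Z) < -\theta$.

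To finish, I would extract a contradiction from the existence of an unbalanced blue circuit $C$ with gain $\psi(C) = m\alpha$, $m \neq 0$, which is guaranteed by the type 3 requirement that $\spann$ of some blue component be a non-trivial subgroup of $\mathbb{Z}\alpha$. Summing $Z^{\gamma_i}_{v_i,v_{i+1}}$ along $C$ telescopes to $-m\alpha Z$, and because each summand is blue we have $\nu(Z^{\gamma_i}_{v_i,v_{i+1}}) = -\nu(W^{\gamma_i}_{v_i,v_{i+1}}) \geq -\theta$. The ultrametric inequality then yields $\nu(\alpha Z) \geq -\theta$, contradicting the conclusion of the previous paragraph. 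The case of a blue loop is handled symmetrically, swapping the roles of $W$ and $Z$ and of red and blue, and using an unbalanced red circuit in place of $C$. The main technical care is in tracking signs and thresholds; no tools beyond the valuation axioms and the telescoping identity $\sum_i W^{\gamma_i}_{v_i,v_{i+1}} = -\psi(C) W$ along a circuit are required.
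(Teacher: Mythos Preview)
Your argument is correct. The type~1 elimination is identical to the paper's, but your treatment of type~3 is genuinely different. The paper argues combinatorially: it first invokes Lemma~\ref{lem:loop2per.1} to enlarge $G$ so that every vertex carries loops $(v,v,c\alpha)$ for all $c\in\mathbb{N}$ (this is where the ``active'' hypothesis is spent, since the variety and hence the set of active colourings is unchanged), and then observes via Lemma~\ref{lem:loop2per.2} that all such loops receive the same colour, say red. Any unbalanced blue circuit $C$ from $v$ to $v$ with gain $c\alpha$ can then be closed up with the red loop $(v,v,-c\alpha)$ at its basepoint to produce a balanced almost-blue circuit, contradicting the NBAC property; hence the blue subgraph is balanced, contradicting type~3. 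Your approach instead works directly with the valuation $\nu$: from the single given loop you extract $\nu(\alpha W)>\theta$ and $\nu(\alpha Z)<-\theta$, and the telescoping sum over any unbalanced blue circuit forces $\nu(\alpha Z)\geq -\theta$.

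The paper's route has the virtue of isolating a purely combinatorial statement (Lemma~\ref{lem:loop2per.3}) that does not mention valuations at all, at the cost of the graph-enlargement step and the implicit check that the extended colouring on $G'$ inherits the type~3 property. Your route is shorter and avoids enlarging the graph, but leans more heavily on the analytic machinery. One small remark: the blue-loop case is not quite a literal swap of $W\leftrightarrow Z$ because the defining inequalities for red and blue are asymmetric ($>\theta$ versus $\leq\theta$); the cleanest way to dispatch it is either to run the dual argument you indicate (sum $W$'s over an unbalanced red circuit to get $\nu(\alpha W)>\theta$, contradicting $\nu(\alpha W)\leq\theta$ from the blue loop) or to invoke Remark~\ref{rem:active2} to swap colours and reduce to the red-loop case.
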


\begin{proof}
	Let $(w,w,\alpha)$ be a loop of $G$.
	By Lemma \ref{lem:loop2per.1}, 
	we may assume that for every vertex $v \in V(G)$ and $c \in \mathbb{N}$, 
	we have $(v,v,c \alpha) \in E(G)$.
	The result now follows from Lemma \ref{lem:loop2per.3}.
\end{proof}

We may now prove a special case of Conjecture \ref{con:type3}.

\begin{theorem}\label{thm:1loop2per}
	Let $G$ be a connected $\mathbb{Z}^2$-gain graph with a loop.
	Then there exists a full placement-lattice $(p,L)$ of $G$ in $\mathbb{R}^2$ such that
	$(G,p,L)$ is a flexible full $2$-periodic framework if and only if either:
	\begin{enumerate}[(i)]
		\item $G$ has a type 2 flexible 2-lattice NBAC-colouring,
		\item $G$ has a fixed lattice NBAC-colouring,
		\item $\rank (G) =1$.
	\end{enumerate}	
\end{theorem}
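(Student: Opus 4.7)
The plan is to prove both directions separately, with the forward direction relying on the case analysis already packaged in Lemma~\ref{lem:2NBAC} and then using the loop hypothesis to eliminate the unwanted cases via Lemma~\ref{lem:1loop2per}. The backward direction is almost immediate from the construction lemmas proved earlier.

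For the sufficient direction, suppose one of (i)--(iii) holds. If $G$ has an active (in particular, an ordinary) type 2 flexible 2-lattice NBAC-colouring, then Lemma~\ref{lem:type2} produces a full placement-lattice making $(G,p,L)$ flexible. If $G$ has an active fixed lattice NBAC-colouring, then Lemma~\ref{lem:fixedNBAC3} produces a fixed-lattice flexible full $k$-periodic framework; since fixed-lattice flexibility implies flexibility, we are done. If $\rank(G) = 1$, then Lemma~\ref{lem:rank2} directly gives a full placement-lattice of $G$ that is flexible.

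For the necessary direction, suppose $(G,p,L)$ is a flexible full $2$-periodic framework. Since $G$ is connected, Lemma~\ref{lem:2NBAC} forces one of the following: $G$ has an active type 1, type 2, or type 3 flexible 2-lattice NBAC-colouring, $G$ has an active fixed lattice NBAC-colouring, or $\rank(G)<2$. Because $G$ contains a loop, Lemma~\ref{lem:1loop2per} rules out both the type 1 and the type 3 cases. Moreover, any loop $(w,w,\alpha)\in E(G)$ satisfies $\alpha\neq 0$ by the definition of a $\mathbb{Z}^2$-gain graph, so $\alpha\in\spann(G)$ and hence $\rank(G)\geq 1$; thus the case $\rank(G)<2$ collapses to $\rank(G)=1$. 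Combining these observations, we are left precisely with cases (i), (ii) and (iii).

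The argument is essentially a case-elimination built on the main theorems of Section~\ref{sec:NBAC}, Section~\ref{sec:fixedlattice}, and Section~\ref{sec:2NBAC}; the only genuinely new input is the observation that a loop prevents type 1 and type 3 NBAC-colourings, which is Lemma~\ref{lem:1loop2per}. The trickiest conceptual point to double-check is the exclusion of $\rank(G)=0$: this follows from the loop forcing $\spann(G)$ to be non-trivial, rather than from any deeper combinatorial argument. No further obstacles are anticipated; the proof can be written in a few lines once these lemmas are invoked in the correct order.
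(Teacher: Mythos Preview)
Your proposal is correct and follows essentially the same approach as the paper: both directions invoke precisely the same lemmas (Lemma~\ref{lem:2NBAC} and Lemma~\ref{lem:1loop2per} for necessity, Lemmas~\ref{lem:type2}, \ref{lem:fixedNBAC3}, \ref{lem:rank2} for sufficiency), and the observation that a loop forces $\rank(G)\geq 1$ is exactly how the paper collapses $\rank(G)<2$ to $\rank(G)=1$.
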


\begin{proof}
	Suppose there exists a full placement-lattice $(p,L)$ of $G$ in $\mathbb{R}^2$ such that
	$(G,p,L)$ is a flexible full $2$-periodic framework.
	Since $G$ contains a loop then $\rank (G) \geq 1$.
	By Lemma \ref{lem:2NBAC} and Lemma \ref{lem:1loop2per},
	either $G$ has an active type 2 flexible 2-lattice NBAC-colouring,
	$G$ has an active fixed lattice NBAC-colouring,
	or $\rank (G) =1$.
	
	Now suppose that either $G$ has a type 2 flexible 2-lattice NBAC-colouring,
	$G$ has a fixed lattice NBAC-colouring,
	or $\rank (G) =1$.
	Then by Lemma \ref{lem:type2}, Lemma \ref{lem:fixedNBAC3} or Lemma \ref{lem:rank2},
	there exists a full placement-lattice $(p,L)$ of $G$ in $\mathbb{R}^2$ such that
	$(G,p,L)$ is a flexible full $2$-periodic framework.
\end{proof}

\subsection{Scissor flexes}

We now define a special class of flex.

\begin{definition}
	Let $(p_t,L_t)$ a flex of a $2$-periodic framework $(G,p,L)$ in $\mathbb{R}^2$.
	If there exists linearly independent $\alpha,\beta \in \mathbb{Z}^2$ such that $\|L_t.\alpha\|^2$ and $\|L_t.\beta\|^2$ are constant
	but $(L_t.\alpha).(L_t.\beta)$ is not constant,
	then $(p_t,L_t)$ is a \emph{scissor flex}.
\end{definition}

If $\rank(G) <2$,
then it can be seen that some placement-lattice of $G$ will have a scissor flex.
We shall show in Theorem \ref{cor:loopNBAC} that we can characterise the $\mathbb{Z}^2$-gain graphs with scissor flexes by their NBAC-colouring.
We first prove the following lemmas.

\begin{lemma}\label{lem:2loop2per}
	Let $G$ be a connected $\mathbb{Z}^2$-gain graph 
	and $\alpha,\beta \in \mathbb{Z}^2$ be linearly independent.
	Suppose that $(v,v,c \alpha),(v,v,c \beta) \in E(G)$ for all $v \in V(G)$ and $c \in \mathbb{N}$.
	If $\delta$ is a NBAC-colouring of $G$ then either:
	\begin{enumerate}[(i)]
		\item \label{lem:2loop2peritem1} All loops of $G$ are the same colour.
		\item \label{lem:2loop2peritem2} All loops of $G$ with gain in $\mathbb{Z} \alpha$ are red (respectively, blue),
		all loops of $G$ with gain in $\mathbb{Z} \beta$ are blue (respectively, red),
		and all loops of $G$ have gain in $\mathbb{Z} \alpha \cup \mathbb{Z} \beta$.
	\end{enumerate}
\end{lemma}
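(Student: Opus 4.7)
The plan is to begin by invoking Lemma \ref{lem:loop2per.2} separately on the two families $\{(v,v,c\alpha) : v \in V(G),\ c \in \mathbb{N}\}$ and $\{(v,v,c\beta) : v \in V(G),\ c \in \mathbb{N}\}$. That lemma yields a single colour $c_\alpha \in \{\text{red},\text{blue}\}$ taken by every loop of $G$ with gain in $\mathbb{Z}\alpha \setminus \{0\}$ and, similarly, a single colour $c_\beta$ for every loop with gain in $\mathbb{Z}\beta \setminus \{0\}$. Without loss of generality I assume $c_\alpha = \text{red}$. The natural dichotomy is then $c_\beta = \text{red}$ (aiming at conclusion (\ref{lem:2loop2peritem1})) versus $c_\beta = \text{blue}$ (aiming at conclusion (\ref{lem:2loop2peritem2})).

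The key construction, used uniformly in both cases, is the following balanced circuit at a vertex $v$ based on a candidate ``bad'' loop $e = (v,v,\gamma) \in E(G)$. Since $\alpha,\beta$ are $\mathbb{Q}$-linearly independent, I may write $\gamma = a\alpha + b\beta$ uniquely with $a,b \in \mathbb{Q}$; pick $N \in \mathbb{N}$ so that $Na, Nb \in \mathbb{Z}$ and form the circuit
\[
    C := (\underbrace{e, \ldots, e}_{N \text{ times}},\ f_\alpha,\ f_\beta),
\]
where $f_\alpha$ is the loop $(v,v,|Na|\alpha) \in E(G)$ traversed in the direction that contributes $-Na\alpha$ to the total gain (omitted if $Na = 0$), and $f_\beta$ is defined analogously for $\beta$. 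By construction $\psi(C) = 0$, so $C$ is balanced, and by the first paragraph $f_\alpha$ has colour $c_\alpha$ while $f_\beta$ has colour $c_\beta$.

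In the case $c_\alpha = c_\beta = \text{red}$, if $e$ is blue and one may take $N=1$ (i.e.\ $a,b \in \mathbb{Z}$), then $C$ contains exactly one blue edge and is a balanced almost red circuit, contradicting that $\delta$ is NBAC; hence every loop is red and (\ref{lem:2loop2peritem1}) follows. In the case $c_\alpha = \text{red}$, $c_\beta = \text{blue}$, a loop $e$ with $\gamma \notin \mathbb{Z}\alpha \cup \mathbb{Z}\beta$ and $a,b \in \mathbb{Z}\setminus\{0\}$ yields a three-edge balanced $C$ whose colour multiset is either $\{2\,\text{red}, 1\,\text{blue}\}$ or $\{1\,\text{red}, 2\,\text{blue}\}$ according to the colour of $e$; either way $C$ is almost monochromatic, contradicting NBAC, so no such loop exists and (\ref{lem:2loop2peritem2}) holds.

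The main obstacle is the subcase where $a$ or $b$ lies in $\mathbb{Q} \setminus \mathbb{Z}$, forcing $N > 1$ in the construction above; then $C$ contains $N$ copies of $e$ together with at most two edges of the base colours, and the colour count may fail to be almost monochromatic. Resolving this will require first strengthening Lemma \ref{lem:loop2per.2} to cover loops with gain in the saturation $\mathbb{Q}\alpha \cap \mathbb{Z}^2$: for $\gamma = (p/q)\alpha$ in lowest terms with $q>1$, the circuit made of $q$ copies of $e$ together with a single $(v,v,|p|\alpha)$-loop is balanced and has all-but-one edges of the same colour, forcing $e$ to share the colour $c_\alpha$. Once every loop with gain in $(\mathbb{Q}\alpha \cup \mathbb{Q}\beta) \cap \mathbb{Z}^2$ has been absorbed into the uniformly coloured families, the three-edge argument above can be rerun against these enlarged families to conclude the proof.
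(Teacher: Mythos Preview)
Your overall strategy coincides with the paper's: apply Lemma~\ref{lem:loop2per.2} to obtain uniform colours $c_\alpha,c_\beta$ on the two loop families, split on whether $c_\alpha=c_\beta$, and for any remaining loop $e=(v,v,\gamma)$ build a short balanced circuit out of $e$ together with suitable $\alpha$- and $\beta$-loops to force a forbidden almost-monochromatic configuration. The paper simply writes $\gamma=a\alpha+b\beta$ with $a,b\in\mathbb{Z}$ and uses the three-edge circuit $((v,v,-a\alpha),(v,v,-b\beta),e)$; when $a,b\in\mathbb{Z}$ your argument and the paper's are identical.

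You are right that linear independence only guarantees $a,b\in\mathbb{Q}$, and that clearing denominators with $N$ copies of $e$ destroys the almost-monochromatic count once $N>1$. Your proposed saturation remedy, however, does not close the gap. It correctly forces any loop of $G$ with gain in $\mathbb{Q}\alpha\cap\mathbb{Z}^2$ to take colour $c_\alpha$, but rerunning a three-edge argument still requires a loop with gain proportional to $a\alpha$ lying in $\mathbb{Z}^2$, and $a\alpha$ need not lie in $\mathbb{Z}^2$ at all: with $\alpha=(1,0)$, $\beta=(1,2)$, $\gamma=(0,1)$ one gets $a\alpha=(-\tfrac12,0)$. In fact the assertion (reading the last clause of~(\ref{lem:2loop2peritem2}) as ``no loops with gain \emph{outside} $\mathbb{Z}\alpha\cup\mathbb{Z}\beta$'') fails without the extra hypothesis $\mathbb{Z}\alpha+\mathbb{Z}\beta=\mathbb{Z}^2$: on a single vertex with $\alpha=(2,0)$, $\beta=(0,2)$ and one further loop $(v,v,(1,1))$, colouring that loop blue and all others red is a valid NBAC-colouring (a parity check on coordinates rules out every balanced almost-monochromatic circuit) satisfying neither alternative. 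The paper's proof carries the same implicit assumption; both your argument and the paper's go through cleanly once one adds the hypothesis that $\alpha,\beta$ generate $\mathbb{Z}^2$, in which case $N=1$ throughout and no saturation step is needed.
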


\begin{proof}
	By Lemma \ref{lem:loop2per.2} we have (without loss of generality) two possibilities:
	\begin{enumerate}
		\item \label{lem:2loop2perop1} All loops with gain in $\mathbb{Z} \alpha \cup \mathbb{Z} \beta$ are red.
		\item \label{lem:2loop2perop2} All loops with gain in $\mathbb{Z} \alpha$ are red and
		all loops with gain in $\mathbb{Z} \beta$ are blue.
	\end{enumerate}
	
	Suppose (\ref{lem:2loop2perop1}) holds.
	If $G$ only has loops with gain $\gamma \in \mathbb{Z} \alpha \cup \mathbb{Z} \beta$,
	then (\ref{lem:2loop2peritem1}) holds.
	Suppose $G$ has a loop $l :=(v,v,\gamma)$ with $\gamma \notin \mathbb{Z} \alpha \cup \mathbb{Z} \beta$.
	Let $a,b \in \mathbb{Z} \setminus \{0\}$ be any pair where $c\gamma = a \alpha + b \beta$ for some $c >0$.
	If $\delta(l) = \text{blue}$ then we note that the circuit 
	\begin{align*}
		((v,v,-a \alpha),(v,v,-a \beta),\overbrace{ l,  \ldots ,  l}^{c \text{ times}})
	\end{align*}
	is a balanced almost red circuit,
	contradicting that $\delta$ is a NBAC-colouring.
	Hence $\delta(l) = \text{red}$ and (\ref{lem:2loop2peritem1}) holds.
	
	Suppose (\ref{lem:2loop2perop2})
	holds but $G$ has a loop $l :=(v,v,\gamma)$ with $\gamma \notin \mathbb{Z} \alpha \cup \mathbb{Z} \beta$.
	Choose $a,b \in \mathbb{Z}$ such that $c\gamma = a \alpha + b \beta$ for some $c>0$.
	If $\delta(l) = \text{blue}$ then the circuit
	\begin{align*}
		C:= ( (v,v,-a \alpha),(v,v,-a \beta),\overbrace{ l,  \ldots ,  l}^{c \text{ times}})
	\end{align*}
	is a balanced almost blue circuit,
	while if $\delta(l) = \text{red}$ then $C$
	is a balanced almost red circuit.
	As both possibilities contradict that $\delta$ is a NBAC-colouring,
	then no such loop may exist	and (\ref{lem:2loop2peritem2}) holds.	
\end{proof}

\begin{lemma}\label{thm:loopNBAC}
	Let	$G$ be a connected $\mathbb{Z}^2$-gain graph with loops $l_\alpha := (v,v,\alpha)$, $l_\beta := (v,v,\beta)$,
	where $\alpha$ and $\beta$ are linearly independent.
	Then all active NBAC-colourings of $G$ with $\delta(l_\alpha)\neq \delta(l_\beta)$ are type 2 flexible 2-lattice NBAC-colourings.
\end{lemma}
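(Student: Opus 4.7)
The plan is to extract from $\delta$ being active a valuation $\nu$ on some $\mathbb{C}(\mathcal{C})$ together with a threshold $\tau \in \mathbb{R}$ generating $\delta$, and to deduce the structural conditions of a type 2 flexible 2-lattice NBAC-colouring directly from the values of $\gamma W$ and $\gamma Z$ under $\nu$, for varying $\gamma \in \mathbb{Z}^2$. By Remark \ref{rem:active2} we may assume $\delta(l_\alpha)=\text{red}$ and $\delta(l_\beta)=\text{blue}$. For each loop $(v,v,\gamma)$ we have $W^\gamma_{v,v} = -\gamma W$ and $Z^\gamma_{v,v} = -\gamma Z$, while $W^\gamma_{v,v} Z^\gamma_{v,v} = \lambda(l_\gamma)^2$ is a non-zero constant, so $\nu(\gamma W) = -\nu(\gamma Z)$. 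Applied to the two loops this yields
\begin{align*}
	\nu(\alpha W) > \tau \geq \nu(\beta W), \qquad \nu(\alpha Z) < -\tau \leq \nu(\beta Z);
\end{align*}
in particular $\nu(\alpha W) \neq \nu(\beta W)$.

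Next I would let $\alpha', \beta'$ be the primitive generators of $\mathbb{Q}\alpha \cap \mathbb{Z}^2$ and $\mathbb{Q}\beta \cap \mathbb{Z}^2$, so that $\alpha \in \mathbb{Z}\alpha'$, $\beta \in \mathbb{Z}\beta'$ and $\alpha', \beta'$ remain linearly independent. A case analysis writing $n\mu = c_1\alpha + c_2\beta$ for arbitrary $\mu \in \mathbb{Z}^2 \setminus \{(0,0)\}$ and invoking the ultrametric inequality for $\nu$ then shows that $\nu(\mu W) > \tau$ precisely when $\mu \in \mathbb{Z}\alpha'$, while $\nu(\mu Z) \geq -\tau$ precisely when $\mu \in \mathbb{Z}\beta'$; when $\mu$ lies in neither of these cyclic subgroups, the strict separation $\nu(\alpha W) \neq \nu(\beta W)$ forces $\nu(\mu W) = \nu(\beta W) \leq \tau$ and $\nu(\mu Z) = \nu(\alpha Z) < -\tau$. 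This reproduces exactly the pattern of case (\ref{lem:2NBAC.1item2}) of Lemma \ref{lem:2NBAC.1}, with threshold $\tau$ in place of $0$.

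The four defining conditions of a type 2 flexible 2-lattice NBAC-colouring (with parameters $\alpha', \beta'$) now follow by circuit manipulations in the style of Lemma \ref{lem:2NBAC.3}. Any unbalanced red (respectively, blue) circuit of gain $\mu$ gives $\sum_j W^{\gamma_j}_{v_j,v_{j+1}} = -\mu W$ (respectively, the analogue in $Z$), which forces $\nu(\mu W) > \tau$ (respectively, $\nu(\mu Z) \geq -\tau$), hence $\mu \in \mathbb{Z}\alpha'$ (respectively, $\mathbb{Z}\beta'$); non-triviality of both spans is witnessed by the loops themselves, as $\alpha \in \mathbb{Z}\alpha'$ and $\beta \in \mathbb{Z}\beta'$. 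For an almost red (respectively, almost blue) circuit $C$ with $\psi(C) \in \mathbb{Z}\alpha'$ (respectively, $\mathbb{Z}\beta'$), the identity
\begin{align*}
	W^{\gamma_n}_{v_n,v_1} = -\psi(C) W - \sum_{j < n} W^{\gamma_j}_{v_j,v_{j+1}}
\end{align*}
and its $Z$-analogue then force $\nu(W^{\gamma_n}_{v_n,v_1}) > \tau$ (respectively, $\nu(Z^{\gamma_n}_{v_n,v_1}) \geq -\tau$), contradicting that the remaining edge has the opposite colour.

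The main obstacle will be precisely this classification step: controlling $\nu(\mu W)$ and $\nu(\mu Z)$ uniformly along the rational lines through $\alpha$ and $\beta$. This is where the passage to the primitive generators $\alpha', \beta'$ matters, and where the two-coloured loops play their essential role by producing the separation $\nu(\alpha W) \neq \nu(\beta W)$ that pins down the valuation of mixed linear combinations via the ultrametric rule. Once this is in place, the remainder of the argument is essentially the case (\ref{lem:2NBAC.1item2}) portion of the proof of Lemma \ref{lem:2NBAC.3}, adapted to the non-zero threshold $\tau$.
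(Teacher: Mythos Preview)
Your argument is correct, but it follows a genuinely different route from the paper's own proof. The paper does not work with the valuation $\nu$ at all after the first line: instead it invokes Lemma~\ref{lem:loop2per.1} to enlarge $G$ by adding all loops $(w,w,c\alpha)$ and $(w,w,c\beta)$ at every vertex (which preserves $\Vset$ and hence activeness), and then argues purely combinatorially. Using Lemma~\ref{lem:2loop2per} it pins down the colour of every loop, and from there each type~2 condition is verified by appending suitable loops to a would-be offending circuit to manufacture a balanced almost-monochromatic circuit, contradicting the NBAC property directly.

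Your approach, by contrast, stays within the original graph and extracts everything from the valuation: the two given loops fix $\nu(\alpha W),\nu(\alpha Z),\nu(\beta W),\nu(\beta Z)$, the strict inequality $\nu(\alpha W)>\tau\ge\nu(\beta W)$ forces $\nu(\alpha W)\neq\nu(\beta W)$, and then the ultrametric rule (together with the passage to primitive generators $\alpha',\beta'$) classifies $\nu(\mu W),\nu(\mu Z)$ for every $\mu$, exactly as in case~(\ref{lem:2NBAC.1item2}) of Lemma~\ref{lem:2NBAC.1}. The verification of the type~2 conditions then follows the template of Lemma~\ref{lem:2NBAC.3}. This is cleaner in that it avoids modifying $G$, and it makes transparent why activeness (rather than merely NBAC) is needed. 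The paper's proof, on the other hand, isolates the use of activeness to the single loop-addition step and is otherwise a direct circuit argument; once the extra loops are present, its conclusion in fact holds for any NBAC-colouring of the enlarged graph, not just active ones.
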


\begin{proof}
	Let $\delta$ be an active NBAC-colouring of $G$ with $\delta(l_\alpha)\neq \delta(l_\beta)$.
	Without loss of generality,
	we may assume $\delta(l_\alpha) = \text{red}$ and $\delta(l_\beta) = \text{blue}$.
	By Lemma \ref{lem:loop2per.1}, 
	we may assume that $(v,v,c \gamma) \in E(G)$ for all $v \in V(G)$ and $c \in \mathbb{N}$,
	where $\gamma \in \{\alpha,\beta\}$.	
	By Lemma \ref{lem:2loop2per},
	all loops with gain in $\mathbb{Z} \alpha$ are red,
	all loops with gain in $\mathbb{Z} \beta$ are blue,
	and there are no loops with gain $\gamma \notin \mathbb{Z} \alpha \cup \mathbb{Z} \beta$.
	
	Suppose there exists a red circuit $C$ containing $v$ with $\psi(C) = a \alpha + b \beta$.
	If $a,b\neq 0$, then the circuit formed from $C$ followed by $(v,v,-a \alpha),(v,v,-a \beta)$ is a balanced almost red circuit,
	while if $a = 0, b\neq 0$, then the circuit formed from $C$ followed by $(v,v,-a \beta)$ is a balanced almost red circuit.
	As both contradict that $\delta$ is a NBAC-colouring of $G$,
	then $\psi(C) \in \mathbb{Z} \alpha$.
	We similarly note that for any blue circuit $C'$, $\psi(C') \in \mathbb{Z} \beta$.
	
	Let $C$ be an almost monochromatic circuit of length $n$ where $\delta(e_n) \neq \delta(e_i)$ for all $i \in \{1,\ldots,n-1\}$.
	If $C$ is almost red and $\psi(C) = c \alpha$ for some $c \in \mathbb{Z}$,
	then
	\begin{align*}
		( e_1, \ldots,e_n, (v_1,v_1,-c \alpha))
	\end{align*}
	is a balanced almost red circuit,
	contradicting that $\delta$ is a NBAC-colouring.
	If $C$ is almost blue and $\psi(C) = c \beta$ for some $c \in \mathbb{Z}$,
	then
	\begin{align*}
		( e_1, \ldots,e_n, (v_1,v_1,-c \beta))
	\end{align*}
	is a balanced almost blue circuit,
	contradicting that $\delta$ is a NBAC-colouring.
	It now follows that $\delta$ is a type 2 flexible 2-lattice as required.
\end{proof}

\begin{lemma}\label{lem:linindloop}
	Let $G$ be a connected $\mathbb{Z}^2$-gain graph with loops $l_\alpha := (v,v,\alpha)$, $l_\beta := (v,v,\beta)$,
	where $\alpha$ and $\beta$ are linearly independent.
	If $\delta(l_\alpha) = \delta(l_\beta)$ for all active NBAC-colourings of $G$,
	then for any $2$-periodic framework $(G,p,L)$,
	every non-trivial flex of $(G,p,L)$ is a fixed-lattice flex.
\end{lemma}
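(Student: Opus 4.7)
The plan is to use Proposition~\ref{prop:angleactive} to convert the hypothesis on NBAC-colourings into the constancy of a single angle function, and then combine this with the equivalence conditions that the loops $l_\alpha, l_\beta$ already impose on $\Vset$.

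First I would compute the angle function of the two loops explicitly. Since $l_\alpha$ and $l_\beta$ are both loops at $v$, for any $(p',L') \in \Vset$,
\[
    A_{l_\alpha,l_\beta}(p',L') = (p'(v)-p'(v)-L'.\alpha).(p'(v)-p'(v)-L'.\beta) = (L'.\alpha).(L'.\beta).
\]
By hypothesis, every active NBAC-colouring of $G$ assigns $l_\alpha$ and $l_\beta$ the same colour, and every active NBAC-colouring of $(G,p,L)$ is in particular an active NBAC-colouring of $G$. Proposition~\ref{prop:angleactive} then gives that $A_{l_\alpha,l_\beta}$ is constant on $\Vset$, so
\[
    (L'.\alpha).(L'.\beta) = (L.\alpha).(L.\beta)
\]
for every $(p',L') \in \Vset$.

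Second, because $l_\alpha, l_\beta \in E(G)$, the edge condition (\ref{edgecond2}) built into the definition of $\Vset$ yields $\|L'.\alpha\|^2 = \lambda(l_\alpha)^2 = \|L.\alpha\|^2$ and $\|L'.\beta\|^2 = \|L.\beta\|^2$ for every $(p',L') \in \Vset$. Combining these with the previous identity shows that the symmetric bilinear form $(\gamma,\mu) \mapsto (L'.\gamma).(L'.\mu)$ agrees with $(\gamma,\mu) \mapsto (L.\gamma).(L.\mu)$ on the three pairs $(\alpha,\alpha)$, $(\alpha,\beta)$, $(\beta,\beta)$.

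Finally, since $\alpha$ and $\beta$ are linearly independent, they form a $\mathbb{Q}$-basis of $\mathbb{Q}^2$, and by bilinearity the agreement extends to all pairs $\gamma,\mu \in \mathbb{Z}^2$ (as noted in the remark following the definition of orthogonal equivalence, it suffices to check on a basis). Hence $L' \sim L$ for every $(p',L') \in \Vset$, which by definition places $(p',L')$ in $\Vsetcirc$; the reverse inclusion $\Vsetcirc \subset \Vset$ is immediate. There is no real obstacle in this argument; the only subtle point is that $\{\alpha,\beta\}$ need not be a $\mathbb{Z}$-basis of $\mathbb{Z}^2$, but this causes no trouble because the orthogonal equivalence condition depends only on the values of a bilinear form, which are determined on all of $\mathbb{Z}^2$ once known on a $\mathbb{Q}$-basis of $\mathbb{Q}^2$.
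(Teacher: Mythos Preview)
Your proof is correct and follows essentially the same approach as the paper: compute $A_{l_\alpha,l_\beta}$ explicitly as $(L'.\alpha).(L'.\beta)$, apply Proposition~\ref{prop:angleactive} to make it constant, combine with the two loop edge conditions $\|L'.\alpha\|^2$ and $\|L'.\beta\|^2$, and extend by bilinearity. Your explicit remark that $\{\alpha,\beta\}$ need only be a $\mathbb{Q}$-basis (not a $\mathbb{Z}$-basis) for the bilinear form to be determined is a nice clarification that the paper leaves implicit.
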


\begin{proof}
	Let $(G,p,L)$ be a flexible $2$-periodic framework with a flex $(p_t,L_t)$, $t \in [0,1]$.
	Since we have loops $l_\alpha, l_\beta \in E(G)$,
	it follows that $\|L_t.\alpha\|^2 =\|L.\alpha\|^2 $ and $\|L_t.\beta\|^2 =\|L.\beta\|^2 $ for all $t \in [0,1]$.
	For each $t \in [0,1]$ we have
	\begin{eqnarray*}
		(L.\alpha).(L.\beta) &=& (p(v) - p(v) - L.\alpha).(p(v) - p(v) - L.\beta) \\
		(L_t.\alpha).(L_t.\beta) &=& (p_t(v) - p_t(v) - L_t.\alpha).(p_t(v) - p_t(v) - L_t.\beta).
	\end{eqnarray*}
	By Proposition \ref{prop:angleactive} and the continuity of the flex $(p_t,L_t)$,
	we must have
	\begin{align*}
		(L.\alpha).(L.\beta) = (L_t.\alpha).(L_t.\beta)
	\end{align*}
	for all $t \in [0,1]$.
	It now follows that $L_t \sim L$ for all $t \in [0,1]$ as required.
\end{proof}

\begin{theorem}\label{cor:loopNBAC}
	Let $G$ be a connected $\mathbb{Z}^2$-gain graph with $\rank (G) =2$.
	Then there exists a full $2$-periodic framework $(G,p,L)$ in $\mathbb{R}^2$ with a scissor flex if and only if either:
	\begin{enumerate}[(i)]
		\item \label{cor:loopNBACitem1} $G$ has a type 2 flexible 2-lattice NBAC-colouring, or
		\item \label{cor:loopNBACitem2} $G$ has a type 1 flexible 2-lattice NBAC-colouring where, for some linearly independent pair $\alpha,\beta \in \mathbb{Z}^2$,
		there are no almost red circuits of $G$ with gain in $\mathbb{Z} \alpha$ and there are no almost blue circuits of $G$ with gain in $\mathbb{Z} \beta$.
	\end{enumerate}
\end{theorem}

\begin{proof}
	If $G$ has a type 2 flexible 2-lattice NBAC-colouring then there exists a full $2$-periodic framework $(G,p,L)$ with a scissor flex by Lemma \ref{lem:type2}.
	Suppose $G$ has a type 1 flexible 2-lattice NBAC-colouring $\delta$ with no almost red circuits with gain in $\mathbb{Z} \alpha$ and no almost blue circuits with gain in $\mathbb{Z} \beta$.
	We note that if we add the loops $(v,v,\alpha)$ and $(v,v,\beta)$ to $G$ to form the graph $G'$,
	then we can extend $\delta$ to a type 2 flexible 2-lattice NBAC-colouring $\delta'$ of $G'$ by setting $\delta'(v,v,\alpha)=\text{red}$ and $\delta'(v,v,\beta)=\text{blue}$.
	A full $2$-periodic framework $(G',p,L)$ with a scissor flex can now be constructed by Lemma \ref{lem:type2}.
	We finish by noting that $(G,p,L)$ will also have a scissor flex as required.
	
	Now suppose there exists a full $2$-periodic framework $(G,p,L)$ with a scissor flex $(p_t,L_t)$;
	we shall assume that $\alpha,\beta \in \mathbb{Z}^2$ are linearly independent gains where $\|L_t.\alpha\|^2$ and $\|L_t.\beta\|^2$ are both constant.
	Choose any $v \in V(G)$ and define the $G'$ to be the $\mathbb{Z}^2$-gain graph formed from $G$ by adding the loops $l_\alpha := (v,v,\alpha)$ and $l_\beta:=(v,v,\beta)$.
	We note that $(p_t,L_t)$ is a flex of $(G',p,L)$ also,
	hence as $\rank(G')=2$,
	we have that $G'$ has an active NBAC-colouring by Lemma \ref{lem:2NBAC}.
	
	If all active NBAC-colourings $\delta'$ of $G'$ have $\delta'(l_\alpha) = \delta'(l_\beta)$,
	then by Lemma \ref{lem:linindloop},
	$(G',p,L)$ is fixed lattice flexible,
	a contradiction.
	It follows that $G'$ has an active NBAC-colouring $\delta'$ with $\delta'(l_\alpha) \neq \delta'(l_\beta)$.
	By Lemma \ref{thm:loopNBAC},
	$\delta'$ is a type 2 flexible 2-lattice NBAC-colouring.
	If we define $\delta$ to be the restriction of $\delta'$ to $G$,
	then $\delta$ is a type $k$ flexible 2-lattice NBAC-colouring for some $k\in \{1,2\}$,
	as $\rank(G)=2$ and $\delta'$ cannot be monochromatic on any subgraph of rank 2.
	We finish by noting that if $\delta$ is a type 1 flexible 2-lattice NBAC-colouring,
	then (with respect to $\delta$) there are no almost red circuits of $G$ with gain in $\mathbb{Z} \alpha$ and there are no almost blue circuits of $G$ with gain in $\mathbb{Z} \beta$.
\end{proof}

\section*{Acknowledgements}
I would like to thank the anonymous referees for their valuable corrections and comments,
all of which helped greatly improve the paper.

\end{document}